\titleformat{\paragraph}[runin]
{\normalfont\normalsize\bfseries}{\theparagraph}{1em}{}
\definecolor{tocolor}{rgb}{.1,.1,.5}
\definecolor{urlcolor}{rgb}{.2,.2,.6}
\definecolor{linkcolor}{rgb}{.1,.4,.6}
\definecolor{citecolor}{rgb}{.6,.3,.1}
\let\oldmarginpar\marginpar
\renewcommand\marginpar[1]{\-\oldmarginpar[\raggedleft\footnotesize #1]
	{\raggedright\footnotesize #1}}
\numberwithin{equation}{subsection}
\newtheorem{theorem}[equation]{Theorem}
\newtheorem{proposition}[equation]{Proposition}
\newtheorem{corollary}[equation]{Corollary}
\newtheorem{conjecture}[equation]{Conjecture}
\newtheorem{lemma}[equation]{Lemma}
\newtheorem{problem}[equation]{Problem}
\theoremstyle{remark}
\newtheorem{remark}[equation]{Remark}
\theoremstyle{definition}
\newtheorem{definition}[equation]{Definition}
\newcounter{margin}
	{\end{itshape}  \bigskip}
\def\beq{\begin{eqnarray}}
\def\eeq{\end{eqnarray}}
\def\bes{\begin{eqnarray*}}
	\def\ees{\end{eqnarray*}}
\def\bG{\mathbf{G}}
\def\bU{{\bm U}}
\def\muhat{{\bm \mu}}
\def\r{{\bf r}}
\DeclareMathOperator{\Spec}{Spec} 
\DeclareMathOperator{\Hom}{Hom}
\DeclareMathOperator{\Ind}{Ind}
\DeclareMathOperator{\Res}{Res} 
\DeclareMathOperator{\Irr}{Irr}
\DeclareMathOperator{\Rep}{Rep}
\DeclareMathOperator{\sgn}{sgn}
\DeclareMathOperator{\Infl}{Infl}
\def\A{{\bf A}}
\def\C{\mathbb{C}}
\def\M{{\mathcal{M}}}
\def\pazM{{\mathscr{M}}}
\def\calQ{{\mathcal{Q}}}
\def\calA{{\mathcal{A}}}
\def\calI{{\mathcal{I}}}
\def\calX{{\mathcal{X}}}
\def\calU{{\mathcal{U}}}
\def\pazU{{\mathscr{U}}}
\def\calP{\mathcal{P}}
\def\calH{\mathcal{H}}
\def\v{\mathbf{v}}
\def\P{\mathcal{P}}
\def\H{\mathbb{H}}
\def\N{\mathbb{Z}_{\geq 0}}
\def\F{\mathbb{F}}
\def\Q{\mathbb{Q}}
\newcommand{\T}{\mathrm{T}}
\def\t{\mathfrak{t}}
\def\calC{{\mathcal C}}
\def\calG{{\mathcal G}}
\def\Z{\mathbb{Z}}
\def\K{\mathbb{K}}
\def\gl{{\mathfrak g\mathfrak l}}
\newcommand{\nc}{\newcommand}
\def\bU{\mathbf U}
\newcommand{\g}{\mathfrak{g}}
\newcommand{\calR}{\mathcal{R}}
\newcommand{\calS}{\mathcal{S}}
\newcommand{\Gr}{\textnormal{Gr}}
\newcommand{\xib}{{\bm \xi}}
\newcommand{\reg}{\textnormal{reg}}
\newcommand{\II}{\mathbb{I}}
\newcommand{\HHH}{\mathscr{H}}
\newcommand{\YY}{\mathsf{Y}}
\newcommand{\HH}{\mathsf{H}}
\newcommand{\NN}{\mathcal{N}}
\newcommand{\FF}{\mathcal{F}}
\newcommand{\End}{\textnormal{End}}
\newcommand{\tr}{\textnormal{tr}}
\newcommand{\opp}{\textnormal{op}}
\renewcommand{\1}{\mathbbm{1}}
\newcommand{\bT}{\mathbf{T}}
\newcommand{\bB}{\mathbf{B}}
\nc{\op}[1]{\mathop{\mathchoice{\mbox{\rm #1}}{\mbox{\rm #1}}
		{\mbox{\rm \scriptsize #1}}{\mbox{\rm \tiny #1}}}\nolimits}
\nc{\al}{\alpha}
\nc{\ep}{\varepsilon} 
\nc{\ga}{\gamma} 
\nc{\Ga}{\Gamma}
\nc{\la}{\lambda} 
\nc{\La}{\Lambda} 
\nc{\si}{\sigma}
\nc{\Sig}{{\Gamma}} 
\nc{\Om}{\Omega} 
\nc{\om}{\omega}
\nc{\SL}{\mathrm{SL}} 
\nc{\GL}{\mathrm{GL}} 
\nc{\PGL}{\mathrm{PGL}}
\nc{\G}{\mathrm{G}}
\nc{\W}{\mathrm{W}}
\nc{\Lg}{\mathrm{L}}
\nc{\Pg}{\mathrm{P}}
\nc{\Frob}{\mathrm{Frob}}
\def\PP {\mathcal{P}}
\def\U{{\mathrm{U}}}
\nc{\rN}{\mathrm{N}}
\def\wt{\widetilde}
\def\SS{\mathfrak{S}}
\def\leeq{\leq}
\def\leeqvs{\subseteq}
\def\leeqrad{\subseteq}
\newcommand{\pare}[1]{\left( #1 \right)} 
\newcommand{\size}[1]{\left| #1 \right|} 
\newcommand{\set}[1]{\left\{ #1 \right\}} 
\newcommand{\dual}[1]{\widehat{ #1 }} 
\DeclareMathOperator{\TD}{\bf {T} } 
\DeclareMathOperator{\DC}{\bf {D} }
\DeclareMathOperator{\id}{id }
\DeclareMathOperator{\stab}{{\bf Stab}}
\DeclareMathOperator{\Stab}{{\bf Stab}}
\nc{\cpt}{{\op{cpt}}} \nc{\Dol}{{\op{Dol}}} \nc{\DR}{{\op{DR}}}
\nc{\B}{{\op{B}}} \nc{\Triv}{\op{Triv}} \nc{\Hod}{{\op{Hod}}}
\nc{\Log}{{\op{Log}}} \nc{\Exp}{{\op{Exp}}} \nc{\Est}{E_{\op{st}}}
\nc{\Hst}{H_{\op{st}}} \nc{\Left}[1]{\hbox{$\left#1\vbox to
		10.5pt{}\right.\nulldelimiterspace=0pt \mathsurround=0pt$}}
\nc{\Right}[1]{\hbox{$\left.\vbox to
		10.5pt{}\right#1\nulldelimiterspace=0pt \mathsurround=0pt$}}
\nc{\LEFT}[1]{\hbox{$\left#1\vbox to
		15.5pt{}\right.\nulldelimiterspace=0pt \mathsurround=0pt$}}
\nc{\RIGHT}[1]{\hbox{$\left.\vbox to
		15.5pt{}\right#1\nulldelimiterspace=0pt \mathsurround=0pt$}}
\nc{\bee}{{\bf E}} \nc{\bphi}{{\bf \Phi}}
\begin{document}
	
	\title{Arithmetic and representation theory of wild character varieties}
	
	\author{ Tam\'as Hausel
		\\ {\it EPF Lausanne} 
		\\{\tt tamas.hausel@epfl.ch} \and Martin Mereb \\ {\it EPF Lausanne -- U de Buenos Aires}\\ {\tt mmereb@gmail.com } \and Michael Lennox Wong \\ {\it Universit\"at Duisburg-Essen}
		\\{\tt michael.wong@uni-due.de}  }
	\pagestyle{myheadings}
	
	\maketitle
	
	\begin{abstract} We count points over a finite field on wild character varieties of Riemann surfaces for singularities with  regular semisimple leading term. The new feature in our counting formulas is the appearance of characters of Yokonuma--Hecke algebras. Our result leads to the conjecture that the mixed Hodge polynomials of these character varieties agree with previously conjectured perverse Hodge polynomials of certain twisted parabolic Higgs moduli spaces, indicating the possibility of a $P=W$ conjecture for a suitable wild Hitchin system. \end{abstract}
	
	\section{Introduction}

\subsection{A conjecture}

Let $C$ be a complex smooth projective curve of genus $g\in \N$, with divisor $$D=p_1+\dots + p_k+rp$$ where $p_1,\dots,p_k, p\in C$ are distinct points,  $p$ having multiplicity $r\in \Z_{\geq 0}$ with $k\geq 0$ and  $k+r\geq 1$.  For $n \in \Z_{\geq 0}$, let $\P_n$ denote the set of partitions of $n$ and set $\P := \bigcup_n \P_n$.  Let $\muhat=(\mu^1,\dots,\mu^k)\in \P_n^k$ denote a $k$-tuple of partitions of $n$, and we write $|\muhat|:=n$.  We denote by $\M_\Dol^{\muhat,r}$  the moduli space of stable parabolic Higgs bundles $(E,\phi)$ with quasi-parabolic structure of type $\mu^i$ at the pole $p_i$, with generic parabolic weights and fixed parabolic degree, and a twisted (meromorphic) Higgs field $$\phi\in H^0\left(C;End(E)\otimes K_C(D)\right)$$ with nilpotent residues compatible with the quasi-parabolic structure at the poles $p_i$ (but no restriction on the residue at $p$).  Then $\M_\Dol^{\muhat,r}$ is a smooth quasi-projective variety of dimension $d_{\muhat,r}$ with a proper Hitchin map $$\chi^{\muhat,r}:\M_\Dol^{\muhat,r}\to \calA^{\muhat,r}$$ defined by taking the characteristic polynomial of the Higgs field $\phi$ and thus taking values in the Hitchin base $$ \calA^{\muhat,r}:=\oplus_{i=1}^n H^0(C;K_C(D)^i).$$ As $\chi^{\muhat,r}$ is proper it induces as in \cite{decataldo-migliorini} 
a perverse filtration $P$ on the rational cohomology $H^*(\M_\Dol^{\muhat,r})$ of the total space. We define the perverse Hodge polynomial as $$PH(\M_\Dol^{\muhat,r};q,t):=\sum \dim\left({\rm Gr}^P_i(H^k(\M_\Dol^{\muhat,r}))\right)q^it^k.$$ 

The recent paper \cite{cddg} by Chuang--Diaconescu--Donagi--Pantev gives a string theoretical derivation of the following mathematical conjecture.

\begin{conjecture} \label{twparconj} We expect
	$$PH(\M_\Dol^{\muhat,r};q,t)=(qt^2)^{d_{\muhat,\r}}\H_{\muhat,r}(q^{-{1}/{2}},-q^{{1}/{2}}t).$$
\end{conjecture}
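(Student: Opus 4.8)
Since this is a conjecture rather than a theorem, the honest plan is to reduce it to two still-open inputs and to make precise the arithmetic that the right-hand side encodes. The first step would be to invoke non-abelian Hodge theory for curves with an irregular singularity (Biquard--Boalch): $\M_\Dol^{\muhat,r}$ is diffeomorphic to a wild character variety $\M_\B^{\muhat,r}$ built from the same genus $g$, the same tame points $p_1,\dots,p_k$ with generic semisimple monodromy of type $\mu^i$, and an irregular singularity at $p$ of pole order $r$ with regular semisimple leading term. This diffeomorphism identifies $H^*(\M_\Dol^{\muhat,r})$ with $H^*(\M_\B^{\muhat,r})$ as graded vector spaces -- not as mixed Hodge structures -- so the perverse Betti numbers on the left can be read on the Betti side once one understands where the filtration $P$ goes.

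The second step, and the part I would actually carry out in detail, is to spread $\M_\B^{\muhat,r}$ over a finitely generated subring of $\C$ and count $\F_q$-points. I would prove that $\#\M_\B^{\muhat,r}(\F_q)$ is a polynomial in $q$ given by an explicit character-sum formula; the new feature forced by the irregular singularity with regular semisimple leading term is that the local factor at $p$ involves characters of Yokonuma--Hecke algebras rather than the ordinary Iwahori--Hecke characters of the tame case. Being polynomial-count, $\M_\B^{\muhat,r}$ then has $E$-polynomial equal to that counting polynomial by Katz's theorem, and Cauchy-type identities for Macdonald symmetric functions -- now twisted by the Yokonuma--Hecke data -- would rewrite it as the value at $t=-1$ of the right-hand side of the conjecture, up to the usual sign and compact-support conventions. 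This is the robust part of the argument.

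To finish, two conjectural ingredients are needed. First, a purity/minimality statement: the mixed Hodge structure on $H^*(\M_\B^{\muhat,r})$ should be of Hodge--Tate type with a ``curious hard Lefschetz'' symmetry, so that the mixed Hodge polynomial is pinned down by the $E$-polynomial together with the Poincar\'e polynomial and equals $(qt^2)^{d_{\muhat,r}}\H_{\muhat,r}(q^{-1/2},-q^{1/2}t)$. Second, and decisively, a wild $P=W$ theorem: under the non-abelian Hodge diffeomorphism the perverse filtration $P$ on $H^*(\M_\Dol^{\muhat,r})$ attached to $\chi^{\muhat,r}$ should coincide, after the customary halving and shift of indices, with the weight filtration on $H^*(\M_\B^{\muhat,r})$. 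Granting both, $PH(\M_\Dol^{\muhat,r};q,t)$ is by definition the weighted Poincar\'e polynomial of $\M_\B^{\muhat,r}$, which by the second step equals the claimed expression in $\H_{\muhat,r}$.

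The hard part will be the wild $P=W$ statement. The tame case -- the conjecture of de Cataldo--Hausel--Migliorini, \cite{decataldo-migliorini} -- is already deep, and for the twisted, irregular Hitchin systems attached to poles of order $r$ it has not even been formulated in the generality needed here; gaining control of the perverse filtration on these total spaces, presumably through the decomposition theorem for $\chi^{\muhat,r}$ and the stratification of the wild Hitchin base $\calA^{\muhat,r}$, is where the real work lies. The purity input is of the same ``cohomology is as simple as possible'' flavour as in the tame character-variety story and should come for free once the perverse structure is understood; by contrast the first two steps are solid, and it is precisely the point count of the second step, with its Yokonuma--Hecke characters, that converts the string-theoretic prediction of \cite{cddg} into a sharp and testable identity.
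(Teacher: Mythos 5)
Your proposed first step rests on a misreading of the definition of $\M_\Dol^{\muhat,r}$. The paper imposes no constraint on the residue of the Higgs field at the $r$-fold point $p$; for $r>0$ this makes $\M_\Dol^{\muhat,r}$ a Poisson, not symplectic, moduli space, so it cannot be the Dolbeault partner of any single wild character variety. Biquard--Boalch wild non-abelian Hodge theory applies to wild Higgs moduli with a \emph{fixed} irregular formal type at $p$ (e.g.\ a diagonal regular semisimple leading term); those appear only as symplectic leaves of $\M_\Dol^{\muhat,r}$, and it is to such a leaf, not to the ambient Poisson space, that the paper's $\M_\B^{\muhat,\r}$ corresponds. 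Consequently the diffeomorphism $\M_\Dol^{\muhat,r}\cong\M_\B^{\muhat,r}$ you invoke does not exist, and the wild $P=W$ you posit afterwards would be a statement about the cohomology of a leaf, not about the perverse filtration on $H^*(\M_\Dol^{\muhat,r})$ induced by the full twisted Hitchin map $\chi^{\muhat,r}$, which is what Conjecture~\ref{twparconj} concerns.

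There is also a mismatch in the Macdonald data. The point count actually carried out in the paper (Theorem~\ref{maint}) yields $WH(\M_\B^{\muhat,\r};q,-1)=q^{d_{\muhat,\r}}\H_{\tilde{\muhat},r}(q^{-1/2},q^{1/2})$ with $\tilde{\muhat}=(\mu^1,\dots,\mu^k,(1^n),\dots,(1^n))$ and $r=\sum_\alpha r_\alpha$: each irregular singularity contributes an extra $(1^n)$ component to the symmetric-function side. Thus the wild character variety conjecturally realizes $PH(\M_\Dol^{\tilde{\muhat},r};q,t)$, not $PH(\M_\Dol^{\muhat,r};q,t)$ for an arbitrary $\muhat$; Conjectures~\ref{twparconj} and~\ref{mainc} can only be matched when at least one $\mu^i$ is already $(1^n)$, and the paper explicitly acknowledges that even the motivating single-trivial-partition case $\muhat=((n))$ of \cite[Remark 2.5.3]{hausel-villegas} is not yet covered. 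The paper therefore does not reduce Conjecture~\ref{twparconj} to a wild $P=W$; its stated evidence is the string-theoretic derivation of \cite{cddg}, the $r=0$ tame case via the classical $P=W$ conjecture of \cite{thmhcv}, and the coincidence of the right-hand side of Conjecture~\ref{twparconj} (for $\tilde{\muhat}$) with the $E$-polynomial of $\M_\B^{\muhat,\r}$ computed via Yokonuma--Hecke characters. Your purity and curious-hard-Lefschetz step is in the right spirit, but it too lives on the Betti side $\M_\B^{\muhat,\r}$, not on $\M_\Dol^{\muhat,r}$.
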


Here, $\H_{\muhat,r}(z,w)\in \Q(z,w)$ is defined by the generating function formula \beq \label{hmur}\sum_{\muhat\in \calP^k} \frac{(-1)^{r|\muhat|}w^{-d_{\muhat,r}} \H_{\muhat,r}(z,w)}{(1-z^2)(1-w^2)} \prod_{i=1}^k m_{\mu_i}(x_i)=\Log{\left(\sum_{\lambda\in \calP}\calH^{g,r}_\lambda(z,w)\prod_{i=1}^k \tilde{H}_\lambda(z^2,w^2;x_i)\right) }. \eeq
The notation is explained as follows.  For a partition $\lambda\in \calP$ we denote \beq\label{calhgr} \calH_\lambda^{g,r}(z,w)=\prod \frac{(-z^{2a}w^{2l})^r(z^{2a+1}-w^{2l+1})^{2g}}
{(z^{2a+2}-w^{2l})(z^{2a}-w^{2l+2})},\eeq 
where the product is over the boxes in the Young diagram of $\lambda$ and $a$ and $l$ are the arm length and the leg length of the given box. We denote by $m_\lambda(x_i)$ the monomial symmetric functions in the infinitely many variables $x_i:=(x_{i_1},x_{i_2},\dots)$ attached to the puncture $p_i$.  $\tilde{H}_\lambda(q,t;x_i)$ denotes the twisted Macdonald polynomials of Garsia--Haiman \cite{garsia-haiman}, which is a symmetric function in the variables $x_i$ with coefficients from $\Q(q,t)$.  Finally, $\Log$ is the plethystic logarithm, see e.g. \cite[\S 2.3.3.]{hausel-aha1} for a definition.

The paper \cite{cddg} gives several pieces of evidence for Conjecture~\ref{twparconj}. On physical grounds it argues that the left hand side should be the generating function for certain refined BPS invariants of some associated Calabi--Yau 3-orbifold $Y$, which they then relate by a refined Gopakumar--Vafa conjecture to the generating function of the refined Pandharipan\-de--Thomas invariants of $Y$. In turn they  can compute the latter in some cases using the recent approach of Nekra\-sov--Okounkov \cite{nekrasov-okounkov}, finding agreement with Conjecture~\ref{twparconj}. Another approach is to use another duality conjecture---the so-called ``geometric engineering''---which conjecturally relates the left hand side of Conjecture~\ref{twparconj} to generating functions for equivariant indices of some bundles on certain nested Hilbert schemes of points on the affine plane $\C^2$. They compute this using work of Haiman \cite{haiman} and find agreement with the right hand side of Conjecture~\ref{twparconj}.

Purely mathematical evidence for Conjecture~\ref{twparconj} comes through a parabolic version of the $P=W$ conjecture of \cite{thmhcv}, in the case when $r=0$.  In this case, by non-abelian Hodge theory we expect the parabolic Higgs moduli space $\M_\Dol^{\muhat} := \M_\Dol^{\muhat,0}$ to be diffeomorphic with a certain character variety $\M_\B^\muhat$, which we will define more carefully below.  The cohomology of $\M_\B^\muhat$ carries a weight filtration, and we denote by $$WH(\M_\B^\muhat;q,t) := \sum_{i,k} \dim\left({\rm Gr}^W_{2i}(H^k(\M_\B^{\muhat}))\right)q^it^k,$$ the mixed Hodge polynomial of $\M_\B^\muhat$. The $P=W$ conjecture  predicts that the perverse filtration $P$ on $H^*(\M_\Dol^\muhat)$ is identified with the weight filtration $W$ on $H^*(\M_\B^\muhat)$ via non-abelian Hodge theory. In particular, $P=W$ would imply $PH(\M_\Dol^\muhat; q,t) = WH(\M_\B^\muhat; q,t)$, and Conjecture~\ref{twparconj} for $r=0$; $PH(\M_\Dol^\muhat;q,t)$ replaced with $WH(\M_\B^\muhat;q,t)$ was the main conjecture in \cite{hausel-aha1}. 

It is interesting to recall what inspired Conjecture~\ref{twparconj} for $r>0$.  Already in \cite[Section 5]{hausel-villegas}, detailed knowledge of the cohomology ring $H^*(\M_\Dol^{(2),r})$ from \cite{hausel-thaddeus} was needed for the computation of $WH(\M_\B^{(2)};q,t)$.  In fact, it was observed in \cite{thmhcv} that the computation in \cite[Remark 2.5.3]{hausel-villegas} amounted to a formula for $PH(\M_\Dol^\muhat;q,t)$, which is the first non-trivial instance of Conjecture~\ref{twparconj}. This twist by $r$ was first extended for the conjectured $PH(\M_\Dol^{(n),r})$ in \cite{mozgovoy} to match the recursion relation in \cite{diaconescu-etal}; it was then generalized in \cite{cddg} to Conjecture~\ref{twparconj}.  We notice that the twisting by $r$ only slightly changes the definition of $\calH^{g,r}(z,w)$ above and the rest of the right hand side of Conjecture~\ref{twparconj} does not depend on $r$. 

It was also speculated in \cite[Remark 2.5.3]{hausel-villegas} that there is a character variety whose mixed Hodge polynomial would agree with the one conjectured for $PH(\M^{\muhat,r}_\Dol;q,t)$ above. 

\begin{problem} \label{mainp} Is there a character variety whose mixed Hodge polynomial agrees with $PH(\M^{\muhat,r}_\Dol;q,t)$?
\end{problem}

A natural idea to answer this question is to look at the symplectic leaves of the natural Poisson structure on $\M^{\muhat,r}_\Dol$. The symplectic leaves  should correspond to moduli spaces of irregular or wild Higgs moduli spaces. By the wild non-abelian Hodge theorem \cite{biquard-boalch} those will be diffeomorphic with wild character varieties.

\subsection{Main result}
In this paper we will study a class of wild character varieties which will conjecturally provide a partial answer to the problem above. Namely, we will look at wild character varieties allowing irregular singularities with polar part having a diagonal regular leading term. Boalch in \cite{boalch} gives the following construction. 

Let $\G:=\GL_n(\C)$ and let $\T\leeq \G$ be the maximal torus of diagonal matrices.  Let $\B_+\leeq \G$ (resp.\ $\B_-\leeq \G$)  be the Borel subgroup of upper (resp.\ lower) triangular matrices. Let $\U = \U_+\leeq \B_+$ (resp.\ $\U_-\leeq \B_-$) be the respective unipotent radicals, i.e., the group of upper (resp.\ lower) triangular matrices with $1$'s on the main diagonal.  We fix $m\in\N$ and $$\r:=(r_1,\dots,r_m)\in \Z_{> 0}^m$$ an $m$-tuple of positive integers. For a $\muhat\in \calP_n^k$ we also fix a $k$-tuple $(\calC_1, \dots,\calC_k)$ of semisimple conjugacy classes, such that the semisimple conjugacy class $\calC_i\subset \G$ is of type $$\mu^i=(\mu^i_1,\mu^i_2,\dots)\in \calP_n;$$ in other words, $\calC_i$ has eigenvalues with multiplicities $\mu^i_j$. Finally we fix $$(\xi_1,\dots,\xi_m)\in (\T^\reg)^m$$ an $m$-tuple of regular diagonal matrices, such that the $k+m$ tuple $$(\calC_1,\dots,\calC_k,\G\xi_1,\dots,\G\xi_m)$$ of semisimple conjugacy classes is generic in the sense of Definition~\ref{genericconjugacyA}. Then define
\bes \M_\B^{\muhat,\r}:= \{ (A_i,B_i)_{i=1..n} \in (\G\times \G)^{g}, X_j\in \calC_j, C_j\in \G, (S^j_{2i-1},S^j_{2i})_{i=1, \ldots, r_j }\in (\U_- \times \U_+)^{r_j} |\\  (A_1,B_1) \cdots (A_g,B_g) X_1\cdots X_k C_1^{-1}\xi_1S^1_{2r_1}\cdots S^1_1C_1\cdots C_m^{-1}\xi_mS^m_{2r_m}\cdots S^m_1C_m=I_n\}/\!/ \G, \ees
where the affine quotient is by the conjugation action of $\G$ on the matrices $A_i,B_i,X_i,C_i$ and the trivial action on $S^j_i$. Under the genericity condition as above, $\M_\B^{\muhat,\r}$ is a smooth affine variety of dimension $d_{\muhat,\r}$ of \eqref{dimensionformula}. In particular, when $m=0$, we have the character varieties $\M_\B^\muhat=\M_\B^{\muhat,\emptyset}$ 
of \cite{hausel-aha1}.

The main result of this paper is the following:
\begin{theorem} \label{maint}  Let $\muhat\in \calP^k_n$ be a $k$-tuple of partitions of $n$ and $\r$ be an $m$-tuple of positive integers and $\M_\B^{\muhat,\r}$ be the generic wild character variety as defined above. Then we have $$WH(\M_\B^{\muhat,\r};q,-1)=q^{d_{\muhat},\r}\H_{\tilde{\muhat},r}(q^{-1/2},q^{1/2}),$$ where $$\tilde{\muhat}:=(\mu^1,\dots,\mu^k,(1^n),\dots,(1^n))\in \P^{k+m}_n$$ is the type of $$(\calC_1,\dots,\calC_k,\G\xi_1,\dots,\G\xi_m)$$ and $$r:=r_1+ \cdots + r_m.$$
\end{theorem}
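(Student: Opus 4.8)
The plan is to compute $WH(\M_\B^{\muhat,\r};q,-1)$ by counting points of $\M_\B^{\muhat,\r}$ over finite fields and invoking Katz's theorem (as in \cite{hausel-villegas,hausel-aha1}) to recover the $E$-polynomial, which for these character varieties is expected to be pure-ish enough that $E(\M_\B^{\muhat,\r};q) = WH(\M_\B^{\muhat,\r};q,-1)$ after the usual substitution. So the first step is to set up the arithmetic: fix a finite field $\F_q$ over which the curve $C$, the conjugacy classes $\calC_i$, and the regular semisimple elements $\xi_j$ are all defined, and write $|\M_\B^{\muhat,\r}(\F_q)|$ as a sum over the group $\G(\F_q) = \GL_n(\F_q)$ using the standard Frobenius/character-theoretic machinery: the number of solutions to a word equation equals $\frac{1}{|\G|}\sum_{\chi \in \Irr(\G)} \frac{(\ldots)}{\chi(1)^{2g-2+k+\text{(pole contributions)}}}\prod_i \frac{|\calC_i|\chi(\calC_i)}{\ldots}$ times the new factors coming from the $C_j$'s and the Stiefel--Schubert cells $(S^j_{2i-1},S^j_{2i}) \in (\U_-\times\U_+)^{r_j}$.

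The key new computation is the contribution of each wild factor $C_j^{-1}\xi_j S^j_{2r_j}\cdots S^j_1 C_j$. Here I would first integrate out $C_j \in \G$: summing $C_j^{-1}\xi_j(\cdots)C_j$ over the orbit turns the local factor into $\frac{1}{\chi(1)}\sum_{g\in\G/\T}$ of the character evaluated on $g\xi_j g^{-1} \cdot(\text{unipotent product})$; then one must sum a fixed irreducible character $\chi$ over $\xi_j$ times a product of $r_j$ alternating lower/upper unipotent matrices. This is precisely the kind of sum that produces the \emph{character of a Yokonuma--Hecke algebra}: the centralizer structure around the regular semisimple $\xi_j$ gives rise to the Yokonuma--Hecke algebra $Y_{r_j,n}(q)$ (the convolution algebra of the $(\T,\U)$-biinvariant-type functions, or equivalently the endomorphism algebra of $\Ind_{\T\U}^{\G}$-type modules), and the alternating product of Borel-opposite unipotents is exactly the geometric incarnation of its generators. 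I would make this identification precise, expressing the $\F_q$-point count as a sum over $\Irr(\G)$ weighted by products of Yokonuma--Hecke characters $\chi^{YH}_{r_j}$, and then match the resulting generating function — after summing over $n$ and over $\muhat$ with the monomial symmetric functions $m_{\mu^i}(x_i)$ as bookkeeping — against the right-hand side of \eqref{hmur}. The role of the $m$ extra full flags (the $(1^n)$ entries in $\tilde\muhat$) is that each $\xi_j$, being regular, contributes a \emph{complete} flag type, which is why $\tilde\muhat$ acquires $m$ copies of $(1^n)$; and the accumulated $q$-powers from the Yokonuma--Hecke characters and from dim-counting the cells $(\U_-\times\U_+)^{r_j}$ should assemble into the single twisting exponent $r = \sum r_j$ appearing in $\calH^{g,r}_\lambda$, exploiting the remark in the introduction that the $r$-twist only modifies $\calH^{g,r}_\lambda(z,w)$ by the factor $(-z^{2a}w^{2l})^r$ and nothing else.

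Concretely the steps are: (1) reduce to an $\F_q$-point count and justify the passage from $E$-polynomial to $WH(\,\cdot\,;q,-1)$ via genericity and a Katz-type argument; (2) apply Frobenius' formula to the word equation, isolating the local factors; (3) integrate out each $C_j$ and identify the resulting $\xi_j$-and-unipotents sum with a Yokonuma--Hecke algebra character, computing that character explicitly (here I expect to need the Tits-deformation/specialization relating $Y_{r_j,n}(q)$ at the relevant parameter to characters induced from $\T$, together with the known combinatorics of its irreducible characters indexed by tuples of partitions); (4) assemble the full sum over $\Irr(\GL_n(\F_q))$, use Green's formula / the known parametrization of $\Irr(\GL_n(\F_q))$ by Jordan blocks, and recognize the symmetric-function identity as the Hall--Littlewood / Macdonald-positivity statement that turns the character sum into $\Log$ of a product of $\tilde H_\lambda$'s — this is the same mechanism as in \cite{hausel-aha1} but with the extra $\calH$-factors supplied by the wild data; (5) read off $d_{\muhat,\r}$ as the top-degree normalization and conclude.

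The main obstacle is step (3): pinning down exactly which Yokonuma--Hecke algebra character appears and evaluating it in closed form. The subtlety is that the product $S^j_{2r_j}\cdots S^j_1$ of alternating opposite unipotents, when pushed through an irreducible character of $\GL_n(\F_q)$ and averaged over the torus, is not literally a Hecke-algebra trace but a twisted version involving the Gelfand--Graev / Deligne--Lusztig structure near $\xi_j$; showing that it depends on $\chi$ only through the partition data $\lambda$ (so that the sum over $\Irr(\G)$ collapses correctly) and that it contributes precisely the factor $(-z^{2a}w^{2l})^{r_j}$ per box requires a careful analysis of how $r_j$ successive Weyl-group-like generators act. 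Once this per-puncture wild factor is shown to be the right power of the Green-polynomial-type weight, the rest follows the established template; everything else — the reduction to counting, the Frobenius formula, the final symmetric-function identity — is, by comparison, routine.
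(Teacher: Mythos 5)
Your outline follows the paper's strategy: spread out over a generic ring, count $\F_q$-points, pass to a character sum by arithmetic Fourier analysis, identify the wild local contribution with Yokonuma--Hecke data, assemble into the symmetric-function identity, and close with Katz's theorem. You also correctly anticipate the source of the $(1^n)$ entries in $\tilde\muhat$ (the $\xi_j$ being regular semisimple).

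The genuine gap is exactly at what you flag as ``the main obstacle,'' step (3), and your sketch of how to close it points in the wrong direction. Since $\U_- = \omega_0\U_+\omega_0$, the alternating Stokes product $S_{2r}\cdots S_1$ sweeps out precisely the double-coset datum built from $\omega_0$'s; Proposition~\ref{count} then reduces the local count $N_v^r(g)$ to a Hecke-algebra trace $\tr(g\,T_v\ast T_{\omega_0}^{2r})$ in $\HHH(\G,\U)$. The decisive fact — missing from your plan — is that $T_{\omega_0}^2$ is \emph{central} in $\HHH(\G,\U)$ (Lemma~\ref{l:t2central}), so by Schur's lemma it acts by a scalar on each irreducible; Theorem~\ref{t:scalarvaluecentral} computes that scalar as $q^{f_\calX}$ via a Springer-style determinant argument, pinning the ambient root of unity by specializing the generic Yokonuma--Hecke algebra at $u=1$ to $\C[\rN]$. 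Combined with Lemma~\ref{r:flambdavalue}, $f_\Lambda = \binom{n}{2}+n(\Lambda')-n(\Lambda)$, this gives exactly the per-box factor $(-z^{2a}w^{2l})^r$ in $\calH^{g,r}_\lambda$. Your proposal instead frets about ``evaluating a Yokonuma--Hecke character at a product of $r$ successive generators'' and a putative ``Gelfand--Graev / Deligne--Lusztig twist''; neither arises once centrality is used, and without it there is no clean closed-form evaluation. You also elide a second needed ingredient: one must match the $\calQ_n$-parametrization of $\Irr\rN$ with that of $\Irr(\G:\U)$ under the Tits-deformation bijection $\TD_\U$ and the double-centralizer bijection $\DC_\U$ — this is the content of Theorem~\ref{p:expbijection} and is required before the $\GL_n(\F_q)$ character sum can be collapsed to a sum over $\Lambda$.
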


The proof of this result follows the route introduced in \cite{hausel-villegas,hausel-aha1,hausel-aha2}.  Using a theorem of Katz \cite[Appendix]{hausel-villegas}, it reduces the problem of the computation of $WH(\M_\B^{\muhat,\r};q,-1)$ to counting $\M_\B^{\muhat,\r}(\F_q)$, i.e., the $\F_q$ points of $\M_\B^{\muhat,\r}$. We count it by a non-abelian Fourier transform.  The novelty here is the determination of the contribution of the wild singularities to the character sum.

The latter problem is solved via the character theory of the Yokonuma--Hecke algebra, which is the convolution algebra on
$$\C[\U(\F_q)\backslash \GL_n(\F_q)/ \U(\F_q)],$$
where $\U$ is as above.  The main computational result, Theorem~\ref{t:scalarvaluecentral}, is an analogue of a theorem of Springer (cf.\ \cite[Theorem 9.2.2]{GeckPfeiffer}) which finds an explicit value for the trace of a certain central element of the Hecke algebra in a given representation.

This theorem, in turn, rests on a somewhat technical result relating the classification of the irreducible characters of the group $\rN = (\F_q^\times)^n \rtimes \SS_n$ to that of certain irreducible characters of $\GL_n(\F_q)$.  To explain briefly, if $\calQ_n$ denotes the set of maps from $\Gamma_1 = \widehat{\F}_q^\times$ (the character group of $\F_q^\times$) to the set of partitions of total size $n$ (see Section~\ref{s:chartables} for definitions and details), then $\calQ_n$ parametrizes both $\Irr \rN$ and a certain subset of $\Irr \GL_n(\F_q)$.  Furthermore, both of these sets are in bijection with the irreducible characters of the Yokonuma--Hecke algebra.  Theorem~\ref{p:expbijection} clarifies this relationship, establishing an analogue of a result proved by Halverson and Ram \cite[Theorem 4.9(b)]{bitraces}, though by different techniques.


Our main result Theorem~\ref{maint} then leads to the following conjecture.

\begin{conjecture} \label{mainc}   We have $$WH(\M_\B^{\muhat,\r};q,t)=(qt^2)^{d_{\muhat,\r}}\H_{\tilde{\muhat},r}(q^{-1/2},-tq^{-1/2}).$$ \end{conjecture}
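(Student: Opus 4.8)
This is Conjecture~\ref{mainc}, and as a conjecture it is not expected to be provable in full by the methods of this paper; what the plan is to do is to articulate the evidence and the partial results one should assemble toward it, and to isolate the genuine obstruction. The first step is to record what Theorem~\ref{maint} already gives: the specialization $t=-1$ of the asserted identity, i.e.\ $WH(\M_\B^{\muhat,\r};q,-1)=q^{d_{\muhat,\r}}\H_{\tilde{\muhat},r}(q^{-1/2},q^{1/2})$, which follows from the point count over $\F_q$ via Katz's theorem \cite[Appendix]{hausel-villegas} and the non-abelian Fourier transform computation (using the Yokonuma--Hecke character theory of Theorem~\ref{t:scalarvaluecentral} and Theorem~\ref{p:expbijection}). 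So the conjecture is a lift of a proven $E$-polynomial identity to the full two-variable mixed Hodge polynomial, exactly parallel to the passage from the $E$-polynomial computations of \cite{hausel-villegas} to the mixed Hodge polynomial conjecture of \cite{hausel-aha1} in the tame case.

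The route I would lay out toward the full statement has three ingredients. First, one needs structural results on $H^*(\M_\B^{\muhat,\r})$ of the kind known in the tame case: that the mixed Hodge structure is of Hodge--Tate type (so that $WH$ encodes the same information as the $E$-polynomial together with the cohomological grading) and that the cohomology is pure in a suitable sense, or at least that the weight and cohomological gradings are locked together by a ``curious Hard Lefschetz'' symmetry. In the tame case these come from the large-group action and from the explicit presentation of the cohomology ring; for the wild varieties $\M_\B^{\muhat,\r}$ one would want an analogous handle, perhaps via the explicit fibration structure coming from the Stokes data $(S^j_{2i-1},S^j_{2i})\in(\U_-\times\U_+)^{r_j}$ and the quotient description. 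Second, one would invoke Conjecture~\ref{twparconj} together with a wild $P=W$ statement: by the wild non-abelian Hodge correspondence \cite{biquard-boalch} the space $\M_\B^{\muhat,\r}$ is diffeomorphic to a wild Higgs moduli space sitting as a symplectic leaf of $\M_\Dol^{\muhat,r}$ (this is the strategy behind Problem~\ref{mainp}), and if the perverse filtration on that wild Hitchin system matches the weight filtration on $\M_\B^{\muhat,\r}$, then $WH(\M_\B^{\muhat,\r};q,t)$ would equal a perverse Hodge polynomial which Conjecture~\ref{twparconj} predicts to be $(qt^2)^{d_{\muhat,\r}}\H_{\tilde\muhat,r}(q^{-1/2},-q^{1/2}t)$. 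One must check that the shift of variables here is consistent: setting $t=-1$ in $(qt^2)^{d}\H_{\tilde\muhat,r}(q^{-1/2},-q^{1/2}t)$ gives $q^{d}\H_{\tilde\muhat,r}(q^{-1/2},q^{1/2})$, matching Theorem~\ref{maint}, and the exponent $d_{\muhat,\r}$ together with the bookkeeping of $r=r_1+\cdots+r_m$ versus the individual $r_j$ needs to be tracked through the dimension formula \eqref{dimensionformula} and through \eqref{calhgr}. Third, for low rank or small $\r$ one can hope to prove the conjecture unconditionally by direct computation, e.g.\ $n=2$ using \cite{hausel-thaddeus,hausel-villegas}, thereby giving the same kind of base-case verification that the first nontrivial instance of Conjecture~\ref{twparconj} received.

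The main obstacle is the second ingredient: there is at present no wild analogue of the $P=W$ theorem, nor even a precise formulation of which wild Hitchin system on a symplectic leaf of $\M_\Dol^{\muhat,r}$ should be dual to $\M_\B^{\muhat,\r}$, and establishing the needed purity/curious-Lefschetz input for the cohomology of a genuinely wild, non-compact character variety is itself open. Absent that, the honest content one can deliver is: (i) the $t=-1$ reduction, which is Theorem~\ref{maint}; (ii) the observation that the right-hand side of Conjecture~\ref{mainc} is forced by compatibility with Conjecture~\ref{twparconj} under the expected wild non-abelian Hodge and $P=W$ correspondences; and (iii) verification in rank $2$. A fully rigorous proof of the two-variable identity would require either proving the relevant wild $P=W$ statement or computing the mixed Hodge polynomial of $\M_\B^{\muhat,\r}$ directly by an arithmetic/geometric method finer than point-counting, and I would flag this explicitly as the content that this paper leaves open.
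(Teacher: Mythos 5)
Your proposal is correct in spirit and takes essentially the same approach as the paper: since the statement is a conjecture, the paper likewise offers no proof but assembles exactly the evidence you list, namely the $t=-1$ specialization supplied by Theorem~\ref{maint}, the compatibility with Conjecture~\ref{twparconj} under the expected wild non-abelian Hodge and $P=W$ correspondences, and low-rank verifications. The only differences are in the details of the supporting material: the paper also records the curious Poincar\'e duality consequence (Lemma~\ref{curpoincomb}, Corollary~\ref{palindromic}, Conjecture~\ref{cpd}) and the consistency check via Boalch's isomorphism with a tame character variety (Conjecture~\ref{21tame}, cf.\ \cite{boalch-multquiv,mellit}), and its $n=2$ verifications in Section~\ref{exspec} are obtained not from the Higgs-side computations of \cite{hausel-thaddeus} but from the explicit cubic-surface models of \cite{vanderput-saito} together with \cite{siersma-tibar}, which give a bouquet of $2$-spheres and hence force the weights on $H^2$ once the weight polynomial is known from Theorem~\ref{maint}.
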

This gives a conjectural partial answer to our Problem~\ref{mainp} originally raised in \cite[Remark 2.5.3]{hausel-villegas}. Namely, in the cases when at least one of the partitions $\mu^i=(1^n)$, we can conjecturally find a character variety whose mixed Hodge polynomial agrees with the mixed Hodge polynomial of a twisted parabolic Higgs moduli space. This class does not yet include the example studied in  \cite[Remark 2.5.3]{hausel-villegas}, where there is a single trivial partition $\muhat=((n))$. We expect that those cases could be covered with more complicated, possibly twisted, wild character varieties. 

Finally, we note that a recent conjecture \cite[Conjecture 1.12]{shende-etal} predicts that in the case when $g=0, k=0, m=1$ and $r=r_1\in \Z_{>0}$,  the mixed Hodge polynomial of our (and more general) wild character varieties, are intimately related to refined invariants of links arising from Stokes data. Our formulas in this case should be related to refined invariants of the $(n,rn)$ torus links. We hope that the natural emergence of Hecke algebras in the arithmetic of wild character varieties will shed new light on Jones's approach \cite{jones} to the HOMFLY polynomials via Markov traces on the usual Iwahori--Hecke algebra and the analogous Markov traces on the Yokonuma--Hecke algebra, c.f.\ \cite{juyumaya,cl,jacon-poulain}. 

The structure of the paper is as follows. Section~\ref{general} reviews mixed Hodge structures on the cohomology of algebraic varieties, the theorem of Katz mentioned above, and gives the precise definition of a wild character variety from \cite{boalch}.  In Section~\ref{Halg} we recall the abstract approach to Hecke algebras; the explicit character theory of the Iwahori--Hecke and Yokonuma--Hecke algebras is also reviewed and clarified.  In Section~\ref{countwcv} we recall the arithmetic Fourier transform approach of \cite{hausel-aha1} and  perform the count on the wild character varieties. In Section~\ref{mains} we prove our main Theorem~\ref{maint} and discuss our main Conjecture \ref{mainc}. In Section~\ref{exspec} we compute some specific examples of Theorem~\ref{maint} and Conjecture~\ref{mainc}, when $n=2$, with particular attention paid to the cases when $\M_\B^{\muhat,\r}$ is a surface. 

\noindent {\bf Acknowledgements.}  We thank Philip Boalch, Daniel Bump, Maria Chlouveraki,  Alexander Dimca, Mario Garc\'ia-Fern\'andez, Eugene Gorsky, Emmanuel Letellier, Andr\'as N\'emethi, Lo{\" i}c Poulain d'Andecy,  Vivek Shende, Szil\'ard Szab\'o and Fernando R.\ Villegas for discussions and/or correspondence. We are also indebted to Lusztig's observation \cite[1.3.(a)]{Lusztig} which led us to study representations of Yokonuma--Hecke algebras.  This research was supported by \'Ecole Polytechnique F\'ed\'erale de Lausanne, an Advanced Grant ``Arithmetic and physics of Higgs moduli spaces'' no.\ 320593 of the European Research Council and the NCCR SwissMAP of the Swiss National Foundation.  Additionally, in the final stages of this project, MLW was supported by SFB/TR 45 ``Periods, moduli and arithmetic of algebraic varieties'', subproject M08-10 ``Moduli of vector bundles on higher-dimensional varieties''.

	\section{Generalities}
	\label{general}

\subsection{Mixed Hodge polynomials and counting points}

To motivate the problem of counting points on an algebraic variety, we remind the reader of some facts concerning mixed Hodge polynomials and varieties with polynomial count, more details of which can be found in \cite[\S2.1]{hausel-villegas}.  Let $X$ be a complex algebraic variety.  The general theory of \cite{Deligne-HodgeII, Deligne-HodgeIII} provides for a mixed Hodge structure on the compactly supported cohomology of $X$:  that is, there is an increasing weight filtration $W_\bullet$ on $H_c^j(X, \Q)$ and a decreasing Hodge filtration $F^\bullet$ on $H_c^j(X, \C)$.  The \emph{compactly supported mixed Hodge numbers of $X$} are defined as
\begin{align*}
h_c^{p,q;j}(X) := \dim_\C \Gr_F^p \Gr_{p+q}^W H_c^j(X, \C),
\end{align*}
the \emph{compactly supported mixed Hodge polynomial of $X$} by
\begin{align*}
H_c(X; x, y, t) := \sum h_c^{p,q;j}(X) x^p y^q t^j,
\end{align*}
and the \emph{$E$-polynomial of $X$} by
\begin{align*}
E(X; x,y) := H_c(X; x, y, -1).
\end{align*}
We could also define the mixed weight polynomial 
$$WH(X;q,t)=\sum  \dim_\C  \Gr_{k}^W H_c^j(X, \C) q^{k/2} t^j = H_c(X; q^{1/2}, q^{1/2}, t)$$
which specializes to the weight polynomial 
$$E(q):=WH(X;q,-1)=E(X;q^{1/2}, q^{1/2}).$$
One observes that the compactly supported Poincar\'e polynomial $P_c(X; t)$ is given by
\begin{align*}
P_c(X;t) = H_c(X;1,1,t).
\end{align*}

Suppose that there exists a separated scheme $\mathcal{X}$ over a finitely generated $\Z$-algebra $R$, such that for some embedding $R \hookrightarrow \C$ we have
\begin{align*}
\mathcal{X} \times_R \C \cong X;
\end{align*}
in such a case we say that $\mathcal{X}$ is a \emph{spreading out of} $X$.  If, further, there exists a polynomial $P_X(w) \in \Z[w]$ such that for any homomorphism $R \to \F_q$ (where $\F_q$ is the finite field of $q$ elements), one has
\begin{align*}
| \mathcal{X}(\F_q) | = P_X(q),
\end{align*}
then we say that \emph{$X$ has polynomial count} and $P_X$ is the \emph{counting polynomial of $X$}.  The motivating result is then the following.

\begin{theorem}(N.\ Katz, \cite[Theorem 6.1.2]{hausel-villegas}) \label{katz}
	Suppose that the complex algebraic variety $X$ is of polynomial count with counting polynomial $P_X$.  Then
	\begin{align*}
	E(X; x,y) = P_X(xy).
	\end{align*}
\end{theorem}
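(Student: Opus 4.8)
The plan is to prove the Katz theorem by a comparison of two cohomological invariants of the spreading out $\mathcal{X}/R$: its compactly supported $\ell$-adic cohomology with Frobenius action on the one hand, and the mixed Hodge structure on the compactly supported cohomology of $X = \mathcal{X}\times_R \mathbb{C}$ on the other. First I would invoke the standard comparison between singular cohomology of $X(\mathbb{C})$ and $\ell$-adic cohomology: for a suitable choice of a finitely generated $\mathbb{Z}$-algebra $R$ (after localizing, i.e.\ inverting finitely many primes and choosing a good model), there is an isomorphism $H^j_c(X,\mathbb{Q}_\ell)\cong H^j_c(\mathcal{X}_{\bar{\F}_q},\mathbb{Q}_\ell)$ compatible with the geometric Frobenius action on the right-hand side, and the Grothendieck--Lefschetz trace formula gives $|\mathcal{X}(\F_q)| = \sum_j (-1)^j \operatorname{Tr}(\mathrm{Frob}_q \mid H^j_c(\mathcal{X}_{\bar{\F}_q},\mathbb{Q}_\ell))$.

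Next I would bring in the theory of Deligne's weights: the hypothesis that $|\mathcal{X}(\F_q)| = P_X(q)$ for all $R\to\F_q$, holding for infinitely many $q$, is a strong rigidity constraint. The key input is that the weight filtration defined via $\ell$-adic purity (the Frobenius eigenvalues on the weight-$k$ graded piece of $H^j_c$ are Weil numbers of absolute value $q^{k/2}$) matches, after the comparison isomorphism, the Hodge-theoretic weight filtration $W_\bullet$ on $H^j_c(X,\mathbb{Q})$ — this is exactly the content of the theory underlying \cite[\S2.1]{hausel-villegas}, so I may take it as given. The point-count polynomial $P_X(q) = \sum_a c_a q^a$ then forces, for almost all $q$, the identity $\sum_j(-1)^j\sum_i (\text{Frobenius eigenvalues on } \Gr^W_{2i}H^j_c) = \sum_a c_a q^a$; since each eigenvalue on the weight-$2i$ part contributes a Weil number of absolute value $q^i$, a standard argument (looking at archimedean absolute values across varying $q$, or invoking that the characteristic polynomials are independent of $q$ up to the obvious substitution) shows that only pure Tate classes of even weight can occur, each eigenvalue on $\Gr^W_{2i}H^j_c$ equals $q^i$, and the odd cohomology contributes nothing that survives; more precisely $\dim \Gr^W_{2i}H^j_c(X) = 0$ unless the parity works out, and $c_a = \sum_j (-1)^j \dim \Gr^W_{2a} H^j_c(X)$.

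I would then translate this back into Hodge numbers. Because the weight-$2a$ graded pieces are of Hodge--Tate type $(a,a)$, we get $h^{p,q;j}_c(X) = 0$ unless $p=q$, and $h^{a,a;j}_c(X)$ contributes to $E(X;x,y)$ the monomial $(-1)^j (xy)^a$. Summing, $E(X;x,y) = \sum_{a} \big(\sum_j (-1)^j h^{a,a;j}_c(X)\big)(xy)^a = \sum_a c_a (xy)^a = P_X(xy)$, which is the claimed equality.

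The main obstacle, and the step requiring the most care, is the passage from the \emph{numerical} coincidence $|\mathcal{X}(\F_q)| = P_X(q)$ to the \emph{structural} conclusion that $H^*_c(X)$ is of Hodge--Tate type with Frobenius eigenvalues exactly the powers of $q$: a priori the trace formula only pins down an alternating sum of eigenvalues, and one must rule out cancellation between eigenvalues of the same absolute value sitting in different cohomological degrees, as well as the occurrence of non-Tate Weil numbers. This is handled by exploiting that the comparison works uniformly over all residue fields $\F_q$ arising from $R$ (so one has the equality for a Zariski-dense set of primes, with the eigenvalues varying algebraically), together with Deligne's purity to control absolute values; the upshot is that the polynomial-count hypothesis is only consistent with the pure-Tate, parity-clean situation. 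Everything after that is the routine bookkeeping of repackaging weight multiplicities into the $E$-polynomial.
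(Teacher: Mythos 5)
There is a genuine gap at the crucial step: you claim that the point-count identity $|\mathcal{X}(\F_q)|=P_X(q)$ forces each $H^j_c(X)$ to be Hodge--Tate with all Frobenius eigenvalues equal to powers of $q$. But the trace formula only controls the alternating sum $\sum_j(-1)^j\tr(\Frob_q\mid H^j_c)$, so cancellation between cohomological degrees cannot be ruled out by this hypothesis: non-Tate classes appearing in both $H^j_c$ and $H^{j+1}_c$ cancel in the trace and are invisible. The theorem asserts only a fact about the virtual Hodge numbers $e^{p,q}(X)=\sum_j(-1)^jh^{p,q;j}_c(X)$, and the remark immediately after the theorem in this paper makes explicit that the Hodge--Tate property (vanishing of the individual $h^{p,q;j}_c$ for $p\neq q$) is a further \emph{expectation}, not a consequence of polynomial count; so the ``standard argument'' you invoke to assert it is not valid. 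Separately, even with complete control of the Frobenius eigenvalues you would not yet know the Hodge bidegree: the weight filtration, which is what the $\ell$-adic comparison provides, determines $p+q$ but cannot distinguish type $(a,a)$ from $(a+1,a-1)+(a-1,a+1)$ inside the same weight. Getting the two-variable $E$-polynomial from a one-variable trace identity requires comparing the full Hodge realization to the $\ell$-adic one, not only their weight filtrations.

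Katz closes both gaps by working throughout with the virtual class $\sum_j(-1)^j[H^j_c]$ in a suitable Grothendieck group, where passing to semisimplification absorbs exactly the degree-to-degree cancellation you would otherwise have to rule out. The trace identity together with Chebotarev density then pins down the virtual $\ell$-adic class as $\sum_a c_a[\Q_\ell(-a)]$, and the compatibility of realizations (via devissage and the de Rham comparison for the smooth projective building blocks) transfers this to the virtual Hodge class, giving $E(X;x,y)=\sum_a c_a(xy)^a=P_X(xy)$. Your outline is in the right spirit, but it overclaims at exactly the step you yourself flag as delicate and omits the two ingredients---working virtually, and comparing the full realizations rather than only the weights---that make the argument close.
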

\begin{remark} Thus, in the polynomial count case we find that the count polynomial $P_X(q)=E(X;q)$ agrees with the weight polynomial. We also expect our varieties to be Hodge--Tate, i.e., $h_c^{p,q;j}(X)=0$ unless $p=q$, in which case $H_c(X;x,y,t) = WH(X;xy,t)$.  Thus, in these cases we are not losing information by considering $WH(X;xy,t)$ (resp.\ $E(X;q)$) instead of the usual $H_c(X;x,y,t)$ (resp.\ $E(X;x,y)$).
\end{remark}

\subsection{Wild character varieties}\label{ss:WChVar}

The wild character varieties we study in this paper were first mentioned in \cite[\S3 Remark 5]{BoalchQHam}, as a then new example in quasi-Hamiltonian geometry---a ``multiplicative'' variant of the theory of Hamiltonian group actions on symplectic manifolds---with a more thorough (and more general) construction given in \cite[\S8]{boalch}.  We give a direct definition here for which knowledge of quasi-Hamiltonian geometry is not required; however, as we appeal to results of \cite[\S9]{boalch} on smoothness and the dimension of the varieties in question, we use some of the notation of \cite[\S9]{boalch} to justify the applicability of those results.

\subsubsection{Definition}

We now set some notation which will be used throughout the rest of the paper.  
Let $\G := \rm{GL}_n(\mathbb{C})$ and fix the maximal torus $\T \leeq \G$ consisting of diagonal matrices; let $\g := \gl_n(\mathbb{C}), \t := \textnormal{Lie}(\T)$ be the corresponding Lie algebras.  Let $\B_+\leeq \G$ (resp., $\B_-\leeq \G$)  be the Borel subgroup of upper (resp., lower) triangular matrices.  Let $\U = \U_+ \leeq \B_+$ (resp., $\U_- \leeq \B_-$) be the unipotent radical, i.e., the group of upper (resp., lower) triangular matrices with $1$s on the main diagonal; one will note that each of these subgroups is normalized by $\T$.

\begin{definition} \label{wcvdefn}
	We will use the following notation.  For $r \in \Z_{> 0}$, we set
	\begin{align*}
	\calA^r := \G \times (\U_+ \times \U_-)^r \times \T.
	\end{align*}
	An element of $\calA^r$ will typically be written $(C, S, t)$ with $$S = (S_1, \ldots, S_{2r}) \in (\U_+ \times \U_-)^r,$$ where $S_i \in \U_+$ if $i$ is odd and $S_i \in U_-$ if $i$ is even.  The group $\T$ acts on $(\U_+ \times \U_-)^r$ by $$x \cdot S = (x S_1 x^{-1}, \ldots, x S_{2r} x^{-1});$$ the latter tuple will often be written simply as $x S x^{-1}$.
	
	We fix $g, k, m \in \Z_{\geq 0}$ with $k+m \geq 1$.  Fix also a $k$-tuple $${\bm \calC} := (\calC_1, \ldots, \calC_k)$$ of semisimple conjugacy classes $\calC_j \subseteq G$; the multiset of multiplicities of the eigenvalues of each $\calC_j$ determines a partition $\mu^j \in \P_n$. Hence we obtain a $k$-tuple $$\muhat := (\mu^1, \ldots, \mu^k) \in \P_n^k$$ which we call the {\em type} of ${\bm \calC} $.  Fix also $$\r := (r_1, \ldots, r_m) \in \Z_{>0}^m.$$ We will write $r := \sum_{\alpha=1}^m r_\alpha$.  Now consider the product
	\begin{align*}
	\calR^{g, {\bm \calC}, \r} := (\G \times \G)^g \times \calC_1 \times \cdots \times\calC_k \times\calA^{r_1} \times\cdots \times\calA^{r_m}.
	\end{align*}
	The affine variety $\calR^{g, {\bm \calC}, \r}$ admits an action of $\G \times \T^m$ given by
	\begin{align} \label{actionR}
	(y, x_1, \ldots, x_m) \cdot \left( A_i, B_i, X_j, C_\alpha, S^\alpha, t_\alpha \right)= \left( yA_iy^{-1}, yB_i y^{-1}, yX_j y^{-1}, x_\alpha C_\alpha y^{-1}, x_\alpha S_\alpha x_\alpha^{-1}, t_\alpha \right),
	\end{align}
	where the indices run $1 \leq i \leq g, 1 \leq j \leq k, 1 \leq \alpha \leq m$.
	
	Now, fixing an element $\xib = (\xi_1, \ldots, \xi_m) \in \T^m$, we define a closed subvariety of $\calR^{g, {\bm \calC}, \r}$ by 
	\begin{align} \label{explicitdescription}
	\calU^{g, {\bm \calC}, \r, \xib} := \Bigg\{ \left( A_i, B_i, X_j, C_\alpha, S^\alpha, t_\alpha \right) \in \calR^{g, {\bm \calC}, \r} \, : \, \prod_{i=1}^g (A_i, B_i) \prod_{j=1}^k X_j & \prod_{\alpha=1}^m C_\alpha^{-1} \xi_\alpha S_{2r_\alpha}^\alpha \cdots S_1^\alpha C_\alpha = I_n, \nonumber \\
	& t_\alpha = \xi_\alpha, 1 \leq \alpha \leq m \Bigg\},
	\end{align} \normalsize
	where the product means we write the elements in the order of their indices:
	\begin{align*}
	\prod_{i=1}^d y_i = y_1 \cdots y_d.
	\end{align*}
	It is easy to see that $\calU^{g, {\bm \calC}, \r, \xib} $ is invariant under the action of $\G \times \T^m$.  Finally, we define the \emph{(generic) genus $g$ wild character variety with parameters ${\bm \calC}, \r, \xib$} as the affine geometric invariant theory quotient
	\begin{align}\label{wilddef}
	\M_\B^{g, {\bm \calC}, \r, \xib} := \calU^{g, {\bm \calC}, \r, \xib} / (\G \times \T^m) = \Spec \mathbb{C}[ \calU^{g, {\bm \calC}, \r, \xib}]^{\G \times \T^m}.
	\end{align}
	Since $g$ will generally be fixed and understood, we will typically omit it from the notation.  Furthermore, since the invariants we compute depend only on the tuples $\muhat$ and $\r$, rather than the actual conjugacy classes ${\bm \calC}$ and $\xib$, we will usually abbreviate our notation to $\M_\B^{\muhat,  \r}$ and  $\calU^{\muhat, \r}$.
\end{definition}

\begin{remark}
	The space $\calA^r$ defined at the beginning of Definition \ref{wcvdefn} is a ``higher fission space'' in the terminology of \cite[\S3]{boalch}.  These are spaces of local monodromy data for a connection with a higher order pole.  To specify a de Rham space---which are constructed in \cite{biquard-boalch}, along with their Dolbeault count\-er\-parts---at each higher order pole, one specifies a ``formal type'' which is the polar part of an irregular connection which will have diagonal entries under some trivialization; this serves as a ``model'' connection.  The de Rham moduli space then parametrizes holomorphic isomorphism classes of connections which are all formally isomorphic to the specified formal type. Locally these holomorphic isomorphism classes are distinguished by their Stokes data, which live in the factor $(\U_+ \times \U_-)^r$ appearing in $\calA^r$.  The factor of $\T$ appearing is the ``formal monodromy'' which differs from the actual monodromy by the product of the Stokes matrices, as appearing in the last set of factors in the expression (\ref{explicitdescription}).  The interested reader is referred to \cite[\S2]{BoalchGbundles} for details about Stokes data.
\end{remark}

\begin{remark}
	As mentioned above, these wild character varieties were constructed in \cite[\S8]{boalch} as quasi-Hamiltonian quotients.  In quasi-Hamiltonian geometry, one speaks of a space with a group action and a moment map into the group.  In this case, we had an action on $\calR^{\muhat, \r}$ given in (\ref{actionR}) and the corresponding moment map $\Phi : \calR^{\muhat,\r} \to \G \times \T^m$ would be
	\begin{align*}
	\left( A_i, B_i, X_j, C_\alpha, S_\alpha, t_\alpha \right) \mapsto \left( \prod_{i=1}^g [A_i, B_i]  \prod_{j=1}^k X_j \prod_{\alpha=1}^m C_\alpha^{-1} t_\alpha S_{2r_\alpha}^\alpha \cdots S_1^\alpha C_\alpha, t_1^{-1}, \ldots, t_m^{-1} \right).
	\end{align*}
	Then one sees that $\calU^{\muhat, \r} = \Phi^{-1}( (I_n, \xib^{-1}))$ and so $\M_\B^{\muhat, \r} = \Phi^{-1}( (I_n, \xib^{-1}))/ (G \times T^m)$ is a quasi-Hamiltonian quotient.
\end{remark}

\begin{remark}
	By taking determinants in (\ref{explicitdescription}) we observe that a necessary condition for $\calU^{\muhat,  \r} $, and hence $\M_\B^{\muhat,  \r}$, to be non-empty is that
	\begin{align} \label{detcondn}
	\prod_{j=1}^k \det \calC_j \cdot \prod_{\alpha=1}^m \det \xi_\alpha = 1,
	\end{align}
	noting that $\det S_p^\alpha = 1$ for $1 \leq \alpha \leq m, 1 \leq p \leq 2r_\alpha$.
\end{remark}

\subsubsection{Smoothness and dimension computation}

We recall  \cite[Definition 2.1.1]{hausel-aha1}.
\begin{definition}
	\label{genericconjugacyA}
	The $k$-tuple ${\bm \calC}=(\calC_1,\dots,\calC_k)$ is {\em generic} if the following holds.  If $V \leeqvs \C^n$ is a subspace stable by some $X_i\in \calC_i$ for each $i$
	such that
	\beq
	\label{prod-conditionA}
	\prod_{i=1}^k\det \left(X_i|_V\right)= 1
	\eeq
	then either $V=0$ or $V=\C^n$.  When, additionally, $$\xib=(\xi_1, \ldots, \xi_m )\in \T^m$$ we say that $${\bm \calC}\times \xib=(\calC_1,\dots,\calC_k,\xi_1,\dots,\xi_m)$$ is generic if $$(\calC_1,\dots,\calC_k,\G \xi_1,\dots,\G \xi_m)$$ is, where $\G \xi_i$ is the conjugacy class of $\xi_i$ in $\G$.
\end{definition}

\begin{remark} It is straightforward to see that the genericity of $(\calC_1,\dots,\calC_k)$ for a $k$-tuple of semi\-simple conjugacy classes can be formulated in terms of the spectra of the matrices in $\calC_i$ as follows. Let $$\A_i:=\{\alpha^i_1,\dots,\alpha^i_n\}$$ be the multiset of eigenvalues of a matrix in $\calC_i$ for $i=1\dots k$. Then $(\calC_1,\dots,\calC_k)$ is generic if and only if the following non-equalities \eqref{noneq} hold. Write $$[\A]:=\prod_{\alpha\in \A}a$$ for any multiset $\A\subseteq \A_i$. The non-equalities are \beq\label{noneq} [\A'_1]\cdots[\A'_k]\neq 1\eeq for $\A_i'\subseteq \A_i$ of the same cardinality $n'$ with $0<n'<n$.
\end{remark}

\begin{theorem}
	For a generic choice of ${\bm \calC}\times \xib$ (in the sense of Definition \ref{genericconjugacyA}), the wild character variety $\M_B^{\muhat,  \r} $ is smooth.  Furthermore, the $\G \times \T^m$ action on $\calU^{ \muhat,\r}$ is scheme-theoretically free.  Finally, one has
	\begin{align} \label{dimensionformula}
	\dim \M_B^{\muhat,  \r} =  (2g+k-2)n^2 - \|\muhat\|^2 + n(n-1) \left( m + r\right) + 2 =: d_{\muhat, \r},
	\end{align}
	where  $r :=\sum_{\alpha=1}^m r_\alpha$ and $$\|\muhat\|^2 = \sum_{j=1}^k \sum_{p=1}^{\ell_j} (\mu_p^j)^2$$ for $$\mu^j = (\mu_1^j, \ldots, \mu_{\ell_j}^j).$$ 
\end{theorem}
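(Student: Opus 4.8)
The plan is to realise $\M_\B^{\muhat,\r}$ as a multiplicative (quasi-Hamiltonian) symplectic quotient in the sense of Boalch \cite{boalch}, and then to read off smoothness, freeness and the dimension from the general reduction theory of \cite[\S9]{boalch}; the only genuine work will be to check that our genericity hypothesis (Definition \ref{genericconjugacyA}) is strong enough to trigger those statements. Concretely, the first thing I would set down is that $\calR^{\muhat,\r}$ is the fusion product of $g$ copies of the internally fused double $\mathbf{D}(\G)=\G\times\G$, whose moment maps combine to the factor $\prod_{i=1}^g[A_i,B_i]$; of the semisimple conjugacy classes $\calC_1,\dots,\calC_k$, each a quasi-Hamiltonian $\G$-space with moment map its inclusion and contributing $\prod_{j=1}^k X_j$; and of the higher fission spaces $\calA^{r_\alpha}=\G\times(\U_+\times\U_-)^{r_\alpha}\times\T$ of \cite[\S3]{boalch}, quasi-Hamiltonian $\G\times\T$-spaces whose moment maps contribute $C_\alpha^{-1}t_\alpha S^\alpha_{2r_\alpha}\cdots S^\alpha_1 C_\alpha$ to the $\G$-component and $t_\alpha^{-1}$ to the $\T^m$-component. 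For the action \eqref{actionR} of $\G\times\T^m$ the resulting group-valued moment map is exactly the map $\Phi$ recorded in the Remark after \eqref{wilddef}, and by construction $\calU^{\muhat,\r}=\Phi^{-1}(I_n,\xib^{-1})$, so $\M_\B^{\muhat,\r}=\Phi^{-1}(I_n,\xib^{-1})/(\G\times\T^m)$ is a quasi-Hamiltonian reduction.

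The heart of the argument will be to show that, for generic ${\bm \calC}\times\xib$, the stabiliser in $\G\times\T^m$ of every point of $\calU^{\muhat,\r}$ is exactly the diagonally embedded central $\C^\times$, $z\mapsto(zI_n,\dots,zI_n)$ (and that this stabiliser is reduced), and that the differential of the relation constraint is surjective onto $\sl_n$ --- the largest subspace it can possibly hit, by the determinant identity \eqref{detcondn}. Once this is in place, the regularity criterion for quasi-Hamiltonian reductions of \cite[\S9]{boalch} will at once give that $\calU^{\muhat,\r}$ is smooth, that the induced action of $(\G\times\T^m)/\C^\times$ on it is (scheme-theoretically) free --- which is the precise content of the freeness assertion --- and that the quotient $\M_\B^{\muhat,\r}$ is a smooth symplectic variety, necessarily affine since all the defining data are affine.

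To prove that claim I would argue as in the tame case of \cite{hausel-aha1} (cf.\ \cite{hausel-villegas}). If $(y,x_1,\dots,x_m)$ fixes a point $(A_i,B_i,X_j,C_\alpha,S^\alpha,t_\alpha)$, then \eqref{actionR} forces $y$ to commute with every $A_i,B_i,X_j$, forces $x_\alpha=C_\alpha y C_\alpha^{-1}$, and forces each $x_\alpha$ to commute with every Stokes factor $S^\alpha_p$; using the regularity $\xi_\alpha\in\T^{\reg}$ and the shape of the moment map of $\calA^{r_\alpha}$, one gets that each eigenspace $W$ of $y$ is stable under all the $A_i,B_i,X_j$ \emph{and} under each local monodromy $C_\alpha^{-1}\xi_\alpha S^\alpha_{2r_\alpha}\cdots S^\alpha_1 C_\alpha$. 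Restricting the defining relation of \eqref{explicitdescription} to $W$ and taking determinants kills the commutator and Stokes contributions and leaves $\prod_{j}\det(X_j|_W)\cdot\prod_{\alpha}\det(\xi_\alpha|_{C_\alpha W})=1$, which by the reformulation \eqref{noneq} of genericity is impossible unless $W=0$ or $W=\C^n$; hence $y$, and then each $x_\alpha$, is scalar. The surjectivity of the differential is the infinitesimal shadow of the same computation. The hard part, I expect, will be the bookkeeping for the wild factors $\calA^{r_\alpha}$: one must check that a stabiliser --- or, dually, an element obstructing surjectivity of the differential --- really does produce a common invariant subspace respected by the local monodromy data built from the Stokes matrices, not merely by the tame data $A_i,B_i,X_j$. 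This is precisely what the fission-space analysis of \cite[\S9]{boalch} supplies, and matching our Definition \ref{genericconjugacyA} to the genericity used there is the step that requires care.

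It then remains only to compute the dimension. Using $\dim\calC_j=n^2-\|\mu^j\|^2$, $\dim\U_\pm=\binom n2$ and $\dim\T=n$, one has
\begin{align*}
\dim\calR^{\muhat,\r} &= 2gn^2+\sum_{j=1}^{k}\bigl(n^2-\|\mu^j\|^2\bigr)+\sum_{\alpha=1}^{m}\bigl(n^2+r_\alpha n(n-1)+n\bigr)\\
&= 2gn^2+(k+m)n^2-\|\muhat\|^2+rn(n-1)+mn.
\end{align*}
Assuming the necessary condition \eqref{detcondn}, the product in \eqref{explicitdescription} takes values in $\SL_n$, so the relation imposes $n^2-1$ independent conditions (the differential being submersive onto $\SL_n$ by the above) while $t_\alpha=\xi_\alpha$ imposes $mn$ more, giving $\dim\calU^{\muhat,\r}=\dim\calR^{\muhat,\r}-(n^2-1)-mn$. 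Dividing by the free action of $(\G\times\T^m)/\C^\times$, of dimension $n^2+mn-1$, yields
\begin{align*}
\dim\M_\B^{\muhat,\r} &= \dim\calR^{\muhat,\r}-2n^2-2mn+2\\
&= (2g+k-2)n^2-\|\muhat\|^2+n(n-1)(m+r)+2,
\end{align*}
which is the asserted $d_{\muhat,\r}$.
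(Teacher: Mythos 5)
Your proposal is correct and follows essentially the same route as the paper: realize $\M_\B^{\muhat,\r}$ as a quasi-Hamiltonian reduction of a fusion product of fused doubles, conjugacy classes and higher fission spaces, then invoke the smoothness/freeness/dimension results of \cite[\S9]{boalch}, checking that Definition~\ref{genericconjugacyA} matches Boalch's genericity. The paper proves the theorem almost entirely by citation (Corollary~9.9, the remarks after Lemma~9.10, and eq.~(41) of \cite{boalch}), whereas you additionally sketch the stabiliser computation in-line; this direct argument is in fact the same one the paper runs later, in the proof of Theorem~\ref{maint}, when verifying freeness for the $\F_q$-spreading-out, so your sketch agrees with the paper's intent, and your dimension count reproduces \eqref{dimensionformula}. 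One small remark: the regularity of $\xi_\alpha$ plays no role in the stabiliser argument --- what matters is only that $x_\alpha\in\T$, so that $x_\alpha$ commutes with the diagonal matrix $\xi_\alpha$ and with the (centraliser-preserved) Stokes factors; the determinant of the restriction of the relation to an eigenspace of $y$ then collapses to the genericity inequality \eqref{noneq} exactly as you say.
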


The first statement is a special case of \cite[Corollary 9.9]{boalch}, the second statement follows from the observations following \cite[Lemma 9.10]{boalch}, and the dimension formula comes from \cite[\S9, Equation (41)]{boalch}.  To see that our wild character varieties are indeed special cases of those constructed there, one needs to see that the ``double'' $\mathbf{D} = \G \times \G$ (see \cite[Example 2.3]{boalch}) is a special case of a higher fission variety, as noted at \cite[\S3, Example (1)]{boalch}, and that $\mathbf{D} /\!\!/_{\calC^{-1}} \G \cong \calC$ for a conjugacy class $\calC \subset \G$.  Then one may form the space
\begin{align*}
\calS^{g,k, \r} := \mathbb{D}^{ \circledast_\G g} \circledast_\G \mathbf{D}^{\circledast_\G k} \circledast_\G \calA^{r_1} \circledast_\G \cdots \circledast_\G \calA^{r_m} /\!\!/ \G,
\end{align*}
in the notation of \cite[\S\S2,3]{boalch} and see that $\M_\B^{\muhat, \r}$ is a quasi-Hamiltonian quotient of the above spƒace by the group $\G^k \times \T^m$ at the conjugacy class $({\bm \calC} \times \xib)$, and is hence a wild character variety as defined at \cite[p.342]{boalch}.

To see that the genericity condition given at \cite[\S9, Equations (38), (39)]{boalch} specializes to ours (Definition \ref{genericconjugacyA}), we observe that for $\G=\GL_n (\mathbb{C})$ the Levi subgroup $L$ of a maximal standard parabolic subgroup $P$ corresponds to a subgroup of matrices consisting of two diagonal blocks, and as indicated earlier in the proof of \cite[Corollary 9.7]{boalch}, the map denoted $\textnormal{pr}_L$ takes the determinant of each factor.  In particular, it takes the determinant of the relevant matrices restricted to the subspace preserved by $P$.  But this is the condition in Definition \ref{genericconjugacyA}.

Finally, using \cite[\S9, Equation (41)]{boalch} and the fact that $$\dim \U_+ = \dim \U_- = \binom{n}{2},$$ it is straightforward to compute the dimension as (\ref{dimensionformula}).

	\section{Hecke Algebras} 
	\label{Halg}
In the following, we describe the theory of Hecke algebras that we will need for our main results.  Let us first explain some notation that will be used.  Typically, the object under discussion will be a $\C$-algebra $\mathbf{A}$ which is finite-dimensional over $\C$.  We will denote its set of (isomorphism classes of) representations by $\Rep \mathbf{A}$ and the subset of irreducible representations by $\Irr \mathbf{A}$; since it will often be inconsequential, we will often also freely confuse an irreducible representation with its character.  Of course, if $\mathbf{A} = \C[G]$ is the group algebra of a group $G$, then we often shorten $\Irr \C[G]$ to $\Irr G$.  We will also sometimes need to consider ``deformations'' or ``generalizations'' of these algebras.  If $\mathbf{H}$ is an algebra free of finite rank over $\C[u^{\pm 1}]$  the extension $\C(u) \otimes_{\C[u^{\pm 1}]} \mathbf{H}$ of such an algebra to the quotient field $\C(u)$ of $\C[u^{\pm 1}]$ will be denoted by $\mathbf{H}(u)$. Note that this abbreviates the notation $\C(u) \mathbf{H}$ in \cite{cp} and \cite[\S~7.3]{GeckPfeiffer}.  Now, if $z \in \C^\times$ and $\theta_z : \C[u^{\pm 1}] \to \C$ is the $\C$-algebra homomorphism which takes $u \mapsto z$, then we may consider the ``specialization'' $\C \otimes_{\theta_z} \mathbf{H}$ of $\mathbf{H}$ to $u=z$ which we will denote by $\mathbf{H}(z)$.  

\subsection{Definitions and Conventions} \label{Halgnotn}

Let $G$ be a finite group and $H \leeq G$ a subgroup.  Given a subset $S \subseteq G$, we will denote its indicator function by $\II_S : G \to \N$, that is to say,
\begin{align*}
\II_S(x) = \begin{cases} 1 & x \in S \\ 0 & \textnormal{otherwise.} \end{cases}
\end{align*} 

Let $M$ be the vector space of functions $f : G \to \C$ such that
\begin{align*}
f(hg) = f(g)
\end{align*}
for all $h \in H, g \in G$. Clearly, $M$ can be identified with the space of complex-valued functions on $H \backslash G$ and so has dimension $[ G : H]$.  We may choose a set $V$ of right $H$-coset representatives, so that
\begin{align} \label{GU}
G = \coprod_{v \in V} Hv.
\end{align}
Such a choice gives a basis 
\beq\label{basisnot}\{ f_v := \II_{Hv} \}_{v \in V}\eeq 
of $M$.  Furthermore, we have a $G$-action on $M$ via
\begin{align} \label{GactM}
(g \cdot f)(x) := f(xg).
\end{align}
With this action, $M$ is identified with the induced representation $\Ind_H^G \1_H$ of the trivial representation $\1_H$ on $H$.

The \emph{Hecke algebra associated to $G$ and $H$}, which we denote by $\HHH(G, H)$,  is the vector space of functions $\varphi : G \to \C$ such that 
\begin{align*}
\varphi( h_1 g h_2) = \varphi(g),
\end{align*} 
for $h_1, h_2 \in H, g \in G$.  It has the following convolution product
\begin{align} \label{convprod}
(\varphi_1 \ast \varphi_2)(g) := \frac{1}{|H|} \sum_{a \in G} \varphi_1(ga^{-1}) \varphi_2(a) = \frac{1}{|H|} \sum_{b \in G} \varphi_1(b) \varphi_2(b^{-1} g). 
\end{align}
Furthermore, there is an action of $\HHH(G, H)$ on $M$, where, for $\varphi \in \HHH(G, H), f \in M, g \in G$, one has
\begin{align} \label{Halgaction}
(\varphi . f)(g) & := \frac{1}{|H|} \sum_{x \in G} \varphi(x) f(x^{-1} g). 
\end{align}
One easily checks that this is well-defined (by which we mean that $\varphi . f \in M$).

It is clear that $\HHH(G, H)$ may be identified with $\C$-valued functions on $H \backslash G / H$, and hence it has a basis indexed by the double $H$-cosets in $G$.  Let $\W \subseteq G$ be a set of double coset representatives which contains $e$ (the identity element of $G$), so that
\begin{align} \label{Bruhatdecomp}
G = \coprod_{w \in \W} H w H.
\end{align}
Then for $w \in \W$, we will set
\begin{align*}
T_w := \II_{HwH};
\end{align*}
these form a basis of $\HHH(G, H)$.

\begin{proposition} \cite[Proposition 1.4]{Iwahori1964} \label{Halgcomm}
	Under the convolution product (\ref{convprod}), $\HHH(G,H)$ is an associative algebra with identity $T_e = \II_H$.  The action (\ref{Halgaction}) yields a unital embedding of algebras $\HHH(G, H) \to \End_{\C} M$  whose image is $\End_G M$. Thus we may identify 
	\begin{align*}
	\HHH(G,H) = \End_G \Ind_H^G \1_H.
	\end{align*}
\end{proposition}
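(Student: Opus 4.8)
The plan is to check the three assertions of the proposition in sequence: that convolution makes $\HHH(G,H)$ a unital associative $\C$-algebra with unit $T_e=\II_H$; that the formula \eqref{Halgaction} defines a unital homomorphism $\HHH(G,H)\to\End_\C M$; and that this homomorphism is injective with image exactly $\End_G M$. All of the content sits in the last point, so I will spend most effort there and dispatch the rest as bookkeeping.

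First I would record the elementary facts. A bi-$H$-invariant function is in particular left $H$-invariant, so $\HHH(G,H)\subseteq M$ as subspaces of $\{f:G\to\C\}$, and $\II_H\in M$ since $\II_H(hg)=\II_H(g)$. Associativity of $\ast$ follows by expanding the double sum in \eqref{convprod} and reindexing; bi-invariance of $\varphi$ together with the normalization $\tfrac1{|H|}$ gives $\II_H\ast\varphi=\varphi=\varphi\ast\II_H$. The same style of reindexing shows $(\varphi_1\ast\varphi_2).f=\varphi_1.(\varphi_2.f)$ and $\II_H.f=f$, so $\varphi\mapsto(f\mapsto\varphi.f)$ is a unital algebra map $\HHH(G,H)\to\End_\C M$. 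Equivariance for the $G$-action \eqref{GactM} is immediate: both $(g.(\varphi.f))(x)$ and $(\varphi.(g.f))(x)$ expand to $\tfrac1{|H|}\sum_{y\in G}\varphi(y)f(y^{-1}xg)$, so the image lands in $\End_G M$.

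The heart of the matter is a single computation: the action on the distinguished vector $\II_H\in M$. Since $\II_H(x^{-1}g)=1$ exactly when $x\in gH$, one gets $(\varphi.\II_H)(g)=\tfrac1{|H|}\sum_{x\in gH}\varphi(x)=\varphi(g)$, the last step by right $H$-invariance of $\varphi$; in other words $\varphi.\II_H=\varphi$ under the inclusion $\HHH(G,H)\subseteq M$. Injectivity is then immediate (if $\varphi.f=0$ for all $f$, take $f=\II_H$). For surjectivity onto $\End_G M$, given $\Psi\in\End_G M$ I would set $\varphi:=\Psi(\II_H)$; it lies in $M$, hence is left $H$-invariant, and it is right $H$-invariant as well because $h.\II_H=\II_H$ for $h\in H$ (indeed $\II_H(xh)=1\iff x\in H$), so $\Psi(\II_H)=\Psi(h.\II_H)=h.\Psi(\II_H)$. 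Thus $\varphi\in\HHH(G,H)$. Finally, $M$ is generated as a $G$-module by $\II_H$: one has $g.\II_H=\II_{Hg^{-1}}=f_{g^{-1}}$ in the notation of \eqref{basisnot}, and the $f_v$ form a basis of $M$ by \eqref{GU}. Writing an arbitrary $f\in M$ as a combination of $G$-translates $v^{-1}.\II_H$ and using the $G$-equivariance of both $\Psi$ and the operator $\varphi.(-)$, together with $\varphi.\II_H=\varphi=\Psi(\II_H)$, one gets $\varphi.f=\Psi(f)$. Hence the map is onto $\End_G M$, and since $M\cong\Ind_H^G\1_H$ as $G$-modules (as noted just before the proposition), the identification $\HHH(G,H)=\End_G\Ind_H^G\1_H$ follows.

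I do not expect a genuine obstacle — this is the classical Iwahori computation — so the real task is hygiene: keeping track of the normalizing constant $\tfrac1{|H|}$ in \eqref{convprod} and \eqref{Halgaction}, and of which side of $H$-invariance is being invoked at each step. The one conceptual input, that $M$ is cyclic with generator $\II_H$ and that the Hecke algebra acts on $\II_H$ ``by itself'' ($\varphi.\II_H=\varphi$), is exactly what makes surjectivity onto $\End_G M$ drop out. As a cross-check one could instead finish by a dimension count: $\dim\HHH(G,H)=|\W|=|H\backslash G/H|$ from the basis $\{T_w\}$ attached to \eqref{Bruhatdecomp}, while Frobenius reciprocity and the Mackey decomposition give $\dim\End_G\Ind_H^G\1_H=|H\backslash G/H|$ as well, so the injective map above must be an isomorphism; but the cyclic-vector argument is cleaner and more in the spirit of the rest of the section.
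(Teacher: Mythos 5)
The paper gives no proof of this proposition; it is stated with a bare citation to \cite[Proposition 1.4]{Iwahori1964}, so there is no internal argument to compare against. Your proof is correct and is, in substance, the classical cyclic-vector argument: the two load-bearing observations are that $\varphi.\II_H=\varphi$ under the inclusion $\HHH(G,H)\subseteq M$, and that $\II_H$ generates $M$ as a $\C[G]$-module (via $g.\II_H=\II_{Hg^{-1}}$ and the basis \eqref{basisnot}), from which injectivity and surjectivity onto $\End_G M$ both follow once you have checked $G$-equivariance of the Hecke-algebra action and bi-$H$-invariance of $\Psi(\II_H)$. The only cosmetic point worth flagging is that in writing $g.\II_H=f_{g^{-1}}$ you are implicitly identifying $g^{-1}$ with its coset representative in $V$, but this does not affect the argument. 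Your suggested alternative finish via the dimension count $\dim\HHH(G,H)=|H\backslash G/H|=\dim\End_G\Ind_H^G\1_H$ (Mackey plus Frobenius reciprocity) is also sound and is a common variant of Iwahori's proof.
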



\begin{remark} \label{twostar} (Relation with the group algebra) The group algebra $\C [G]$ may be realized as the space of functions $\sigma : G \to \C$ with the multiplication $$\ast_G : \C[G] \otimes_\C \C[G] \to \C[G]$$ given by
	\begin{align*}
	(\sigma_1 \ast_G \sigma_2)(x) = \sum_{a \in G} \sigma_1(a) \sigma_2(a^{-1} x).
	\end{align*}
	It is clear that we have an embedding of vector spaces $$\iota : \HHH(G, H) \hookrightarrow \C[G]$$ and it is easy to see that if $\varphi_1, \varphi_2 \in \HHH(G, H)$, we have
	\begin{align} \label{gpalgHalg}
	\iota(\varphi_1) \ast_G \iota(\varphi_2) = |H| (\varphi_1 \ast \varphi_2),
	\end{align}
	where the right hand side is the convolution product in $\HHH(G, H)$.  Furthermore, the inclusion takes the identity element $T_e \in \HHH(G, H)$ to $\II_H$, which is not the identity element in $\C[G]$.  Thus, while the relationship between the multiplication in $\HHH(G, H)$ and that in $\C[G]$ will be important for us, we should be careful to note that $\iota$ is not an algebra homomorphism.  When we are dealing with indicator functions for double $H$-cosets, we will write $T_v, v \in \W$ when we consider it as an element of $\HHH(G, H)$, and in contrast, we will write $\II_{H v H}$ when we think of it as an element of $\C[G]$.  We will also be careful to indicate the subscript in $\ast_G$ when we mean multiplication in the group algebra (as opposed to the Hecke algebra).
\end{remark}

We will need some refinements regarding Hecke algebras taken with respect to different subgroups.

\subsubsection{Quotients}

Let $G$ be a group, $H \leq G$ a subgroup and suppose that $H = K \rtimes L$ for some subgroups $K$, $L \leq H$.  Our goal is to show that there is a natural surjection $\HHH(G, K) \twoheadrightarrow \HHH(G, H)$.  We note that since $K \leq H$, there is an obvious inclusion of vector spaces $\HHH(G, H) \leeqvs \HHH(G, K)$, when thought of as bi-invariant $G$-valued functions.  We will write $\ast_K$ and $\ast_H$ to denote the convolution product in the respective Hecke algebras.  From \eqref{gpalgHalg}, we easily see that for $\varphi_1$, $\varphi_2 \in \HHH(G, H)$
\begin{align} \label{convHK}
\varphi_1 \ast_K \varphi_2 = |L| ( \varphi_1 \ast_H \varphi_2).
\end{align}

Let $W = W^K \subseteq G$ be a set of double $K$-coset representatives such that $L \subseteq W$; note that for $\ell \in L$, $K \ell K = \ell K = K \ell$.  If $\ell \in L$ and $w \in W$, then it follows from \cite[Lemma 1.2]{Iwahori1964} that $T_\ell \ast_K T_w = T_{\ell w}$.  From this it is easy to see that
\begin{align*}
E = E_L := \frac{1}{|L|} \sum_{\ell \in L} T_\ell
\end{align*}
is an idempotent in $\HHH(G, K)$.  In fact, one notes that $E = |L|^{-1} \II_H = |L|^{-1} \1_{\HHH(G, H)}$, thought of as bi-invariant functions on $G$.

\begin{lemma} \label{suffcent}
	If $W^K \subseteq N_G(L)$ then $E$ is central in $\HHH(G, K)$.
\end{lemma}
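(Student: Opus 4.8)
The plan is to show that $E = E_L$ commutes with every basis element $T_w$, $w \in W^K$, of $\HHH(G,K)$; since the $T_w$ span, this suffices. Concretely I would compute $T_w \ast_K E$ and $E \ast_K T_w$ and show they agree. Using $E = |L|^{-1}\sum_{\ell \in L} T_\ell$ and bilinearity of $\ast_K$, it is enough to compare $T_w \ast_K T_\ell$ with $T_{\ell'} \ast_K T_w$ as $\ell$ (resp.\ $\ell'$) ranges over $L$; more precisely, the claim reduces to the assertion that
\begin{align*}
\sum_{\ell \in L} T_w \ast_K T_\ell = \sum_{\ell \in L} T_\ell \ast_K T_w
\end{align*}
in $\HHH(G,K)$.

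The key input is the hypothesis $W^K \subseteq N_G(L)$. First I would record, as in the discussion preceding the lemma, that for $\ell \in L$ one has $K\ell K = \ell K = K\ell$ and, by \cite[Lemma 1.2]{Iwahori1964}, $T_\ell \ast_K T_w = T_{\ell w}$ and similarly $T_w \ast_K T_\ell = T_{w\ell}$ (the latter holds because right multiplication by $\ell\in L\subseteq H$ normalizes $K$ on the right in the appropriate sense; I would check that $KwK\cdot \ell = Kw\ell K$ using $w\ell = (w\ell w^{-1})w$ with $w\ell w^{-1}\in L\subseteq K$... wait, $L$ need not lie in $K$). The cleaner route: since $w \in N_G(L)$, we have $w\ell w^{-1} =: \ell^* \in L$ for each $\ell \in L$, so $w\ell = \ell^* w$ as elements of $G$, hence $KwK\ell = Kw\ell K = K\ell^* w K = \ell^* KwK$ using $K\ell^* = \ell^* K$. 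This gives the crucial identity at the level of double cosets: $(KwK)\ell = \ell^*(KwK)$, i.e.\ right translation of the double coset $KwK$ by $\ell$ equals left translation by $\ell^*$. Translating back to indicator functions and convolution, this yields $T_w \ast_K T_\ell = T_{\ell^*} \ast_K T_w$ for each individual $\ell$, where $\ell \mapsto \ell^*$ is the conjugation automorphism of $L$ by $w$; I would verify this identity directly from the convolution formula \eqref{convprod} rather than quoting Iwahori's lemma, since the latter is stated only for $\ell$ on the left.

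Finally, since $\ell \mapsto \ell^* = w\ell w^{-1}$ is a bijection of $L$ onto itself, summing over $\ell \in L$ gives
\begin{align*}
T_w \ast_K E = \frac{1}{|L|}\sum_{\ell \in L} T_w \ast_K T_\ell = \frac{1}{|L|}\sum_{\ell \in L} T_{\ell^*} \ast_K T_w = \frac{1}{|L|}\sum_{\ell' \in L} T_{\ell'} \ast_K T_w = E \ast_K T_w,
\end{align*}
which is the desired centrality. The main obstacle I anticipate is purely bookkeeping: being careful that $T_w \ast_K T_\ell = T_{w\ell}$ genuinely holds (i.e.\ that $KwK\ell$ is a single double coset and the convolution has the right normalization), since Iwahori's Lemma 1.2 as cited handles multiplication by $T_\ell$ on the left, and one must either invoke the analogous right-handed statement or redo the short computation; the normalizer hypothesis is exactly what makes the double coset $KwK$ stable under this twisted left/right $L$-translation, so once that identity is in hand the rest is a one-line reindexing.
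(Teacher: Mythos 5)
Your proof is correct and takes essentially the same approach as the paper: both reduce to showing $T_w \ast_K E = E \ast_K T_w$ for each $w \in W^K$, rewrite $T_\ell \ast_K T_w = T_{\ell w} = T_{w(w^{-1}\ell w)}$, and use the normalizer hypothesis to see that $\ell \mapsto w^{-1}\ell w$ is a bijection of $L$ so the sum reindexes. The only extra content in your write-up is the (correct and worth noting) caution that $T_w \ast_K T_\ell = T_{w\ell}$ needs its own one-line verification parallel to the cited $T_\ell \ast_K T_w = T_{\ell w}$; the paper tacitly uses it as well, and the verification is immediate since $L$ normalizes $K$ (from $H = K \rtimes L$) so $KwK\ell = Kw\ell K$ and the index computation gives coefficient $1$.
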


\begin{proof}
	It is enough to show that for any $w \in W$, $E \ast_K T_w = T_w \ast_K E$.  One has \small
	\begin{align*}
	E \ast_K T_w & = \frac{1}{|L| |K|} \sum_{\ell \in L} T_\ell \ast_K T_w = \frac{1}{|H|} \sum_{\ell \in L} T_{\ell w} = \frac{1}{|H|} \sum_{\ell \in L} T_{w (w^{-1} \ell w)} = \frac{1}{|H|} \sum_{m \in L} T_{w m} = T_w \ast_K E. & \qedhere
	\end{align*} \normalsize 
\end{proof}

\begin{proposition} \label{p:Hprojection}
	If $E$ is central in $\HHH(G, K)$ then there exists a surjective algebra homomorphism $\HHH(G, K) \to \HHH(G, H)$ which takes $\1_{\HHH(G,K)}$ to $\1_{\HHH(G,H)}$, given by
	\begin{align*}
	\alpha \mapsto |L| ( E \ast_K \alpha).
	\end{align*}
\end{proposition}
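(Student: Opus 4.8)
The statement I want to prove is that when $E = E_L$ is central in $\HHH(G,K)$, the map $\alpha \mapsto |L|(E \ast_K \alpha)$ is a surjective algebra homomorphism $\HHH(G,K) \to \HHH(G,H)$ sending $\1_{\HHH(G,K)}$ to $\1_{\HHH(G,H)}$. The key structural fact to exploit is the one already recorded in the excerpt: $E = |L|^{-1}\II_H = |L|^{-1}\1_{\HHH(G,H)}$, viewed as a bi-$K$-invariant function on $G$. So the corner $E \ast_K \HHH(G,K) \ast_K E$ is exactly (up to the scalar bookkeeping between $\ast_K$ and $\ast_H$) the subspace of functions that are bi-$H$-invariant, i.e.\ $\HHH(G,H)$. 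When $E$ is central, the double corner collapses to a single one, $E \ast_K \HHH(G,K) \ast_K E = E \ast_K \HHH(G,K)$, and the scalar $|L|$ is precisely what is needed to turn the rescaled convolution $\ast_K$ into $\ast_H$ via \eqref{convHK}.

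\textbf{Steps.} First I would verify the image lands in $\HHH(G,H)$: for $\alpha \in \HHH(G,K)$, the function $E \ast_K \alpha$ is left $H$-invariant because $E$, as $|L|^{-1}\II_H$, is left $H$-invariant and convolution on the left by a left-$H$-invariant function produces a left-$H$-invariant function; since $E$ is central, $E \ast_K \alpha = \alpha \ast_K E$ is also right $H$-invariant by the same argument on the right. Hence $|L|(E \ast_K \alpha) \in \HHH(G,H)$. Second, multiplicativity: for $\alpha,\beta \in \HHH(G,K)$, using centrality and idempotency of $E$,
\begin{align*}
|L|\bigl(E \ast_K (\alpha \ast_K \beta)\bigr) = |L|\bigl(E \ast_K E \ast_K \alpha \ast_K \beta\bigr) = |L|\bigl((E\ast_K\alpha) \ast_K (E \ast_K \beta)\bigr),
\end{align*}
and then I apply \eqref{convHK} to the two elements $E\ast_K\alpha, E\ast_K\beta \in \HHH(G,H)$: their $\ast_K$-product equals $|L|$ times their $\ast_H$-product, so the right-hand side is $|L|\bigl((E\ast_K\alpha)\ast_K(E\ast_K\beta)\bigr) = \bigl(|L|(E\ast_K\alpha)\bigr) \ast_H \bigl(|L|(E\ast_K\beta)\bigr)$, exactly the product of the images. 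Third, the unit: apply the map to $\1_{\HHH(G,K)} = T_e = \II_K$; then $E \ast_K \II_K = E$, so the image is $|L| E = \II_H = \1_{\HHH(G,H)}$. Fourth, surjectivity: if $\varphi \in \HHH(G,H) \leeqvs \HHH(G,K)$, then since $E = |L|^{-1}\1_{\HHH(G,H)}$ acts as a two-sided identity on $\HHH(G,H)$ under $\ast_H$, we get $E \ast_H \varphi = |L|^{-1}\varphi$, hence $E \ast_K \varphi = |L|(E \ast_H \varphi) = \varphi$ by \eqref{convHK} (applicable since both $E$ and $\varphi$ lie in $\HHH(G,H)$), so $|L|(E \ast_K \varphi) = |L|\varphi$; rescaling, $|L|^{-1}\varphi$ maps to $\varphi$. (Alternatively one notes the map restricted to $\HHH(G,H)$ is multiplication by $|L|$, visibly invertible.)

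\textbf{Main obstacle.} There is no deep obstacle here — the content is entirely the interplay between the two convolution normalizations $\ast_K$ and $\ast_H$ and the identification $E = |L|^{-1}\II_H$. The one place to be genuinely careful is step two: \eqref{convHK} only relates $\ast_K$ and $\ast_H$ for elements that actually lie in $\HHH(G,H)$, so I must make sure that $E\ast_K\alpha$ and $E\ast_K\beta$ have been shown to be bi-$H$-invariant (step one) \emph{before} invoking \eqref{convHK}, and that centrality of $E$ is what licenses replacing $E\ast_K\alpha\ast_K\beta$ by $(E\ast_K\alpha)\ast_K(E\ast_K\beta)$ via inserting a second $E$ and using $E\ast_K E = E$. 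Getting the bookkeeping of the scalar $|L|$ consistent across all four steps is the only thing that requires attention.
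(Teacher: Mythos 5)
Your proof is correct and follows essentially the same route as the paper's: verify that the image lands in $\HHH(G,H)$, use centrality and idempotency of $E$ together with \eqref{convHK} for multiplicativity, note that $E \ast_K \II_K = E$ with $|L|E = \II_H$ for the unit, and show surjectivity by observing that $|L|^{-1}\varphi$ maps to $\varphi$ for $\varphi \in \HHH(G,H)$. The only difference is that you spell out the well-definedness step (left/right $H$-invariance via $E = |L|^{-1}\II_H$ and centrality) that the paper leaves as ``easy to check.''
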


\begin{proof}
	It is easy to check that this map is well-defined, i.e., that if $\alpha$ is $K$-bi-invariant, then $|L| ( E \ast_K \alpha)$ is $H$-bi-invariant.  To see that it preserves the convolution product, one uses the fact that $E$ is a central idempotent and \eqref{convHK} to see that
	\begin{align*}
	|L| \left( E \ast_K ( \alpha \ast_K \beta) \right) = |L| \left( (E \ast_K \alpha) \ast_K (E \ast_K \beta) \right) = |L| (E \ast_K \alpha) \ast_H |L| (E \ast_K \beta).
	\end{align*}
	By the remark preceding Lemma \ref{suffcent}, this map preserves the identity.  Finally, it is surjective, for given $\varphi \in \HHH(G, H)$, as mentioned above, we may think of it as an element of $\HHH(G, K)$ and we find $|L|^{-1} \varphi \mapsto \alpha$.
\end{proof}

\subsubsection{Inclusions} \label{Heckeinclusions}

Suppose now that $G$ is a group $L$, $H \leq G$ are subgroups and let $K := H \cap L$.  Assume $H = K \ltimes U$ for some subgroup $U \leq G$ and that $L \leq N_G(U)$.  We write $\ast_K$ and $\ast_H$ for the convolution products in $\HHH(L, K)$ and $\HHH(G, H)$, respectively.

\begin{lemma} \label{HKcosets}
	Suppose $x$, $y \in L$ are such that $x \in HyH$.  Then $x \in K y K$.
\end{lemma}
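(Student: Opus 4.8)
We are in the situation $K = H \cap L$, $H = K \ltimes U$, and $L \leq N_G(U)$, and we must show: if $x, y \in L$ lie in the same double $H$-coset of $G$, then they already lie in the same double $K$-coset of $L$. So suppose $x = h_1 y h_2$ with $h_1, h_2 \in H$.

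\textbf{Proof proposal.} The plan is to write $h_1 = k_1 u_1$ and $h_2 = u_2 k_2$ using the semidirect product decomposition $H = K \ltimes U$ (so $k_i \in K$, $u_i \in U$), giving $x = k_1 u_1 y u_2 k_2$. Since $k_1, k_2 \in K \leq L$ and $x \in L$, and we want to land in $K x' K$ with $x' = y$, it suffices to show that $u_1 y u_2 \in L$ — indeed, if $u_1 y u_2 \in L$, then $u_1 y u_2 = k_1^{-1} x k_2^{-1} \in L$, and moreover we can try to absorb the $u_i$ into things that land in $K$. The key point I would exploit is the normality hypothesis $L \leq N_G(U)$: rewrite $u_1 y = y (y^{-1} u_1 y)$, and since $y \in L \leq N_G(U)$ we have $y^{-1} u_1 y \in U$. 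So $x = k_1 y (y^{-1} u_1 y) u_2 k_2 = k_1 y u' k_2$ where $u' := (y^{-1} u_1 y) u_2 \in U$. Now $k_1 y u' k_2 = x \in L$ and $k_1 y k_2 \in L$, hence $u' = y^{-1} k_1^{-1} x k_2^{-1} \cdot (\text{rearranged})$ — more carefully, from $x = k_1 y u' k_2$ we get $u' = (k_1 y)^{-1} x k_2^{-1} = y^{-1} k_1^{-1} x k_2^{-1}$, and every factor on the right lies in $L$, so $u' \in U \cap L$.

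\textbf{Finishing.} The crux is then the claim $U \cap L \subseteq K$, equivalently $U \cap L \subseteq H \cap L$, which is immediate since $U \leq H$. Therefore $u' \in K$, and $x = k_1 y u' k_2 \in K y K$ as desired (absorbing $u'$ and $k_2$ together into the right-hand $K$, or noting $y u' \in y K$... actually $u'$ is on the right of $y$, so $x = k_1 \cdot y \cdot (u' k_2)$ with $k_1 \in K$ and $u' k_2 \in K$, hence $x \in K y K$). I should double-check the ordering conventions: write $h_1 = u_1 k_1$ versus $k_1 u_1$ consistently with how $H = K \ltimes U$ is presented — since $U$ is normal in $H$, both $H = KU$ and $H = UK$ hold, and I'd pick whichever makes the conjugation step cleanest; the argument is symmetric.

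\textbf{Expected main obstacle.} There is essentially no hard analytic or combinatorial content here — the only thing to be careful about is bookkeeping: getting the left/right placement of the unipotent factors consistent, and making sure the normalizer hypothesis is applied to conjugate a $U$-element past a fixed element of $L$ (here $y$, or its inverse) rather than past an element of $H \setminus L$, which would not be controlled. The one genuine input beyond elementary group manipulation is the hypothesis $L \leq N_G(U)$, and the proof is really just the observation that this lets one "slide" the Stokes-type unipotent factors through $L$-elements at the cost of conjugation, after which everything collapses because $U \cap L \leq K$. So I expect the writeup to be three or four lines.
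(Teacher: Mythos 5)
Your proof is correct and takes essentially the same route as the paper's: decompose the left $H$-factor via $H = K\ltimes U$, slide the resulting $U$-element past $y$ using $L \leq N_G(U)$, and conclude from $U \cap L \subseteq H \cap L = K$. The only cosmetic difference is that you also decompose the right $H$-factor, whereas the paper leaves $h_2$ intact and observes directly that $(y^{-1}uy)h_2 \in H \cap L = K$; both are the same argument.
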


\begin{proof}
	We write $x = h_1 y h_2$ for some $h_1$, $h_2 \in H$.  Since $H = K \ltimes U$, we may write $h_1 = k u$ for some $k \in K$, $u \in U$.  Then $x = k y (y^{-1} u y) h_2$, but since $y \in L \leq N_G(U)$, $v := y^{-1} u y \in U \leq H$ and so $vh_2 \in H$.  But also $vh_2 = (ky)^{-1} x \in L$, so $vh_2 \in K = H \cap L$, and hence $x = k y (v h_2) \in K y K$.
\end{proof}

\begin{proposition} \label{p:Hinclusion}
	One has an inclusion of Hecke algebras $\HHH(L, K) \hookrightarrow \HHH(G, H)$, taking $\1_{\HHH(L,K)}$ to $\1_{\HHH(G,H)}$, given by $\varphi \mapsto \varphi^H$, where
	\begin{align*}
	\varphi^H(g) := \frac{1}{|K|} \sum_{x \in L} \varphi(x) \II_H(x^{-1} g).
	\end{align*}
\end{proposition}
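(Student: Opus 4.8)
The plan is to verify directly that $\varphi \mapsto \varphi^H$ is a well-defined, unital, injective algebra homomorphism, exploiting the hypothesis $H = K \ltimes U$ with $L \leq N_G(U)$ and the coset-comparison Lemma~\ref{HKcosets}. First I would check well-definedness, i.e.\ that $\varphi^H$ is genuinely $H$-bi-invariant. Right $H$-invariance is automatic from the factor $\II_H(x^{-1}g)$. For left $H$-invariance one writes $h = k u$ with $k \in K$, $u \in U$, and uses that $\varphi$ is left $K$-invariant together with $L \leq N_G(U)$ to absorb the $U$-part: for $x \in L$, $\II_H((ku)^{-1} \cdot$ something$)$ can be rewritten by pushing $u$ across, since $x^{-1} u^{-1} \cdots$ lands back in $H$ after conjugating $u$ through elements of $L$. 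The key bookkeeping is that the sum is over $x \in L$ and the $\II_H$ factor only ``sees'' $x$ up to right multiplication by $H \cap L = K$, which is exactly what $\varphi$ is invariant under.

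Next I would identify the image on basis elements, which is where Lemma~\ref{HKcosets} does the real work. Pick double-coset representatives: let $W^K$ be a set of $K$-double-coset representatives in $L$ and extend (or relate) these to $H$-double-coset representatives in $G$. Lemma~\ref{HKcosets} says that two elements of $L$ that are $H$-equivalent are already $K$-equivalent, so distinct $K$-double cosets in $L$ land in distinct $H$-double cosets in $G$; this gives injectivity and shows $\varphi^H$ sends the basis element $T^K_w = \II_{KwK}$ (for $w \in W^K$) to a scalar multiple of $\II_{HwH}$. One then computes that scalar by evaluating $\varphi^H$ at $w$ itself: $\varphi^H(w) = \frac{1}{|K|}\sum_{x \in L} \II_{KwK}(x)\II_H(x^{-1}w)$, and the terms that survive are those $x \in KwK$ with $x^{-1} w \in H$, i.e.\ $x \in KwK \cap wH$; counting these (using $H = K \ltimes U$ and $L \leq N_G(U)$ to control the $U$-direction) pins down the normalization and shows the map is essentially the ``extension by zero outside $L$'' followed by spreading over the larger double coset, scaled correctly so that $T^K_e = \II_K \mapsto \II_H = T^H_e$, giving unitality.

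Finally, for multiplicativity I would compute $(\varphi_1 \ast_K \varphi_2)^H$ and $\varphi_1^H \ast_H \varphi_2^H$ and match them. The cleanest route is to relate everything back to the group algebras via the identity~\eqref{gpalgHalg}: the convolution in $\HHH(G,H)$ differs from the group-algebra product by $|H|^{-1}$ and similarly for $\HHH(L,K)$ with $|K|^{-1}$, so one reduces to a statement about the maps $\iota_L : \HHH(L,K) \hookrightarrow \C[L]$, $\iota_G : \HHH(G,H) \hookrightarrow \C[G]$ and the inclusion $\C[L] \hookrightarrow \C[G]$. Concretely, $\iota_G(\varphi^H)$ should equal $\frac{|H|}{|K|}$ times the image in $\C[G]$ of $\iota_L(\varphi) \in \C[L]$, up to the idempotent/averaging bookkeeping; then multiplicativity of $\varphi \mapsto \varphi^H$ follows from multiplicativity of $\C[L] \hookrightarrow \C[G]$ together with~\eqref{gpalgHalg}. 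The main obstacle will be the well-definedness and multiplicativity check: both require carefully pushing $U$-elements through $L$-elements using $L \leq N_G(U)$ and keeping track of the $H = K \ltimes U$ splitting in sums over cosets — the algebra is elementary but the index-chasing is the part most likely to hide a sign or a factor of $|U|$, so I would do it by first establishing the clean group-algebra reformulation and only then translating back to the Hecke-algebra normalization.
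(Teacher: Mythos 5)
Your outline has the right skeleton — the well-definedness check, the use of Lemma~\ref{HKcosets} for injectivity and the evaluation at a representative $w$, and a reduction to group-algebra convolutions for multiplicativity — but the last step takes a genuinely different route from the paper, which instead proves multiplicativity by a direct expansion of $(\varphi_1^H \ast_H \varphi_2^H)(g)$ as a nested sum, splitting $h=ku$ with $k\in K$, $u\in U$, substituting, and using $z\in L\leq N_G(U)$ to absorb the $U$-sum into a factor of $|U|$. Your group-algebra reformulation can be made to work, but the concrete claim that $\iota_G(\varphi^H)$ is a scalar $\tfrac{|H|}{|K|}$ times the extension-by-zero of $\iota_L(\varphi)$ is not right: the support of $\varphi^H$ is $HwH\supsetneq KwK$, so it is not a scaled copy of the $\C[L]$-element. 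The correct reformulation is $\varphi^H = \tfrac{1}{|K|}\,\widetilde\varphi \ast_G \II_H$, where $\widetilde\varphi\in\C[G]$ is $\varphi$ extended by zero. With this in hand your route does close up: using $\II_H = \II_K \ast_G \II_U$ (since $H=K\ltimes U$), the commutation $\II_U\ast_G\widetilde\varphi = \widetilde\varphi\ast_G\II_U$ for $\varphi$ supported on $L$ (this is exactly where $L\leq N_G(U)$ and $L\cap U=\{1\}$ enter), and $\II_K\ast_G\widetilde\varphi = |K|\,\widetilde\varphi$, one gets the key identity $\II_H\ast_G\widetilde\varphi\ast_G\II_H = |H|\,\widetilde\varphi\ast_G\II_H$, from which multiplicativity follows by a short computation combining the two instances of \eqref{gpalgHalg}. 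This reformulation is arguably more structural and isolates the role of the hypotheses cleanly, whereas the paper's proof is a single self-contained index chase; both are about the same length once the details are written out, and both hinge on exactly the same two inputs ($H=K\ltimes U$ and $L$ normalizing $U$). On injectivity you propose computing the exact scalar by which $T^{\HHH(L,K)}_w$ maps to $T^{\HHH(G,H)}_w$; the paper settles for observing that $(T^{\HHH(L,K)}_w)^H(w)>0$, which suffices since the image is $H$-bi-invariant and supported in the single double coset $HwH$ (by Lemma~\ref{HKcosets}), hence already a multiple of $T^{\HHH(G,H)}_w$.
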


\begin{proof}
	It is clear that this is a map of vector spaces.  To show that it preserves multiplication, let $\varphi_1$, $\varphi_2 \in \HHH(L,K)$.  Then \small
	\begin{align*}
	(\varphi_1^H \ast_H \varphi_2^H)(g) & = \frac{1}{|H|} \sum_{a \in G} \varphi_1(a) \varphi_2 ( a^{-1} g) = \frac{1}{|H||K|^2} \sum_{ \substack{ x,y \in L \\ a \in G}} \varphi_1(y) \II_H( y^{-1}a) \varphi_2 ( x) \II_H(x^{-1} a^{-1} g) \\
	& = \frac{1}{|U||K|^3} \sum_{\substack{ x,y \in L \\ a \in yH}} \varphi_1(y) \varphi_2 (x) \II_H(x^{-1} a^{-1} g) = \frac{1}{|U||K|^3} \sum_{ \substack{x,y \in L \\ h \in H}} \varphi_1(y) \varphi_2 (x) \II_H(x^{-1} h^{-1} y^{-1} g) \\
	& = \frac{1}{|U||K|^3} \sum_{ \substack{x,y \in L \\ k \in K,  u \in U}} \varphi_1(y) \varphi_2 (x) \II_H(x^{-1} k^{-1} u^{-1} y^{-1} g) .
	\end{align*} \normalsize
	Making the substitution $x = k^{-1} z$, this becomes \small
	\begin{align*}
	(\varphi_1^H \ast_H \varphi_2^H)(g) & = \frac{1}{|U||K|^3} \sum_{ \substack{y, z \in L \\ k \in K,  u \in U}} \varphi_1(y) \varphi_2 ( k^{-1} z ) \II_H(z^{-1} u^{-1} y^{-1} g) \\
	& = \frac{1}{|U||K|^2} \sum_{ \substack{y, z \in L \\ u \in U}} \varphi_1(y) \varphi_2 ( z ) \II_H \left( (z^{-1} u^{-1} z) z^{-1} y^{-1} g \right)  = \frac{1}{|K|^2} \sum_{ y, z \in L } \varphi_1(y) \varphi_2(z) \II_H \left( z^{-1} y^{-1} g \right)  
	\end{align*} \normalsize
	and now letting $z = y^{-1}x$, 
	\begin{align*}
	(\varphi_1^H \ast_H \varphi_2^H)(g) & = \frac{1}{|K|^2} \sum_{ x,y \in L } \varphi_1(y) \varphi_2( y^{-1} x ) \II_H \left( x^{-1} g \right)  = \frac{1}{|K|} \sum_{ x \in L } (\varphi_1 \ast_K \varphi_2)(x) \II_H \left( x^{-1} g \right)  \\
	& = (\varphi_1 \ast_K \varphi_2)^H(g). 
	\end{align*}
	It is easy to see that $\1_{\HHH(L,K)}^\H = \II_K^H = \II_H = \1_{\HHH(G,H)}$, so we do indeed get a map of algebras.
	
	To see that it is injective, let $V \subseteq L$ be a set of double $K$-coset representatives, so that $\{ T_x^{\HHH(L,K)} \}_{x \in V}$ is a basis of $\HHH(L, K)$.  Lemma\ref{HKcosets} says that if $W$ is a set of double $H$-coset representatives in $G$, then we may take $V \subseteq W$.  We write $\{ T_w^{\HHH(G,H)} \}_{w \in W}$ for the corresponding basis of $\HHH(G, H)$, with the subscripts denoting which Hecke algebra the element lies in.  Then we observe that
	\begin{align*}
	(T_x^{\HHH(L,K)})^H(x) = \frac{1}{|K|} \sum_{y \in L} T_x^{\HHH(L,K)}(y) \II_H(y^{-1}x) = \frac{1}{|K|} \sum_{y \in KxK} \II_H(y^{-1}x) > 0.
	\end{align*}
	This says that the $T_x^{\HHH(G,H)}$-component of $(T_x^{\HHH(L,K)} )^H$ is non-zero, but since $V \subseteq W$, the set $\big\{ T_x^{\HHH(G,H)} \big\}_{x \in V}$ is linearly independent, and hence so is the image $\big\{ (T_x^{\HHH(L,K)})^H \big\}_{x \in V}$ of the basis of $\HHH(L, K)$.
\end{proof}

\subsection{Iwahori--Hecke Algebras of type \texorpdfstring{$ A_{n-1} $}{An-1} }
\label{s:IHAlg}

Let $\G$ be the algebraic group $\GL_n$ defined over the finite field $\F_q$. 
Let $\T \leeq \G$ be the maximal split torus of diagonal matrices.  There will be a corresponding root system with Weyl group $\SS_n$, the symmetric group on $n$ letters, which we will identify with the group of permutation matrices.  Let $\B \leeq \G$ be the Borel subgroup of upper triangular matrices.  	Let the finite dimensional algebra $\HHH(\G, \B)$ be as defined above.  
The Bruhat decomposition for ${\G}$, with respect to $\B$, allows us to think of $\SS_n$ as a set of double $\B$-coset representatives, and hence $\{ T_w \}_{w \in \SS_n}$ gives a basis of $\HHH(\G, \B)$.  Furthermore, the choice of $\B$ determines a set of simple reflections $\{ s_1, \ldots, s_{n-1} \} \subseteq \SS_n$; we will write $T_i := T_{s_i}$.  The main result of \cite[Theorem 3.2]{Iwahori1964} gave the following characterization of $\HHH(\G, \B)$ in terms of generators and relations.
\begin{enumerate}[(a)]
	\item If $w = s_{i_1} \cdots s_{i_k}$ is a reduced expression for $w \in \SS_n$, then $T_w = T_{i_1} \cdots T_{i_k}$.  Hence $\HHH(\G, \B)$ is generated as an algebra by $T_1, \ldots, T_\ell$.
	\item For $1 \leq i \leq \ell$, $T_i^2 = qT_i + (q-1) 1$.
\end{enumerate}

\subsubsection{A generic deformation}
If $u$ is an indeterminate over $\C$, we may consider the $\C[u^{\pm 1}]$-algebra $\HH_n$ generated by elements $\mathsf{T}_1, \ldots, \mathsf{T}_{n-1}$ subject to the relations
\begin{enumerate}[(a)]
	\item $\mathsf{T}_i \mathsf{T}_j = \mathsf{T}_j \mathsf{T}_i$ for all $i,j=1,\ldots, n-1$ such that $|i-j|>1$;
	\item $\mathsf{T}_i \mathsf{T}_{i+1} \mathsf{T}_i = \mathsf{T}_{i+1} \mathsf{T}_i \mathsf{T}_{i+1}$ for all $i=1,\ldots, n-2$;
	\item $\mathsf{T}_i^{2} = u\cdot 1 + (u-1) \mathsf{T}_i$;
\end{enumerate}
$\HH_n$ is called the \emph{generic Iwahori--Hecke algebra of type $A_{n-1}$ with parameter $u$}.  Setting $u = 1$, we see that the generators satisfy those of the generating transpositions for $\SS_n$ and \cite[Theorem 3.2]{Iwahori1964} shows that for $u=q$ these relations give the Iwahori--Hecke algebra above, so (cf.\ \cite[(68.11) Proposition]{CurtisReiner})
\begin{align} \label{Iwahorispec}
\HH_n(1) & \cong \C[\SS_n] & \HH_n(q) & \cong \HHH(\G, \B).
\end{align}


\subsection{Yokonuma--Hecke Algebras} \label{s:yokoyoko}

Let $\T \leeq \B \leeq \G$ be as in the previous example, let $\U \leeq \B$ 
be the unipotent radical of $\B$, namely, the group of upper triangular unipotent matrices. 
Then the algebra $\HHH(\G, \U)$, first studied in \cite{yokonuma1}, is called the \emph{Yokonuma--Hecke algebra} associated to $\G, \B, \T$.  Let $N(\T)$ be the normalizer of $\T$ in $\G$; $N(\T)$ is the group of the monomial matrices (i.e., those matrices for which each row and each column has exactly one non-zero entry) and one has $N(\T) =  \T \rtimes \SS_n$, where the Weyl group $\SS_n$ acts by permuting the entries of a diagonal matrix.  We will often write $\rN$ for $N(\T)$.  By the Bruhat decomposition, one may take $\rN \leeq \G$ as a set of double $\U$-coset representatives.  Section \ref{Halgnotn} describes how $\HHH(\G,\U)$ has a basis $\{ T_v, v \in N(\T) \}$.

\subsubsection{A generic deformation} \label{altpres}
%

The algebra $\HHH(\G,\U)$ has a presentation in terms of generators and relations due to \cite{yokonuma1,yokonuma2}, 
which we now describe and which we will make use of later on.  For this consider the $\C[u^{\pm 1}]$-algebra 
$\YY_{d,n}$ generated by the elements
\begin{align*}
\mathsf{T}_i ,\: i=1,\ldots,n-1 \mbox{ and } \mathsf{h}_j \mbox{ for } j=1,\ldots, n
\end{align*}
subject to the following relations:
\begin{enumerate}[(a)]
	\item $\mathsf{T}_i \mathsf{T}_j = \mathsf{T}_j \mathsf{T}_i$ for all $i,j=1,\ldots, n-1$ such that $|i-j|>1$;
	\item $\mathsf{T}_i \mathsf{T}_{i+1} \mathsf{T}_i = \mathsf{T}_{i+1} \mathsf{T}_i \mathsf{T}_{i+1}$ for all $i=1,\ldots, n-2$;
	\item $\mathsf{h}_i \mathsf{h}_j=\mathsf{h}_j \mathsf{h}_i$ for all $i,j=1,\ldots, n$;
	\item $\mathsf{h}_j \mathsf{T}_i=\mathsf{T}_i \mathsf{h}_{s_i(j)}$ for all $i=1,\ldots, n-1,$ and $j=1,\ldots, n,$ where $s_i := (i,i+1) \in \SS_n$;
	\item $\mathsf{h}_i^{d}=1$ for all $i=1,\ldots, n$;
	\item \label{quadratic} 
	$\mathsf{T}_i^{2}=u\mathsf{f}_i\mathsf{f}_{i+1}+(u-1)  \mathsf{e}_i \mathsf{T}_i$
	for $i=1,\ldots, n-1$, where
	\begin{align}\label{e:esubi}
	\mathsf{e}_i := \frac{1}{d}\sum_{j=1}^{d} \mathsf{h}_i^{j}\mathsf{h}_{i+1}^{-j} 
	\end{align}
	for $ i=1,\ldots,n-1 ,$ and
	\begin{align}\label{e:fcases}
	\mathsf{f}_i := \begin{cases}
	\mathsf{h}_i^{d/2} 	& \textit{ for   $ d $ even, }\\
	1 & \textit{ for  $ d $ odd }
	\end{cases}
	\end{align}
	for $ i=1,\ldots,n .$
	%
	
\end{enumerate}



In Theorem~\ref{t:yokonuma} below, we will see that $\HHH(\G, \U)$ arises as the specialization $\YY_{q-1,n}(q)$.

\begin{remark}\label{r:modernyoko}
	The modern definition of $\YY_{d,n} $ in terms of generators and relations takes $\mathsf{f}_i = 1$,
	regardless of the parity of $ d $ (cf. \cite{cp} and \cite{juyumaya3}).  We decided to take that of \cite{yokonuma1} so that the meaning of the generators is more transparent.  Again, this will be clearer from Theorem~\ref{t:yokonuma} and its proof.
\end{remark}

\subsection{Some computations in \texorpdfstring{$\HHH(\G, \U)$}{H(G,U)} and \texorpdfstring{$\YY_{d,n}(u)$}{Yd,n(u)}}
%

We continue with the context of the previous subsection.  There is a canonical surjection $p : \rN \to \SS_n$, which allows us to define the length of an element $v \in \rN$ as that of $p(v)$; we will denote this by $\ell(v)$.

\begin{lemma} \label{heckemult}
	If $v_1$, $v_2 \in \rN$ are such that $\ell(v_1 v_2) = \ell(v_1) + \ell(v_2)$, then $T_{v_1} \ast T_{v_2} = T_{v_1 v_2}$.  In particular, if $h \in \T$, $v \in \rN$ then $T_h \ast T_v = T_{hv}$.
\end{lemma}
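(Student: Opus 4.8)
The plan is to translate the statement into a point count and peel it down, by induction on $\ell(v_1)$, to two cases: $v_1\in\T$ (which simultaneously gives the ``in particular'' clause) and $v_1$ a simple reflection. From \eqref{convprod} one has $(T_{v_1}\ast T_{v_2})(g)=\frac1{|\U|}\#\{b\in\G: b\in\U v_1\U,\ b^{-1}g\in\U v_2\U\}$, so the task is to show that, under $\ell(v_1v_2)=\ell(v_1)+\ell(v_2)$, this count equals $|\U|\cdot\II_{\U v_1v_2\U}(g)$.

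First I would dispose of the torus case $v_1=h\in\T$. Since $\T$ normalizes $\U$ we have $\U h\U=h\U=\U h$, hence $T_h=\II_{h\U}$, and a short manipulation of \eqref{convprod} rewrites $(T_h\ast T_v)(g)$ as $\frac1{|\U|}\#\{u\in\U: h^{-1}ug\in\U v\U\}$. But $h^{-1}ug\in\U v\U\iff ug\in\U hv\U\iff g\in\U hv\U$, a condition independent of $u$, so $T_h\ast T_v=T_{hv}$. This is the base case $\ell(v_1)=0$ of the induction and also the ``in particular'' assertion.

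For the inductive step, write $v_1=v_1's$ with $s=s_i$ a simple reflection and $\ell(v_1')=\ell(v_1)-1$ (peel the last letter off a reduced word for $p(v_1)$, keeping the torus part with $v_1'$). By the inductive hypothesis $T_{v_1}=T_{v_1'}\ast T_s$, and standard properties of the length function on $\rN$ (which factors through $p:\rN\to\SS_n$) turn $\ell(v_1v_2)=\ell(v_1)+\ell(v_2)$ into $\ell(sv_2)=\ell(v_2)+1$ and $\ell(v_1'(sv_2))=\ell(v_1')+\ell(sv_2)$. Granting the simple-reflection case $T_s\ast T_{v_2}=T_{sv_2}$, associativity and the inductive hypothesis again give $T_{v_1}\ast T_{v_2}=T_{v_1'}\ast(T_s\ast T_{v_2})=T_{v_1'}\ast T_{sv_2}=T_{v_1'sv_2}=T_{v_1v_2}$.

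The crux, and the step I expect to be the main obstacle, is the simple-reflection case: $T_s\ast T_v=T_{sv}$ for $v\in\rN$ with $\ell(sv)=\ell(v)+1$. Here I would bring in the root subgroup $\U_i\leeq\U$ attached to $\alpha_i$ and $\U_i':=\U\cap s\U s^{-1}$, which is normal in $\U$, satisfies $\U=\U_i'\rtimes\U_i$, is stabilized by $s$-conjugation, and has $s\U_is^{-1}=\U_{-i}\leeq\U_-$. Consequently $\U s\U=\U s\U_i$, the map $(u,a)\mapsto usa$ is a bijection $\U\times\U_i\to\U s\U$, and substituting $b\mapsto b^{-1}$ in \eqref{convprod} collapses the count to $(T_s\ast T_v)(g)=\#\{a\in\U_i: sag\in\U v\U\}$. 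The hypothesis $\ell(sv)=\ell(v)+1$ is equivalent to $v^{-1}\U_iv\leeq\U$, and from it one extracts the key identity $\U s\U v\U=\U sv\U$. If $g\notin\U sv\U$, then $sag\in\U v\U$ would force $g\in a^{-1}s\,\U v\U\leeq\U s\U v\U=\U sv\U$, a contradiction, so the count is $0=\II_{\U sv\U}(g)$. If $g\in\U sv\U$, write $g=u_0\cdot sv\cdot b$ in the unique ``reduced'' form with $u_0\in\U$ and $b\in\U\cap(sv)^{-1}\U_-(sv)$; using $s^2=1$ and decomposing $au_0\in\U_i\U_i'$ and $b$ along $\Phi(sv)=\Phi(v)\sqcup\{v^{-1}\alpha_i\}$, one computes $sag=\bar c\cdot d\cdot v\cdot b_1$ with $\bar c\in\U_{-i}$ depending only on the image of $au_0$ in $\U/\U_i'\cong\U_i$, and $d,b_1\in\U$. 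Then $sag\in\U v\U\iff\bar c=1$: indeed if $\bar c\neq 1$ the rank-one fact that every nontrivial element of $\U_{-i}$ lies in some Bruhat cell $\U(h_0s)\U$ with $h_0\in\T$, together with $\U s\U v\U=\U sv\U$, places $sag$ in $\U(h_0sv)\U$, which is disjoint from $\U v\U$ because $h_0s\notin\T$. Since $\bar c=1$ holds for exactly one $a\in\U_i$, the count is $1=\II_{\U sv\U}(g)$, which finishes the simple-reflection case and hence the lemma.
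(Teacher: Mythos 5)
Your proof is correct, but it takes a genuinely different route from the paper's. The paper disposes of this lemma in one line by citing Iwahori's general Lemma~1.2 (valid for any Hecke ring $\HHH(G,H)$), which says $T_{v_1}\ast T_{v_2}=T_{v_1v_2}$ whenever the indices multiply, $\textnormal{ind}(v_1v_2)=\textnormal{ind}(v_1)\cdot\textnormal{ind}(v_2)$ where $\textnormal{ind}(v)=|HvH|/|H|$; combined with the observation that $\textnormal{ind}(v)=q^{\ell(v)}$ for $v\in\rN$ (which follows from $|\U v\U|=|\U|\cdot|\U\cap v^{-1}\U_- v|$), the length additivity hypothesis immediately becomes multiplicativity of indices and the result drops out. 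You instead prove the identity from scratch: you reduce by induction on $\ell(v_1)$ to the torus case (which also handles the ``in particular'') and the simple-reflection case, and then settle the latter by unwinding the convolution as a count over $\U_i$, using the precise Bruhat-cell geometry of $\GL_n$ (the decomposition $\U=\U_i'\rtimes\U_i$, the identity $\U s\U v\U=\U sv\U$ when $v^{-1}\U_iv\leeq\U$, and the rank-one fact about $\U_{-i}$). Both approaches are valid; the paper's is shorter and works for an arbitrary $\HHH(G,H)$ via the purely combinatorial index criterion, while yours makes the mechanism explicit at the cost of a considerably longer argument specific to the split $\GL_n$ setting. One small point to make precise if you write this up: in the inductive step you must peel off a literal permutation-matrix lift $s=\sigma_i$ of $s_i$ (so that $s^2=1$, which your later manipulation $s^{-1}=s$ and $sag=(sau_0s^{-1})vb$ uses), absorbing any torus part into $v_1'$; you indicate this but it is load-bearing and worth stating.
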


\begin{proof}
	This follows readily from \cite[Lemma 1.2]{Iwahori1964} using the fact that, in the notation there, for $v \in \rN$, $\textnormal{ind}(v) = q^{\ell(v)}$.
\end{proof}

Our first task is to describe the relationship of $\YY_{d,n}$ to $\HHH(\G, \U)$, as alluded to at the beginning of Section \ref{altpres}.  To do this, we follow the approach in \cite[(7.4),(8.1.6)]{GeckPfeiffer}.  
For example, the $u=1$ specialization of $\YY_{q-1,n}$ gives 
\beq\label{spec1} \YY_{q-1,n}(1)=\C \otimes_{\theta_1} \YY_{q-1,n}\cong \C[\rN]=\C[(\F_q^\times)^n \rtimes \SS_n] \eeq
the group algebra of the normalizer in $\G$ of the torus $T(\F_q).$


\begin{theorem} \label{t:yokonuma} Let $q$ be a prime power and fix a multiplicative generator $t_g\in \F_q^\times$.  For $t\in \F_q^\times$, let $h_{i}(t)\in \T$ be the diagonal matrix obtained by replacing the $i$th diagonal entry of the identity matrix by $t$.  Finally, we let $s_i \in \rN$ denote the permutation matrix corresponding to $(i, i+1)$ and
	\begin{align*}
	\omega_i := s_i h_i({-1}) = h_{i+1}(-1) s_i \in \rN.
	\end{align*}
	Then one has an isomorphism of $\C$-algebras $\YY_{q-1,n}(q) \cong \HHH(\G, \U)$ under which
	\begin{align*}
	\mathsf{T}_i  & \mapsto T_{\omega_i} \in \HHH(\G, \U) & & \text{ and } & 
	\mathsf{h}_i & \mapsto T_{h_{i}(t_g)}\in \HHH(\G, \U). 
	\end{align*}
\end{theorem}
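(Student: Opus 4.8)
The plan is to establish the isomorphism $\YY_{q-1,n}(q) \cong \HHH(\G, \U)$ by verifying that the proposed assignment $\mathsf{T}_i \mapsto T_{\omega_i}$, $\mathsf{h}_i \mapsto T_{h_i(t_g)}$ respects the defining relations (a)--(f) of $\YY_{d,n}$ with $d = q-1$ and $u = q$, and then checking that the resulting surjection is an isomorphism by a dimension count. First I would record the basic multiplicative structure in $\HHH(\G, \U)$: by Lemma~\ref{heckemult}, whenever $\ell(v_1 v_2) = \ell(v_1) + \ell(v_2)$ one has $T_{v_1} \ast T_{v_2} = T_{v_1 v_2}$, and in particular $T_h \ast T_v = T_{hv}$ for $h \in \T$. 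Since the torus elements $h_i(t)$ have length zero, the images $T_{h_i(t_g)}$ generate a commutative subalgebra isomorphic to $\C[(\F_q^\times)^n]$, which immediately gives relations (c) and (e) (using that $t_g$ has order $q-1 = d$, so $T_{h_i(t_g)}^d = T_{h_i(t_g^d)} = T_{h_i(1)} = T_e$). Relation (d), $\mathsf{h}_j \mathsf{T}_i = \mathsf{T}_i \mathsf{h}_{s_i(j)}$, follows from the conjugation identity $\omega_i h_j(t) \omega_i^{-1} = h_{s_i(j)}(t)$ in $\rN$ together with the length-additivity bookkeeping of Lemma~\ref{heckemult}.

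The braid relations (a) and (b) for the $T_{\omega_i}$ are the heart of the classical Iwahori--Hecke computation transplanted to this setting: one checks that $\ell(\omega_i \omega_j) = \ell(\omega_i) + \ell(\omega_j)$ when $|i-j| > 1$, and that both sides of the braid relation correspond to reduced words of the same length-$3$ element of $\rN$, so that Lemma~\ref{heckemult} reduces each side to a single basis element $T_v$ and one verifies the two resulting monomial matrices in $N(\T)$ coincide (the sign corrections built into $\omega_i = s_i h_i(-1)$ are designed precisely so this works out). The quadratic relation (f) is the most delicate: one must compute $T_{\omega_i} \ast T_{\omega_i}$ directly from the convolution product \eqref{convprod}, i.e., by counting elements in $\U w \U$ for the relevant $w$, expand $\omega_i^2 = h_i(-1)h_{i+1}(-1) \in \T$, and organize the answer into the form $q\,\mathsf{f}_i\mathsf{f}_{i+1} + (q-1)\mathsf{e}_i T_{\omega_i}$. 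Here $\mathsf{e}_i$ maps to $\frac{1}{q-1}\sum_{j=1}^{q-1} T_{h_i(t_g^j)h_{i+1}(t_g^{-j})} = \frac{1}{q-1}\sum_{t \in \F_q^\times} T_{h_i(t)h_{i+1}(t^{-1})}$, an averaging idempotent over the subtorus where the $i$ and $i+1$ entries are reciprocal, and $\mathsf{f}_i$ maps to $T_{h_i(-1)}$ when $q-1$ is even (i.e.\ $q$ odd, so $t_g^{(q-1)/2} = -1$) and to $T_e$ when $q-1$ is odd (i.e.\ $q$ even, where $-1 = 1$); this is exactly why the presentation of \cite{yokonuma1} carries the parity-dependent $\mathsf{f}_i$, and why the specialization is stated as in Remark~\ref{r:modernyoko}.

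Once all relations are verified, the assignment extends to a $\C$-algebra homomorphism $\YY_{q-1,n}(q) \to \HHH(\G,\U)$; it is surjective because the $T_{\omega_i}$ and $T_{h_i(t_g)}$ generate $\HHH(\G,\U)$ (any $T_v$ with $v \in \rN = \T \rtimes \SS_n$ factors, via Lemma~\ref{heckemult} and reduced words, into a product of $T_{h}$'s and $T_{\omega_i}$'s). For injectivity I would compare dimensions: $\dim_\C \HHH(\G,\U) = |\rN| / 1 \cdot (\text{number of double }\U\text{-cosets})$, but more efficiently $\dim_\C \HHH(\G,\U) = |\U \backslash \G / \U| = \sum_{w \in \SS_n} |\T| = n!\,(q-1)^n$ (using the refined Bruhat decomposition $\G = \coprod_{w} \U \T w \U$ with the torus factor contributing freely), while $\dim_\C \YY_{q-1,n}(u) = n!\, d^n = n!\,(q-1)^n$ by the known basis $\{\mathsf{h}_1^{a_1}\cdots \mathsf{h}_n^{a_n} \mathsf{T}_w\}$ of $\YY_{d,n}$ over $\C[u^{\pm1}]$, which survives specialization at $u = q$. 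Equality of dimensions upgrades the surjection to an isomorphism. The main obstacle I anticipate is the explicit evaluation of the quadratic relation (f) via coset counting—getting the normalization of $\mathsf{e}_i$ and the parity case of $\mathsf{f}_i$ correct—since this is where the combinatorics of $\U \backslash \G / \U$ interacts nontrivially with the torus, whereas the braid and commutation relations are comparatively mechanical consequences of Lemma~\ref{heckemult}.
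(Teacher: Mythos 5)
Your proposal follows the same route as the paper: verify that the proposed images of the generators satisfy relations (a)--(f), with Lemma~\ref{heckemult} handling the braid, commutation, and torus relations, and a direct computation (the paper cites Yokonuma's Th\'eor\`eme 2.4 for the same identity $T_{\omega_i}^2 = q T_{h_i(-1)h_{i+1}(-1)} + \sum_{t\in\F_q^\times} T_{h_i(t)h_{i+1}(t^{-1})} T_{\omega_i}$) producing the quadratic relation. The one thing you add that the paper's terse proof omits is the explicit surjectivity-plus-dimension-count argument upgrading the homomorphism to an isomorphism; the paper simply asserts ``it is sufficient to show'' the relations hold without justifying that sufficiency, so your completion fills a small but real gap in the exposition.
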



Note that $\omega_i$ is the matrix obtained by replacing the $(2 \times 2)$-submatrix of the identity matrix formed by the $i$th and $(i+1)$st rows and columns by
\begin{align*}	
\left[ \begin{array}{cc}
0 & 1 \\ -1 & 0
\end{array} \right] .
\end{align*}

\begin{proof}
	It is sufficient to show that $T_{\omega_i}$, $T_{h_j(t_g)}$ satisfy the relations prescribed for $\mathsf{T}_i, \mathsf{h}_j$ in Section \ref{altpres}.  Lemma \ref{heckemult} makes most of these straightforward.  
	The computation in \cite[Th\'eor\`eme 2.4${}^\circ$]{yokonuma1} gives
	\begin{align*}
	T_{\omega_i}^2=qT_{h_i(-1) h_{i+1}({-1})} + \sum_{t\in \F_q^\times} T_{h_i(t) h_{i+1}(t^{-1})} T_{\omega_i},
	\end{align*}
	which is relation \eqref{quadratic} . 
\end{proof}

The longest element $w_0 \in \W = \SS_n$ is the permutation $\prod_{i=1}^{ \lfloor n/2 \rfloor} (i, n+1-i)$ (the order of the factors is immaterial since this is a product of disjoint transpositions) and is of length $\binom{n}{2}$.  We may choose a reduced expression 
\begin{align} \label{longestelt}
w_0 = s_{i_1} \cdots s_{i_{\binom{n}{2}}}.
\end{align}
%
With the same indices as in \eqref{longestelt}, we define
\begin{align} \label{specialelements}
\omega_0 & := \omega_{i_1} \cdots \omega_{i_{\binom{n}{2}}} \in \rN, & & \textnormal{ and } &
\mathsf{T}_0 & := \mathsf{T}_{i_1} \cdots \mathsf{T}_{i_{\binom{n}{2}}} \in \YY_{d,n}.
\end{align}
Using the braid relations (a) and (b) and arguing as for Matsumoto's Theorem \cite[Theorem 1.2.2]{GeckPfeiffer}, one sees that $\mathsf{T}_0$ is independent of the choice of reduced expression \eqref{longestelt}.  Now, Lemma \ref{heckemult} shows that
\begin{align*}
T_{\omega_0} = T_{\omega_{i_1}} \cdots T_{\omega_{i_{\binom{n}{2}}}} \in \HHH(\G, \U)
\end{align*}
and Theorem \ref{t:yokonuma} shows that this corresponds to $\mathsf{T}_0 \in \YY_{q-1,n}(q)$.




\begin{lemma}\label{l:t2central}
	The element $\mathsf{T}_0^2$ is central in $\YY_{d,n}$.  It follows, by specialization, that $T_{\omega_0}^2$ is central in $\HHH(\G, \U)$.
\end{lemma}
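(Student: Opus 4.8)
The goal is to show $\mathsf{T}_0^2$ is central in the generic Yokonuma--Hecke algebra $\YY_{d,n}$, and then conclude centrality of $T_{\omega_0}^2$ in $\HHH(\G,\U)$ by the specialization $\YY_{q-1,n}(q)\cong\HHH(\G,\U)$ of Theorem~\ref{t:yokonuma}. Since $\YY_{d,n}$ is generated by $\mathsf{T}_1,\dots,\mathsf{T}_{n-1}$ and $\mathsf{h}_1,\dots,\mathsf{h}_n$, it suffices to check that $\mathsf{T}_0^2$ commutes with each generator. The key structural fact I would use is that $\mathsf{T}_0$ is the ``lift'' of the longest element $w_0$: conjugation by $\mathsf{T}_0$ should implement the longest-element automorphism on the two types of generators, and since $w_0$ is an involution, conjugation by $\mathsf{T}_0^2$ is trivial.

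\textbf{Key steps.} First I would establish the analogue of the standard fact for Iwahori--Hecke algebras that for a reduced word $w_0 = s_{i_1}\cdots s_{i_{\binom n2}}$ one has, for each simple generator $\mathsf{T}_j$,
\begin{align*}
\mathsf{T}_0\,\mathsf{T}_j = \mathsf{T}_{j'}\,\mathsf{T}_0,\qquad\text{where } s_{j'} = w_0 s_j w_0^{-1},
\end{align*}
i.e.\ $j' = n-j$. This is proved exactly as in the Iwahori--Hecke case: because $\ell(w_0 s_j) = \ell(w_0)-1 = \ell(s_{j'} w_0)$, the element $w_0 s_j = s_{j'} w_0$ has reduced expressions obtained from that of $w_0$ by deleting one letter on the right (resp.\ left), and repeated application of the braid relations (a), (b) — which are identical in $\YY_{d,n}$ and $\HH_n$ — yields the identity; the quadratic relation (f) is never invoked since all words in question are reduced. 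Applying this twice gives $\mathsf{T}_0^2\,\mathsf{T}_j = \mathsf{T}_{j''}\,\mathsf{T}_0^2$ with $j'' = n-(n-j)=j$, so $\mathsf{T}_0^2$ commutes with every $\mathsf{T}_j$.

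\textbf{The $\mathsf{h}$-generators.} Next, using relation (d), $\mathsf{h}_a\mathsf{T}_i = \mathsf{T}_i\mathsf{h}_{s_i(a)}$, an easy induction along the reduced word for $w_0$ shows $\mathsf{h}_a\,\mathsf{T}_0 = \mathsf{T}_0\,\mathsf{h}_{w_0(a)} = \mathsf{T}_0\,\mathsf{h}_{n+1-a}$. Applying this twice (and using that $w_0$ is an involution, $w_0^2 = \mathrm{id}$, together with commutativity of the $\mathsf{h}$'s, relation (c)) gives $\mathsf{h}_a\,\mathsf{T}_0^2 = \mathsf{T}_0^2\,\mathsf{h}_a$. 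Hence $\mathsf{T}_0^2$ commutes with all generators, so it is central in $\YY_{d,n}$.

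\textbf{Conclusion and the main obstacle.} Finally, specializing $u\mapsto q$ and $d\mapsto q-1$ and transporting along the isomorphism of Theorem~\ref{t:yokonuma}, under which $\mathsf{T}_0\mapsto T_{\omega_0}$, centrality is preserved, giving that $T_{\omega_0}^2$ is central in $\HHH(\G,\U)$. The only genuinely delicate point is the first step: making rigorous that ``$\mathsf{T}_0$ conjugates $\mathsf{T}_j$ to $\mathsf{T}_{n-j}$'' purely from the braid relations, i.e.\ the Matsumoto-type argument showing that the product $\mathsf{T}_{i_1}\cdots\mathsf{T}_{i_k}$ attached to a reduced word depends only on the underlying Weyl group element (already invoked in the excerpt to make $\mathsf{T}_0$ well-defined) and that the exchange identity $w_0 s_j = s_{n-j} w_0$ in $\SS_n$ lifts to the braid monoid. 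Since relations (a) and (b) of $\YY_{d,n}$ are literally the braid relations of type $A_{n-1}$, this is the standard argument and presents no new difficulty beyond bookkeeping; everything else is a short induction.
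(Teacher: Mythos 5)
Your proof is correct and takes essentially the same route as the paper: both reduce the $\mathsf{T}_j$-commutation to the braid-monoid identity $\mathsf{T}_0\mathsf{T}_j=\mathsf{T}_{n-j}\mathsf{T}_0$ (the paper by invoking \cite[Lemma 4.1.9]{GeckPfeiffer} for its auxiliary monoid $\mathtt{B}^+$; you by the equivalent reduced-word argument) and both then handle the $\mathsf{h}_a$'s by iterating relation (d) along a reduced word for $w_0$. The paper's device of formally introducing the monoid $\mathtt{B}^+$ is just a clean way to package your observation that the quadratic relation (f) never enters; there is no mathematical difference.
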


\proof
Proceeding as in~\cite[\S~4.1]{GeckPfeiffer}, we define a monoid $\mathtt{B}^+$ generated by 
\begin{align*}
\mathtt{T}_1, \ldots, \mathtt{T}_{n-1}, \mathtt{h}_1, \ldots, \mathtt{h}_n
\end{align*}
and subject to the relations
\begin{enumerate}[(a)]
	\item $\mathtt{T}_i \mathtt{T}_j = \mathtt{T}_j \mathtt{T}_i$ for $1 \leq i,j \leq n-1$ with $|i-j|>1$;
	\item $\mathtt{T}_i \mathtt{T}_{i+1} \mathtt{T}_i = \mathtt{T}_{i+1} \mathtt{T}_i \mathtt{T}_{i+1}$ for $1 \leq i \leq n-2$;
	\item $\mathtt{h}_j \mathtt{T}_i = \mathtt{T}_i \mathtt{h}_{s_i(j)}$ for $1 \leq i \leq n-1, 1 \leq j \leq n$, where $s_i := (i,i+1) \in \SS_n$.
\end{enumerate}
Observe that these are simply relations (a), (b) and (d) of those 
given for $\YY_{d,n}$ in Section~\ref{altpres}; one can define the 
monoid algebra $\C[u^{\pm 1}][\mathtt{B}^+]$ of which $\YY_{d,n}$ will be a quotient via the mapping
\begin{align*}
\mathtt{T}_i & \mapsto \mathsf{T}_i & \mathtt{h}_j & \mapsto \mathsf{h}_j,
\end{align*}
for $1 \leq i \leq n-1, 1 \leq j \leq n$.

Letting
\begin{align*}
\mathtt{T}_0 := \mathtt{T}_{i_1} \cdots \mathtt{T}_{i_{\binom{n}{2}}} \in \C[u^{\pm 1}][\mathtt{B}^+],
\end{align*}
it is enough to show that $\mathtt{T}_0$ is central in $\C[u^{\pm 1}][\mathtt{B}^+]$.  Arguing as in the proof of~\cite[Lemma 4.1.9]{GeckPfeiffer}, one sees that $\mathsf{T}_0^2$ commutes with each $\mathsf{T}_i, 1 \leq i \leq n-1$.  Furthermore, relation (c) and \eqref{l:secondcount} give
\begin{align*}
\mathtt{h}_j \mathtt{T}_0^2 = \mathtt{T}_0^2 \mathtt{h}_{w_0^2(j)} = \mathtt{T}_0^2 \mathtt{h}_j,
\end{align*}
noting that $w_0^2 = 1$ implies $w_0 = w_0^{-1} = s_{i_{\binom{n}{2}}} \cdots s_{i_1}$.
\endproof


The following will be useful when we look at representations.

\begin{lemma} \label{eiTicomm}
	The element $\mathsf{e}_i$ defined in \eqref{e:esubi} commutes with $\mathsf{T}_i$ in $\YY_{d,n}(u)$.
\end{lemma}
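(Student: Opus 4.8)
The plan is to verify the identity $\mathsf{e}_i\mathsf{T}_i=\mathsf{T}_i\mathsf{e}_i$ directly from the presentation of $\YY_{d,n}$ in Section~\ref{altpres}, using only relations (c), (d) and (e); since none of these involves the parameter $u$, the identity will hold already in $\YY_{d,n}$ and hence a fortiori in $\YY_{d,n}(u)$. First I would record the consequences of relation (d) that matter here: because $s_i=(i,i+1)$ interchanges $i$ and $i+1$, one has $\mathsf{h}_i\mathsf{T}_i=\mathsf{T}_i\mathsf{h}_{i+1}$ and $\mathsf{h}_{i+1}\mathsf{T}_i=\mathsf{T}_i\mathsf{h}_i$; inverting and taking powers (all the $\mathsf{h}_j$ being units, by relation (e)) gives $\mathsf{h}_i^{\,j}\mathsf{T}_i=\mathsf{T}_i\mathsf{h}_{i+1}^{\,j}$ and $\mathsf{h}_{i+1}^{-j}\mathsf{T}_i=\mathsf{T}_i\mathsf{h}_i^{-j}$ for every $j\in\Z$.

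The core computation then moves $\mathsf{T}_i$ past a single monomial: using the two relations just recorded together with the commutativity relation (c),
\[
\mathsf{h}_i^{\,j}\mathsf{h}_{i+1}^{-j}\mathsf{T}_i=\mathsf{h}_i^{\,j}\mathsf{T}_i\mathsf{h}_i^{-j}=\mathsf{T}_i\mathsf{h}_{i+1}^{\,j}\mathsf{h}_i^{-j}=\mathsf{T}_i\,\mathsf{h}_i^{-j}\mathsf{h}_{i+1}^{\,j}.
\]
Summing over $j=1,\dots,d$ and dividing by $d$ yields $\mathsf{e}_i\mathsf{T}_i=\mathsf{T}_i\cdot\tfrac1d\sum_{j=1}^{d}\mathsf{h}_i^{-j}\mathsf{h}_{i+1}^{\,j}$, so it remains to identify the sum on the right with $d\,\mathsf{e}_i$. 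Here relation (e), $\mathsf{h}_i^{\,d}=\mathsf{h}_{i+1}^{\,d}=1$, does the work: substituting $j\mapsto d-j$ and using $\mathsf{h}_i^{-d}=\mathsf{h}_{i+1}^{-d}=1$ shows $\sum_{j=1}^{d}\mathsf{h}_i^{-j}\mathsf{h}_{i+1}^{\,j}=\sum_{j=1}^{d}\mathsf{h}_i^{\,j}\mathsf{h}_{i+1}^{-j}$, whence $\tfrac1d\sum_{j=1}^{d}\mathsf{h}_i^{-j}\mathsf{h}_{i+1}^{\,j}=\mathsf{e}_i$ and therefore $\mathsf{e}_i\mathsf{T}_i=\mathsf{T}_i\mathsf{e}_i$.

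There is really no serious obstacle; the one place to be slightly careful is this last reindexing, i.e. remembering to invoke $\mathsf{h}_j^{\,d}=1$ so that $j\mapsto -j$ is a well-defined involution of $\{1,\dots,d\}$ modulo $d$ — without relation (e) the two sums need not agree. Conceptually the statement is just the observation that $\mathsf{e}_i$ is the average, over the cyclic group of order $d$ generated by $\mathsf{h}_i\mathsf{h}_{i+1}^{-1}$, of the elements of that group, and that conjugation by $\mathsf{T}_i$ (in the sense of relation (d)) sends this generator to its inverse, hence permutes the group and fixes its average.
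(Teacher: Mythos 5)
Your proof is correct and is essentially the same argument the paper gives, just spelled out in more detail: push $\mathsf{T}_i$ past the monomial $\mathsf{h}_i^{j}\mathsf{h}_{i+1}^{-j}$ using relation (d), average over $j=1,\dots,d$, and use relation (e) (the order-$d$ property of $\mathsf{h}_i,\mathsf{h}_{i+1}$) to reindex the resulting sum back to $d\,\mathsf{e}_i$. The conceptual remark at the end — that conjugation by $\mathsf{T}_i$ inverts the generator of the cyclic group of order $d$ and hence fixes its average — is a nice way to see the reindexing step at a glance.
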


\proof
By relation (d), we see that $\mathsf{T}_i( \mathsf{h}_i^j \mathsf{h}_{i+1}^{-j} ) =  (\mathsf{h}_i^{-j} \mathsf{h}_{i+1}^j) \mathsf{T}_i$.  The Lemma follows by averaging over $j = 1, \ldots, d$ and observing that both $\mathsf{h}_i$ and $\mathsf{h}_{i+1}$ have order $d$.
\endproof

\begin{lemma} \label{Yconj}
	The elements $\mathsf{T}_1, \ldots, \mathsf{T}_{n-1} \in \YY_{d,n}(u)$ are all conjugate.
\end{lemma}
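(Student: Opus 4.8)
The plan is to show that each pair of \emph{adjacent} generators $\mathsf{T}_i,\mathsf{T}_{i+1}$ is conjugate in the group of units of $\YY_{d,n}(u)$, and then invoke transitivity of conjugacy to conclude that $\mathsf{T}_1,\dots,\mathsf{T}_{n-1}$ all lie in a single conjugacy class.

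First I would check that every $\mathsf{T}_i$ is invertible. Using Lemma~\ref{eiTicomm} to commute $\mathsf{e}_i$ past $\mathsf{T}_i$, the quadratic relation \eqref{quadratic} can be rewritten as
\[
\mathsf{T}_i\bigl(\mathsf{T}_i-(u-1)\mathsf{e}_i\bigr)=\bigl(\mathsf{T}_i-(u-1)\mathsf{e}_i\bigr)\mathsf{T}_i=u\,\mathsf{f}_i\mathsf{f}_{i+1}.
\]
The right-hand side is a unit: $u$ is invertible in $\C(u)$, and $\mathsf{f}_i,\mathsf{f}_{i+1}$ are invertible since each is either $1$ or a power of some $\mathsf{h}_j$ and $\mathsf{h}_j^d=1$. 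Moreover $\mathsf{f}_i\mathsf{f}_{i+1}$ commutes with $\mathsf{T}_i$, because relation (d) of the presentation gives $\mathsf{h}_i\mathsf{T}_i=\mathsf{T}_i\mathsf{h}_{i+1}$ and $\mathsf{h}_{i+1}\mathsf{T}_i=\mathsf{T}_i\mathsf{h}_i$, and the $\mathsf{h}_j$ commute with one another. Hence $\mathsf{T}_i$ is a unit, with $\mathsf{T}_i^{-1}=u^{-1}\mathsf{f}_i^{-1}\mathsf{f}_{i+1}^{-1}\bigl(\mathsf{T}_i-(u-1)\mathsf{e}_i\bigr)$.

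Next I would use the braid relation (b). Put $w_i:=\mathsf{T}_i\mathsf{T}_{i+1}$, which is a unit by the previous step. Then $\mathsf{T}_i\mathsf{T}_{i+1}\mathsf{T}_i=\mathsf{T}_{i+1}\mathsf{T}_i\mathsf{T}_{i+1}$ says precisely that $w_i\mathsf{T}_i=\mathsf{T}_{i+1}w_i$, so $\mathsf{T}_{i+1}=w_i\mathsf{T}_iw_i^{-1}$. Thus $\mathsf{T}_i$ and $\mathsf{T}_{i+1}$ are conjugate for each $i=1,\dots,n-2$, and composing these conjugations shows that $\mathsf{T}_1,\dots,\mathsf{T}_{n-1}$ are all conjugate.

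The only step that needs a moment's thought is the invertibility of $\mathsf{T}_i$; once that is in hand the rest is a direct rewriting of the defining relations, so there is no real obstacle here. As an alternative for that step one could specialize at $u=1$, where $\YY_{d,n}(1)\cong\C[\rN]$ and each $\mathsf{T}_i$ maps to an honest group element, but the computation above is cleaner and already works over $\C[u^{\pm1}]$.
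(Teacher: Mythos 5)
Your proof is correct and follows essentially the same route as the paper's: establish invertibility of $\mathsf{T}_i$ from the quadratic relation and then read conjugacy of adjacent generators off the braid relation, concluding by transitivity. The only cosmetic difference is the choice of conjugating element (you use $\mathsf{T}_i\mathsf{T}_{i+1}$, the paper uses $\mathsf{T}_i\mathsf{T}_{i+1}\mathsf{T}_i$), and you spell out the invertibility argument in a bit more detail.
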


\begin{proof}
	From relation \eqref{quadratic} in Section \ref{altpres}, we see that each $\mathsf{T}_i$ is invertible, with inverse $$\mathsf{T}_i^{-1} = u^{-1}( \mathsf{T}_i - (u-1) \mathsf{e}_i )\mathsf{f}_i\mathsf{f}_{i+1}.$$  Then
	\begin{align*}
	(\mathsf{T}_i \mathsf{T}_{i+1} \mathsf{T}_i) \mathsf{T}_{i+1} (\mathsf{T}_i \mathsf{T}_{i+1} \mathsf{T}_i)^{-1} = (\mathsf{T}_i \mathsf{T}_{i+1} \mathsf{T}_i) \mathsf{T}_{i+1} (\mathsf{T}_{i+1} \mathsf{T}_i \mathsf{T}_{i+1})^{-1} = \mathsf{T}_i,
	\end{align*}
	and the statement follows by transitivity of the conjugacy relation.
\end{proof}

\subsection{The double centralizer theorem}

The following is taken from \cite[\S~3.2]{KraftProcesi}.  Let $\K$ be an arbitrary field, $A$ a finite-dimensional algebra over $\K$ and let $W$ be a finite-dimensional (left) $A$-module.  Recall that $W$ is said to be \emph{semisimple} if it decomposes as a direct sum of irreducible submodules.  If $A$ is semisimple as a module over itself then it is called a \emph{semisimple algebra}; the Artin--Wedderburn theorem then states that any such $A$ is a product of matrix algebras over (finite-dimensional) division $\K$-algebras.  If $U$ is a finite-dimensional simple $A$-module, then the \emph{isotypic component of $W$ of type $U$} is the direct sum of all submodules of $W$ isomorphic to $U$.  The isotypic components are then direct summands of $W$ and their sum gives a decomposition of $W$ precisely when the latter is semisimple; in this case, it is called the \emph{isotypic decomposition of $W$}.  We recall the following, which is often called the ``double centralizer theorem.''

\begin{theorem} \label{doublecentralizer}
	Let $W$ a finite-dimensional vector space over $\K$, let $A \subseteq \End_\K M$ be a semisimple subalgebra and let
	\begin{align*}
	A' = \End_A W := \{ b \in \End_\K W \, : \, ab = ba \ \forall a \in A \},
	\end{align*}
	be its centralizer subalgebra.  Then $A'$ is also semisimple and there is a direct sum decomposition
	\begin{align*}
	W = \bigoplus_{i=1}^r W_i
	\end{align*}
	which is the isotypic decomposition of $W$ as either an $A$-module or an $A'$-module.  In fact, for $1 \leq i \leq r$, there is an irreducible $A$-module $U_i$ and an irreducible $A'$-module $U_i'$ such that if $D_i := \End_A U_i$ (this is a division $\K$-algebra by Schur's lemma), then $\End_{A'} U_i' \cong D^\opp$ and 
	\begin{align*}
	W_i \cong U_i \otimes_{D_i} U_i'.
	\end{align*}
\end{theorem}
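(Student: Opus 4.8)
The plan is to derive everything from Schur's lemma together with the elementary structure theory of modules over a finite product of matrix algebras over division rings. First I would use that $A$ is semisimple, so that $W$, being a finite-dimensional $A$-module, is semisimple; write its isotypic decomposition as
$$W = \bigoplus_{i=1}^{r} W_i, \qquad W_i \cong U_i^{\oplus m_i},$$
where $U_1, \ldots, U_r$ are the pairwise non-isomorphic simple $A$-modules occurring in $W$ and $m_i \geq 1$. By Schur's lemma each $D_i := \End_A U_i$ is a division $\K$-algebra; I will regard $U_i$ as an $A$-$D_i$-bimodule, so that $\Hom_A(U_i, W_i)$ acquires a left $D_i$-module structure and the evaluation map $U_i \otimes_{D_i} \Hom_A(U_i, W_i) \to W_i$, $u \otimes f \mapsto f(u)$, is an isomorphism of $A$-modules (this is just a restatement of the fact that $W_i$ is $U_i$-isotypic).

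Next I would compute $A' = \End_A W$ explicitly. Since $\Hom_A(W_i, W_j) = 0$ for $i \neq j$, every $A$-endomorphism of $W$ preserves each summand $W_i$, and an $A$-endomorphism of $U_i^{\oplus m_i}$ is nothing but an $m_i \times m_i$ matrix with entries in $\End_A U_i = D_i$; hence
$$A' \cong \bigoplus_{i=1}^{r} \End_A W_i \cong \bigoplus_{i=1}^{r} M_{m_i}(D_i).$$
A finite product of matrix algebras over division rings is visibly semisimple --- this is already the Artin--Wedderburn normal form --- which gives the first assertion. Moreover the decomposition $W = \bigoplus_i W_i$ is stable under $A'$ (the factor $M_{m_j}(D_j)$ annihilates $W_i$ for $i \neq j$), so it remains only to identify the simple $A'$-modules and to see that this is also the isotypic decomposition of $W$ over $A'$.

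For that I would use the transparent representation theory of $M_{m_i}(D_i)$: it has a unique simple module, the space $U_i' := D_i^{m_i}$ of column vectors with left matrix multiplication, on which all other factors act by zero; the $U_i'$ are therefore pairwise non-isomorphic as $A'$-modules, and $\End_{A'} U_i' \cong \End_{M_{m_i}(D_i)} D_i^{m_i} \cong D_i^{\opp}$ (its endomorphisms being the right multiplications by elements of $D_i$). Finally, tracing through the identification $A' \cong \bigoplus_i \End_A W_i$ one checks that the natural $A'$-action on $\Hom_A(U_i, W_i)$ by post-composition turns it into the module $D_i^{m_i} = U_i'$; combined with the evaluation isomorphism above this yields $W_i \cong U_i \otimes_{D_i} U_i'$. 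Since $U_i$ is free of finite rank as a right $D_i$-module ($D_i$ being a division ring), $U_i \otimes_{D_i} U_i'$ is a direct sum of copies of $U_i'$, so each $W_i$ is $U_i'$-isotypic over $A'$; hence $W = \bigoplus_i W_i$ is at the same time the isotypic decomposition of $W$ as an $A$-module and as an $A'$-module, which is the remaining assertion.

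I expect the only real care the argument requires to be the bookkeeping of left versus right module structures and of opposite algebras: one must decide once and for all on which side $\End_A U_i$ acts on $U_i$, and then track that convention consistently through the matrix identification $\End_A(U_i^{\oplus m_i}) \cong M_{m_i}(D_i)$, through the description of the simple $M_{m_i}(D_i)$-module and its endomorphism ring, and through the bimodule structures on $U_i$ and on $\Hom_A(U_i, W_i)$ that make the evaluation map well-defined over $D_i$. None of the individual steps is deep --- everything rests on Schur's lemma, on the semisimplicity of $A$ (which forces $W$ to be semisimple over $A$), and on the elementary structure of matrix algebras over division rings --- but making all the variances match up is where the actual work lies.
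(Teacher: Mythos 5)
Your proof is correct, and it is essentially the standard proof of the double centralizer theorem. Note, however, that the paper does not actually prove this statement: it is quoted verbatim from Kraft--Procesi (the paper says ``The following is taken from \cite[\S~3.2]{KraftProcesi}''), so there is no in-paper argument to compare against. Your route --- isotypic decomposition of $W$ over the semisimple algebra $A$, the evaluation isomorphism $U_i\otimes_{D_i}\Hom_A(U_i,W_i)\xrightarrow{\sim}W_i$, the identification $A'\cong\bigoplus_i\End_A W_i\cong\bigoplus_i M_{m_i}(D_i)$, the observation that each matrix factor $M_{m_i}(D_i)$ has the column space $U_i'=D_i^{m_i}$ as its unique simple module with $\End_{M_{m_i}(D_i)}(D_i^{m_i})\cong D_i^{\opp}$, and the recognition of $\Hom_A(U_i,W_i)$ as $U_i'$ under post-composition --- is exactly the argument one finds in Kraft--Procesi and in most textbook treatments (e.g.\ via Artin--Wedderburn). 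The one place you rightly flag as needing care is the bookkeeping of sides: whether $D_i$ acts on $U_i$ on the left or right determines whether $\End_A(U_i^{\oplus m_i})$ is identified with $M_{m_i}(D_i)$ or $M_{m_i}(D_i)^{\opp}$, and the appearance of $D_i^{\opp}$ as $\End_{A'}U_i'$ must be consistent with that choice; your outline picks the right-$D_i$-module convention on $U_i$ and tracks it coherently, so the argument goes through. One small remark: the hypothesis $A\subseteq\End_\K W$ (i.e.\ faithfulness of $W$ over $A$) is not actually used for the conclusions listed in the theorem --- you only need $W$ to be a semisimple $A$-module, which follows from semisimplicity of $A$ --- faithfulness would matter only if one also wanted the bicommutant identity $A''=A$, which the theorem statement does not assert.
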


\begin{remark}
	In the case where $\K$ is algebraically closed, then there are no non-trivial finite-dimensional division algebras over $\K$, and so in the statement above, the tensor product is over $\K$.
\end{remark}

We are interested in the case where $\K = \C$ (so that we are within the scope of the Remark), $H$ and $G$ are as in Section \ref{Halgnotn}, $W = \Ind_H^G \1_H$ is the induction of the trivial representation of a subgroup $H \leq G$ to $G$ and $A$ is the image of the group algebra $\C[G]$ in $\End_\C W$.  Then $A$ is semisimple and via Proposition~\ref{Halgcomm}, we know $A' = \HHH(G, H)$.  We can then conclude the following.

\begin{corollary}
	The Hecke algebra $\HHH(G, H)$ is semisimple.
\end{corollary}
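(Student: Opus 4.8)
The plan is to deduce this immediately from the double centralizer theorem (Theorem~\ref{doublecentralizer}) together with Proposition~\ref{Halgcomm}. First I would set $W = \Ind_H^G \1_H$, which by the discussion in Section~\ref{Halgnotn} is realized as the $G$-module $M$ of functions constant on right $H$-cosets. Let $A \subseteq \End_\C W$ be the image of the group algebra $\C[G]$ under the representation map. The key input is that $\C[G]$ is a semisimple algebra over $\C$ by Maschke's theorem (since $\C$ has characteristic zero and $G$ is finite), and a quotient of a semisimple algebra is semisimple; hence $A$, being a quotient of $\C[G]$, is a semisimple subalgebra of $\End_\C W$.

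Next I would invoke Theorem~\ref{doublecentralizer} with $\K = \C$: since $A$ is semisimple, its centralizer $A' = \End_A W$ is also semisimple. It remains to identify $A'$ with $\HHH(G,H)$. By Proposition~\ref{Halgcomm}, the action \eqref{Halgaction} gives a unital embedding $\HHH(G,H) \hookrightarrow \End_\C W$ whose image is exactly $\End_G W$. But $\End_G W = \End_A W = A'$, because an endomorphism of $W$ commutes with the $G$-action if and only if it commutes with every element of the image $A$ of $\C[G]$. Therefore $\HHH(G,H) \cong A'$ as algebras, and since $A'$ is semisimple, so is $\HHH(G,H)$.

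Honestly, there is essentially no obstacle here — the statement is a formal consequence of results already assembled in the excerpt. The only point requiring a word of care is the passage "$\C[G]$ semisimple $\Rightarrow$ its image $A$ in $\End_\C W$ semisimple," which uses that a homomorphic image of a semisimple ring is semisimple (equivalently, the image is a product of some of the matrix-algebra factors of $\C[G]$). With that noted, the corollary follows in two lines: apply Theorem~\ref{doublecentralizer} to get $A'$ semisimple, then apply Proposition~\ref{Halgcomm} to recognize $A' = \HHH(G,H)$.
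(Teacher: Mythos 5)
Your proof is correct and follows exactly the route the paper takes: take $W = \Ind_H^G \1_H$, let $A$ be the (semisimple) image of $\C[G]$ in $\End_\C W$, apply the double centralizer theorem to conclude $A' = \End_A W$ is semisimple, and identify $A'$ with $\HHH(G,H)$ via Proposition~\ref{Halgcomm}. You have simply made explicit a couple of small steps (Maschke's theorem and semisimplicity of quotients) that the paper leaves implicit.
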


Furthermore, one observes that the kernel of the induced representation $\Ind_H^G \1_H$ is given by $\bigcap_{g \in G} gHg^{-1}$.  Thus, applying Theorem \ref{doublecentralizer} with $A = \HHH(G, H)$, since its commuting algebra is the image of the $G$-action, we may state the following.

\begin{corollary} \label{groupalgHecke}
	If $\bigcap_{g \in G} gHg^{-1}$ is trivial, one has 
	\begin{align*}
	\C[G] \cong \End_{\HHH(G, H)} \Ind_H^G \1_H.
	\end{align*}
\end{corollary}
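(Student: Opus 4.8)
The plan is to read the statement off the double centralizer theorem (Theorem~\ref{doublecentralizer}), applied to the image of the group algebra acting on $W := \Ind_H^G \1_H$, together with the identification of commutants supplied by Proposition~\ref{Halgcomm}. First I would let $\rho : \C[G] \to \End_\C W$ denote the representation and put $A_0 := \rho(\C[G])$. Since $G$ is finite and $\C$ has characteristic zero, Maschke's theorem makes $\C[G]$ semisimple, hence so is its quotient $A_0 \cong \C[G]/\ker\rho$. By Proposition~\ref{Halgcomm}, $\HHH(G,H) = \End_G W$, which is exactly the commutant $\End_{A_0} W$ of $A_0$ inside $\End_\C W$; in particular (as in the corollary above) $\HHH(G,H)$ is semisimple.

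Next I would invoke Theorem~\ref{doublecentralizer} with ``$A$'' taken to be the semisimple subalgebra $A_0 \subseteq \End_\C W$, so that ``$A'$'' becomes $\HHH(G,H)$. Over the algebraically closed field $\C$ the theorem produces a decomposition $W = \bigoplus_i U_i \otimes_\C U_i'$ in which the $U_i$ (resp.\ the $U_i'$) run over the distinct irreducible $A_0$-modules (resp.\ $\HHH(G,H)$-modules) occurring in $W$; reading off commutants from this decomposition gives the biduality $\End_{\HHH(G,H)} W = A_0$, i.e.\ the commutant of $\HHH(G,H)$ in $\End_\C W$ is precisely the image of $\C[G]$. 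It then remains to see that this image is all of $\C[G]$, equivalently that $\ker\rho = 0$: as $W$ is the permutation representation of $G$ on $H\backslash G$, an element $g\in G$ acts trivially exactly when $Hxg=Hx$ for all $x\in G$, i.e.\ when $g\in\bigcap_{x\in G}x^{-1}Hx=\bigcap_{x\in G}xHx^{-1}$, and under the hypothesis this intersection is trivial. Combining the two points yields $\C[G]\cong\End_{\HHH(G,H)}\Ind_H^G\1_H$.

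The step I expect to demand the most care is the passage, in that last sentence, from ``the permutation action of $G$ on $H\backslash G$ is faithful'' to ``$\rho:\C[G]\to\End_\C W$ is injective as an algebra map'': what is really required is the stronger assertion that every irreducible representation of $G$ occur in $\Ind_H^G\1_H$---equivalently, by Frobenius reciprocity, that every irreducible of $G$ have a nonzero $H$-fixed vector. Triviality of $\bigcap_g gHg^{-1}$ is the information one pulls out of the hypothesis, and the heart of the argument is in tying it to this occurrence statement; everything else is routine bookkeeping with Theorem~\ref{doublecentralizer} and Proposition~\ref{Halgcomm}.
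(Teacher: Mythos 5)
Your concern in the closing paragraph is right, and the gap you flag cannot in fact be filled: the corollary as stated is false. Faithfulness of the permutation action of $G$ on $H\backslash G$---which is exactly what the hypothesis $\bigcap_{g} gHg^{-1}=\{1\}$ delivers---says only that, for $g\neq 1$, the element $\delta_g-\delta_1\in\C[G]$ is not in $\ker\rho$; the conclusion needs the much stronger statement $\ker\rho=\mathrm{Ann}_{\C[G]}\bigl(\Ind_H^G\1_H\bigr)=0$, i.e.\ that every irreducible character of $G$ have a nonzero $H$-fixed vector. These are inequivalent conditions. Take $G=A_5$ and $H=A_4$: then $\bigcap_g gHg^{-1}$ is trivial because $A_5$ is simple, but $\Ind_H^G\1_H$ is the five-point permutation representation, which decomposes as the trivial plus the standard $4$-dimensional representation and omits three of the five irreducibles; hence the image of $\C[G]$ in $\End_\C\bigl(\Ind_H^G\1_H\bigr)$ is $\C\times M_4(\C)$ of dimension $17$, not $\C[A_5]$ of dimension $60$. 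The same failure occurs in this paper's own setting: for $G=\GL_n(\F_q)$ and $H=\U$ one has $\bigcap_g g\U g^{-1}=\{1\}$, yet $\Irr(\G : \U)=\{\chi_\Lambda^\G : \Lambda\in\calQ_n\}$ is a proper subset of $\Irr\G$ (Remark~\ref{r:IrrGUtwists}); the cuspidal characters do not occur.

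The paper's own one-line justification, which reads off the group-theoretic kernel $\bigcap_g gHg^{-1}$ of the permutation representation and then cites Theorem~\ref{doublecentralizer}, makes precisely the same conflation between group-level and algebra-level kernels, so the published argument has the same flaw. What the double centralizer argument actually yields, with no hypothesis whatsoever, is the isomorphism $\End_{\HHH(G,H)}\Ind_H^G\1_H\cong\C[G]\big/\mathrm{Ann}_{\C[G]}\bigl(\Ind_H^G\1_H\bigr)\cong\prod_{\chi\in\Irr(G:H)}M_{\chi(1)}(\C)$. That corrected statement does suffice for the one application made of the corollary: in the proof of Proposition~\ref{p:DCind} one ultimately tensors against a module $V\in\Irr(L:K)$, and any such $V$ is annihilated by $\mathrm{Ann}_{\C[L]}\bigl(\Ind_K^L\1_K\bigr)$, so substituting $\C[L]\big/\mathrm{Ann}$ for $\C[L]$ in that computation costs nothing.
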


\subsection{Representations of Hecke algebras} \label{s:RepsHA}

We return to the abstract situation of Section~\ref{Halgnotn}.  By a \emph{representation of $\HHH(G,H)$} we will mean a pair $(W, \rho)$ consisting of a finite-dimensional complex vector space $V$ and an identity-preserving homomorphism $\rho : \HHH(G, H) \to \End_\C W$.  Let $(V, \pi)$ be a representation of $G$ and let $V^H \subseteq V$ be the subspace fixed by $H$.  Then $V^H$ is a representation of the Hecke algebra $\HHH(G,H)$ via the action
\begin{align} \label{Heckerepdef}
\varphi . v := \frac{1}{|H|} \sum_{a \in G} \varphi(a) \pi(a) \cdot v
\end{align}
for $\varphi \in \HHH(G,H), v \in V^H$.  It is easy to check that $\varphi . v \in V^H$ so that this is well-defined.  

Note that upon choosing a basis vector for the trivial representation $\1_H$ of $H$, we may identify
\begin{align*}
\Hom_H(\1_H, \Res_H^G V) \cong V^H
\end{align*}
by taking an $H$-morphism to the image of the basis vector.  Thus, we have defined a map $\DC_H : \Rep G \to \Rep \HHH(G, H)$
\begin{align*}
(V, \pi) \mapsto \Hom_H(\1_H, \Res_H^G V) \cong V^H.
\end{align*}
Let us now set
\begin{align} \label{IrrGH}
\Irr(G : H) := \left\{ \zeta \in \Irr G \, : ( \zeta, \1_H^G) > 0 \right\},
\end{align}
where $( \, , )$ is the pairing on characters; the condition is equivalent to $\Hom_H( \1_H, \Res_H^G \zeta) \neq 0$.  We can now give the following characterisation of irreducible representations of $\HHH(G, H)$, which, in the more general case of locally compact groups, is \cite[Proposition 2.10]{bernstein-zelevinsky}.

\begin{proposition} \label{Heckerepgen}
	If $(V, \pi)$ is an irreducible representation of $G$, then $V^H$ is an irreducible representation of $\HHH(G,H)$, and every irreducible representation of $\HHH(G, H)$ arises in this way, that is, $\DC_H$ restricts to a bijection $\DC_H : \Irr(G : H) \xrightarrow{\sim} \Irr \HHH(G, H)$.
\end{proposition}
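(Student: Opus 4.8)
The plan is to deduce the statement from the double centralizer theorem, Theorem~\ref{doublecentralizer}, together with Frobenius reciprocity, applied to the $G$-module $M := \Ind_H^G \1_H$. By Proposition~\ref{Halgcomm} the centralizer in $\End_\C M$ of the image $A$ of $\C[G]$ is $A' = \End_G M = \HHH(G,H)$. Since $G$ is finite and $\C$ has characteristic zero, $M$ is semisimple, so $A$ is a semisimple subalgebra and Theorem~\ref{doublecentralizer} yields a decomposition $M = \bigoplus_{i=1}^r W_i$ with $W_i \cong U_i \otimes_\C U_i'$, where $i \mapsto U_i$ is a bijection onto the set of irreducible $G$-modules occurring in $M$, $i \mapsto U_i'$ is a bijection onto $\Irr \HHH(G,H)$, and each $W_i$ is simultaneously the $U_i$-isotypic component of $M$ as a $G$-module and the $U_i'$-isotypic component as an $\HHH(G,H)$-module. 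By Frobenius reciprocity $\langle U, M\rangle_G = \langle \1_H, \Res_H^G U\rangle_H = \dim U^H$, so the irreducible constituents of $M$ are exactly the members of $\Irr(G:H)$ from \eqref{IrrGH}; in particular $r = |\Irr(G:H)| = |\Irr \HHH(G,H)|$.

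It then remains to identify, for each $i$, the $\HHH(G,H)$-module $U_i'$ with $\DC_H(U_i) = U_i^H$. Because $W_i$ is the $U_i$-isotypic component of $M$, evaluation gives a canonical $G$-isomorphism $W_i \cong U_i \otimes_\C \Hom_G(U_i, M)$ under which $\End_G M = \HHH(G,H)$ acts on the second factor by post-composition; comparing with $W_i \cong U_i \otimes_\C U_i'$ gives $U_i' \cong \Hom_G(U_i, M)$ as $\HHH(G,H)$-modules. On the other hand $\DC_H(U_i) = \Hom_H(\1_H, \Res_H^G U_i) \cong \Hom_G(\Ind_H^G \1_H, U_i) = \Hom_G(M, U_i)$ by Frobenius reciprocity, and this is likewise a module over $\End_G M = \HHH(G,H)$. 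One then checks that, under these identifications, the resulting $\HHH(G,H)$-action on $U_i^H$ is exactly the action \eqref{Heckerepdef}, by unwinding the adjunction isomorphism and the definitions of the convolution product \eqref{convprod} and of the actions \eqref{GactM}, \eqref{Halgaction}, \eqref{Heckerepdef}. The subtle point here is that $\Hom_G(U_i, M)$ and $\Hom_G(M, U_i)$ a priori carry opposite (left- versus right-) module structures; these are reconciled using the anti-automorphism $\varphi \mapsto \varphi^\vee$, $\varphi^\vee(g) := \varphi(g^{-1})$, of $\HHH(G,H)$, under which the action \eqref{Heckerepdef} on $M^H$ becomes right convolution and is hence isomorphic to the regular representation of $\HHH(G,H)$ on $M^H = \HHH(G,H)$.

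I expect this last step --- matching the module structure built into $\DC_H$ through \eqref{Heckerepdef} with the one produced by the double centralizer decomposition --- to be the only genuine obstacle; the rest is a formal consequence of Theorem~\ref{doublecentralizer} and Frobenius reciprocity, and it reproves by elementary means the finite-group case of \cite[Proposition 2.10]{bernstein-zelevinsky}. One should also read the first assertion of the Proposition as applying to $V \in \Irr(G:H)$, i.e.\ to $V$ with $V^H \neq 0$ (if $V^H = 0$ there is nothing to prove); the displayed bijection $\DC_H : \Irr(G:H)\xrightarrow{\sim}\Irr\HHH(G,H)$ is precisely what pins down the correct statement, and its injectivity and surjectivity are already contained in the two bijections $i \mapsto U_i$ and $i \mapsto U_i'$ above.
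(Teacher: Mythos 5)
You take the same route the paper evidently has in mind: rather than proving the Proposition the paper cites \cite[Proposition 2.10]{bernstein-zelevinsky}, and the text immediately after it derives the decomposition \eqref{Inddecomp} from Theorem~\ref{doublecentralizer}, which is exactly your starting point. Your reduction via the double centralizer theorem and Frobenius reciprocity is sound up to the last step, and it is precisely that step --- which you yourself flag as the only genuine obstacle --- that contains a real gap. The two Frobenius adjunctions do not land in the same place: $\Hom_G(U_i, M) \cong \Hom_H(\Res_H^G U_i, \1_H) \cong (U_i^*)^H$, whereas $\Hom_G(M, U_i) \cong \Hom_H(\1_H, \Res_H^G U_i) \cong U_i^H$. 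Tracing the identifications through, the multiplicity space $U_i' \cong \Hom_G(U_i, M)$ with the post-composition action is $\DC_H(U_i^*)$, not $\DC_H(U_i)$. The anti-automorphism $\varphi \mapsto \varphi^\vee$ does exactly one of the two jobs you ask of it: it converts the pre-composition right action on $\Hom_G(M, U_i)$ into the left action \eqref{Heckerepdef}, so that $\DC_H(U_i) \cong \Hom_G(M, U_i)^\vee$; but it does not identify $\Hom_G(U_i, M)$ with $\Hom_G(M, U_i)^\vee$. When $U_i \not\cong U_i^*$ these are genuinely non-isomorphic $\HHH(G,H)$-modules: take $G$ abelian and $H = \{1\}$, so that $M = \C[G]$, $\HHH(G,H) = \C[G]$ and $\DC_H$ is the identity, yet with the actions \eqref{GactM} and \eqref{Halgaction} the double-centralizer companion of a character $\chi$ inside $\C[G]$ is $\chi^{-1}$, not $\chi$. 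So the identification $U_i' \cong \DC_H(U_i)$ that you assert (and, read literally, \eqref{Inddecomp} asserts as well, which should rather be $\bigoplus_V V \otimes (V^*)^H$) is false in general.

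The Proposition is nonetheless correct, and your argument can be patched cheaply. Since $\DC_H(U) = U^H \cong \Hom_G(M, U)^\vee$ is (up to the $\vee$-twist) the dual of the simple $\HHH(G,H)$-module $U' \cong \Hom_G(U, M)$, it is itself simple; that gives the first assertion. For bijectivity, $\dim U^H = \dim(U^*)^H$ shows $U \mapsto U^*$ is a bijection of $\Irr(G:H)$ with itself, and $\DC_H$ factors as the composite of this involution with the bijection $U \mapsto U'$ supplied by Theorem~\ref{doublecentralizer}, hence is a bijection. You should make this explicit: as written, your closing sentence reads injectivity and surjectivity of $\DC_H$ straight off the two bijections $i \mapsto U_i$ and $i \mapsto U_i'$, which presupposes $\DC_H(U_i) = U_i'$ and so does not quite close the argument.
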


Since $\C[G]$ and  $\HHH(G,H)$ are semisimple, we can apply Theorem \ref{doublecentralizer} to $W = \Ind_H^G \1_H$.  If we denote the set of irreducible representations of $G$ by $\Irr G$, we find that 
\begin{align} \label{Inddecomp}
\Ind_H^G \1_H = \bigoplus_{ \substack{V \in \Irr G  \\ V^H \neq \{ 0 \}} } V \otimes V^H,
\end{align}
with elements of $G$ acting on the left side of the tensor product and those of $\HHH(G,H)$ acting on the right.  One has a consistency check here in that for an irreducible representation $V$ of $G$, the multiplicity of $V$ in $\Ind_H^G \1_H$ is given by
\begin{align*} 
\dim \Hom_G \left(\Ind_H^G \1_H, V \right) = \dim \Hom_H \left(\1_H, \Res_H^G V \right) = \dim V^H,
\end{align*}
which the decomposition in \eqref{Inddecomp} confirms.

\subsubsection{Traces} 

Let $M \cong \Ind_H^G \1_H$ be as in Section~\ref{Halgnotn}.  Observe that if $X \in \End_\C M$, then using the basis \eqref{basisnot}, its trace is computed as 
\begin{align*}
\tr_M X = \sum_{v \in V} (X.f_v)(v).
\end{align*}

\begin{lemma} \label{traceconj} 
	Let $g \in G$ and $\varphi \in \HHH(G, H)$ and consider $g \varphi = \varphi g \in \End_\C \Ind_H^G \1_H$ (where $g$ is thought of as an element of $\End_\C \Ind_H^G \1_H$ via (\ref{GactM})).  Then
	\begin{align*}
	\tr ( g \varphi) = \frac{1}{|H|} \sum_{x \in G} \varphi( x g x^{-1})=\sum_{ V \in \Irr(G:H) } \chi_V(g)  \chi_{\DC_H(V)}( \varphi),
	\end{align*}
	where $\chi_V$ is the character of the $G$-module $V$, and $\chi_{\DC_H(V)}$ is that of the $\HHH(G, H)$-module $\DC_H(V)$. 
\end{lemma}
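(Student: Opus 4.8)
The claim has two parts joined by an equality: a concrete ``conjugation sum'' formula for $\tr(g\varphi)$ and a ``spectral'' formula as a sum of products of characters over $\Irr(G:H)$.  I would treat these as two separate computations and then note they must agree.

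For the first equality, the plan is to work directly from the trace formula $\tr_M X = \sum_{v\in V}(X.f_v)(v)$ recorded just above the statement, together with the definitions \eqref{GactM} and \eqref{Halgaction}.  First I would compute the operator $g\varphi$ acting on a basis element $f_v=\II_{Hv}$: applying \eqref{Halgaction} gives $(\varphi.f_v)(x)=\tfrac1{|H|}\sum_{y\in G}\varphi(y)f_v(y^{-1}x)=\tfrac1{|H|}\sum_{y\in G}\varphi(y)\II_{Hv}(y^{-1}x)$, and then applying $g$ via \eqref{GactM} replaces $x$ by $xg$.  Evaluating at $x=v$ and summing over the coset representatives $v\in V$ yields $\tr(g\varphi)=\tfrac1{|H|}\sum_{v\in V}\sum_{y\in G}\varphi(y)\II_{Hv}(y^{-1}vg)$.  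The condition $\II_{Hv}(y^{-1}vg)=1$ means $y^{-1}vg\in Hv$, i.e.\ $y\in vg v^{-1}H\cdot\{?\}$; I would unwind this carefully, using that $\varphi$ is right-$H$-invariant to collapse the inner sum over $y$ in a fixed coset, and that as $v$ ranges over $V$ and we also implicitly range over $H$ we are really summing over all of $G$.  The upshot should be $\tr(g\varphi)=\tfrac1{|H|}\sum_{x\in G}\varphi(xgx^{-1})$; the bookkeeping of how the $|H|$ in the denominator, the sum over $V$, and the left-$H$-invariance of $\varphi$ interact is the one place where a sign-of-index slip is easy, so that is the main obstacle, though it is purely a matter of careful substitution.

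For the second equality, the plan is to use the isotypic decomposition.  By Proposition~\ref{Halgcomm} and the double centralizer theorem, $M=\Ind_H^G\1_H=\bigoplus_{V\in\Irr(G:H)}V\otimes V^H$ as a $(\C[G],\HHH(G,H))$-bimodule, as recorded in \eqref{Inddecomp}.  On the summand $V\otimes V^H$ the element $g\in G$ acts as $\pi_V(g)\otimes\id$ and $\varphi\in\HHH(G,H)$ acts as $\id\otimes\rho_{\DC_H(V)}(\varphi)$, so $g\varphi$ acts as $\pi_V(g)\otimes\rho_{\DC_H(V)}(\varphi)$, whose trace on that summand is $\chi_V(g)\chi_{\DC_H(V)}(\varphi)$ by multiplicativity of trace on tensor products.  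Summing over $V\in\Irr(G:H)$ gives exactly $\sum_{V\in\Irr(G:H)}\chi_V(g)\chi_{\DC_H(V)}(\varphi)$.  This half is essentially immediate given the results already assembled in the excerpt (Proposition~\ref{Heckerepgen}, the decomposition \eqref{Inddecomp}, and the identification of the two commuting actions), so no serious obstacle arises here.  Finally, since both expressions equal $\tr(g\varphi)$, the desired chain of equalities follows; alternatively one can simply present the two computations in sequence, which is what I would do to keep the argument linear.
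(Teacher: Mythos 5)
Your proposal matches the paper's proof: the first equality is obtained exactly by expanding $\tr(g\varphi)=\sum_{v\in V}((g\varphi).f_v)(v)$ via \eqref{GactM} and \eqref{Halgaction} and resumming over $G=\coprod_{v\in V}Hv$, and the second by taking traces on the bimodule decomposition \eqref{Inddecomp}. One small note: in the re-indexing step it is the \emph{left}-$H$-invariance of $\varphi$ (applied as $\varphi(vgv^{-1}h^{-1})=\varphi((hv)g(hv)^{-1})$) that turns the sum over $v\in V$, $h\in H$ into a sum over all $x\in G$, not right-invariance.
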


\begin{proof}
	In the notation of \eqref{basisnot},
	\begin{align*}
	|H| \tr(g \varphi) & = |H| \sum_{v \in V} \big( (g \varphi). f_v \big)(v) = |H| \sum_{v \in V} (\varphi. f_v) (vg) = \sum_{v \in V} \sum_{y \in G} \varphi(y) f_v(y^{-1} vg) \\
	& = \sum_{v \in V} \sum_{y^{-1} vg \in Hv} \varphi(y) = \sum_{v \in V} \sum_{y \in vgv^{-1} H} \varphi(y) = \sum_{v \in V} \sum_{h \in H} \varphi(vgv^{-1} h^{-1}) \\
	& = \sum_{v \in V} \sum_{h \in H} \varphi\big( (hv)g (hv)^{-1}\big) = \sum_{x \in G} \varphi( xg x^{-1} ),
	\end{align*}
	where we use \eqref{GU} at the last line.  On the other hand, if $g$, $\varphi$ are as in the Lemma, then applying $g \varphi = \varphi g$ to the decomposition (\ref{Inddecomp}), we get
	\begin{align*} 
	\tr(g \varphi) = \sum_{ V \in \Irr (G:H) } \chi_V(g)  \chi_{\DC_H(V)}( \varphi). 
	& \qedhere
	\end{align*} 
\end{proof} 

\subsubsection{Induced representations}

Let $G$, $H$, $L$, $K$ and $U$ be as in Section \ref{Heckeinclusions}.  Then one sees that $L \cap U$ is trivial and hence we may define $P := L \ltimes U$.  The inclusion $\HHH(L, K) \hookrightarrow \HHH(G, H)$ given by Proposition \ref{p:Hinclusion} allows us to induce representations from $\HHH(L, K)$ to $\HHH(G, H)$:  if $V$ is an $\HHH(L, K)$-representation, then 
\begin{align*}
\Ind_{\HHH(L, K)}^{\HHH(G, H)} V := \HHH(G, H) \otimes_{\HHH(L, K)} V,
\end{align*}
yielding a map $\Rep \HHH(L,K) \to \Rep \HHH(G, H)$.

We also have a ``parabolic induction'' functor:  for $V \in \Rep L$, we define
\begin{align} \label{parinddef}
R_L^G V := \Ind_P^G \Infl_L^P V,
\end{align}
which gives a map $\Rep L \to \Rep G$.  Let $\Rep(G : H)$ denote the set of isomorphism classes of (finite-dimensional) representations of $G$ at least one of whose irreducible components lies in $\Irr(G : H)$.  Then we claim that if $V \in \Irr(L:K)$, then $R_L^G V \in \Rep(G : H)$ and hence $R_L^G$ yields a map
\begin{align*}
R_L^G : \Irr(L:K) \to \Rep(G: H).
\end{align*}
We know that $V \in \Irr(L : K)$ if and only if $\Hom_K(\1_K, \Res_K^L V) \neq 0$.  The latter implies that 
\begin{align*}
0 \neq \Hom_H (\1_H, \Res_H^P \Infl_L^P V) = \Hom_P( \Ind_H^P \1_H, \Infl_L^P V).
\end{align*}
Now, inducing to $G$ in both factors\footnote{If $B \in \Irr P$, then $\1_{\Ind B} \in \End_G(\Ind_P^G B, \Ind_P^G B)$, so $\Hom_P (B, \Res_P^G \Ind_P^G B) \neq 0$, and thus $B$ is an irreducible component of $\Res_P^G \Ind_P^G B$.  It follows that if $A$ and $B$ are any $P$-representations with $\Hom_P(A, B) \neq 0$ then $\Hom_G( \Ind_P^G A, \Hom_P^G B) \neq 0$.} 
\begin{align*}
0 \neq \Hom_G( \Ind_P^G \Ind_H^P \1_H, \Ind_P^G \Infl_L^P V) = \Hom_G( \Ind_H^G \1_H, R_L^G V) = \Hom_H(\1_H, \Res_H^G R_L^G V),
\end{align*} 
which proves the claim.

Our goal here is to show that the $\DC$-operators are compatible with these induction operations.  Here is the precise statement.

\begin{proposition} \label{p:DCind}
	Assume that $\bigcap_{\ell \in L} \ell K \ell^{-1}$ is trivial.  Then the following diagram commutes:
	\begin{align*}
	\xymatrix{
		\Irr(L:K) \ar[d]_-{ R_L^G } \ar[r]^-{ \DC_K} & \Irr \HHH(L,K) \ar[d]_-{ \Ind  } \\
		\Rep(G:H)   \ar[r]^-{ \DC_H } & \Rep \HHH(G, H) .
	}
	\end{align*}
\end{proposition}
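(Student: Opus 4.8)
The heart of the matter is the isomorphism $R_L^G M_L \cong M_G$, where I write $M_L := \Ind_K^L \1_K$ and $M_G := \Ind_H^G \1_H$. Indeed, since $U \trianglelefteq P = L \ltimes U$ and $U \leq H = K \ltimes U$, the natural bijection $L/K \xrightarrow{\sim} P/H$ is $P$-equivariant with $U$ acting trivially on both sides; hence $\Infl_L^P M_L \cong \Ind_H^P \1_H$ as $P$-representations, and applying $\Ind_P^G$ and using transitivity of induction gives $R_L^G M_L = \Ind_P^G \Infl_L^P M_L \cong \Ind_P^G \Ind_H^P \1_H \cong \Ind_H^G \1_H = M_G$ (this refines the computation already used above to show $R_L^G V \in \Rep(G:H)$). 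Consequently the functor $R_L^G$ induces an algebra homomorphism $\End_L M_L \to \End_G M_G$, i.e.\ a map $\HHH(L,K) \to \HHH(G,H)$; one checks, either by a direct computation with the convolution formulas of Section~\ref{Halgnotn} or by comparing with the idempotent-based construction in the proof of Proposition~\ref{p:Hinclusion}, that this map is exactly the inclusion $\varphi \mapsto \varphi^H$. In particular $\DC_K(V) = \Hom_L(M_L, V) = V^K$ carries precisely the $\HHH(L,K)$-module structure that enters the target $\Ind_{\HHH(L,K)}^{\HHH(G,H)} \DC_K(V) = \HHH(G,H) \otimes_{\HHH(L,K)} V^K$.

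\textbf{The natural transformation.} Using $M_G \cong R_L^G M_L$ and the identification $\DC_H(W) \cong \Hom_G(M_G, W)$ (Proposition~\ref{Halgcomm} together with \eqref{Heckerepdef}), I would define, for $V \in \Rep L$, a map $\Phi_V : \HHH(G,H) \otimes_{\HHH(L,K)} \DC_K(V) \to \DC_H(R_L^G V)$ by $\varphi \otimes \psi \mapsto R_L^G(\psi) \circ \varphi$, where $\varphi \in \HHH(G,H) = \End_G M_G$ and $R_L^G(\psi) : R_L^G M_L \to R_L^G V$ is the image of $\psi$ under the functor $R_L^G$. Functoriality of $R_L^G$ and the previous paragraph make $\Phi_V$ well-defined (i.e.\ $\HHH(L,K)$-balanced) and $\HHH(G,H)$-linear, and it is manifestly natural in $V$; identifying the source with $\Ind_{\HHH(L,K)}^{\HHH(G,H)} \circ \DC_K$ and the target with $\DC_H \circ R_L^G$, showing that each $\Phi_V$ is an isomorphism is exactly the asserted commutativity of the square.

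\textbf{Reduction to $V = M_L$.} Both functors, and the transformation $\Phi$, are additive in $V$, so it suffices to prove $\Phi_V$ is an isomorphism for a family of $V$ in which every $V \in \Irr(L:K)$ appears as a direct summand. Since $\langle V, M_L \rangle_L = \dim V^K > 0$ for $V \in \Irr(L:K)$ and $M_L$ is semisimple, each such $V$ is a direct summand of $M_L$; thus it is enough to take $V = M_L$. There $\DC_K(M_L) = \End_L M_L \cong \HHH(L,K)$ is the regular module, so the source of $\Phi_{M_L}$ is $\HHH(G,H) \otimes_{\HHH(L,K)} \HHH(L,K) \cong \HHH(G,H)$, while its target is $\DC_H(R_L^G M_L) = \DC_H(M_G) = \End_G M_G = \HHH(G,H)$; and $\Phi_{M_L}$ carries $1 \otimes 1 \mapsto R_L^G(\id_{M_L}) = \id_{M_G} = 1$. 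Being an $\HHH(G,H)$-linear map between free rank-one left $\HHH(G,H)$-modules sending a generator to a generator, $\Phi_{M_L}$ is the canonical isomorphism, which finishes the argument.

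\textbf{Expected main obstacle.} The conceptual skeleton is short; the genuine work is the bookkeeping of module conventions — aligning the left/right (and opposite-algebra) structures in \eqref{Heckerepdef}, Proposition~\ref{Halgcomm} and Proposition~\ref{p:Hinclusion} so that $\Phi_V$ as written is literally the comparison map of the stated square — together with the verification that the two descriptions of $\HHH(L,K) \hookrightarrow \HHH(G,H)$ (via functoriality of $R_L^G$ versus the explicit $\varphi \mapsto \varphi^H$) coincide. The hypothesis that $\bigcap_{\ell \in L} \ell K \ell^{-1}$ is trivial should enter precisely where one invokes the $(L,K)$-analogue of Proposition~\ref{Heckerepgen} and Corollary~\ref{groupalgHecke}: it guarantees that $\DC_K$ identifies $\Irr(L:K)$ with $\Irr \HHH(L,K)$ and that $M_L$ realizes the regular $\HHH(L,K)$-module faithfully, which is what underpins the base case $V = M_L$.
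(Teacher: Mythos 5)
Your argument is correct in strategy and takes a genuinely different route from the paper's. Both proofs hinge on the same key identity $R_L^G(\Ind_K^L \1_K) \cong \Ind_H^G \1_H$ (obtained from $\Infl_L^P \Ind_K^L = \Ind_H^P \Infl_K^H$, $\Ind_P^G \Ind_H^P = \Ind_H^G$ and $\Infl_K^H \1_K = \1_H$). But where you package this into a natural transformation $\Phi \colon \Ind_{\HHH(L,K)}^{\HHH(G,H)} \circ \DC_K \Rightarrow \DC_H \circ R_L^G$ and reduce to the universal case $V = M_L$ by semisimplicity, the paper instead invokes Corollary~\ref{groupalgHecke} at the outset to rewrite $V \cong \Ind_K^L \1_K \otimes_{\HHH(L,K)} \DC_K(V)$ for $V \in \Irr(L:K)$, and then simply pushes $R_L^G$ and $\DC_H$ through this tensor-product expression (the group action being carried on the first factor). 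The paper's chain of tensor manipulations thereby sidesteps precisely the bookkeeping you flag as the main obstacle: it never needs to check that the functorial map $\End_L M_L \to \End_G M_G$ coincides with the explicit inclusion $\varphi \mapsto \varphi^H$ of Proposition~\ref{p:Hinclusion}, nor that $\Phi_V$ is $\HHH(L,K)$-balanced and $\HHH(G,H)$-linear under the action conventions of \eqref{Heckerepdef} (both of which involve genuine opposite-algebra care — e.g.\ $\HHH(G,H)$ acts on $\Hom_G(M_G, W)$ by precomposition with $\alpha \mapsto \alpha^\vee$, $\alpha^\vee(g) := \alpha(g^{-1})$, not with $\alpha$ itself, so $\Phi_V$ as literally written wants a $\vee$-twist). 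One small correction on where the hypothesis enters: the triviality of $\bigcap_{\ell \in L} \ell K \ell^{-1}$ is \emph{not} needed for the $(L,K)$-analogue of Proposition~\ref{Heckerepgen}, which holds unconditionally; in the paper's proof it is used only to invoke Corollary~\ref{groupalgHecke} at the very first step, and your base case $V = M_L$ (where $\DC_K(M_L) \cong \HHH(L,K)$ and $\DC_H(M_G) \cong \HHH(G,H)$ follow already from Proposition~\ref{Halgcomm}) does not appear to use it at all — so if the deferred bookkeeping is carried out, your route arguably needs a weaker hypothesis.
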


\begin{proof}
	By the assumption, Lemma \ref{groupalgHecke} gives us
	\begin{align*}
	\C[L] \cong \End_{\HHH(L,K)} \Ind_K^L \1_K = \Ind_K^L \1_K \otimes_{\HHH(L,K)} \left( \Ind_K^L \1_K \right)^*.
	\end{align*}
	Therefore, given $V \in \Irr(L : K)$, we may rewrite this as
	\begin{align} \label{DCindproof1}
	V \cong \C[L] \otimes_{\C[L]} V = \Ind_K^L \1_K \otimes_{\HHH(L,K)} \left( \Ind_K^L \1_K \right)^* \otimes_{\C[L]} V,
	\end{align}
	where now we are taking the $L$-action on the first factor $\Ind_K^L \1_K$.  Using
	\begin{align*}
	\left( \Ind_K^L \1_K \right)^*\otimes_{\C[L]} V = \Hom_L \left( \Ind_K^L\1_K, V \right) = \Hom_K \left( \1_K, \Res_K^L V \right) = \DC_K (V),
	\end{align*}
	\eqref{DCindproof1} gives
	\begin{align*}
	V \cong \Ind_K^L \1_K \otimes_{\HHH(L,K)} \DC_K(V).
	\end{align*}
	Now, applying $R_L^G$ to both sides, one gets
	\begin{align} \label{DCindproof2}
	R_L^G V \cong R_L^G \left( \Ind_K^L \1_K \otimes_{\HHH(L,K)} \DC_K(V) \right) = R_L^G \left( \Ind_K^L \1_K \right) \otimes_{\HHH(L,K)} \DC_K(V),
	\end{align}
	as we had said that the $L$-action is on the first factor.  Now, using the natural isomorphisms of functors
	\begin{align*}
	\Infl_L^P \Ind_K^L  & = \Ind_H^P  \Infl_K^H & \Ind_P^{G} \Ind_H^P & = \Ind_H^{G} 
	\end{align*}
	and the fact that $\Infl_K^H\1_K = \1_H$, we can simplify
	\begin{align*}
	R_L^G \left( \Ind_K^L \1_K \right) = \Ind_P^G \Infl_L^P \Ind_K^L \1_K = \Ind_P^G \Ind_H^P  \Infl_K^H  \1_K = \Ind_H^G \1_H
	\end{align*}
	and so \eqref{DCindproof2} becomes
	\begin{align*}
	R_L^G V \cong \Ind_H^G \1_H \otimes_{\HHH(L,K)} \DC_K(V).
	\end{align*}
	
	Applying now the functor 
	\begin{align*}  
	\DC_H = \Hom_H \big(\1_H,\Res_H^{G}(-) \big) 
	\end{align*}
	to both sides, and again noting that the $G$-action in the right hand side is on the first factor, we get
	\begin{align*} 
	\DC_H( R_L^G V ) & \cong \Hom_H\big(\1_H,\Res_H^{G}\Ind_H^{G}\1_H\big) \otimes_{\HHH(L,K)} \DC_K(V) \\
	& = \End_G \left( \Ind_H^G \1_H, \Ind_H^G \1_H \right) \otimes_{\HHH(L,K)} \DC_K(V) \\
	& = \HHH(G, H) \otimes_{\HHH(L,K)} \DC_K(V) = \Ind_{\HHH(L,K)}^{\HHH(G,H)} \DC_K(V). & \qedhere
	\end{align*}
\end{proof}

\subsection{Character tables} \label{s:chartables}

We now review some facts about the character tables of some finite groups which will be used in our counting arguments later.  As a matter of notation, in this section and later, if $A$ is an abelian group, we often denote its group of characters by $\widehat{A} := \Hom(A, \C^\times)$.

\subsubsection{Character table of \texorpdfstring{$\GL_n(\F_q)$}{GLn(Fq)}}\label{s:charGL}

We follow the presentation of ~\cite[Chapter IV]{MacD}.  Fix a prime power $q$.  Let $\Gamma_n := \widehat{\F}_{q^n}^\times$ be the dual group of $\F_{q^n}^\times$.  For $n|m$, the norm maps $\textnormal{Nm}_{n,m} : \F_{q^m}^\times \to \F_{q^n}^\times$ yield an inverse system, and hence the $\Gamma_n$ form a direct system whose colimit we denote by
\[ \Gamma := \varinjlim \Gamma_n. \] 
There is a natural action of the Frobenius $\Frob_q : \overline{\F}_q^\times \to \overline{\F}_q^\times$, given by $\gamma \mapsto \gamma^q$, restricting to each $\F_{q^n}^\times$ and hence inducing an on action each $\Gamma_n$ and hence on $\Gamma$; we identify $\Gamma_n$ with $\Gamma^{\Frob_q^{n}}$.  Let $\Theta$ denote the set of $\Frob_q$-orbits in $\Gamma$.

The \emph{weighted size} of a partition $\lambda = (\lambda_1,\lambda_2,\ldots) \in \P$ is
\[
n(\lambda):=\sum_{i\geq 1}(i-1)\lambda_i=\sum_{j \geq 1} \binom{\lambda_{j}'}{2}
\]
where, as usual, $\lambda' = (\lambda'_1,\lambda'_2,\ldots)$ is the conjugate partition of $\lambda,$ i.e., $\lambda'_i$ is the number of $\lambda_j$'s not smaller than $i$.  The \emph{hook polynomial} of $\lambda$ is defined as
\begin{align}\label{e:hookpolyl}
H_\lambda(q):=\prod_{\square\in\lambda}(q^{h(\square)}-1)
\end{align}	
where the product is taken over the boxes in the Ferrers' diagram (cf.~\cite[1.3]{Stan}) of $\lambda,$ and $h(\square)$ is the \emph{hook length} of the box $\square$ in position $(i,j)$ defined as $$h(\square):= \lambda_i + \lambda'_j - i - j+1.$$ 

By~\cite[IV (6.8)]{MacD} there is a bijection between the irreducible characters of $\GL_n(q)$ and the set of functions $\Lambda : \Theta \to \PP$ which are stable under the Frobenius action and having \emph{total size}
\[
|\Lambda|:=\sum_{\gamma \in \Theta} |\gamma| |\Lambda(\gamma)|
\]
equal to $n$.  Under this correspondence, the character $\chi_\Lambda^{\G}$ corresponding to $\Lambda$ has degree 
\begin{align} \label{e:degree}
\frac{\prod_{i=1}^n(q^i-1)}{ q^{-n(\Lambda')} H_{\Lambda}(q)}
\end{align}
where
\begin{align} \label{e:hookpolyL}
H_\Lambda(q):=\prod_{ \gamma \in \Theta} H_{\Lambda(\gamma)}(q^{|\gamma |})
\end{align}
and
\beq \label{nLambda} 
n(\Lambda):=\sum_{ \gamma \in \Theta} |\gamma|n(\Lambda(\gamma)).\eeq

\begin{remark}\label{r:unipart}
	There is a class of irreducible characters of $\GL_n(\F_q)$, known as the unipotent characters, which are also parametrized by $\P_n$.  Given $\lambda \in \P_n$, the associated unipotent character $\chi_\lambda^\G$ is the one corresponding to, in the description above, the function $\Lambda_\lambda : \Theta \to \P$, where $\Lambda_\lambda$ takes the (singleton) orbit of the trivial character in $\Gamma_1$ to $\lambda \in \P_n$ and all other orbits to the empty partition.
	
	In fact, any $\psi \in \Gamma_1 = \widehat{\F}_q^\times$ is a singleton orbit of  $\Frob_q$ and so we may view $\Gamma_1$ as a subset of $\Theta$.  Thus, if we let $\calQ_n$ denote the set of maps $\Lambda : \Gamma_1 \to \P$ of total size $n$, i.e.,
	\begin{align*}
	|\Lambda| = \sum_{\psi \in \Gamma_1} | \Lambda(\psi)| = n,
	\end{align*}
	then $\calQ_n$ is a subset of the maps $\Theta \to \P$ of size $n$.  The set of characters of $\GL_n(\F_q)$ corresponding to the maps in $\calQ_n$ will also be important for us later.
	
	The following description of the characters corresponding to $\calQ_n$ can be found in the work of Green \cite{Green55}.  Suppose $n_1$, $n_2$ are such that $n = n_1 + n_2$.  
	Let $\Lg = \GL_{n_1}(\F_q) \times \GL_{n_2}(\F_q)$ and view it as the subgroup of $\GL_n(\F_q)$ of block diagonal matrices, $\U_{12} \leeq \G$ the subgroup of upper block unipotent matrices, and $\mathrm{P} := \Lg \ltimes \U_{12}$ the parabolic subgroup of block upper triangular matrices.  
	The $ \circ $-product $- \circ - : \Irr \GL_{n_1}(\F_q) \times \Irr \GL_{n_2}(\F_q) \to \Rep \GL_n(\F_q)$ is defined as
	\begin{align}\label{e:circprod}
	\chi_1 \circ \chi_2 =  R_\Lg^\G (\chi_1 \otimes \chi_2) = \Ind_{\mathrm{P}}^{\G}\Infl_{\Lg}^{\mathrm{P}}(\chi_1 \otimes \chi_2)  .
	\end{align} 
	Now, given $\Lambda \in \calQ_n$, we will often write $\psi_1$, $\ldots$, $\psi_r \in \Gamma_1$ for the distinct elements for which $n_i := |\Lambda(\psi_i)| > 0$ (note that $\sum_i n_i = n$) and $\lambda_i := \Lambda(\psi_i)$.  For each $1 \leq i \leq r$, we have the unipotent representation $\chi_{\lambda_i}^\G$ of $\GL_{n_i}(\F_q)$, described above, as well as the character $\psi_i^\G := \psi_i \circ \det : \GL_{n_i}(\F_q) \to \F_q^\times \to \C^\times$.  Then the irreducible character $\chi_\Lambda^\G$ of $\GL_n(\F_q)$ associated to $\Lambda \in \calQ_n$ is
	\begin{align} \label{e:chiLambdaG}
	\chi_\Lambda^\G := \left( \chi_{\Lambda(\psi_1)}^\G \otimes \psi_1^\G \right) \circ \cdots \circ \left( \chi_{\Lambda(\psi_r)}^\G \otimes \psi_r^\G \right).
	\end{align}
	The fact that it is irreducible is attributable to \cite{Green55}.  One may also think of the tuple of characters of the $\GL_{n_i}(\F_q)$ as yielding one on the product, which may be viewed as the Levi of some parabolic subgroup of $\GL_n(\F_q)$.  Then the above $\circ$-product is the parabolic induction of the character on the Levi.
\end{remark}

\subsubsection{Character table of \texorpdfstring{$\rN$}{N(T)}}\label{s:charnormtor}

Recall that we have isomorphisms $\rN \cong \T \rtimes \SS_n = (\F_q^\times)^{n} \rtimes \SS_n$.  Our aim is to describe $\Irr \rN$, but let us begin with a description of the irreducible representations of each of its factors.  One has $\Irr \T = \widehat{\T}$, the dual group.  Furthermore, it is well known that $\Irr \SS_n$ is in natural bijection with the set $\P_n$ of partitions of $n$:  to $\lambda \in \P_n$ one associates the (left) submodule of $\C[\SS_n]$ spanned by its ``Young symmetrizer'' \cite[\S~4.1]{fulton-harris}; we will denote the resulting character by $\chi_\lambda^\SS$.

To describe $\Irr \rN$ explicitly, we appeal to~\cite[\S~8.2, Proposition 25]{Serre}, which treats the general situation of a semidirect product with abelian normal factor.   If $\psi \in \widehat{\T}$ then $\psi$ extends to a ($1$-dimensional) character of $\T \rtimes \stab \psi$ (noting that if we identify $\widehat{\T} = ( \widehat{\F}_q^\times)^n$, then $\SS_n$ acts by permutations) trivial on $\stab \psi$; so now, given $\chi \in \Irr( \stab \psi)$, we get $\psi \otimes \chi \in \Irr( \T \rtimes \stab \psi)$.  The result cited above says that $\Ind_{\T \rtimes \stab \psi}^\rN \psi \otimes \chi$ is irreducible and in fact all irreducible representations of $\rN$ arise this way (with the proviso that we get isomorphic representations if we start with characters in the $\SS_n$-orbit of $\psi$).

If we write $\psi = (\psi_1, \ldots, \psi_n) \in ( \widehat{\F}_q^\times)^n$, then $\stab \psi \cong \SS_{n_1} \times \cdots \times \SS_{n_r}$, where the $n_i$ are the multiplicities with which the $\psi_j$ appear.  Further, $\chi \in \Irr(\stab \psi) = \Irr \SS_{n_1} \times \cdots \times \Irr \SS_{n_r}$ so is an exterior tensor product $\chi = \chi_{\lambda_1}^\SS \otimes \cdots \otimes \chi_{\lambda_r}^\SS$ with $\lambda_i \in \P_{n_i}$.  Thus, from $\Ind \psi \otimes \chi$, we may define a map $\Lambda : \Gamma_1 \to \P$ by setting $\Lambda(\psi_j) = \lambda_i$, where $j$ is among the indices permuted by $\SS_{n_i}$ and $\Lambda(\varphi)$ to be the empty partition if $\varphi \in \widehat{\F}_q^\times$ does not appear in $\psi$.  In this way, we get a map $\Lambda : \Gamma_1 \to \P$ of total size $n$, i.e., an element of $\calQ_n$ defined in Remark \ref{r:unipart}.

Conversely, given $\Lambda \in \calQ_n$, let $\psi_i \in \Gamma_1$, $n_i$ and $\lambda_i \in \P_{n_i}$ be as in the paragraph preceding \eqref{e:chiLambdaG}.  Let $\T_i := (\F_q^\times)^{n_i}$ and set $\rN_i := \T_i \rtimes \SS_{n_i}$.  Observe that if $\psi \in \widehat{\T}$ is defined by taking the $\psi_i$ with multiplicity $n_i$, then $\prod_{i=1}^r \rN_i = \T \rtimes \stab \psi$.  Now, $\psi_i$ defines a character of $\rN_i$ by
\begin{align*}
(t_1, \ldots, t_{n_i}, \sigma) \mapsto \prod_{j=1}^{n_i} \psi_i(t_j),
\end{align*}
and $\chi_{\lambda_i}^\SS$ defines an irreducible representation of $\SS_{n_i}$ and hence of $\rN_i$.  Hence we get
\begin{align*}
\chi_{\lambda_i,\psi_i}^{\rN_i} := \chi_{\lambda_i}^\SS \otimes \psi_i \in \Irr \rN_i.
\end{align*}

Taking their exterior tensor product and then inducing to $\rN $ gives the irreducible representation
\begin{align} \label{e:chiLambdaNdef}
\chi _{\Lambda}^{\rN} := \Ind ^{\rN}_{\prod \rN_i} \bigotimes_{i} \chi_{\lambda_i,\psi_i}^{\rN_i} \in \Irr \rN .
\end{align}
It is in this way that we will realize the bijection $\calQ_n \xrightarrow{\sim} \Irr \rN$.

It will be convenient to define for $\Lambda \in \calQ_n $ the function $\wt{\Lambda} \in \calQ_n$ as
\begin{align}\label{e:Lamtildef}
\wt{\Lambda}\left(  \psi \right) := 
\begin{cases}
{\Lambda}\left(  \psi \right)'  & \mbox{ for }\psi\mbox{ odd,}\\
{\Lambda}\left(  \psi \right)  & \mbox{ for }\psi\mbox{ even}  
\end{cases}
\end{align}
where $\psi $ is said to be \emph{even} if $\psi(-1_{\F_q}) = 1_\C$ and \emph{odd} otherwise.


%

%

\subsection{Parameter sets for \texorpdfstring{$\Irr \HH_n$}{Irr Hn} }\label{s:IwHalg}

Here, we take up again the notation introduced at the beginning of Section \ref{s:IHAlg}.  Given a partition $\lambda \in \P_n$, we will be able to associate to it three different characters:  the unipotent character $\chi_\lambda^\G$ of $\G$ described in Remark \ref{r:unipart}, which we will see below is, in fact, an element of $\Irr(\G: \B)$; a character $\chi_\lambda^\HHH \in \Irr \HHH(\G, \B)$; and the irreducible character $\chi_\lambda^\SS$ of the symmetric group $\SS_n$.  The discussions in Sections \ref{s:IHAlg} and \ref{s:RepsHA} suggest that there are relationships amongst these and the purpose of this section is indeed to clarify this.

To a partition $\nu \in \P_n$, say $\nu = (\nu_1, \ldots, \nu_\ell)$, one can associate a subgroup $\SS_\nu := \SS_{\nu_1} \times \cdots \times \SS_{\nu_\ell} \leq \SS_n$ and then consider the character $\tau_\nu^\SS$ of the induced representation $\Ind_{\SS_\nu}^{\SS_n} \1_{\SS_\nu}$.  Then these characters are related to those of the irreducible representations $\chi_\lambda^\SS$ (see Section \ref{s:charGL}) by the Kostka numbers $K_{\lambda \nu}$ \cite[Corollary 4.39]{fulton-harris}, via the relationship
\begin{align} \label{Kostkasym}
\tau_\nu^\SS = \sum_{\lambda \in \P_n} K_{\lambda \nu} \chi_\lambda^\SS.
\end{align}

Also to $\nu \in \P_n$ one can associate a standard parabolic subgroup $P_\nu \leq \G$ whose Levi factor $L_\nu$ is isomorphic to $\GL_{\nu_1}(\F_1) \times \cdots \times \GL_{\nu_\ell}(\F_q)$.  Then one may consider the character $\tau_\nu^\G := \Ind_{P_\nu}^\G \1_{P_\nu}$ of the parabolic induction of the trivial representation.\footnote{By \cite[Proposition 6.1]{DigneMichel}, $\tau_\nu^\G$ depends only on the isomorphism class of $L_\nu$ (rather than the parabolic $P_\nu$) which in turn depends only on $\nu$.}  
Then if $\lambda \in \P_n$ and $\chi_\lambda^\G$ denotes the corresponding unipotent characters (as described in Remark \ref{r:unipart}), the following remarkable parallel with representations of the symmetric group was already observed at \cite[Corollary 1]{Steinberg}:
\begin{align} \label{KostkaG}
\tau_\nu^\G = \sum_{\lambda \in \P_n} K_{\lambda \nu} \chi_\lambda^\G.
\end{align}
In particular, as $K_{\lambda \lambda} = 1$ for all $\lambda \in \P_n$, we see that $\chi_\lambda^\G$ is a component of $\tau_\lambda^\G$ and hence of $\tau_{(1^n)}^\G$, which is the character of $\Ind_\B^\G \1_\B$.  This shows that
\begin{align} \label{unisubset}
\left\{ \chi_\lambda^\G \, : \, \lambda \in \P_n \right\} \subseteq \Irr(\G: \B).
\end{align}

Let us now consider the specializations \eqref{Iwahorispec} of $\HH_n$ corresponding to $\theta_q, \theta_1 : \C[u^{\pm 1}] \to \C$.  Since $\HH_n(u)$ is split semisimple \cite[(68.12) Corollary]{CurtisReiner}, Tits's deformation theorem (\cite[(68.20) Corollary]{CurtisReiner}, \cite[7.4.6 Theorem]{GeckPfeiffer}) applies to give bijections 
\begin{align*}
d_{\theta_q} : \Irr \HH_n (u) & \xrightarrow{\sim} \Irr \HH_n(q) = \Irr \HHH(\G,\B) & d_{\theta_1} : \Irr \HH_n (u) & \xrightarrow{\sim} \Irr \HH_n(1) = \Irr \SS_n,
\end{align*}
where a character $\mathcal{X} : \HH_n \to \C[u^{\pm 1}]$ (the characters of $\HH_n(u)$ are in fact defined over $\C[u^{\pm 1}]$ by \cite[Proposition 7.3.8]{GeckPfeiffer}) is taken to its specialization $\mathcal{X}_z : \HH_n \to \C \otimes_{\theta_z} \C[u^{\pm 1}] = \C$, for $z= 1$ or $z = q$.  We can thus define the composition
\begin{align} \label{TDBdef}
\TD_\B := d_{\theta_q} \circ d_{\theta_1}^{-1} : \Irr \SS_n \xrightarrow{\sim} \Irr \HHH(\G, \B).
\end{align}
Now, we have bijections (using Proposition \ref{Heckerepgen} for the one on the left)
\begin{align} \label{DBTB}
\vcenter{ \xymatrix{ \Irr(\G : \B) \ar[dr]^{\DC_\B} & & \Irr \SS_n \ar[dl]_{\TD_\B} \\
		& \Irr \HHH(\G, \B) & } }
\end{align}
and since $| \Irr \SS_n | = | \P_n|$ all of the sets are of this size, so it follows that the inclusion in \eqref{unisubset} is in fact an equality
\begin{align*}
\Irr(\G: \B) = \left\{ \chi_\lambda^\G \, : \, \lambda \in \P_n \right\} .
\end{align*}
Furthermore, the following holds.

\begin{proposition}\cite[Theorem 4.9(b)]{bitraces}\label{p:bitraces}
	For $\lambda \in \P_n$, one has
	\begin{align*}
	\DC_\B( \chi_\lambda^\G) = \TD_\B(\chi_\lambda^\SS).
	\end{align*}
\end{proposition}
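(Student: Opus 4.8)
The plan is to prove the identity for all $\lambda$ simultaneously by comparing how the two sides decompose the ``Hecke analogue'' of the parabolically induced trivial characters $\tau_\nu^\G$, and then inverting the Kostka matrix. For $\nu \in \P_n$ let $\HH_\nu \leeq \HH_n$ be the parabolic subalgebra generated by those $\mathsf{T}_i$ with $s_i \in \SS_\nu$, and let $\tau_\nu^{\HH_n}$ denote the character of the $\HH_n$-module $\HH_n \otimes_{\HH_\nu} \mathrm{ind}$, where $\mathrm{ind}$ is the index representation $\mathsf{T}_i \mapsto u$ of $\HH_\nu$. Since $\HH_n$ is free over $\HH_\nu$, this induction commutes with the specialisations $\theta_1$ and $\theta_q$. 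Under $\HH_n(1) \cong \C[\SS_n]$, $\HH_\nu(1) \cong \C[\SS_\nu]$ the module becomes $\Ind_{\SS_\nu}^{\SS_n}\1_{\SS_\nu}$, so $\tau_\nu^{\HH_n}(1) = \tau_\nu^\SS$; under $\HH_n(q) \cong \HHH(\G,\B)$ and, by Iwahori's theorem (\cite[Theorem 3.2]{Iwahori1964}) applied to the Levi $L_\nu$, $\HH_\nu(q) \cong \HHH(L_\nu, \B_{L_\nu})$ (with $\B_{L_\nu}$ the Borel of $L_\nu$), it becomes $\HHH(\G,\B) \otimes_{\HHH(L_\nu,\B_{L_\nu})} \mathrm{ind}$.

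The first and main step is to identify this last $\HHH(\G,\B)$-module with $\DC_\B(\tau_\nu^\G) = (\Ind_{P_\nu}^\G \1_{P_\nu})^\B = \C[\B\backslash\G/P_\nu]$. By the Bruhat decomposition relative to $\B$ and $P_\nu$, the double cosets $\B w P_\nu$ are indexed by the minimal-length representatives $w \in X_\nu$ of $\SS_n/\SS_\nu$, which is exactly the set indexing the basis $\{T_w \otimes 1\}$ of $\HHH(\G,\B)\otimes_{\HHH(L_\nu,\B_{L_\nu})}\mathrm{ind}$. A direct check that the action \eqref{Heckerepdef} of $\HHH(\G,\B)$ on the indicator functions $\II_{\B w P_\nu}$ reproduces the standard Hecke action on cosets then yields the module isomorphism, hence $\DC_\B(\tau_\nu^\G) = \tau_\nu^{\HH_n}(q)$. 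It is tempting to obtain this from Proposition~\ref{p:DCind} with $G = \G$, $H = \B$, $L = L_\nu$, $K = \B_{L_\nu}$ and $U$ the unipotent radical of $P_\nu$; but the hypothesis $\bigcap_{\ell \in L_\nu}\ell\B_{L_\nu}\ell^{-1} = \{1\}$ fails once $q > 2$ (the intersection is the center of $L_\nu$), so the explicit Bruhat computation appears to be the safe route. I expect this identification to be the principal obstacle.

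With Step 1 in hand, I would write down the two decompositions of $\DC_\B(\tau_\nu^\G)$ into irreducibles. On the group side, $\DC_\B = (-)^\B$ is exact and additive, so applying it to \eqref{KostkaG} gives $\DC_\B(\tau_\nu^\G) = \sum_{\lambda} K_{\lambda\nu}\,\DC_\B(\chi_\lambda^\G)$, a decomposition into pairwise distinct irreducibles since $\DC_\B\colon \Irr(\G:\B) \to \Irr\HHH(\G,\B)$ is a bijection (Proposition~\ref{Heckerepgen} together with $\Irr(\G:\B) = \{\chi_\lambda^\G\}$). On the Hecke side, $\HH_n(u)$ is split semisimple, so by Tits's deformation theorem (\cite[7.4.6]{GeckPfeiffer}) the multiplicity of each generic irreducible $\mathcal{X}_\mu$ (normalised so that $\mathcal{X}_\mu(1) = \chi_\mu^\SS$, whence $\mathcal{X}_\mu(q) = \TD_\B(\chi_\mu^\SS)$ by \eqref{TDBdef}) in $\tau_\nu^{\HH_n}$ is constant under specialisation; at $u=1$ it equals $\langle \tau_\nu^\SS, \chi_\mu^\SS\rangle = K_{\mu\nu}$ by \eqref{Kostkasym}. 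Hence $\tau_\nu^{\HH_n} = \sum_\mu K_{\mu\nu}\mathcal{X}_\mu$, and specialising at $u=q$, $\DC_\B(\tau_\nu^\G) = \tau_\nu^{\HH_n}(q) = \sum_\mu K_{\mu\nu}\,\TD_\B(\chi_\mu^\SS)$.

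Finally, comparing the two expressions, $\sum_\lambda K_{\lambda\nu}\DC_\B(\chi_\lambda^\G) = \sum_\mu K_{\mu\nu}\TD_\B(\chi_\mu^\SS)$ for every $\nu \in \P_n$. Both $\{\DC_\B(\chi_\lambda^\G)\}$ and $\{\TD_\B(\chi_\mu^\SS)\}$ enumerate $\Irr\HHH(\G,\B)$; let $C$ be the permutation matrix with $\TD_\B(\chi_\mu^\SS) = \sum_\lambda C_{\mu\lambda}\DC_\B(\chi_\lambda^\G)$. Then, using linear independence of the $\DC_\B(\chi_\lambda^\G)$, the comparison reads $K = C^{\top}K$ as matrices; since the Kostka matrix $K = (K_{\lambda\nu})$ is unitriangular for the dominance order, hence invertible, $C$ must be the identity. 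Therefore $\DC_\B(\chi_\lambda^\G) = \TD_\B(\chi_\lambda^\SS)$ for every $\lambda \in \P_n$, which is the assertion.
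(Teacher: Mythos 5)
Your proposal is correct and reaches the conclusion by the same final device as the paper (invertibility of the Kostka matrix forces the two parametrizations of $\Irr\HHH(\G,\B)$ to agree), but you take a longer route to get there. The paper simply cites \cite[(68.24) Theorem]{CurtisReiner} for the statement that the Tits bijection $\DC_\B^{-1}\circ\TD_\B$ preserves multiplicities in the permutation characters $\tau_\nu$, i.e.
\begin{align*}
\left( (\DC_\B^{-1} \circ \TD_\B)(\chi_\lambda^\SS), \tau_\nu^\G \right) = \left(\chi_\lambda^\SS, \tau_\nu^\SS \right) \quad \text{for all } \nu,
\end{align*}
and then reads off the claim from \eqref{Kostkasym} and \eqref{KostkaG}. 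Your Steps 1--2 --- the identification $\DC_\B(\tau_\nu^\G)\cong\HH_n(q)\otimes_{\HH_\nu(q)}\mathrm{ind}$ via the Bruhat decomposition for $\B\backslash\G/P_\nu$, together with the Tits-deformation argument showing the multiplicities are constant across $u=1$ and $u=q$ --- are precisely a reconstruction of the content of that Curtis--Reiner theorem, which is why the rest of your argument then runs parallel to the paper's. Your observation that Proposition~\ref{p:DCind} cannot be invoked here (the hypothesis $\bigcap_{\ell\in L_\nu}\ell\B_{L_\nu}\ell^{-1}=\{1\}$ fails since the intersection is $Z(L_\nu)$ whenever $q>2$) is correct and a useful sanity check; it explains why a direct Bruhat computation, or an external citation, is needed for the key module isomorphism. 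What the paper's route buys is brevity; what yours buys is a self-contained account that exposes exactly where the parabolic Hecke subalgebra and the generic specialization enter, which is worth keeping in mind since Proposition~\ref{p:DCind} will play the analogous role in the $\U$-setting later.
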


\begin{proof}
	\cite[(68.24) Theorem]{CurtisReiner} states that the bijection $\DC_\B^{-1} \circ \TD_\B : \Irr \SS_n \xrightarrow{\sim} \Irr(\G : \B)$ satisfies
	\begin{align*}
	\left( \left( \DC_\B^{-1} \circ \TD_\B \right) ( \chi_\lambda^\SS), \tau_\nu^\G \right) = \left(\chi_\lambda^\SS, \tau_\nu^\SS \right)
	\end{align*}
	for all $\nu \in \P_n$.  But the right hand side is, from \eqref{Kostkasym}, $K_{\lambda \nu}$, but then from \eqref{KostkaG}, we must have $\DC_\B^{-1} \circ \TD_\B( \chi_\lambda^\SS) = \chi_\lambda^\G$.
\end{proof}

This allows us to unambiguously define, for each $\lambda \in \P_n$, an irreducible representation $\chi_\lambda^\HHH \in \Irr \HHH(\G, \B)$ by
\begin{align*}
\chi_\lambda^\HHH := \DC_\B( \chi_\lambda^\G) = \TD_\B(\chi_\lambda^\SS).
\end{align*}

\subsection{Parameter sets for \texorpdfstring{$\Irr \YY_{d,n}$}{Irr Yd,n} }\label{s:parY}

In Section \ref{s:chartables}, we saw that the set $\calQ_n$ was used to parametrize both the irreducible representations of $\Irr \rN$ (Section \ref{s:charnormtor}) as well as a subset of those of $\G = \GL_n(\F_q)$ (Remark \ref{r:unipart}).  We will see (in Remark \ref{r:IrrGUtwists}) that this latter subset is in fact $\Irr(\G : \U)$, which by Proposition \ref{Heckerepgen} yields the irreducible representations of $\HHH(\G, \U)$, and furthermore, that Tits's deformation theorem again applies to the generic Yokonuma--Hecke algebra, which gives a bijection of these with $\Irr \rN$.  The purpose of this section is to establish, as in Section \ref{s:IwHalg}, the precise correspondence between the relevant irreducible representations in terms of the elements of the parameter set $\calQ_n$.

Let us proceed with the argument involving Tits's deformation theorem.  Recall from Theorem~\ref{t:yokonuma} that we have isomorphisms
\begin{align*}
\HHH(\G, \U) & \cong \YY_{q-1,n}(q) & & \textnormal{and} & \C[\rN] & \cong \YY_{q-1,n}(1)
\end{align*}
by specialising $\YY_{q-1,n}$ at $u=q$ and $u=1$, respectively.  Thus, both $\HHH(\G, \U)\cong \YY_{q-1,n}(q)$ and $\C[\rN]\cong \YY_{q-1,n}(1)$ are split semisimple by \cite[Proposition 9]{cp} and $\YY_{d,n}(u)$ is also split by \cite[5.2]{cp}.\footnote{Strictly speaking \cite{cp} considers $\YY_{d,n}(v)$ where $v^2=u$.  However, one can define all irreducible representations in \cite[Proposition 5]{cp} of  $\YY_{d,n}(v)$ already over $\YY_{d,n}(u)$ by a slight change in the defining formulas of \cite[Proposition 5]{cp} see \cite[Theorem 3.7]{hm}.} 
Thus, the deformation theorem (\cite[(68.20) Corollary]{CurtisReiner}, \cite[7.4.6 Theorem]{GeckPfeiffer}) again applies to yield bijections \small
\begin{align} \label{titsiso}
d_{\theta_q} : \Irr \YY_{q-1,n}(u) & \xrightarrow{\sim} \Irr \YY_{q-1,n}(q) = \Irr \HHH(\G,\U) & d_{\theta_1} : \Irr \YY_{q-1,n}(u) & \xrightarrow{\sim} \Irr \YY_{q-1,n}(1) = \Irr \rN,
\end{align} \normalsize
where we denote by $\theta_q:\C[u^{\pm 1}]\to \C$ the $\C$-algebra homomorphism sending $u$ to $q$.  Again, \cite[Proposition 7.3.8]{GeckPfeiffer}  applies to say that if $\calX \in \Irr \YY_{d,n}(u)$, then in fact $\calX : \YY_{d,n}\to \C[u^{\pm 1}]$, and the bijections in \eqref{titsiso} are in fact the specializations of $\calX$.  We may put these together to obtain a bijection
\begin{align} \label{TDUdef}
\TD_\U := d_{\theta_q} \circ d_{\theta_1}^{-1} : \Irr \rN \to \Irr \HHH(\G, \U).
\end{align}

Furthermore, \cite[Remark 7.4.4]{GeckPfeiffer} tells us that
\begin{align} \label{speccharval}
d_{\theta_q}(\calX) & = \theta_q(\calX) & & \textnormal{ and } & d_{\theta_1}(\calX) & =\theta_1(\calX).
\end{align} 
In particular, the dimensions of the irreducible representations of $\HHH(\G, \U)$ and $\C[\rN]$ agree.  Thus, we may conclude from Wedderburn's theorem that the Hecke algebra $\HHH(\G, \U)$ and the group algebra $\C[\rN]$ are isomorphic as abstract $\C$-algebras.



Defining $\DC_\U : \Irr ( \G : \U ) \to \Irr \HHH(\G,\U)$ as in Proposition \ref{Heckerepgen}, we get a diagram like that at \eqref{DBTB}:
\begin{align} \label{DUTU}
\vcenter{ \xymatrix{ \Irr(\G : \U) \ar[dr]^{\DC_\U} & & \Irr \rN \ar[dl]_{\TD_\U} \\
		& \Irr \HHH(\G, \U). & } }
\end{align}


For every $\Lambda \in \calQ_n$ we will define a character $\mathcal{X}^{\YY}_\Lambda \in \Irr \YY_{d,n}(u)$ (see \eqref{e:defdefchar} in Section \ref{yokotable} below) such that 
\begin{align*}
\mathcal{X}^{\YY}_\Lambda \otimes_{\theta_1} \C = d_{\theta_1} \left( \mathcal{X}^{\YY}_\Lambda \right) = \chi_{\wt{\Lambda}}^\rN \in \Irr \rN
\end{align*}
is the one described in Section \ref{s:charnormtor} for the modified $\wt{\Lambda} $.  This, together with the $ u=q $ specialization
\begin{align*}
\mathcal{X}^{\YY}_\Lambda \otimes_{\theta_q} \C = d_{\theta_q} (\mathcal{X}^{\YY}_\Lambda )= \chi^{ \HHH }_\Lambda  \in \Irr \HHH(\G,\U )
\end{align*}
will satisfy
\begin{align*}
\TD_{\U} \left( \chi^{ \rN }_{\wt{\Lambda}} \right) = \chi^{ \HHH }_\Lambda.
\end{align*}

As in Section \ref{s:RepsHA}, for a character $\chi^\G_\Lambda \in \Irr(\G : \U)$ one defines
\begin{align*}
\DC_{\U} \left(	\chi^{\G}_\Lambda \right) := \Hom_{\U} \left( \1_{\U}, \Res^{\G}_{\U} \chi^{\G}_{\Lambda} \right),
\end{align*}
namely, the subspace of $\U $-invariants, as in Proposition \ref{Heckerepgen}.

Our definition of $\mathcal{X}^{\YY}_\Lambda $ will be in such a way that the $\chi_\Lambda^{\G} $ from Section \ref{s:charGL} is mapped by $\DC_{\U} $ to the same $\chi^{ \HHH }_\Lambda $, proving thus the main result of this section. 

\begin{theorem}\label{p:expbijection}
	Let the characters $\chi_\Lambda^\G$ and $\chi_{\wt{\Lambda}}^\rN$ be those described in Sections~\ref{s:charGL} and~\ref{s:charnormtor}, respectively.  Then, the set $\calQ_n $ parametrizes the pairs of irreducible representations $ (V,V^{\U}) $ from Proposition~\ref{Heckerepgen} in such a way that the characters
	\begin{align*}
	\chi_{\Lambda}^{\G} & \in \Irr(\G:\U) &
	& \text{ and } & \chi_{\wt{\Lambda}}^{\rN} & \in \Irr \rN
	\end{align*}
	satisfy
	\begin{align*}
	\DC_{\U} \left(\chi_{\Lambda}^{\G} \right)= \TD_{\U} \left( \chi_{\wt{\Lambda}}^{\rN} \right) \in \Irr \HHH \left(\G,\U \right).
	\end{align*}
	
\end{theorem}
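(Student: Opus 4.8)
The plan is to follow the strategy of the Iwahori--Hecke case (Proposition~\ref{p:bitraces}), the extra work coming from the torus factor of $\rN$ and from the twists $\psi^\G := \psi \circ \det$ and $\Lambda \mapsto \wt\Lambda$. Write $\psi_1, \ldots, \psi_r \in \Gamma_1$ for the support of $\Lambda$, put $n_i := |\Lambda(\psi_i)|$, $\lambda_i := \Lambda(\psi_i)$, $\nu := (n_1, \ldots, n_r)$, and let $L := L_\nu = \prod_i \GL_{n_i}(\F_q) \leq \G$ be the standard Levi, $K := \U \cap L = \prod_i \U_{n_i}$, $N_L := \rN \cap L = \prod_i \rN_i$. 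Since the construction of $\mathcal{X}^\YY_\Lambda$ in Section~\ref{yokotable} is arranged so that $d_{\theta_1}(\mathcal{X}^\YY_\Lambda) = \chi_{\wt\Lambda}^\rN$, one has $\TD_\U(\chi_{\wt\Lambda}^\rN) = d_{\theta_q}(\mathcal{X}^\YY_\Lambda) =: \chi^\HHH_\Lambda$, and the theorem reduces to showing $\DC_\U(\chi_\Lambda^\G) = \chi^\HHH_\Lambda$.

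First I would reduce to the case $r = 1$ using Proposition~\ref{p:DCind}. By \eqref{e:chiLambdaG}, $\chi_\Lambda^\G = R_L^\G( \bigotimes_i (\chi_{\lambda_i}^{\GL_{n_i}} \otimes \psi_i^{\GL_{n_i}}) )$, and because $\psi_i \circ \det$ is trivial on unipotent matrices each factor lies in $\Irr(\GL_{n_i} : \U_{n_i})$; dually \eqref{e:chiLambdaNdef} gives $\chi_{\wt\Lambda}^\rN = \Ind_{N_L}^\rN( \bigotimes_i (\chi_{\wt{\lambda_i}}^\SS \otimes \psi_i) )$. Proposition~\ref{p:DCind}, applied with $H = \U$, the above $L$, $K$, and $U$ the unipotent radical of $P_\nu$ (its hypothesis holds since $K$ contains no nontrivial normal subgroup of $L$), yields $\DC_\U(\chi_\Lambda^\G) = \Ind_{\HHH(L,K)}^{\HHH(\G,\U)}( \bigotimes_i \DC_{\U_{n_i}}(\chi_{\lambda_i}^{\GL_{n_i}} \otimes \psi_i^{\GL_{n_i}}) )$, where $\HHH(L,K) = \bigotimes_i \HHH(\GL_{n_i}, \U_{n_i})$ by Proposition~\ref{p:Hinclusion}. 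The parallel statement on the $\TD_\U$ side --- that $\TD_\U$ carries $\Ind_{N_L}^\rN$ to $\Ind_{\HHH(L,K)}^{\HHH(\G,\U)}$ --- I would deduce from the fact that $\YY_{q-1,n}$ contains $\bigotimes_i \YY_{q-1,n_i}$ as the subalgebra on the block-diagonal generators, that this inclusion specialises to $\C[N_L] \hookrightarrow \C[\rN]$ at $u = 1$ and to the inclusion of Proposition~\ref{p:Hinclusion} at $u = q$, and that the bijections of Tits's deformation theorem are functorial along such compatible subalgebra inclusions. Since both $\DC_\U(\chi_\Lambda^\G)$ and $\TD_\U(\chi_{\wt\Lambda}^\rN)$ are then the same induction of $\bigotimes_i$ of the $r=1$ data, this leaves exactly: for $\lambda \in \P_n$ and $\psi \in \Gamma_1$, prove $\DC_\U(\chi_\lambda^\G \otimes \psi^\G) = \TD_\U(\chi_{\wt\lambda}^\SS \otimes \psi)$, where $\wt\lambda = \lambda'$ if $\psi$ is odd and $\wt\lambda = \lambda$ if $\psi$ is even.

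I would then settle $\psi = \1$ and bootstrap to general $\psi$ by twisting. For $\psi = \1$ the claim is $\DC_\U(\chi_\lambda^\G) = \TD_\U(\Infl_{\SS_n}^\rN \chi_\lambda^\SS)$. As $\B = \U \rtimes \T$ and $\rN = N_\G(\T)$ serves as a set of double-$\U$-coset representatives, Lemma~\ref{suffcent} and Proposition~\ref{p:Hprojection} give a surjection $\pi : \HHH(\G, \U) \twoheadrightarrow \HHH(\G, \B)$ which specialises to $\C[\rN] \twoheadrightarrow \C[\SS_n]$ at $u = 1$ and to the corresponding $\HH_n$-projection at $u = q$; functoriality of Tits deformation then gives $\TD_\U(\Infl_{\SS_n}^\rN \chi_\lambda^\SS) = \pi^* \TD_\B(\chi_\lambda^\SS) = \pi^* \chi_\lambda^\HHH$. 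A Harish--Chandra argument (among the $\Ind_\B^\G \Infl_\T^\B \eta$, $\eta \in \widehat\T$, the unipotent characters occur only at $\eta = \1$) shows $\dim (\chi_\lambda^\G)^\U = \dim (\chi_\lambda^\G)^\B$, hence $(\chi_\lambda^\G)^\U = (\chi_\lambda^\G)^\B$; since the central idempotent $E_\T$ of Lemma~\ref{suffcent} acts on $(\chi_\lambda^\G)^\U$ as the projector onto $(\chi_\lambda^\G)^\B$, it acts as the identity, so $\DC_\U(\chi_\lambda^\G)$ lies in the image of $\pi^*$ and equals $\pi^* \DC_\B(\chi_\lambda^\G) = \pi^* \chi_\lambda^\HHH$ by Proposition~\ref{p:bitraces}. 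For general $\psi$, tensoring by $\psi^\G$ (trivial on $\U$) leaves the space $(\chi_\lambda^\G)^\U$ unchanged but twists the $\HHH(\G,\U)$-action by the algebra automorphism $\rho_\psi : \varphi \mapsto (\psi \circ \det) \cdot \varphi$; checking on the generators $T_{h_j(t_g)}$, $T_{\omega_i}$ --- using $\det \omega_i = 1$ and $\det h_j(t_g) = t_g$ --- shows $\rho_\psi$ is the $u = q$ specialisation of the $\C$-algebra automorphism $\alpha_\psi$ of $\YY_{q-1,n}$ given by $\mathsf{h}_j \mapsto \psi(t_g) \mathsf{h}_j$, $\mathsf{T}_i \mapsto \mathsf{T}_i$ (the relations of Section~\ref{altpres} survive because $\psi(t_g)^{q-1} = 1$). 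Its $u = 1$ specialisation sends $\Infl_{\SS_n}^\rN \chi_\lambda^\SS$ to $\chi_{\wt\lambda}^\SS \otimes \psi$: the torus is twisted by $\psi$, while $s_i = \omega_i h_i(-1)$ picks up the scalar $\psi(-1_{\F_q})$, so the $\SS_n$-part is twisted by $\sgn$ precisely when $\psi$ is odd, and $\chi_\lambda^\SS \otimes \sgn = \chi_{\lambda'}^\SS$. Since the $d_{\theta_z}$ are specialisations (hence commute with $\alpha_\psi$), $\TD_\U(\chi_{\wt\lambda}^\SS \otimes \psi)$ is $\pi^* \chi_\lambda^\HHH$ with its action precomposed by $\rho_\psi$, which is $\DC_\U(\chi_\lambda^\G)$ with its action precomposed by $\rho_\psi$, i.e.\ $\DC_\U(\chi_\lambda^\G \otimes \psi^\G)$, as required.

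The hard parts will be the precise functoriality statements for Tits's deformation theorem --- compatibility with Harish--Chandra induction (for the reduction to $r = 1$) and with the quotient $\HHH(\G,\U) \twoheadrightarrow \HHH(\G,\B)$ (for the base case) --- together with the bookkeeping in the twisting step that produces the conjugate partition $\lambda'$ exactly in the odd case. This last point is where the choice $\omega_i = s_i h_i(-1)$ in Theorem~\ref{t:yokonuma}, rather than $s_i$, pays off, and where the modification $\Lambda \mapsto \wt\Lambda$ of \eqref{e:Lamtildef} enters the final statement.
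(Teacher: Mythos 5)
Your proposal is correct, and its skeleton coincides with the paper's: reduce to the single-$\psi_i$ factors via parabolic-induction compatibility of $\DC_\U$ and $\TD_\U$ (the paper packages this as Proposition~\ref{p:parainduc}; you re-derive it from Proposition~\ref{p:DCind} and the functoriality of the Tits deformation along the block-diagonal inclusion $\bigotimes_i \YY_{q-1,n_i}\hookrightarrow\YY_{q-1,n}$); then settle $\psi=\mathbf 1$ by passing from $\B$ to $\U$ (your argument is exactly the content of Proposition~\ref{p:BtoU}); and finally bootstrap to general $\psi$ by a twist. The one place where your route is genuinely different is the twist step. The paper factors the twist through the quotient $\YY_{q-1,n}\twoheadrightarrow\HH_{q-1,n}=\HH_n\otimes_\C\C[C_{q-1}]$ of \eqref{e:natquot}, inflates $\mathcal X^{\HH_n}_\lambda\otimes\psi$ back up, and analyses the two specialisations via the auxiliary maps $\Infl$ and $\wt\Infl$ of Remark~\ref{r:infltilde} (this is Proposition~\ref{p:twist}). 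You instead build an explicit $\C[u^{\pm1}]$-algebra automorphism $\alpha_\psi$ of $\YY_{q-1,n}$ with $\mathsf h_j\mapsto\psi(t_g)\mathsf h_j$, $\mathsf T_i\mapsto\mathsf T_i$, and obtain the $\TD_\U$-compatibility for free because specialisations commute with any $\C[u^{\pm1}]$-automorphism. The verification that $\alpha_\psi$ respects relations (e) and (f) is where $\psi(t_g)^{q-1}=1$ is used, and you have identified that correctly; one should also note that $\mathsf e_i$ and $\mathsf f_i\mathsf f_{i+1}$ are $\alpha_\psi$-invariant. Your mechanism is a slightly more elementary argument for the same commuting square, and it makes the origin of the replacement $\lambda\mapsto\lambda'$ for odd $\psi$ completely transparent, whereas the paper tracks it through $\wt\Infl$ and Remark~\ref{r:subltediffinfl}. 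Both routes rest on the same convention $\omega_i=s_ih_i(-1)$ from Theorem~\ref{t:yokonuma}; in particular, had one used the ``modern'' presentation of Remark~\ref{r:modernyoko} the bookkeeping producing $\wt\Lambda$ would disappear.
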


Inspired by the construction of $\Irr \rN$ in Section \ref{s:charnormtor}, we establish a parallel with the technique of parabolic induction 
to build the character table of $\G $ using the unipotent characters as building blocks.

The rest of this section is devoted to studying more carefully the bijections $\DC_{\U},\TD_{\U}, \DC_{\B}$ and $\TD_{\B}$.  In Section~\ref{p:vanishBtoU} we prove the compatibility between them.  In Section~\ref{p:sutwist} we analyze their behaviour with a twist by a degree one character.  In Section~\ref{p:paraind} we check their interplay with parabolic induction and exterior tensor products.  Finally, in Section~\ref{yokotable} we construct the character table of the generic Yokonuma--Hecke algebra, as a common lift of both $\Irr \rN$ and $\Irr(\G : \U)$.

\subsubsection{\texorpdfstring{From $\B$ to $\U$}{From B to U} } \label{p:vanishBtoU}

Let us start by checking the correspondence for the unipotent characters $\chi_{\lambda}^{\G}$ in the $\HHH(\G,\U) $-case agrees with the one in the $\HHH(\G,\B)$-case (cf. Remark \ref{r:unipart}).  Taking $G = \G$, $H = \B$, $K = \U$ and $L = \T$ in Proposition \ref{p:Hprojection}, noting that Lemma \ref{suffcent} applies as the set $\rN$ of double $\U$-coset representatives normalizes $\T$ , we are provided with a surjective homomorphism $\HHH(\G, \U) \twoheadrightarrow \HHH(\G, \B)$, and hence we obtain an inflation map $\Infl : \Rep \HHH(\G, \B) \to \Rep \HHH(\G, \U)$, taking $\Irr \HHH(\G, \B)$ to $\Irr \HHH(\G, \U)$.  We can then make the following precise statement.

\begin{proposition} \label{p:BtoU}
	The following diagram is commutative
	\begin{align}\label{e:diagrm2sqr}
	\vcenter{ \xymatrix{
			\Irr(\G:\B) \ar@{_{(}->}[d]^-{  } \ar[r]^-{ \DC_{\B} } & 
			\Irr \HHH(\G,\B)\ar@{_{(}->}[d]^-{ \Infl } & 
			\Irr \SS_n \ar[l]_-{\TD_{\B}} \ar@{_{(}->}[d]^-{ \Infl }\\
			\Irr(\G:\U)   \ar[r]^-{ \DC_{\U} } &    \Irr \HHH(\G,\U) & \Irr \rN   \ar[l]_-{\TD_{\U}} 
		} }
		\end{align}
		where the top horizontal arrows are the bijections in \eqref{DBTB}, the bottom horizontal arrows those in \eqref{DUTU}, the leftmost vertical arrow is the natural inclusion and the other two are the inflation maps.
	\end{proposition}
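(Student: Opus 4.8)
\emph{Setup.} The diagram is made of two commuting squares, which I would handle by quite different means. Throughout, let $\kappa\colon\HHH(\G,\U)\twoheadrightarrow\HHH(\G,\B)$ be the surjection of Proposition~\ref{p:Hprojection} taken with $G=\G$, $H=\B$, $K=\U$, $L=\T$; by Lemma~\ref{suffcent} (as $\rN$ normalises $\T$) the element $E:=E_\T$ is then a central idempotent of $\HHH(\G,\U)$, one has $\kappa(\alpha)=|\T|(E\ast_\U\alpha)$ with $\ker\kappa=(\1-E)\HHH(\G,\U)$, and, as a function, $E=|\T|^{-1}\II_\B$. In particular the right‑hand vertical arrow, the inflation functor $\Infl\colon\Rep\HHH(\G,\B)\to\Rep\HHH(\G,\U)$ along $\kappa$, is pullback along $\kappa$, so its essential image is precisely the set of $\HHH(\G,\U)$‑modules on which $E$ acts as the identity.

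\emph{The left square.} Fix $\zeta\in\Irr(\G:\B)$, which lies in $\Irr(\G:\U)$ since $\U\leq\B$. First I would compute how $E$ acts on $\DC_\U(\zeta)=\zeta^\U$: from \eqref{Heckerepdef} and $E=|\T|^{-1}\II_\B$ it acts by $v\mapsto\tfrac1{|\B|}\sum_{a\in\B}\pi(a)v$, i.e.\ as the projection of $\zeta^\U$ onto $\zeta^\B$, which is nonzero because $\zeta\in\Irr(\G:\B)$. As $\DC_\U(\zeta)$ is irreducible (Proposition~\ref{Heckerepgen}) and $E$ is a central idempotent, $E$ must act as the identity on $\DC_\U(\zeta)$; hence $\zeta^\B=\zeta^\U$ and $\ker\kappa$ annihilates $\DC_\U(\zeta)$, so $\DC_\U(\zeta)=\Infl(N)$ for a unique $\HHH(\G,\B)$‑module $N$. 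To identify $N$ with $\DC_\B(\zeta)$, note that a $\kappa$‑preimage of $\psi\in\HHH(\G,\B)$ is $|\T|^{-1}\psi\in\HHH(\G,\U)$ (as in the proof of Proposition~\ref{p:Hprojection}), whose action \eqref{Heckerepdef} on $\zeta^\U$ is $v\mapsto\tfrac1{|\B|}\sum_a\psi(a)\pi(a)v$ — exactly the action \eqref{Heckerepdef} defining $\DC_\B(\zeta)$ on $\zeta^\B$. This gives $\DC_\U\circ(\text{incl})=\Infl\circ\DC_\B$ on $\Irr(\G:\B)$, i.e.\ commutativity of the left square; note this requires no separate computation of multiplicities.

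\emph{The right square.} This expresses compatibility of Tits's deformation theorem for $\HH_n$ and for $\YY_{q-1,n}$, and I would prove it in three steps. (i) There is a $\C[u^{\pm1}]$‑algebra surjection $\varpi\colon\YY_{q-1,n}\twoheadrightarrow\HH_n$, $\mathsf{T}_i\mapsto\mathsf{T}_i$, $\mathsf{h}_j\mapsto1$: all relations of Section~\ref{altpres} are trivially preserved except \eqref{quadratic}, which becomes relation~(c) of $\HH_n$ since $\mathsf{e}_i\mapsto1$ and $\mathsf{f}_i\mapsto1$. (ii) After base change to $\C(u)$ both algebras are split semisimple, so inflation $\Infl^{u}\colon\Irr\HH_n(u)\to\Irr\YY_{q-1,n}(u)$ along $\varpi$ is defined with irreducible values; since the irreducible characters of $\HH_n(u)$ and $\YY_{q-1,n}(u)$ are defined over $\C[u^{\pm1}]$ (\cite[Proposition~7.3.8]{GeckPfeiffer}) and $\varpi$ is too, the character $\mathcal{X}\circ\varpi$ of $\Infl^u(\mathcal{X})$ specialises at $u=z$ to the character of $\Infl^{\varpi(z)}(d_{\theta_z}\mathcal{X})$, so (characters determining modules in the split semisimple case) $d_{\theta_z}\circ\Infl^u=\Infl^{\varpi(z)}\circ d_{\theta_z}$ for $z\in\{1,q\}$. (iii) Under $\YY_{q-1,n}(1)\cong\C[\rN]$ and $\HH_n(1)\cong\C[\SS_n]$ (see \eqref{spec1}, \eqref{Iwahorispec}), $\varpi(1)$ becomes the algebra map induced by $p\colon\rN\twoheadrightarrow\SS_n$ (as $\mathsf{T}_i\leftrightarrow\omega_i$ with $p(\omega_i)=s_i$ and $\mathsf{h}_j\leftrightarrow h_j(t_g)\in\ker p$), so $\Infl^{\varpi(1)}$ is the left vertical $\Infl^\rN_{\SS_n}$; and under $\YY_{q-1,n}(q)\cong\HHH(\G,\U)$ and $\HH_n(q)\cong\HHH(\G,\B)$ (Theorem~\ref{t:yokonuma}, \cite[Theorem~3.2]{Iwahori1964}), $\varpi(q)$ becomes the surjection $\kappa$ above, so $\Infl^{\varpi(q)}=\Infl$. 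Granting (iii), and writing $d^{\HH}$, $d^{\YY}$ for the specialisation bijections of the two generic algebras, so that $\TD_\B=d^{\HH}_{\theta_q}\circ(d^{\HH}_{\theta_1})^{-1}$ and $\TD_\U=d^{\YY}_{\theta_q}\circ(d^{\YY}_{\theta_1})^{-1}$, one gets
\[
\Infl\circ\TD_\B=\Infl^{\varpi(q)}\circ d^{\HH}_{\theta_q}\circ(d^{\HH}_{\theta_1})^{-1}=d^{\YY}_{\theta_q}\circ\Infl^u\circ(d^{\HH}_{\theta_1})^{-1}=d^{\YY}_{\theta_q}\circ(d^{\YY}_{\theta_1})^{-1}\circ\Infl^{\varpi(1)}=\TD_\U\circ\Infl^\rN_{\SS_n},
\]
which is commutativity of the right square.

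\emph{Main obstacle.} The only step that is not either the structural central‑idempotent observation above or routine Tits‑deformation bookkeeping is the identification in (iii) of $\varpi(q)$ with the abstract surjection $\kappa$. Both are surjective, unital algebra maps $\HHH(\G,\U)\to\HHH(\G,\B)$, so it suffices to compare them on the algebra generators $T_{\omega_i}$ and $T_{h_j(t_g)}$ of $\HHH(\G,\U)$. One checks $\kappa(T_h)=\1_{\HHH(\G,\B)}$ for $h\in\T$ directly from \eqref{convprod} (using $\U h\U=h\U\subseteq\B$), matching $\varpi(q)(T_{h_j(t_g)})=\1$; and from $T_{\omega_i}=T_{s_i}\ast T_{h_i(-1)}$ (Lemma~\ref{heckemult}, since $\ell(\omega_i)=\ell(s_i)$) the remaining claim reduces to $\kappa(T_{s_i})=T_{s_i}$, which unwinds to the Bruhat identity $\B\U s_i\U=\B s_i\B$ together with a count of the fibres of $\B\times\U s_i\U\to\B s_i\B$. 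I expect this compatibility of the two descriptions of the surjection, and the careful tracking of the Theorem~\ref{t:yokonuma} isomorphisms through the specialisations at $u=1$ and $u=q$, to be the delicate part; everything else is formal.
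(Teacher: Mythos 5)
Your proof is correct, and the left square is handled by a genuinely different argument than the paper's. The paper proves $\Hom_\G(\Ind_\U^\G\1_\U,\chi_\lambda^\G)\cong\Hom_\G(\Ind_\B^\G\1_\B,\chi_\lambda^\G)$ by decomposing $\Ind_\U^\G\1_\U=\bigoplus_{\phi\in\widehat\T}\Ind_\B^\G\phi$ and then using a Mackey-decomposition calculation to show $\Hom_\G(\Ind_\B^\G\phi,\Ind_\B^\G\1_\B)$ vanishes for $\phi\neq\1$; only at the very end does it appeal, without details, to the surjection from Proposition~\ref{p:Hprojection} to match the module structures. You bypass the Mackey computation entirely: since $\DC_\U(\zeta)=\zeta^\U$ is irreducible and $E=E_\T$ is a central idempotent acting on $\zeta^\U$ as the averaging projector onto $\zeta^\B$, Schur's lemma forces $E$ to act by $0$ or $1$, and nonvanishing of $\zeta^\B$ rules out $0$, giving $\zeta^\B=\zeta^\U$ at one stroke. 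This is shorter and also makes the identification of the $\HHH(\G,\B)$-action (via the preimage $|\T|^{-1}\psi$ of $\psi$) completely explicit — something the paper asserts but does not spell out. The trade-off is that your argument leans on irreducibility of $\zeta^\U$ (Proposition~\ref{Heckerepgen}), while the paper's version computes the $\Hom$-spaces directly and only uses that $\chi_\lambda^\G$ is a constituent of $\Ind_\B^\G\1_\B$.

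For the right square, you are filling in the same idea as the paper — the prism over $\Irr\HH_n(u)\to\Irr\YY_{q-1,n}(u)$, with the two specialisations — but where the paper says only that the non-horizontal arrows ``arise from taking tensor products,'' you make the generic surjection $\varpi$ explicit (checking that $\mathsf{e}_i,\mathsf{f}_i\mapsto 1$ turns the Yokonuma quadratic relation into the Iwahori one), record that inflated irreducibles stay irreducible, and track characters through the specialisations. The point you flag as the ``delicate part'' — identifying $\varpi(q)$ with $\kappa$ under the isomorphisms of Theorem~\ref{t:yokonuma} and \cite[Theorem~3.2]{Iwahori1964} — is genuinely needed and the paper glosses over it, but your proposed resolution is correct and not hard: $\kappa(T_h)=|\T|(E\ast_\U T_h)=|\T|E=\II_\B=\1$ for $h\in\T$ by Lemma~\ref{heckemult}, and since $E\ast_\U T_{s_i}=\frac{1}{|\T|}\sum_{\ell\in\T}T_{\ell s_i}=\frac{1}{|\T|}\II_{\B s_i\B}$ (as $\B s_i\B=\coprod_{\ell\in\T}\U\ell s_i\U$), one gets $\kappa(T_{\omega_i})=\kappa(T_{s_i})\ast_\B\kappa(T_{h_i(-1)})=\II_{\B s_i\B}=T_{s_i}$, matching $\varpi(q)$ on generators. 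So your ``main obstacle'' closes cleanly; the argument as a whole is sound.
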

	
	\proof
	We know from Proposition \ref{Heckerepgen} that $\DC_{\B} $ maps $\chi_{\lambda}^{\G} $ to the character $\chi_{\lambda}^{\HHH} \in \Irr \HHH(\G,\B)$ given by
	\begin{align*}
	\chi_{\lambda}^{\HHH} 
	=\Hom_{\B}( \1_\B  ,  \Res^{\G}_{\B}\chi_{\lambda}^{\G}) = \Hom_{\G}(  \Ind^{\G}_{\B}\1_\B,  \chi_{\lambda}^{\G} ).
	\end{align*}
	We want to replace $  \Ind^{\G}_{\B}\1_\B $ by  $\Ind^{\G}_{\U} \1_\U$.  Let us take a closer look at the latter.  It is canonically isomorphic to $\Ind^{\G}_{\B} \Ind^{\B}_{\U} \1_\U$, but since $B = \U \rtimes \T$, $\Ind_\U^\B \1_\U = \Infl_\T^\B \C[\T]$, where $\C[\T] $ is the regular representation of $\T$.
	Since $\C[\T] $ is the direct sum of the degree one characters $\phi:\T \to \C^{\times}$, the group of which we denote by $\dual{\T}$, one gets $\Infl^{\B}_{\T} \C[\T]$ is the sum of the inflations $\phi: \B\to \C^{\times}$,
	hence
	\begin{align*}
	\Ind^{\G}_{\U} \1_\U = \bigoplus_{\phi \in \dual{\T}} \Ind_{\B}^{\G} \phi,
	\end{align*}
	and
	\begin{align}\label{e:decHomInflReg}
	\Hom_{\G} \left(\Ind^{\G}_{\U} \1_\U, \chi_{\lambda}^{\G} \right) = \bigoplus_{\phi \in \dual{\T}} \Hom_{\G} \left( \Ind_{\B}^{\G} \phi,  \chi_{\lambda}^{\G} \right).
	\end{align}
	
	For any $\phi \in \dual{\T} $ one has
	\begin{align*}
	\Hom_{\G} \left(  \Ind_{\B}^{\G} \phi,  \Ind_{\B}^{\G} \1_{\B}  \right) =
	\Hom_{\B} \left(  \phi, \Res_{\B}^{\G}  \Ind_{\B}^{\G} \1_{\B} \right)   
	\end{align*}
	which by the Mackey decomposition becomes
	\begin{align*} 
	\sum_{\sigma \in \B \setminus \G /\B} 
	\Hom_{\B} \left(\phi ,   \Ind_{\B\cap \sigma ^{-1}\B\sigma}^{\B}( \1_{\B\cap \sigma ^{-1}\B \sigma } )  \right) =
	\sum_{\sigma \in \B \setminus \G /\B} 
	\Hom_{\B\cap \sigma ^{-1}\B \sigma} \left(    \Res_{\B\cap \sigma ^{-1}\B \sigma }^{\B} \phi , \1_{\B\cap \sigma ^{-1}\B \sigma } \right)
	\end{align*}
	where $\sigma $ runs over a full set of $\B $-double coset representatives.  Since $\T \subseteq  \B\cap \sigma ^{-1}\B \sigma$ acts non-trivially on $\Res_{\B\cap \sigma ^{-1}\B \sigma }^{\B}( \phi )$ for $\phi$ nontrivial, the only non-vanishing term in this last sum is the one with $\phi  = \1_{\B}$.
	
	Since $\chi_{\lambda}^{\G} $ is a constituent of $\Ind_{\B}^{\G} \1_{\B}$, only one summand in the right hand side of \eqref{e:decHomInflReg} does not vanish and we end up with
	\begin{align*}
	\Hom_{\G}(   \Ind_{\U}^{\G} \1_{\U} , \chi_{\lambda}^{\G} ) = \Hom_{\G}(   \Ind_{\B}^{\G} \1_{\B} , \chi_{\lambda}^{\G} ).
	\end{align*}
	
	The irreducible $\HHH(\G,\U) $-representation associated to $\chi_{\lambda}^{\G} $ is
	\begin{align*}
	\Hom_{\U}\left( \1_\U  , \Res^{\G}_{\U} \chi_{\lambda}^{\G} \right) =  \Hom_{\G}(  \Ind^{\G}_{\U} \1_\U , \chi_{\lambda}^{\G}  ) 
	%
	\end{align*}
	which is thus isomorphic to $\Hom_{\G}( \Ind^{\G}_{\B}\1_\B,  \chi_{\lambda}^{\G} )$ with the $\HHH(\G,\U) $-module structure induced by the surjection $\HHH(\G,\U) \to \HHH(\G,\B)$ coming from Proposition \ref{p:Hprojection}.  This proves the commutativity of the left square in \eqref{e:diagrm2sqr}.  
	
	The right one is also commutative since all the non-horizontal arrows in
	$$\xymatrix{
		& \Irr \HH_n(u) \ar@{_{(}->}'[d][dd]^-{ \Infl } \ar[dl]_-{ d_{\theta_q} } \ar[dr]^-{ d_{\theta_1} } & \\
		\Irr \HHH(\G,\B ) \ar@{_{(}->}[dd]^-{ \Infl } &  & \Irr \SS_n \ar[ll] \ar@{}[l]_-{ \TD_{\B} } \ar@{_{(}->}[dd]^-{ \Infl } \\
		& \Irr \YY_n(u) \ar[dl]_-{ d_{\theta_q} } \ar[dr]^-{ d_{\theta_1} } & \\
		\Irr \HHH(\G,\U) &  & \Irr \rN \ar[ll]_-{ \TD_{\U} }
	}$$
	arise from taking tensor products.
	\endproof
	
	\subsubsection{Twisting by characters of \texorpdfstring{$\F_q^\times$}{Lg}} \label{p:sutwist}
	
	In order to deal with character twists, we extend the Iwahori--Hecke algebra as follows.  For $ d,n \geq 1 $ we introduce the $\C[u^{\pm 1}]$-algebra $\HH_{d,n} $ 
	defined by 
	\begin{align*}
	\HH_{d,n} := \HH_n[h]/(h^d-1) = \HH_n \otimes_{\C} \C[C_d],
	\end{align*}
	where $ C_d $ is the cyclic group of order $ d $ and $ h\in C_d $ a generator.
	As usual, denote by $\HH_{d,n}(u) $ the corresponding $\C(u)$-algebra $\C(u)\otimes_{\C[u^{\pm 1}]} \HH_{d,n}$.
	
	Since $\HH_{d,n}(u) $ is a tensor product of semisimple algebras it is also semisimple and its set of characters is the cartesian product of those of its factors. 
	Concretely, for $\lambda \in \P_n$ and $\psi \in \Irr \C[C_d]$ we define $\mathcal{X}^{\HH_{d, n}}_{\lambda,\psi}$ to be the exterior tensor product:\begin{align}\label{e:cartprodtable}
	\mathcal{X}^{\HH_{d, n}}_{\lambda,\psi} :=\mathcal{X}^{\HH_{ n}}_{\lambda} \otimes \psi \in \Irr \HH_{d,n}(u) = \Irr \HH_{n}(u) \times \Irr \C[C_d] 
	\end{align}
	These are all the irreducible representations of $\HH_{d,n}(u)$, and they all come from localizing certain finitely generated representations of $\HH_{d,n}$ that we also denote by $\mathcal{X}^{\HH_{d, n}}_{\lambda,\psi}$.
	

	There is a natural quotient
	\begin{align}\label{e:natquot}
	\YY_{d,n} \twoheadrightarrow \HH_{d,n}
	\end{align}
	that sends the $\mathsf{h_i} \in \YY_{d,n}$ to $\mathsf{h} \in \HH_{d,n} $ and the $\mathsf{T_i} \in \YY_{d,n}$ to the $\mathsf{T_i} $ from $\HH_{n} .$ 
	
	
	The $ u=1 $ specialization gives 
	\begin{align*}
	\C \otimes_{\theta_1} \HH_{d,n}  \simeq \C[\SS_n] \otimes_{\C}\C[C_d] = \C[\SS_n\times C_d]
	\end{align*} 
	and the  $ u= q $ specialization gives 
	\begin{align*}
	\C \otimes_{\theta_q} \HH_{d,n}  \simeq  \HHH(\G,\B) \otimes_{\C}\C[C_d] 
	\end{align*}
	which in the $ d=q-1 $ case gives 
	\begin{align*}
	\HHH(\G,\B) [\F_q^{\times}] \simeq  \HHH(\G,\B_1),
	\end{align*}
	where $\B_1 = \B \cap \SL_n(\F_q)$.  Here one has 
	\begin{align*}
	\Irr \HHH(\G,\B_1) = \Irr \HHH(\G,\B) \times \Irr \F_q^{\times}
	\end{align*}
	and the $ u=q $ specialization of the natural map \eqref{e:natquot}  becomes
	\begin{align}\label{e:specialsurj}
	\HHH(\G,\U) \twoheadrightarrow   \HHH(\G,\B)[\F_q^\times]
	\end{align}
	where, for $\sigma \in \SS_n$ and $t \in \T$, the corresponding basis elements are mapped as follows:
	\begin{align*}
	T_\sigma \in \HHH(\G, \U) & \mapsto T_\sigma \in \HHH(\G, \B) \\
	T_t \in \HHH(\G, \U) & \mapsto \det t \in \F_q^\times \subseteq \HHH(\G, \B)[\F_q^\times].
	\end{align*}
	
	\begin{remark}\label{r:infltilde}
		The $\mathsf{T}_i $ from $\YY_{q-1,n} $ corresponds to $ {\omega_i} \in  \T \rtimes \SS_n$, 
		whereas the $\mathsf{T}_i $ from $\HH_{q-1,n} $ corresponds to $\sigma_i \in \SS_n \subseteq \F_q^{\times}\times \SS_n. $
		Therefore, the $ u=1 $ specialization of \eqref{e:natquot} gives the surjection
		\begin{align}\label{e:surjtilde}
		\T \rtimes \SS_n & \twoheadrightarrow \F_q^{\times} \times \SS_n &  (t, \sigma) \in \T \rtimes \SS_n & \mapsto \left( (\det t) (\sgn \sigma) , \sigma \right) \in \F_q^\times \times \SS_n,
		\end{align}
		where $\sgn \sigma = (-1)^{\ell(\sigma)}\in \F_q^{\times}$.  For this reason we define
		\begin{align*}
		\wt{\Infl}:	\Irr \left( \SS_n \times \F_q^{\times} \right) \to \Irr \rN
		\end{align*}
		as composition with the map \eqref{e:surjtilde}.  Thus, for $\chi^{\F_q^{\times} \SS}_{\lambda ,\psi } := \chi^{\SS}_\lambda \otimes \psi \in \Irr(\F_q^{\times} \times \SS_n ) $ and $ (t, \sigma )\in \rN$, we have
		\begin{align}\label{e:infltilde}
		\wt{\Infl}\left(\chi^{\F_q^{\times} \SS}_{\lambda ,\psi } \right)(t,\sigma ) = 
		\psi\left( (\sgn \sigma )( \det t) \right) \chi^{\SS}_{\lambda } (\sigma ) . 
		\end{align}
	\end{remark}
	
	\begin{remark}\label{r:subltediffinfl}
		In general, we reserve the notation $\Infl $ for inflation by the natural quotient. In this case it is
		\begin{align*}
		\T \rtimes \SS_n  \twoheadrightarrow \F_q^{\times} \times \SS_n
		\end{align*}
		mapping $ (t, \sigma) \in \T \rtimes \SS_n $ to $ ( \det t , \sigma ) .$
		Therefore, by \eqref{e:infltilde}
		\begin{align*}
		\wt{\Infl}\left(\chi^{\F_q^{\times} \SS}_{\lambda ,\psi } \right)(t,\sigma ) = \psi(\sgn \sigma) \Infl \left(\chi^{\F_q^{\times} \SS}_{\lambda ,\psi } \right)(t,\sigma ) .
		\end{align*}
		
		Since $\psi \circ \sgn \in \Irr \SS_n$ is the sign representation when $\psi$ is odd (i.e., a non-square) character, and is trivial when $\psi$ is even (i.e., the square of a character), and tensoring with the sign representation amounts to taking the transpose partition $\lambda'$ 
		we see that
		\begin{align*}
		\wt{\Infl}\left(  \chi^{}_{\lambda,\psi}    \right) = 
		\begin{cases}
		{\Infl}\left(  \chi^{}_{\lambda',\psi}  \right) & \mbox{ for }\psi\mbox{ odd,}\\
		{\Infl}\left(  \chi^{}_{\lambda,\psi}   \right) & \mbox{ for }\psi\mbox{ even,}  
		\end{cases}
		\end{align*}
		where the superscripts $ {\F_q^{\times} \SS} $ were omitted.
		%
	\end{remark}
	
	\begin{remark}\label{r:interestingvalues}
		In any case we have
		\begin{align*}
		\wt{\Infl}\left(\chi^{\F_q^{\times} \SS}_{\lambda ,\psi } \right)( t ) =  \psi(\det t ) \chi^{ \SS}_{\lambda } (1) = {\Infl}\left(\chi^{\F_q^{\times} \SS}_{\lambda ,\psi } \right)( t )  
		\end{align*}
		for  $ t \in \T \subseteq \rN$, and
		\begin{align*}
		\wt{\Infl}\left(\chi^{\F_q^{\times} \SS}_{\lambda ,\psi } \right)( \omega_i ) =   \chi^{ \SS}_{\lambda } (s_i) = \Infl \left(\chi^{\F_q^{\times} \SS}_{\lambda ,\psi } \right)( s_i )  
		\end{align*}
		where $\omega_i, s_i \in \rN$ as in Theorem~\ref{t:yokonuma}.  Thus, by definition of the characters $\chi_\Lambda^\rN \in \Irr \N$ for $\Lambda \in \calQ_n$ in Section \ref{s:charnormtor} and \eqref{e:Lamtildef}
		\begin{align} 
		\chi^{\rN}_{\wt{\Lambda}}(t) & = \chi^{\rN}_{\Lambda}(t) & & \textnormal{ and }  & \chi^{\rN}_{\wt{\Lambda}}(\omega_i) & = \chi^{\rN}_{\Lambda}(s_i) 
		\end{align}
		for $\Lambda \in \calQ_n$, $t \in \T$, and $\omega_i$, $s_i \in \rN$ as before.
	\end{remark}
	
	Let $\chi_\lambda^\G \in \Irr(\G : \B) \subseteq \Irr(\G : \U)$ be a unipotent character and let $\psi \in \Gamma_1 = \widehat{\F}_q^\times$.  As in Remark \ref{r:unipart}, we have $\psi^\G := \psi \circ \det : \G \to \C^\times$ and we consider the representation $\chi_\lambda^\G \otimes \psi^\G$.  Since $ \U \subseteq \ker \psi^\G$, $\dim (\chi_\lambda^\G \otimes \psi^\G)^\U = \dim (\chi_\lambda^\G)^\U > 0$, so $\chi_\lambda^\G \otimes \psi^\G \in \Irr(\G:\U)$.  Thus, taking tensor products gives a map $\Irr(\G : \B) \times \widehat{\F}_q^\times \to \Irr(\G : \U)$.
	
	\begin{proposition}\label{p:twist}
		Let $\psi \in \Gamma_1$ be as above.  Then the following diagram is commutative
		$$\xymatrix{
			\Irr(\G:\B)\ar[r]^-{ \DC_{\B} }  \ar@{_{(}->}[d]^-{ - \otimes \psi} & \Irr \HHH(\G,\B) \ar@{_{(}->}[d]^-{ - \otimes \psi} & \Irr \SS_n \ar[l]_-{ \TD_{\B} } \ar@{_{(}->}[d]^-{ - \otimes \psi} \\
			\Irr(\G:\B) \times \widehat{\F}_q^\times \ar[r]^-{ (\DC_{\B},\id) }  \ar@{_{(}->}[d]^-{ - \otimes - } & \Irr\big(\HHH(\G,\B)[\F_q^\times] \big) \ar@{_{(}->}[d]^-{ \Infl } & \Irr\big(\SS_n\times \F_q^\times \big) \ar[l]_-{ (\TD_{\B},\id) } 
			\ar@{_{(}->}[d]^-{ \wt{\Infl} } \\
			\Irr(\G:\U)\ar[r]^-{ \DC_{\U} } & \Irr \HHH(\G,\U) &  \Irr \rN. \ar[l]_-{ \TD_{\U} }
		}$$
	\end{proposition}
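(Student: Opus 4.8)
The plan is to reduce the whole diagram to the two squares spanning the second and third rows. The two squares between the first and second rows commute tautologically: after the identifications $\Irr\big(\HHH(\G,\B)[\F_q^\times]\big) = \Irr\HHH(\G,\B)\times\Irr\F_q^\times$ and $\Irr(\SS_n\times\F_q^\times) = \Irr\SS_n\times\Irr\F_q^\times$, the top vertical maps merely pair an object with the fixed character $\psi$, while $(\DC_\B,\id)$ and $(\TD_\B,\id)$ act through the first factor only. So what remains to be proved is
\begin{align*}
\DC_\U\big(\chi_\lambda^\G\otimes\psi^\G\big) = \Infl\big(\DC_\B(\chi_\lambda^\G)\otimes\psi\big)
\quad\text{and}\quad
\TD_\U\big(\wt{\Infl}(\chi_\lambda^\SS\otimes\psi)\big) = \Infl\big(\TD_\B(\chi_\lambda^\SS)\otimes\psi\big),
\end{align*}
where on the right-hand sides $\Infl$ denotes inflation along the surjection \eqref{e:specialsurj}.

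For the second identity I would argue by naturality of Tits's deformation theorem along the $\C[u^{\pm1}]$-algebra quotient \eqref{e:natquot}, $\YY_{q-1,n}\twoheadrightarrow\HH_{q-1,n}$. Both algebras are free of finite rank over $\C[u^{\pm1}]$ and split semisimple after $\otimes_{\C[u^{\pm1}]}\C(u)$ and after specialising at $u=1$ and $u=q$, so the bijections $d_{\theta_1},d_{\theta_q}$ of \eqref{titsiso} (and their $\HH$-analogues from Section~\ref{s:IwHalg}) apply, and by \eqref{speccharval} they are just ``specialise the character''. Since inflation of characters along a quotient defined over $\C[u^{\pm1}]$ visibly commutes with $-\otimes_{\theta_z}\C$, one obtains $\TD_\U\circ\Infl_{u=1} = \Infl_{u=q}\circ\mathrm{TD}_{\HH_{q-1,n}}$, where $\mathrm{TD}_{\HH_{q-1,n}} := d_{\theta_q}\circ d_{\theta_1}^{-1}$ for $\HH_{q-1,n}$. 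It then remains to match the three ingredients with the diagram: $\Infl_{u=q}$ is the $\Infl$ of \eqref{e:specialsurj}; $\Infl_{u=1}$ is $\wt{\Infl}$ by Remark~\ref{r:infltilde} (this is exactly \eqref{e:surjtilde}); and, since the factor $\C[C_{q-1}]$ in $\HH_{q-1,n}=\HH_n\otimes_\C\C[C_{q-1}]$ carries no $u$, $\mathrm{TD}_{\HH_{q-1,n}}$ is $(\TD_\B,\id)$ on $\Irr\HH_n(u)\times\Irr\C[C_{q-1}]$. This yields the right-hand square.

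For the first identity I would work directly with $\DC_\U(V)=V^\U$ and the action \eqref{Heckerepdef}. Since $\psi^\G=\psi\circ\det$ is trivial on $\U\leeq\SL_n(\F_q)$, one has $(\chi_\lambda^\G\otimes\psi^\G)^\U = (\chi_\lambda^\G)^\U$ as vector spaces, and substituting into \eqref{Heckerepdef} shows that $\varphi\in\HHH(\G,\U)$ acts on the twist exactly as the pointwise product $\psi^\G\cdot\varphi$ acts on $(\chi_\lambda^\G)^\U$. Multiplication by $\psi^\G$ preserves $\U$-bi-invariance and, via the comparison \eqref{gpalgHalg} with the group-algebra product (where it is an algebra automorphism of $\C[\G]$ fixing $\II_\U$), gives an algebra automorphism $\alpha_\psi$ of $\HHH(\G,\U)$; hence $\DC_\U(\chi_\lambda^\G\otimes\psi^\G)$ is the pullback of $\DC_\U(\chi_\lambda^\G)$ along $\alpha_\psi$. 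Combining this with Proposition~\ref{p:BtoU}, which identifies $\DC_\U(\chi_\lambda^\G)$ with the inflation of $\DC_\B(\chi_\lambda^\G)$ along the projection $\HHH(\G,\U)\twoheadrightarrow\HHH(\G,\B)$ of Proposition~\ref{p:Hprojection}, it suffices to check that the two $\HHH(\G,\U)$-module structures so obtained on the common space $(\chi_\lambda^\G)^\U$ agree on the algebra generators $T_{\omega_i},T_{h_i(t_g)}$ supplied by Theorem~\ref{t:yokonuma}. There $\alpha_\psi(T_{\omega_i})=\psi(\det\omega_i)\,T_{\omega_i}=T_{\omega_i}$, since $\det\omega_i=\det(s_i)\det(h_i(-1))=(-1)(-1)=1$, while $\alpha_\psi(T_{h_i(t_g)})=\psi(t_g)\,T_{h_i(t_g)}$; on the other hand \eqref{e:specialsurj} (read off from \eqref{e:natquot} at $u=q$) sends $T_{\omega_i}\mapsto T_{s_i}\in\HHH(\G,\B)$ and $T_{h_i(t_g)}\mapsto t_g\in\F_q^\times$, which acts by $\psi(t_g)$ on $\DC_\B(\chi_\lambda^\G)\otimes\psi$; since the projection of Proposition~\ref{p:Hprojection} sends $T_{\omega_i}\mapsto T_{s_i}$ and $T_{h_i(t_g)}\mapsto\1$, the two structures coincide.

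The main obstacle is bookkeeping rather than representation theory: one must use the lift $\SS_n\hookrightarrow\rN$, $s_i\mapsto\omega_i$, consistently throughout. It is precisely the relation $\omega_i=s_i h_i(-1)$ that produces the sign twist in $\wt{\Infl}$ on the $\rN$-side, whereas the same relation produces no sign on the $\HHH(\G,\B)[\F_q^\times]$-side exactly because $\det\omega_i=1$; aligning these two faces of one phenomenon across the $\DC$- and $\TD$-columns is the delicate point. Everything substantive — Proposition~\ref{p:BtoU}, Tits's deformation theorem, and the identifications of the various specialisations — is already available.
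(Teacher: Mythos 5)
Your proof is correct and takes essentially the same route as the paper's: tautology for the top squares, specialization (Tits's deformation naturality) for the right-hand squares, and direct analysis of the $\HHH(\G,\U)$-action for the left-hand square. Where you go further than the paper is in the left square, which the paper dismisses in one sentence; your reformulation via the twisting automorphism $\alpha_\psi$ of $\HHH(\G,\U)$ and the explicit check on the Yokonuma generators $T_{\omega_i}$, $T_{h_i(t_g)}$ (with $\det\omega_i=1$ being the key arithmetic fact) is a genuinely more transparent verification, and in particular makes explicit why the sign discrepancy between $\wt{\Infl}$ and $\Infl$ appears only in the $\rN$-column. The one caveat worth flagging is that ``agreement on generators'' is a valid reduction only because both sides are algebra homomorphisms from $\HHH(\G,\U)$ to $\End_\C\bigl((\chi_\lambda^\G)^\U\bigr)$ on the \emph{same} underlying space; this requires first invoking Proposition~\ref{p:BtoU} to identify $(\chi_\lambda^\G)^\U$ with $(\chi_\lambda^\G)^\B$ and hence with the underlying space of the inflated module, which you do state but which deserves emphasis since it is the step that makes the comparison a literal equality of maps rather than an isomorphism to be constructed.
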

	
	\proof
	The commutativity of the squares on the left largely comes from the formula \eqref{Heckerepdef} for the action of the Hecke algebra, as well as noting that the $\HHH(\G, \U)$-action factors through the surjection onto $\HHH(\G, \B)[\F_q^\times] = \HHH(\G, \B_1)$, which comes from either \eqref{e:specialsurj} or Proposition \ref{p:Hprojection}.
	
	
	The squares on the right are also commutative since all the non-horizontal arrows in
	$$\xymatrix{
		& \Irr \HH_n(u) \ar@{_{(}->}'[d][dd]^-{ \_\, \otimes \psi} \ar[dl]_-{ d_{\theta_q} } \ar[dr]^-{ d_{\theta_1} } & \\
		\Irr \HHH(\G,\B) \ar@{_{(}->}[dd]^-{ \_\, \otimes \psi} & & \Irr \SS_n \ar[ll] \ar@{}[l]_-{ \TD_{\B} } \ar@{_{(}->}[dd]^-{ \_\, \otimes \psi} \\
		& \Irr\left(\HH_n(u)[\F_q^\times] \right) \ar@{_{(}->}'[d][dd]^-{ \Infl } \ar[dl]_-{ d_{\theta_q} } \ar[dr]^-{ d_{\theta_1} } & \\
		\Irr \left(\HHH(\G,\B)[\F_q^\times] \right) \ar@{_{(}->}[dd]^-{ \Infl }  &  & \Irr\big(\SS_n\times \F_q^\times \big) \ar[ll] \ar@{}[l]_-{ (\TD_{\B},\id) } \ar@{_{(}->}[dd]^-{ \wt{\Infl} } \\
		& \Irr \YY_n(u) \ar[dl]_-{ d_{\theta_q} } \ar[dr]^-{ d_{\theta_1} } & \\
		\Irr \HHH(\G,\U) &  & \Irr \rN \ar[ll]_-{ \TD_{\U} } 
	}$$
	arise from tensor products.
	\endproof
	
	\subsubsection{Parabolic induction }\label{p:paraind}
	
	We now check that the parabolic induction of a product of irreducible representations is compatible with the operations $\DC_\U$ (coming from Section \ref{s:RepsHA}) and $\TD_\U$ (defined at \eqref{TDUdef}).
	
	
	
	Suppose $n_1$, $n_2$ are such that $n = n_1 + n_2$.  For $i = 1$, $2$, let $\G_i := \GL_{n_i}(\F_q)$, and let $\B_i$, $\U_i$, and $\rN_i$ be the respective subgroups of $\G_i$ as described at the beginning of Section \ref{s:yokoyoko}.  Let $\Lg = \G_1 \times \G_2$ and we will view it as the subgroup of $\G = \GL_n(\F_q)$ of block diagonal matrices; let $\U_{12} \leeq \G$ be the subgroup of upper block unipotent matrices and $\mathrm{P} := \Lg \U_{12}$ the corresponding parabolic subgroup.  If $\B_\Lg := \Lg \cap \B$, then its unipotent radical is $\U_\Lg = \Lg \cap \U = \U_1 \times \U_2$.  One may also identify $\rN_1 \times \rN_2$ as a (proper) subgroup of $\rN$.
	
	\begin{remark} \label{r:parspeccase}
		Observe that $\U = \U_\Lg \ltimes \U_{12}$ and that $\Lg$ normalizes $\U_{12}$, so that we, with $G = \G$, $H = \U$, $L = \Lg$, $K = \U_\Lg$ and $U = \U_{12}$, we are in the situation of Section \ref{Heckeinclusions}, so that Proposition \ref{p:Hinclusion} gives an inclusion $\HHH(\Lg, \U_\Lg) \hookrightarrow \HHH(\G, \U)$.  Furthermore, the functor $R_\Lg^\G : \Rep \Lg \to \Rep \G$ defined at \eqref{parinddef} is the usual parabolic induction functor.
	\end{remark}
	
	\begin{remark} \label{r:IrrGUtwists}
		For $i = 1$, $2$, let $\psi_1$, $\psi_2 \in \widehat{\F}_q^\times$ be distinct and let $\lambda_i \in \P_{n_i}$.  Then by the discussion preceding Proposition \ref{p:twist}, $\chi_{\lambda_i}^\G \otimes \psi_i^{\G_i} \in \Irr(\G_i : \U_i)$.  Thus, Remark \ref{r:parspeccase} makes the discussion preceding Proposition \ref{p:DCind} relevant:  it says that $\left( \chi_{\lambda_1}^\G \otimes \psi_1^{\G_1} \right) \circ \left(\chi_{\lambda_i}^\G \otimes \psi_i^{\G_i} \right) \in \Rep(\G : \U)$.  However, this $\circ$-product is irreducible, as mentioned in Remark \ref{r:unipart}, so in fact it lies in $\Irr(\G: \U)$.  By a straightforward inductive argument, we see that for every $\Lambda \in \calQ_n$, one has $\chi_\Lambda^\G \in \Irr(\G : \U)$ (as defined in Remark \ref{r:unipart}).  We repeat the argument of Section \ref{s:IwHalg} to show that $\Irr(\B : \G)$ consists of precisely the unipotent representations:  one has an inclusion
		\begin{align*}
		\left\{ \chi_\Lambda^\G \, : \, \Lambda \in \calQ_n \right\} \subseteq \Irr(\G : \U),
		\end{align*}
		but since $|\Irr(\G : \U)| = |\Irr \HHH(\G, \U)| = |\Irr \rN| = |\calQ_n|$ via the bijections $\DC_\U$ and $\TD_\U$, we must have equality.
	\end{remark}

	\begin{remark}
		Since
		\begin{align*}
		\Hom_{\U_\Lg} \left( \1_{\U_{\Lg}},  \Res_{\U_{\Lg}}^{\Lg} \left( \chi_1^{\G_1}\otimes\chi_2^{\G_2} \right) \right) \cong \Hom_{\U_1} \left( \1_{\U_1}, \Res_{\U_1}^{\G_1}\chi_1^{\G_1} \right) \otimes_{\C}^{} \Hom_{\U_2} \left( \1_{\U_2}, \Res_{\U_2}^{\G_2}\chi_2^{\G_2} \right)
		\end{align*}
		as modules over the algebra
		\begin{align*}
		\HHH(\Lg,\U_{\Lg}) \cong \HHH(\G_1,\U_1) \otimes 
		\HHH(\G_2,\U_2),
		\end{align*}
		one has
		\begin{align}\label{e:canisotensor}
		\DC_{\U_{\Lg}}\big(\chi_1^{\G_1}\otimes\chi_2^{\G_2}\big) \cong \DC_{\U_1}\big(\chi_1^{\G_1} \big)\otimes 
		\DC_{\U_2}\big(\chi_2^{\G_2} \big).
		\end{align}
		
		%
		%
	\end{remark}
	
	
	
	
	\begin{proposition} \label{p:parainduc}
		The following diagram is commutative:
		$$\xymatrix{
			\Irr(\G_1:\U_1)\times \Irr(\G_2:\U_2) \ar[d]_-{ - \circ - } \ar[r]^-{ \DC_{\U_\Lg} } & \Irr \HHH(\Lg,\U_\Lg) \ar[d]_-{ \Ind  } & \Irr(\rN_1\times \rN_2) \ar[l]_-{\TD_{\U_\Lg}} \ar[d]_-{ \Ind }\\
			\Rep(\G:\U)   \ar[r]^-{ \DC_{\U} } & \Rep \HHH(\G,\U) & \Rep \rN. \ar[l]_-{\TD_{\U}} 
		} $$
	\end{proposition}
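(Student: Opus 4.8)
The diagram splits into two squares, which I would handle by different means. The left square is an instance of Proposition~\ref{p:DCind}. Taking $G=\G$, $H=\U$, $L=\Lg$, $K=\U_\Lg$ and $U=\U_{12}$ puts us in the setting of Section~\ref{Heckeinclusions}, as noted in Remark~\ref{r:parspeccase}; the functor $R_\Lg^\G$ of \eqref{parinddef} is exactly the $\circ$-product \eqref{e:circprod}; $\Irr(\Lg:\U_\Lg)$ is identified with $\Irr(\G_1:\U_1)\times\Irr(\G_2:\U_2)$ via exterior tensor products, $\HHH(\Lg,\U_\Lg)\cong\HHH(\G_1,\U_1)\otimes\HHH(\G_2,\U_2)$, under these identifications $\DC_{\U_\Lg}$ becomes $\DC_{\U_1}\otimes\DC_{\U_2}$ by \eqref{e:canisotensor}, and the middle vertical map is the induction attached to the inclusion of Proposition~\ref{p:Hinclusion}. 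The one hypothesis of Proposition~\ref{p:DCind} still to be checked is that $\bigcap_{\ell\in\Lg}\ell\,\U_\Lg\,\ell^{-1}$ is trivial; this intersection equals $\prod_{i=1}^{2}\bigcap_{\ell_i\in\G_i}\ell_i\,\U_i\,\ell_i^{-1}$, and an element of the $i$-th factor lies in every conjugate of $\B_i$, hence stabilises every complete flag in $\F_q^{n_i}$, hence is a unipotent scalar, i.e.\ trivial. Proposition~\ref{p:DCind} then gives commutativity of the left square.

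For the right square I would lift the whole picture to the generic Yokonuma--Hecke algebras and argue as in \eqref{TDUdef}--\eqref{titsiso}. First I would exhibit an embedding $\iota:\YY_{d,n_1}\otimes_{\C[u^{\pm1}]}\YY_{d,n_2}\hookrightarrow\YY_{d,n}$ of $\C[u^{\pm1}]$-algebras whose image is the subalgebra generated by all the $\mathsf{h}_j$ together with the two commuting blocks $\mathsf{T}_1,\dots,\mathsf{T}_{n_1-1}$ and $\mathsf{T}_{n_1+1},\dots,\mathsf{T}_{n-1}$; that this subalgebra is a quotient of the tensor product is immediate from the relations of Section~\ref{altpres} (the two $\mathsf{T}$-blocks commute, and relation (d) makes $\mathsf{h}_j$ with $j\leq n_1$ commute with $\mathsf{T}_i$ with $i\geq n_1+1$), and a count against the standard basis $\{\mathsf{h}_1^{a_1}\cdots\mathsf{h}_n^{a_n}\mathsf{T}_w\}$ together with the parabolic coset decomposition of $\SS_n$ shows it is an isomorphism over whose image $\YY_{d,n}$ is free as a right module. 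Then I would check that $\iota$ specialises at $u=1$ to the evident inclusion $\C[\rN_1\times\rN_2]=\C[\rN_1]\otimes\C[\rN_2]\hookrightarrow\C[\rN]$ via \eqref{spec1}, and at $u=q$ to the inclusion $\HHH(\Lg,\U_\Lg)\hookrightarrow\HHH(\G,\U)$ of Proposition~\ref{p:Hinclusion}, both amounting via Theorem~\ref{t:yokonuma} to $T_v\mapsto T_v$ for $v\in\rN_1\times\rN_2\subseteq\rN$.

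With $\iota$ in hand, given $W\in\Irr(\rN_1\times\rN_2)$ I would take the generic character $\calX$ of $\YY_{d,n_1}\otimes\YY_{d,n_2}$ (defined over $\C[u^{\pm1}]$ by \cite[Proposition~7.3.8]{GeckPfeiffer}) with $\calX\otimes_{\theta_1}\C=W$, so that $\calX\otimes_{\theta_q}\C=\TD_{\U_\Lg}(W)$ by definition of $\TD_{\U_\Lg}$, and form the generic induced module $\mathsf{M}:=\YY_{d,n}\otimes_\iota\calX$. Since $\YY_{d,n}$ is free over $\iota(\YY_{d,n_1}\otimes\YY_{d,n_2})$, $\mathsf{M}$ is free of finite rank over $\C[u^{\pm1}]$, and as induction commutes with base change its $u=1$ and $u=q$ specialisations are $\Ind_{\rN_1\times\rN_2}^{\rN}W$ and $\Ind_{\HHH(\Lg,\U_\Lg)}^{\HHH(\G,\U)}\TD_{\U_\Lg}(W)$ respectively. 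Decomposing $\mathsf{M}\otimes_{\C[u^{\pm1}]}\C(u)=\bigoplus_j\calX_j$ into irreducibles over the split semisimple algebra $\YY_{d,n}(u)$, the multiplicity of $d_{\theta_z}(\calX_j)$ in $\mathsf{M}\otimes_{\theta_z}\C$ is the same for $z=1$ and $z=q$ (characters specialise and both $\C[\rN]$ and $\HHH(\G,\U)$ are split semisimple), so applying $\TD_\U=d_{\theta_q}\circ d_{\theta_1}^{-1}$, extended additively to $\Rep\rN$, constituent by constituent yields $\TD_\U(\Ind_{\rN_1\times\rN_2}^{\rN}W)=\Ind_{\HHH(\Lg,\U_\Lg)}^{\HHH(\G,\U)}\TD_{\U_\Lg}(W)$, which is the commutativity of the right square.

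The step I expect to be the main obstacle is the construction and identification of $\iota$: checking that a single generic embedding specialises on the nose both to the group-algebra inclusion and to the Hecke-algebra inclusion of Proposition~\ref{p:Hinclusion} --- in particular that the normalising constant in the formula $\varphi\mapsto\varphi^H$ there equals $1$ on the standard basis elements indexed by $\rN_1\times\rN_2$ --- together with the structural input that $\YY_{d,n}$ is free over its Levi subalgebra. Everything else is either a citation (Proposition~\ref{p:DCind}; the Tits-deformation formalism of \cite[\S\S7.3--7.4]{GeckPfeiffer}) or routine bookkeeping with exterior tensor products.
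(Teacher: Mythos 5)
Your proof is correct and follows essentially the same route as the paper: the left square is handled by citing Proposition~\ref{p:DCind} together with Remark~\ref{r:parspeccase}, and the right square is handled by lifting to the generic Yokonuma--Hecke algebras and comparing the $u=1$ and $u=q$ specializations of a generic induced module. The one place where you go noticeably beyond what the paper writes down is in making explicit the embedding $\iota:\YY_{d,n_1}\otimes\YY_{d,n_2}\hookrightarrow\YY_{d,n}$, the freeness of $\YY_{d,n}$ over its image, and the verification that $\iota$ specialises on the nose at $u=1$ to the inclusion $\C[\rN_1\times\rN_2]\hookrightarrow\C[\rN]$ and at $u=q$ to the inclusion of Proposition~\ref{p:Hinclusion}; the paper leaves these points implicit behind the phrase that the generic induction ``happens to be'' the appropriate specialisation. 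These are genuine points that the paper's terse wording skips, and you are right to flag the last one as the step requiring care, so your write-up is a more complete version of the same argument rather than a different one.
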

	
	\proof
	Commutativity of the square on the left comes from Proposition \ref{p:DCind} which applies by Remark \ref{r:parspeccase} and observing that $\bigcap_{\ell \in \Lg} \ell \U_\Lg \ell^{-1}$ is trivial.
	
	Let us take representations $ \mathcal{X}_1^{\YY}, \mathcal{X}_2^{\YY}$ of $\YY_{q-1,n_1}(u)$ and $\YY_{q-1,n_2}(u)$ and denote their corresponding $u=1$ specializations by $\chi_1^{\rN_1}$ and $\chi_2^{\rN_2}$, and their $u=q$ specializations by $ \chi_1^{\HHH_1}$ and $\chi_2^{\HHH_2}$, respectively, so that
	\begin{align*}
	\TD_{\U_i}(\chi_i^{\rN_i}) = \chi_i^{\HHH_i} .
	\end{align*}
	
	For $ i=1,2 $ we write $ \chi_i^{\G_i} $ for the corresponding $ \G_i $ representation, in such a way that $\chi_i^{\HHH_i}$ agrees with
	\begin{align*}
	\Hom_{\U_i}\big( \1_{\U_i}, \Res^{\G_i}_{\U_i}\chi_i^{\G_i} \big)
	\end{align*}
	namely, the $ \HHH(\G_i,\U_i)$-representation $\DC_{\U_i}(\chi_i^{\G_i})$.
	
	By \eqref{e:canisotensor}, the commutativity of the rightmost square follows since the vertical and diagonal arrows in 
	$$\xymatrix{
		& \Irr \big(\YY_{n_1}(u) \otimes^{}_{\C(u)} \YY_{n_2}(u) \big) \ar'[d][dd]^-{ \Ind } \ar[dl]_-{ d_{\theta_q} } \ar[dr]^-{ d_{\theta_1} } & \\
		\Irr \HHH(\Lg,\U_\Lg) \ar[dd]^-{  \Ind } &  
		& \Irr(\rN_1\times \rN_2) \ar[ll] \ar@{}[l]_-{ \TD_{\U_{\Lg}} } \ar[dd]^-{  \Ind  } \\
		& \Rep \YY_n(u) \ar[dl]_-{ d_{\theta_q} } \ar[dr]^-{ d_{\theta_1} } & \\
		\Rep \HHH(\G,\U) &  & \Rep \rN \ar[ll]_-{ \TD_{\U} }
	}$$ 
	come from taking tensor products. Namely
	\begin{align*}
	\Ind^{\HHH(\G,\U)}_{\HHH(\Lg,\U_{\Lg})}
	\TD_{\U}\big( \chi_1^{\rN_1} \otimes \chi_2^{\rN_2}\big) =
	\HHH(\G,\U)\otimes_{\HHH(\Lg,\U_{\Lg})}^{}
	(\chi_1^{\HHH_1} \otimes \chi_2^{\HHH_2})
	\end{align*}
	happens to be the $ u=q $ specialization of
	\begin{align*}
	\YY_n\otimes_{ \YY_{n_1} \otimes \YY_{n_2}}^{}
	\big( \mathcal{X}^{\YY}_1 \otimes^{}_{} \mathcal{X}^{\YY}_2 \big).
	\end{align*}
	Taking the $u=1$ specialization gives
	\begin{align*}
	\C [\rN] \otimes^{}_{ \C [\rN_1 \times \rN_2]} \big( \chi_1^{\rN_1}\otimes   \chi_2^{\rN_2} \big) = \Ind^{\rN}_{\rN_1 \times \rN_2} \big( \chi_1^{\rN_1} \otimes  \chi_2^{\rN_2} \big)
	\end{align*}
	proving thus 
	\begin{align*}
	\Ind^{\HHH(\G,\U)}_{\HHH(\Lg,\U_{\Lg})} \TD_{\U_{\Lg}}\big( \chi_1^{\rN_1} \otimes \chi_2^{\rN_2}\big) = \TD_{\U}\left(\Ind^{\rN}_{\rN_1 \times \rN_2} \big( \chi_1^{\rN_1} \otimes \chi_2^{\rN_2} \big)\right). & \qedhere
	\end{align*}
	\endproof
	
	\subsubsection{Proof of Proposition~\ref{p:expbijection}}\label{yokotable}
	
	We define a family of representations of $ \YY_{q-1,n} $ parametrized by $ \calQ_n $ whose $u=1$ specialization matches that of Section~\ref{s:charnormtor}, and whose $u=q$ specialization gives the $ \DC_{\U} $ image of the characters described in Remark~\ref{r:unipart}.  
	
	Given a partition $\lambda \vdash n$ and $\psi : \F_q^\times \to \C^\times$ a character, let us define a representation $\mathcal{X}^{\YY_{q-1, n}}_{\lambda,\psi}$ of $\YY_{q-1, n}$ by inflating the $ \HH_{q-1,n} $-representation $\mathcal{X}^{\HH_n}_{\lambda} \otimes \psi$ from \eqref{e:cartprodtable} via the natural quotient \eqref{e:natquot}.
	
	Now, for $\Lambda \in \calQ_n$, let $\psi_i$, $\lambda_i$, $n_i$ be as in the paragraph preceding \eqref{e:chiLambdaG} and consider the $\C[u ^{\pm 1}]$-algebra 
	\begin{align*}
	\YY_{(n_i)}:=\bigotimes_{i} \YY_{q-1,n_i}
	\end{align*}
	which may be embedded in $\YY_{q-1,n }$ by a choice of ordering of the indices $i$.  This has the representation 
	\begin{align*}
	\mathcal{X}^{\YY_{(n_i)}}_{\Lambda} := \bigotimes_{i}\mathcal{X}^{\YY_{q-1, n_i}}_{\lambda_i,\psi_i},
	\end{align*}
	where the tensor is over $ \C[u^{\pm 1}] $, and we may define $ \mathcal{X}^{\YY}_{\Lambda} $ as the induced character
	\begin{align}\label{e:defdefchar}
	\mathcal{X}^{\YY}_{\Lambda}:= \Ind^{\YY_{q-1,n}}_{\YY_{(n_i)}} \mathcal{X}^{\YY_{(n_i)}}_{\Lambda}.
	\end{align}
	
	
	We are now in position to prove Proposition~\ref{p:expbijection}.
	
	\proof[Proof of Proposition~\ref{p:expbijection}]
	Take a $ \Lambda \in \calQ_n$ and the corresponding pairs $ (\lambda_i, \psi_i) $ with $\Lambda(\psi_i) = \lambda_i$, and define $ n_i = |\lambda_i |$.  Let us write  $\rN_i = (\F_q^\times)^{n_i} \rtimes \SS_{n_i}$, $\G_i=\GL_{n_i}(\F_q)$, $\Lg=\prod_{i} \G_i $ viewed as the subgroup of block diagonal matrices of $ \G$,  and $\U_{\Lg}$ the product of the upper triangular unipotent subgroups.
	
	Consider the $\YY_{q-1,n}$-representation $\mathcal{X}^{\YY}_\Lambda$ defined in \eqref{e:defdefchar}.
	Its $ u=1 $ and $ u=q $ specializations give, by the commutativity of the squares on the right in Proposition \ref{p:twist},
	\begin{align*}
	\chi_{\wt{\Lambda}}^{\rN} & := \Ind^{\rN}_{\prod \rN_i} \bigotimes_{i}  \wt{\Infl}\left( \chi^{\SS_{n_i}}_{\lambda_i} \otimes \psi_i \right) & 
	& \textnormal{ and } &
	\chi_{\Lambda}^{\HHH} & := \Ind^{\HHH(\G,\U)}_{ \HHH(\Lg,\U_{\Lg}) } \bigotimes_{i} {\Infl}\left( \chi^{\HH_{n_i}}_{\lambda_i} \otimes \psi_i \right)
	\end{align*} 
	regarding $ \prod_{i} \rN_i $ as a subgroup of $\rN$ embedded by the same choice of indices as $\YY_{(n_i)}$ is in $\YY_{q-1,n}$.  Therefore, 
	\begin{align}\label{e:RHSexpBij}
	\TD_{\U}(\chi_{\wt{\Lambda}}^{\rN} ) = \chi_{\Lambda}^{\HHH}.
	\end{align}
	
	The character $\chi_\Lambda^\G \in \Irr(\G : \U)$ was defined at~\eqref{e:chiLambdaG}.  Applying $\DC_\U$, one gets, by Proposition~\ref{p:parainduc} and an inductive argument,
	\begin{align*}
	\DC_{\U}(\chi_{\Lambda}^{\G}) & = \Ind^{\HHH(\G,\U)}_{ \HHH(\Lg,\U_{\Lg}) } \bigotimes^{}_i \: \DC_{\U_i}(\chi^{\G_i}_{\lambda_i} \otimes \psi_i ) = \Ind^{\HHH(\G,\U)}_{ \HHH(\Lg,\U_{\Lg}) } \bigotimes^{}_i \: \DC_{\B_i}(\chi^{\G_i}_{\lambda_i}) \otimes \psi_i \\
	& = \Ind^{\HHH(\G,\U)}_{ \HHH(\Lg,\U_{\Lg}) } \bigotimes^{}_i \: \chi^{\HH_{n_i}}_{\lambda_i} \otimes \psi_i
	\end{align*}
	where we have used Proposition~\ref{p:twist} and Proposition~\ref{p:BtoU} for the following two equalities.  But this is exactly $ \chi_\Lambda^{\HHH}$.  So we may conclude by taking a second look at \eqref{e:RHSexpBij}.
	\endproof
	
	\begin{remark}\label{r:rationality}
		One can use this construction to give another proof of the splitting of $ \YY_{d,n}(u)$, together with a list of their irreducible representations parametrized by the set
		\begin{align*}
		\calQ_{d,n} := \left\{
		\Lambda : \dual{C}_d \to \PP\; : \; \sum_{\phi \in \widehat{C}_d} |\Lambda(\phi)|=n  
		\right\}  ,
		\end{align*}
		where $ C_d $ is the cyclic group with $d$ elements.
		
		
		Given a function $\Lambda \in \calQ_{d,n}$ we consider its set of pairs $(\psi_i, \lambda_i)$ with $ \Lambda(\psi_i) = \lambda_i$ and $n_i = |\lambda_i|$ adding up to $n$.  Define
		\begin{align*}
		\mathcal{X}^{\YY_{(n_i)}}_{\Lambda} := \bigotimes_{i}\mathcal{X}^{\YY_{d, n_i}}_{\lambda_i,\psi_i} 
		\end{align*}
		for the $ \C[u^{\pm 1}]$-algebra $ \YY_{(n_i)}:= \bigotimes_{i} \YY_{d,n_i}$,
		where the $ \mathcal{X}^{\YY_{d, n}}_{\lambda,\psi}  $
		stands for the inflation of the $ \mathcal{X}^{\HH_{d, n}}_{\lambda,\psi} $
		from \eqref{e:cartprodtable}.
		
		The induced characters $ \mathcal{X}^{\YY}_{\Lambda}:= \Ind^{\YY_{d,n}(u)}_{\YY_{(n_i)}(u)} \mathcal{X}^{\YY_{(n_i)}}_{\Lambda} $ are defined over $ \C(u) $ since the $ \mathcal{X}^{\HH_n}_{\lambda} $ are (cf.\ \cite[Theorem 2.9]{Benson-Curtis}).  We extend scalars to some finite Galois extension $ \mathbb{K}/\C(u) $ so that $ \YY_{d,n} $ becomes split.  One can also extend the $ u=1 $ specialization as in \cite[8.1.6]{GeckPfeiffer} and $ \mathbb{K}\YY_{d,n} $ becomes isomorphic to $ \mathbb{K}[ C_d^{n}\rtimes\SS_n]$, being a deformation of $ \C[ C_d^{n}\rtimes \SS_n]$.  The specializations 
		$$  \chi_{\Lambda}^{ C_d^{n}\rtimes \SS_n} : = d_{\theta_1} (\mathcal{X}^{\YY}_{\Lambda}) $$ 
		are all the irreducible characters of $ C_d^{n}\rtimes \SS_n $ as in Section \ref{s:charnormtor} (invoking again \cite[\S~8.2, Proposition 25]{Serre}).  Therefore, $ \YY_{d,n}(u) $ is split semisimple and the $ \mathcal{X}^{\YY}_{\Lambda} $ is the full list of irreducibles.
	\end{remark}
	
	\section{Counting on wild character varieties}
	\label{countwcv}

\subsection{Counting on quasi-Hamiltonian fusion products}

Here we describe the technique we use to count the points on the wild character varieties, which was already implicitly used in \cite{hausel-villegas, hausel-aha1}.  The idea is to use the construction of the wild character variety as a quotient of a fusion product and reduce the point-counting problem to one on each of the factors.  Then the counting function on the entire variety will be the convolution product of those on each of the factors.  This can be handled by a type of Fourier transform as in the references above.

\subsubsection{Arithmetic harmonic analysis}\label{aha}

In carrying out our computations, we will employ the technique of ``arithmetic harmonic analysis,'' which is an of analogue of the Fourier transform for non-abelian finite groups such as  $\GL_n(\F_q)$.  This is described in \cite[\S3]{hausel-aha1}, a part of which we reproduce here for the convenience of the reader.

Let $G$ be a finite group, $\Irr G$ the set of irreducible character of $G$, $C(G_\bullet)$ the vector space of class functions (i.e., functions which are constant on conjugacy classes) on $G$ and $C(G^\bullet)$ the space of functions on the set $\Irr G$.  We define isomorphisms
\begin{align*}
& \FF_\bullet : C(G_\bullet) \to C(G^\bullet) & & \FF^\bullet : C(G^\bullet) \to C(G_\bullet) 
\end{align*}
by
\begin{align*}
\FF_\bullet(f)(\chi) & := \sum_{x \in G} f(x) \frac{\chi(x)}{\chi(e)} & \FF^\bullet(F)(x) & := \sum_{\chi \in \Irr G} F(\chi) \chi(e) \overline{\chi}(x).
\end{align*}
Note that these are not quite mutually inverse, but will be up to a scalar; precisely,
\begin{align*}
\FF^\bullet \circ \FF_\bullet & = |G| \cdot 1_{C(G_\bullet)} & \FF_\bullet \circ \FF^\bullet & = |G| \cdot 1_{C(G^\bullet)}.
\end{align*}

It is clear that $C(G_\bullet)$ is a subspace of the group algebra $\C[G]$; it is not difficult to verify that it is in fact a subalgebra for the convolution product $\ast_\G$.  We can define a product on $C(G^\bullet)$ simply by pointwise multiplication:
\begin{align*}
(F_1 \cdot F_2) (\chi) := F_1(\chi) F_2(\chi).
\end{align*}
Then $\FF_\bullet$ and $\FF^\bullet$ have the important properties that
\begin{align} \label{FourierMult}
\FF_\bullet(f_1 \ast_\G f_2) & = \FF_\bullet(f_1) \cdot \FF_\bullet(f_2) & |G| \cdot \FF^\bullet(F_1 \cdot F_2) & = \FF^\bullet(F_1) \ast_\G \FF^\bullet(F_2)
\end{align}
for $f_1, f_2 \in C(G_\bullet), F_1, F_2 \in C(G^\bullet)$.

\subsubsection{Set-theoretic fusion}\label{setteofusion}

Let $G$ be a finite group, $M$ a (left) $G$-set and $\mu : M \to G$ an equivariant map of sets (where $G$ acts on itself by conjugation).  We may define a function $N : G \to \Z_{\geq 0}$ by
\begin{align*}
N(x) := \left| \mu^{-1}(x) \right|.
\end{align*}
The equivariance condition implies that $m \mapsto a \cdot m$ gives a bijection $\mu^{-1}(x) \leftrightarrow \mu^{-1}(axa^{-1})$ for $a, x \in G$, and hence it is easy to see that $N \in C(G_\bullet)$.

Suppose $M_1$ and $M_2$ are two $G$-sets and $\mu_1 : M_1 \to G, \mu_2 : M_2 \to G$ are equivariant maps, and let $M := M_1 \times M_2$ and define $\mu : M \to G$ by
\begin{align*}
\mu(m_1, m_2) = \mu_1(m_1) \mu_2(m_2).
\end{align*}
Then since for $x \in G$,
\begin{align*}
\mu^{-1}(x) = \coprod_{a \in G} \mu_1^{-1}(a) \times \mu_2^{-1}(a^{-1} x),
\end{align*}
a straightforward computation gives
\begin{align}\label{f:convofusion}
N_M = N_{M_1} \ast_G N_{M_2}.
\end{align}

\subsection{Counting via Hecke algebras}







Recall the notation of Section \ref{Halgnotn}.  Let $V \subseteq G$ be a set of double $H$-coset representatives as in \eqref{Bruhatdecomp}.  Let $k\in \Z_{>0}$, $x \in G$ and  $$\v=(v_1,w_1,\dots,v_{k},w_k)\in V^{2k}.$$  For $h \in H$ we set

\begin{align*}
\NN_h(x, \v) := \left\{ (a, a_1, \ldots, a_k) \in G \times v_1 H w_1 H \times \cdots \times v_k H w_k H  \, : \, a x a^{-1} = ha_1\cdots a_k \right\}.
\end{align*} Often we will abbreviate
$\NN(x,\v):=\NN_1(x,\v)$.  We are interested in the function \beq  \begin{array}{cccc}  N : & G \times V^{2k} & \to & \N \\
	&(x,\v)& \mapsto& |\NN(x,\v)|. \end{array} \label{gencount} \eeq

\begin{proposition}\label{count}  
	One has
	\begin{align*}
	N(x, \v) = \frac{\left| H \right|^{2k}}{\left| H v_1 H \right|\dots \left| H v_k H \right| } \tr \left(x  T_{v_1} \ast T_{w_1}\ast \dots \ast T_{v_k} \ast T_{w_k}\right).
	\end{align*}
\end{proposition}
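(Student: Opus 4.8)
The plan is to reduce the statement to Lemma~\ref{traceconj} and then unwind the iterated convolution product combinatorially.  Put $\varphi := T_{v_1} \ast T_{w_1} \ast \cdots \ast T_{v_k} \ast T_{w_k} \in \HHH(G,H)$.  By Proposition~\ref{Halgcomm} the $\HHH(G,H)$-action on $M = \Ind_H^G \1_H$ commutes with the $G$-action, so $x\varphi = \varphi x$ is of the type covered by Lemma~\ref{traceconj}, giving $\tr(x\varphi) = |H|^{-1}\sum_{y \in G}\varphi(yxy^{-1})$.  Iterating the associative formula~\eqref{convprod} $2k-1$ times, one sees that for $z \in G$ the integer $|H|^{2k-1}\varphi(z)$ counts the tuples $(c_1, d_1, \ldots, c_k, d_k)$ with $c_i \in Hv_iH$, $d_i \in Hw_iH$ and $c_1 d_1 \cdots c_k d_k = z$.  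Setting $z = yxy^{-1}$ and summing over $y$, we conclude that $|H|^{2k}\tr(x\varphi)$ is the number of tuples $(y, c_1, d_1, \ldots, c_k, d_k)$ with the $c_i, d_i$ in these double cosets and $c_1 d_1 \cdots c_k d_k = yxy^{-1}$.  It then suffices to show that this number equals $\big(\prod_{i=1}^k |Hv_iH|\big)\, N(x, \v)$, since $|Hv_iH| = |H|^2/|H \cap v_iHv_i^{-1}|$.

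The identification is carried out by a chain of reparametrizations, each multiplying the count by an explicit factor.  First write each $c_i = h_i v_i h_i'$ with $h_i, h_i' \in H$; this parametrization of $Hv_iH$ is $|H \cap v_iHv_i^{-1}|$-to-one, so it scales the count by $\prod_i|H \cap v_iHv_i^{-1}|$.  In the word $h_1 v_1 h_1' d_1 h_2 v_2 h_2' d_2 \cdots h_k v_k h_k' d_k = yxy^{-1}$ regroup the interior factors by setting $b_i := h_i' d_i h_{i+1} \in Hw_iH$ for $1 \leq i < k$ and $b_k := h_k' d_k \in Hw_kH$; for fixed $h$'s and a fixed value of $b_i$ there are exactly $|H|$ pairs $(h_i', d_i)$, so this step scales by $|H|^k$ and rewrites the constraint as $h_1 v_1 b_1 v_2 b_2 \cdots v_k b_k = yxy^{-1}$.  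Since $h_2, \ldots, h_k$ no longer appear, they are free and contribute $|H|^{k-1}$.  Finally, summing over $(y, h_1) \in G \times H$ and substituting $y = h_1 y'$ turns the constraint into $v_1 b_1 \cdots v_{k-1}b_{k-1} v_k (b_k h_1) = y' x (y')^{-1}$; absorbing $h_1$ by $b_k' := b_k h_1$ is $|H|$-to-one onto $Hw_kH$, contributing a further $|H|$ and leaving exactly the tuples $(y', b_1, \ldots, b_{k-1}, b_k')$ with $v_1 b_1 \cdots v_k b_k' = y' x (y')^{-1}$.  As $v_iHw_iH = v_i\cdot (Hw_iH)$, writing $a_i = v_i \tilde b_i$ and $a = y'$ matches these tuples bijectively with $\NN(x,\v)$ of~\eqref{gencount}, so their number is $N(x,\v)$.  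Multiplying the factors, the count of $(y, c_1, d_1, \ldots, c_k, d_k)$ equals $\big(\prod_i|H \cap v_iHv_i^{-1}|\big)^{-1}|H|^{2k}\,N(x,\v) = \big(\prod_i|Hv_iH|\big)\,N(x,\v)$, which is what was needed.

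The main obstacle is the bookkeeping in this chain: one must track exactly which copy of $H$ is consumed at each stage and with what multiplicity.  The delicate point is the asymmetry between $h_1$ and $h_2, \ldots, h_k$: the inner factors become genuinely free once the $b_i$ are introduced, whereas the outermost factor $h_1$ cannot simply be dropped and has to be absorbed by re-parametrizing the conjugating element via $y = h_1 y'$; only after this last move does the count collapse to $N(x,\v)$.  For $k=1$ the whole argument reduces to the identity $\tr(xT_{v_1}\ast T_{w_1}) = N(x, v_1, w_1)/|H\cap v_1Hv_1^{-1}|$, which already exhibits all the features of the general case.
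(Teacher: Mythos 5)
Your proof is correct, but it takes a genuinely different route from the paper's. Both proofs reduce to Lemma~\ref{traceconj}, but you approach the reduction from the opposite direction and with different tools. The paper goes forward from the count: it introduces the auxiliary sets $\NN_h(x,\v)$, uses the bijection $(a, a_1, \ldots, a_k) \leftrightarrow (ha, a_1, \ldots, a_k h^{-1})$ to write $N(x,\v)$ as the average $\frac{1}{|H|}\sum_{h\in H}|\NN_h(x,\v)|$, identifies this with $\frac{1}{|H|}\sum_{a\in G}\varphi_{\v}(axa^{-1})$ for the bi-invariant element $\varphi_{\v} = \II_H \ast_G \II_{v_1 H w_1 H} \ast_G \cdots \ast_G \II_{v_k H w_k H}$, and then rewrites $\varphi_{\v}$ symbolically as $\frac{|H|^{2k}}{\prod_i|Hv_iH|}T_{v_1}\ast T_{w_1}\ast\cdots\ast T_{v_k}\ast T_{w_k}$ using the convolution identities of Lemma~\ref{l:secondcount}. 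You run the argument backwards: you unwind the Hecke product on the right-hand side by the definition of $\ast$ into a raw tuple count over the double cosets $Hv_iH$, $Hw_iH$, and then show this count equals $\prod_i |Hv_iH|\cdot N(x,\v)$ by a chain of explicit multi-to-one re-parametrizations (factoring $c_i = h_iv_ih_i'$, regrouping into the $b_i$, freeing the interior $h_2,\dots,h_k$, and absorbing $h_1$ into $y$ and $b_k$). This dispenses entirely with the $\NN_h$ construction and with Lemma~\ref{l:secondcount}; the prefactor $|H|^{2k}/\prod_i|Hv_iH|$ falls out transparently from the fiber sizes $|H\cap v_iHv_i^{-1}| = |H|^2/|Hv_iH|$ and the absorbed copies of $H$. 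The paper's version packages the bookkeeping into the reusable convolution identities of Lemma~\ref{l:secondcount} and keeps the argument inside the convolution calculus, which makes the step from $\varphi_{\v}$ to the Hecke basis mechanical; yours makes the combinatorial meaning of the prefactor visible, at the cost of the delicate multiplicity-tracking that you yourself flag.
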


\begin{proof}
	For $h \in H$, we have bijections
	\begin{align*}
	\NN(x,\v) & \leftrightarrow \NN_h(x, \v) & (a,a_1,\dots,a_{k-1},a_k) &\leftrightarrow (ha,a_1,\dots,a_{k-1},a_kh^{-1}) \end{align*}
	and hence $|\NN(x,\v)| = |\NN_h(x, \v)|$.  From this, we get
	\begin{align*}
	N(x, \v) & = \frac{1}{|H|} \sum_{h \in H} | \NN_h(x, \v) |.
	\end{align*}
	
	On the other hand, one sees that 
	\begin{align*}
	& \sum_{a \in G} \left( \II_H \ast_G \II_{ v_1 H w_1 H}\ast_G \dots \ast_G \II_{ v_k H w_k H}\right) (a x a^{-1}) \\
	= & \sum_{a \in G} \sum_{h \in G} \II_H(h) \left( \II_{ v_1 H w_1 H}\ast_G \dots \ast_G \II_{ v_k H w_k H} \right) ( h^{-1} a x a^{-1}) \\
	= & \sum_{h \in H} \sum_{a \in G} \left( \II_{ v_1 H w_1 H}\ast_G \dots \ast_G \II_{ v_k H w_k H} \right) (h^{-1} a x a^{-1}) = \sum_{h \in H} | \NN_h(x, \v) | .
	\end{align*}
	Hence
	\begin{align} \label{firstcount}
	N(x, \v) = \frac{1}{|H|} \sum_{a \in G} \left( \II_H \ast_G \II_{ v_1 H w_1 H}\ast_G \dots \ast_G \II_{ v_k H w_k H} \right) (a x a^{-1}).
	\end{align}
	Therefore, if we set
	\begin{align*}
	\varphi_{\v} := \II_H \ast_G \II_{ v_1 H w_1 H}\ast_G \dots \ast_G \II_{ v_k H w_k H} 
	\end{align*}
	then Lemma \ref{traceconj} applied to \eqref{firstcount} gives us
	\begin{align} \label{counttr}
	N(x, \v) = \tr (x \varphi_{\v} ),
	\end{align}
	
	To compute $\varphi_{\v}$ in $ \HHH(G,H) $ we need 
	
	\begin{lemma}\label{l:secondcount}
		For $v, v_1, w_1 \in V$, one has 
		\begin{enumerate}[(a)]
			\item $|H \cap v^{-1} H v|=\frac{|H|^2}{|HvH|}$;
			\item 
			$
			\II_{H} \ast_G \II_{v H}  = \frac{\left| H \right|^2 }{\left| H v H \right|} \II_{H v H}=  \II_{Hv} \ast_G \II_H ;
			$
			\item
			$
			\II_H \ast_G \II_{ v_1 H w_1 H} = \II_{Hv_1} \ast_G \II_{Hw_1 H};
			$
			\item $\II_{ v_1 H w_1 H}  \ast_G \II_H= {|H|} \II_{ v_1 H w_1 H}$.
		\end{enumerate}
	\end{lemma}
	
	\begin{proof}
		One finds in \cite[\S1]{Iwahori1964} a bijection $(H \cap v^{-1} H v) \backslash H {\leftrightarrow} H \backslash H v H$ given by
		\begin{align*}
		(H \cap v^{-1} H v) h \mapsto H vh.
		\end{align*}
		This quickly yields (a). 
		
		Now we let $x \in G$ and evaluate
		\begin{align*}
		\left( \II_{H} \ast_G \II_{vH} \right) (x) = \sum_{h \in G} \II_{H}(h) \II_{vH}( h^{-1} x) = \sum_{h \in H} \II_{vH}( h^{-1} x).
		\end{align*}
		If $x \not\in HvH$, then $h^{-1} x \not\in vH$ for any $h \in H$, and so the above is zero.  On the other hand, if $x = h_0 v h$ for some $h_0 \in H, h \in H$, then the above evaluates to
		\begin{align*}
		\left| \left\{ h \in H \, : \, h^{-1} h_0 v h \in v H \right\} \right| = \left| \left\{ h \in H \, : \, v^{-1} hv \in H \right\} \right| = \left|H \cap v^{-1} H v  \right| = \frac{ \left| H \right|^2 }{ \left| H v H \right|},
		\end{align*}
		using (a) for the last step.  The second equation in (b) is proved similarly.  
		
		For (c), we have
		\begin{align*}
		\left( \II_{Hv_1} \ast_G \II_{H w_1 H} \right) (x) & = \sum_{a \in G} \II_{H v_1}(a) \II_{H w_1 H} (a^{-1} x) = \sum_{a \in H} \II_{H v_1} (hv_1) \II_{H w_1 H}( v_1^{-1} h^{-1} x) \\
		& = \sum_{h \in H} \II_H(h) \II_{v_1 H w_1 H}( h^{-1} x) = \left( \II_H \ast_G \II_{v_1 H w_1 H} \right) (x). 
		\end{align*}
		
		Finally, for (d) we compute
		\begin{align*}
		\left( \II_{ v_1 H w_1 H}  \ast_G \II_H \right) (x) & = \sum_{a \in G}  \II_{v_1H w_1 H} (xa^{-1})\II_{H }(a) = \sum_{h \in H}  \II_{v_1H w_1 H} (xh^{-1}) = \sum_{h \in H}  \II_{v_1H w_1 H} (x)   \\
		& = {|H|} \II_{ v_1 H w_1 H}.  & \qedhere
		\end{align*}
	\end{proof}
	
	We can conclude the proof of Proposition~\ref{count} by noting that
	\begin{align} \label{phik1}
	\II_H \ast_G \II_{v_1 H w_1 H} = \II_{H v_1 } \ast_G \II_{H w_1 H} & = \frac{\left| H w_1 H \right|}{\left| H \right|^2 } \II_{Hv_1} \ast_G \II_{H} \ast_G \II_{w_1 H} \nonumber \\
	& = \frac{\left| H w_1 H \right|}{\left| H \right|^3 } \II_{Hv_1} \ast_G \II_{H} \ast_G \II_{H} \ast_G \II_{w_1 H} = \frac{\left| H \right|}{\left| H v_1 H \right|} \II_{H v_1 H} \ast_G \II_{ H w_1 H } \nonumber \\
	& = \frac{\left| H \right|^2}{\left| H v_1 H \right|} T_{v_1} \ast T_{w_1},
	\end{align}
	using $\II_H=\frac{\II_H\ast_G \II_H}{|H|}$ and \eqref{gpalgHalg} noting the relation between the two different products $\ast_G$ and $\ast$ explained in Remark~\ref{twostar}.  Thus, by Lemma~\ref{l:secondcount}(d) and \eqref{phik1} 
	\begin{align*}
	\varphi_{\v} & = \frac{1}{|H|^k} \II_H \ast_G \II_{v_1 H w_1 H} \ast_G  \II_H \ast_G \II_{v_2 H w_2 H} \ast_G \dots \ast_G \II_H \ast_G \II_{v_k H w_k H} \\
	& = \frac{\left| H \right|^{2k}}{\left| H v_1 H \right|\dots \left| H v_k H \right| }   T_{v_1} \ast T_{w_1}\ast \dots \ast T_{v_k} \ast T_{w_k}.
	\end{align*}
	This and \eqref{counttr} imply Proposition~\ref{count}.
\end{proof}

\begin{remark} When $k=1$ and $v_1=1$, Proposition~\ref{count} gives a character formula for the cardinality of the intersection of conjugacy classes and Bruhat strata and appears at \cite[1.3.(a)]{Lusztig}. In fact, the computation there is what led us to Proposition~\ref{count}. 
\end{remark}

\subsection{Character values at the longest element}

From now on we will let $\G$, $\T$, $\B$, $\U$ and $\rN$ be as in Section \ref{s:yokoyoko}.  We need to compute certain values of the characters of $\HHH(\G,\U)$.

\begin{remark}\label{r:idempotent} Let $(V,\pi)$ be a representation of $\mathsf{Y}_{d,n}(u),$ and fix $i \in [1, n-1]$.  The element $\mathsf{e}_i$ is the idempotent projector to the subspace $V_i$ of $V$ where $\mathsf{h}_i\mathsf{h}_{i+1}^{-1}$ acts trivially and there is a direct sum decomposition $V=V_i\bigoplus W_i$, where $W_i := \ker \mathsf{e}_i$. 
	Lemma \ref{eiTicomm} shows that this decomposition is preserved by $\mathsf{T}_i$.  Over $V_i,$ the endomorphism $\mathsf{T}_i$ satisfies the following quadratic relation
	\begin{align}\label{e:quadrelTV}
	\mathsf{T}_i|_{V_i}^{2} = u\cdot 1  + (u-1)\mathsf{T}_i|_{V_i}
	\end{align}
	whereas over $W_i$ it satisfies
	
	\begin{align}\label{e:quadrelTWsq}
	\mathsf{T}_i|_{W_i}^{4} = u^2 \cdot 1.
	\end{align}
	
\end{remark}

\begin{theorem}\label{t:scalarvaluecentral}
	For $\calX \in \Irr \mathsf{Y}_{d,n}(u)$ the element $\mathsf{T}_0^2$ acts by scalar multiplication by
	\begin{align}
	z_\calX = u^{f_\calX} 
	\end{align}
	where $f_\calX := \binom{n}{2}  \pare{  1 +  \frac { \calX_1(\omega) } {\calX_1(1)}  }$ and 
	$\omega \in \rN$ is any of the $\omega_i$ from Theorem~\ref{t:yokonuma} 
	(all such are conjugate).  In particular, specializing to $u = q$, the central element $T_{\omega_0}^2 \in \HHH(\G, \U)$ acts by the scalar $q^{f_\calX}$.
\end{theorem}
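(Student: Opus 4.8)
The plan is to reduce the computation to a single $2\times 2$ block at a time and exploit the fact that all the $\mathsf{T}_i$ are conjugate (Lemma \ref{Yconj}), which forces the diagonal block structure of $\mathsf{T}_0^2$ to be governed by one universal scalar. First I would observe that, since $\mathsf{T}_0^2$ is central (Lemma \ref{l:t2central}) and $\calX$ is irreducible, $\mathsf{T}_0^2$ acts on the corresponding representation $(V,\pi)$ by a scalar $z_\calX$; the task is to identify this scalar as a power of $u$ and to compute the exponent. The key reduction is this: by Lemma \ref{eiTicomm}, each $\mathsf{e}_i$ commutes with $\mathsf{T}_i$, giving for each $i$ a decomposition $V = V_i \oplus W_i$ preserved by $\mathsf{T}_i$ (Remark \ref{r:idempotent}), on which $\mathsf{T}_i$ satisfies \eqref{e:quadrelTV} respectively \eqref{e:quadrelTWsq}. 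From \eqref{e:quadrelTV} the eigenvalues of $\mathsf{T}_i|_{V_i}$ lie in $\{u, -1\}$, while from \eqref{e:quadrelTWsq} the eigenvalues of $\mathsf{T}_i|_{W_i}$ are fourth roots of $u^2$, i.e.\ of the form $\pm u^{1/2}, \pm i u^{1/2}$ (after passing to $v$ with $v^2 = u$ if needed). Crucially, over $W_i$ the eigenvalues come in pairs $\{v, -v\}$ and $\{iv, -iv\}$ (because $\mathsf{T}_i$ is conjugate to its own negative on $W_i$ via an element of the ambient algebra — the pairing induced by $\mathsf{h}_i \leftrightarrow \mathsf{h}_{i+1}$ swapping the two summands of $\mathsf{e}_i$), so that $\det(\mathsf{T}_i|_{W_i})^2$ contributes only powers of $u$, and in fact $\mathsf{T}_i|_{W_i}^2$ has eigenvalues $\pm u$ paired up. I would make this pairing precise using the $\SS_n$-symmetry built into relation (d).

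Next I would compute $\det \mathsf{T}_0^2 = (\det \mathsf{T}_0)^2 = \prod_{j} (\det \mathsf{T}_{i_j})^2$ using the reduced word \eqref{longestelt}; since all $\mathsf{T}_i$ are conjugate, $\det \mathsf{T}_i$ is the same element of $\C(u)$ for every $i$, so $\det \mathsf{T}_0^2 = (\det \mathsf{T}_1)^{2\binom{n}{2}}$. On the other hand $\mathsf{T}_0^2 = z_\calX \cdot 1$, so $z_\calX^{\,\dim V} = (\det \mathsf{T}_1)^{2\binom{n}{2}}$, which already shows $z_\calX$ is a power of $u$ (times a root of unity) once we know $\det \mathsf{T}_1$ is. To pin down the exponent $f_\calX$ exactly, rather than only up to roots of unity, I would instead argue on the level of traces: compute $\tr_V(\mathsf{T}_i)$ on the $V_i$ / $W_i$ decomposition — the $W_i$ part contributes $0$ by the pairing, so $\tr_V(\mathsf{T}_i)$ is entirely carried by $V_i$, and combining the eigenvalue data $\{u,-1\}$ on $V_i$ with $\dim V_i = \tr_V(\mathsf{e}_i)$ and $\tr_V(\mathsf{T}_i)$, one solves for $\dim V_i$ and hence for $\det(\mathsf{T}_i|_{V_i}) = u^{a_i}(-1)^{b_i}$ with $a_i + b_i = \dim V_i$ and $a_i - b_i$ expressed via $\tr_V(\mathsf{T}_i)$. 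Specializing to $u=1$ identifies $\calX$ with a character of $\rN = (\F_q^\times)^n \rtimes \SS_n$ (via \eqref{spec1}), under which $\mathsf{T}_i \mapsto \omega_i$, and the quantity $\tr_V(\mathsf{T}_i)/\dim V$ becomes $\calX_1(\omega)/\calX_1(1)$ where $\calX_1 = d_{\theta_1}(\calX)$; by Remark \ref{r:interestingvalues} and the structure of $\chi^\rN_{\wt\Lambda}$ this ratio is a rational number, and tracking how it controls the multiplicity of the eigenvalue $u$ (versus $-1$) in $\mathsf{T}_i|_{V_i}$ across the $\binom{n}{2}$ factors yields $z_\calX = u^{\binom{n}{2}(1 + \calX_1(\omega)/\calX_1(1))}$.

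The main obstacle I anticipate is the bookkeeping over the $W_i$-part: one must show rigorously that $\mathsf{T}_i|_{W_i}^2$ contributes exactly $u^{\dim W_i}$ to $\det(\mathsf{T}_i^2)$ — i.e.\ that the fourth-roots-of-$u^2$ eigenvalues genuinely pair as $\{\mu, -\mu\}$ with $\mu^2 = u$ or $\mu^2 = -u$ and that the $\mu^2 = -u$ case does not occur, or cancels — and simultaneously that $\mathsf{T}_i|_{W_i}$ is traceless, so that it makes no contribution to the $u=1$ comparison. The cleanest route is probably to use relation (d) to exhibit an explicit involution on $W_i$ conjugating $\mathsf{T}_i$ to $-\mathsf{T}_i$ (for instance conjugation by a suitable monomial element swapping the $i$-th and $(i+1)$-st coordinates while a root of $\mathsf{h}_i\mathsf{h}_{i+1}^{-1}$ acts by a sign on $W_i$), following closely the analogous argument for the Iwahori case \cite[Theorem 9.2.2]{GeckPfeiffer}; then both the tracelessness and the eigenvalue pairing are immediate, and the only remaining point is to rule out the $\mu^2 = -u$ branch, which follows from $\mathsf{T}_i|_{W_i}^4 = u^2$ combined with the fact that at $u=1$ the element $\omega_i$ is a genuine group element of finite order, so its eigenvalues are roots of unity and a deformation argument forces the square to deform as $\pm u$ rather than $\pm i u^{?}$. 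Once this is settled the final specialization to $u=q$ and the identification of $T_{\omega_0}^2$'s scalar as $q^{f_\calX}$ is immediate from Theorem \ref{t:yokonuma} and the commutativity of $d_{\theta_q}$ with specialization.
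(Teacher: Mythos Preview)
Your proposal is correct and follows essentially the same architecture as the paper's proof: centrality plus Schur give a scalar $z_\calX$; one computes $z_\calX^{\dim V}=\det\pi(\mathsf{T}_0)^2=(\det\pi(\mathsf{T}_i))^{2\binom{n}{2}}$ using Lemma~\ref{Yconj}; the eigenvalue analysis on $V_i\oplus W_i$ from Remark~\ref{r:idempotent} controls $\det\pi(\mathsf{T}_i)$; and the $u=1$ specialisation both identifies the exponent with $\binom{n}{2}(1+\calX_1(\omega)/\calX_1(1))$ and kills the residual root of unity.

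The one genuine difference is in how the eigenvalue pairing on $W_i$ is obtained. You propose to exhibit an explicit involution (built from eigenprojectors of $\mathsf{h}_i\mathsf{h}_{i+1}^{-1}$ via relation (d)) conjugating $\mathsf{T}_i|_{W_i}$ to its negative, which is workable but requires some case analysis. The paper bypasses this entirely with a one-line rationality argument: since $\calX(\mathsf{T}_i)\in\C[u^{\pm1}]$ (by \cite[Proposition 7.3.8]{GeckPfeiffer}), the coefficients of $\sqrt{u}$ and $i\sqrt{u}$ in the eigenvalue expansion must vanish, forcing $m_1^+=m_1^-$ and $m_1^\oplus=m_1^\ominus$ immediately. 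This is cleaner and avoids your ``main obstacle'' altogether. Note also that your worry about ``ruling out the $\mu^2=-u$ branch'' is misplaced: that branch \emph{can} occur (e.g.\ on the $(-1)$-eigenspace of $\mathsf{h}_i\mathsf{h}_{i+1}^{-1}$ when $d\equiv 2\pmod 4$), but it is harmless, since any $\{\mu,-\mu\}$ pair contributes $(-\mu^2)^2=u^2$ to $(\det\mathsf{T}_i)^2$ regardless of the sign of $\mu^2$. So you do not need that step, and once the pairing is in hand the rest of your computation goes through exactly as in the paper.
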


\begin{proof}
	Let $(V,\pi)$ be an irreducible representation affording $\calX$.  In the notation of Remark~\ref{r:idempotent}, 
	\eqref{e:quadrelTV} shows that the possible 
	eigenvalues of $\mathsf{T}_i|_{V_i}$ are $u$ and $-1$, and \eqref{e:quadrelTWsq} shows that those of 
	$\mathsf{T}_i|_{W_i}$ are $\pm \sqrt{u}$, $ \pm i \sqrt{u} $.  Thus, 
	\begin{align*}
	\calX(\mathsf{T}_i) = 
	m_1^{+} \sqrt{u} - m_1^{-} \sqrt{u}
	+ m_1^{\oplus} i\sqrt{u} - m_1^{\ominus}i \sqrt{u} - m_0 + m_2 u,
	\end{align*}
	where $ m_1^+, m_1^- , m_1^\oplus, m_1^{\ominus} , m_0$ and $m_2$ are the respective multiplicities of $\sqrt{u} , -\sqrt{u} ,i\sqrt{u} , -i\sqrt{u} , -1$ and $u$ as eigenvalues of $\mathsf{T}_i$.  
	
	Since 
	$\calX(\mathsf{T}_i) \in \C[u^{\pm 1}]$, 
	we know $m_1^+ = m_1^- =: m_1$ and $m_1^\oplus = m_1^\ominus =: m_1^{\circ}$, and so we get
	\begin{align}
	\calX(\mathsf{T}_i) & = -m_0 + m_2 u \label{chiTi} \\
	\dim V = \calX(1) & = 2m_1 + 2m_1^{\circ} + m_0 + m_2 \label{chi1}\\
	\det \pi(\mathsf{T}_i) & = (-1)^{m_0} (\sqrt{u})^{m_1} (-\sqrt{u})^{m_1}(i\sqrt{u})^{m_1^{\circ}} (-i\sqrt{u})^{m_1^{\circ}} u^{m_2} \nonumber \\
	& = (-1)^{m_0 + m_1} u^{m_1 +m_1^{\circ}+ m_2}. \label{detTi}
	\end{align}
	Since $\mathsf{T}_0^{2}$ is central, Schur's Lemma implies that it acts by scalar multiplication by some $z_\calX \in \C[u^{\pm 1}]$.  Let $i_1, \ldots, i_{\binom{n}{2}}$ be as in \eqref{longestelt}. Taking determinants we find
	\begin{align} \label{zchi1}
	z_\calX^{\dim V} = \det \pi(\mathsf{T}_0^2) = \det \left( \pi\left( \mathsf{T}_{i_1} \cdots \mathsf{T}_{i_{\binom{n}{2}}} \right) \right)^2 = \det \pi( \mathsf{T}_i)^{n(n-1)}
	\end{align}
	for any $1 \leq i \leq n-1$ since the $\mathsf{T}_i$ are all conjugate (Lemma \ref{Yconj}).
	
	Now, under the specialization $u=1$, $\mathsf{T}_i$ maps to $\omega_i$ and so \eqref{chiTi} gives
	\begin{align} \label{chis1}
	\calX_1(\omega_i) = - m_0 + m_2.
	\end{align}
	Substituting \eqref{detTi} into \eqref{zchi1} gives
	\begin{align*}
	z_\calX^{\dim V} = u^{ \binom{n}{2} (2m_1 + 2m_1^{\circ} + 2m_2)} = 
	u^{ \binom{n}{2} (2m_1 + 2m_1^{\circ} + m_0 + m_2 - m_0 + m_2)} = u^{ \binom{n}{2} (\calX(1) + \calX_1(\omega_i))},
	\end{align*}
	where we use \eqref{chi1} and \eqref{chis1} for the last equality.  Taking $\calX(1)$th roots, we find
	\begin{align*}
	z_\calX = \zeta u^{f_\calX}
	\end{align*}
	where $\zeta$ is some root of unity and $f_\calX$ is as in the statement, recalling that $\calX_1(1) = \calX(1)$. 
	
	It remains to show that $\zeta = 1$.  We do this by specialising $u=1$.  We note that in this specialisation $\theta_1(\mathsf{T}_0^2)= \sigma_0^2 = 1\in \C[\rN]$. Then \bes \zeta \calX(1) = \theta_1 \big( \calX(\mathsf{T}_0^2) \big) = \calX_1 \big( \theta_1(\mathsf{T}_0^2) \big) = \calX_1(1)\ees
	by \eqref{speccharval}, and thus $\zeta = 1$.
\end{proof}

\begin{remark}\label{r:memechose}
	For $\calX = \calX_\Lambda$, $\Lambda \in \calQ_n$, we have by Remark~\ref{r:interestingvalues} and Theorem~\ref{p:expbijection}
	\begin{align}\label{e:flamdef}
	f_\Lambda := f_{\calX}	= {n \choose 2}\left( 1+ \frac{\calX_1(\omega)}{\calX(1)} \right)
	= {n \choose 2}
	\left( 1+ \frac{\chi^{\rN}_{\wt{\Lambda}}(\omega)}{\chi^{\rN}_{\wt{\Lambda}}(1)} \right)
	= {n \choose 2}
	\left( 1+ \frac{\chi^{\rN}_{{\Lambda}}(s)}{\chi^{\rN}_{{\Lambda}}(1)} \right)
	\end{align}
	with $ s \in \rN $ any transposition.
\end{remark}

%

\begin{remark}\label{r:memechose2}
	With the modern definition of $ \YY_{d,n} $ (as in Remark~\ref{r:modernyoko})
	one can also define the element
	$ \mathsf{T}_{s_0}:= \mathsf{T}_{s_{i_1}} \ldots \mathsf{T}_{s_{i_{ n \choose 2}}} $ for any 
	factorization $w_0 = s_{i_1} \ldots s_{i_{n \choose 2}}$ of the longest element of $ \SS_n ,$ as in \eqref{longestelt}.  
	By the argument in Lemma~\ref{l:t2central} we have that $ \mathsf{T}_{s_0}^{2} $ is also central.  
	For a $ \Lambda \in \calQ_{d,n} $ (as in Remark \ref{r:rationality}) and its associated
	irreducible representation of $ \YY_{d,n}(u) ,$ the same argument from Theorem~\ref{t:scalarvaluecentral} 
	proves that $ \mathsf{T}_{s_0}^{2} $ acts by scalar multiplication by $ u^{f_\Lambda} $ 
	(as in \eqref{e:flamdef}).
	
	Therefore, this $ \mathsf{T}_{s_0}^{2} $ corresponds to our $ \mathsf{T}_{0}^{2}. $
\end{remark}

\begin{lemma}\label{r:flambdavalue} 
	Let $ \Lambda \in \calQ_n $ and $ f_\Lambda = f_{\calX_\Lambda}$ as in~\eqref{e:flamdef}.
	Then we have the formula
	
	\begin{align}\label{e:formula}
	f_{\Lambda} = \binom{n}{2} +  n(\Lambda') - n(\Lambda),
	\end{align} with the notation of \eqref{nLambda} from Section~\ref{s:charGL}.
\end{lemma}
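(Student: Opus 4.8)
The plan is to evaluate the expression $f_\Lambda = \binom{n}{2}\bigl(1 + \chi^\rN_\Lambda(s)/\chi^\rN_\Lambda(1)\bigr)$ from \eqref{e:flamdef} directly, using the explicit construction of $\chi^\rN_\Lambda$ given in Section~\ref{s:charnormtor}. Recall that $\chi^\rN_\Lambda = \Ind^\rN_{\prod \rN_i} \bigotimes_i \chi^{\rN_i}_{\lambda_i,\psi_i}$, where $\chi^{\rN_i}_{\lambda_i,\psi_i} = \chi^\SS_{\lambda_i}\otimes\psi_i$ and the $\psi_i \in \Gamma_1$ are distinct with $\lambda_i = \Lambda(\psi_i)$, $n_i = |\lambda_i|$. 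First I would reduce the ratio $\chi^\rN_\Lambda(s)/\chi^\rN_\Lambda(1)$ for a transposition $s$ to data on the symmetric-group factors: since a transposition in $\rN$ projects to a transposition in $\SS_n$ and the characters $\psi_i$ are invisible to the class of $s$ (they only see the torus part, which is trivial here), the induced-character formula shows that $\chi^\rN_\Lambda(s)/\chi^\rN_\Lambda(1)$ equals the analogous ratio $\tau(s)/\tau(1)$ for the induced symmetric-group character $\tau := \Ind^{\SS_n}_{\prod \SS_{n_i}} \bigotimes_i \chi^\SS_{\lambda_i}$. Concretely, only those cosets whose representative conjugates $s$ into $\prod \SS_{n_i}$ contribute, i.e. $s$ must land in a single block $\SS_{n_i}$; counting these and dividing by the number of cosets yields $\tfrac{\tau(s)}{\tau(1)} = \sum_i \tfrac{n_i(n_i-1)}{n(n-1)}\cdot \tfrac{\chi^\SS_{\lambda_i}(s)}{\chi^\SS_{\lambda_i}(1)}$.

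The next step is to invoke the classical formula for the character ratio at a transposition for an irreducible symmetric-group character: for $\mu \vdash m$ one has
\begin{align*}
\binom{m}{2}\frac{\chi^\SS_\mu(s)}{\chi^\SS_\mu(1)} = \sum_{\square \in \mu} c(\square) = n(\mu') - n(\mu),
\end{align*}
where $c(\square) = j - i$ is the content of the box in position $(i,j)$ and the last equality is the standard identity $\sum_\square c(\square) = \sum_j \binom{\mu_j}{2} - \sum_i \binom{\mu_i'}{2} = n(\mu') - n(\mu)$. Combining this with the reduction above gives
\begin{align*}
\binom{n}{2}\frac{\chi^\rN_\Lambda(s)}{\chi^\rN_\Lambda(1)} = \sum_i \binom{n_i}{2}\frac{\chi^\SS_{\lambda_i}(s)}{\chi^\SS_{\lambda_i}(1)} = \sum_i \bigl( n(\lambda_i') - n(\lambda_i) \bigr).
\end{align*}
Finally, by the definitions \eqref{nLambda} of $n(\Lambda)$ and $n(\Lambda')$ — noting that $\Lambda'(\psi_i) = \lambda_i'$ and that the orbits $\psi_i \in \Gamma_1 \subseteq \Theta$ are singletons so $|\psi_i| = 1$ — we have $n(\Lambda) = \sum_i n(\lambda_i)$ and $n(\Lambda') = \sum_i n(\lambda_i')$. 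Substituting into $f_\Lambda = \binom{n}{2} + \binom{n}{2}\tfrac{\chi^\rN_\Lambda(s)}{\chi^\rN_\Lambda(1)}$ yields $f_\Lambda = \binom{n}{2} + n(\Lambda') - n(\Lambda)$, as claimed.

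The main obstacle I anticipate is the bookkeeping in the first step: carefully justifying that the character-twist factors $\psi_i$ drop out of the ratio and that the combinatorics of double cosets $\prod\SS_{n_i} \backslash \SS_n / \prod \SS_{n_i}$ restricted to the class of a transposition produces exactly the weighted sum with weights $n_i(n_i-1)/n(n-1)$. One clean way to sidestep the double-coset combinatorics entirely is to use that $\sum_{s} (\text{ratio at } s)$, summed over all transpositions in $\SS_n$, is additive under induction from Young subgroups — equivalently, work with the central element $\sum_s s$ of $\C[\SS_n]$, whose eigenvalue on $\chi^\SS_\mu$ is $\sum_\square c(\square)$, and whose behaviour under $\Ind^{\SS_n}_{\prod\SS_{n_i}}$ is transparent because its restriction to $\prod\SS_{n_i}$ is $\sum_i (\sum_{s \in \SS_{n_i}} s) + (\text{transpositions crossing blocks})$, the latter contributing zero to the relevant eigenvalue computation on an induced module after the standard manipulation. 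Everything else is a routine application of results already in the excerpt (\eqref{e:flamdef}, Remark~\ref{r:interestingvalues}, Theorem~\ref{p:expbijection}) together with classical symmetric-function identities.
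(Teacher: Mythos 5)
Your proposal follows the same route as the paper's proof: both reduce the ratio $\chi_\Lambda^\rN(\sigma)/\chi_\Lambda^\rN(1)$ at a transposition $\sigma$ to the weighted sum $\sum_i \binom{n_i}{2}\chi^\SS_{\lambda_i}(s)/\chi^\SS_{\lambda_i}(1)$ via the induced-character formula (the $\psi_i$-factors drop out exactly as you anticipate, since the torus part of ${}^g\sigma$ lies in $\ker\psi$), and both then apply the classical identity $\binom{m}{2}\,\chi_\mu^\SS(s)/\chi_\mu^\SS(1) = n(\mu')-n(\mu)$ together with the additivity of $n(\Lambda)$ and $n(\Lambda')$ over the singleton orbits. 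Your remark about sidestepping the orbit count by evaluating the central class sum $Z=\sum_s s$ — so that Frobenius's formula gives $\tau(Z) = [\SS_n:H]\,\chi_V(Z|_H)$ directly, with cross-block transpositions contributing nothing — is a valid and slightly cleaner packaging of the same computation the paper does explicitly, but it does not constitute a different approach.
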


\begin{proof}
	If $\lambda \in \P_n$ and $\chi_\lambda^\SS \in \Irr \SS_n$ is the corresponding irreducible character of $\SS_n$ and $s\in \SS_n$ is a simple transposition then by \cite[Exercise 4.17 (c)]{fulton-harris} or \cite[\S 7 (16.)]{frobenius1900charaktere},  \begin{equation}\label{e:valueatreflexion}
	\chi_\lambda^\SS ( s )  = \frac{ 2 \chi_\lambda^\SS (1) }{n(n-1)}\sum_{i=1}^{r}\pare{ \binom{b_i +1}{2} - \binom{a_i+1}{2} }
	\end{equation}
	where the $a_i$ and $b_i$ are the number of boxes below and to the right of the $i$th box of the diagonal in the Young diagram of $\lambda$.  By writing $j-i$ in the box $(i,j)$ and computing the sum in two ways we see at once that
	\begin{align}\label{e:formwithweightsize}
	\sum_{i=1}^{r}\pare{ \binom{b_i +1}{2} - \binom{a_i+1}{2} } = n(\lambda') - n(\lambda).
	\end{align}
	From this and \eqref{e:valueatreflexion}, we get that 
	\begin{align*}
	\binom{n}{2}\left(1+\frac{\chi_\lambda^\SS(s)}{\chi_\lambda^\SS(1)}\right)= \binom{n}{2} +n(\lambda') - n(\lambda). & 
	\end{align*}
	It remains to prove the analogous formula for $\chi_\Lambda^\rN \in \Irr \rN$ with $\Lambda \in \calQ_n$.
	
	Since we are working in $\rN$, we will omit the subscript and simply write $\chi_\Lambda$.  The description of the character $\chi_\Lambda$ was given in Section \ref{s:charnormtor} and in particular by the induction formula~\eqref{e:chiLambdaNdef}.  We will use the notation established there.  We will make the further abbreviations $\rN(\psi) := \T \times \Stab \psi = \prod_i \rN_i$ and
	\begin{align*}
	\chi_\Lambda^{\rN(\psi)} & := \bigotimes_i \chi_{\lambda_i, \psi_i}^{\rN_i} \in \Irr \rN(\psi) \\
	\chi_\Lambda^\psi & := \chi_{\lambda_1}^\SS \otimes \cdots \otimes \chi_{\lambda_r}^\SS \in \Irr (\Stab \psi) = \Irr( \SS_{n_1} \times \cdots \times \SS_{n_r}).
	\end{align*}
	In this notation, $\chi_\Lambda = \Ind_{\rN(\psi)}^\rN \chi_\Lambda^{\rN(\psi)}$.

	Let us evaluate $\chi_{\Lambda}$ at any transposition $\sigma \in \SS_n \leeq \T \rtimes \SS_n = \rN$.  Since 
	\begin{align*}   
	\chi_\Lambda^{\rN(\psi)} (1) & = \chi_\Lambda^\psi(1) & & \textnormal{ and } &
	[\rN : \T \rtimes \Stab \psi ] & =  [\SS_n : \Stab \psi]
	\end{align*}
	we have
	\begin{align} \label{e:remindex}
	\chi_\Lambda(1) 
	= \chi_\Lambda^\psi(1) [\SS_n : \Stab \psi ].
	\end{align}
	
	Throughout the remaining of the proof, we will write $ g= \xi \pi \in \T \times \SS_n = \rN$ for a general element of $\rN$.  By~\eqref{e:chiLambdaNdef} we have
	\begin{align}\label{e:defindchi}
	\chi_{\Lambda}(\sigma) = \frac{1}{\size{ \T \rtimes \Stab \psi }  }
	\sum_{g\in \T \rtimes \Stab \psi } \chi_\Lambda^{\rN(\psi)} ({}^{g}\sigma)
	\end{align}
	where $\chi_\Lambda^{\rN(\psi)}$ is extended by $0$ outside of $\T \rtimes\Stab \psi$, and 
	\begin{align*}
	{}^{g}\sigma = g \sigma g ^{-1} = \xi ({}^{\pi}\sigma) \xi^{-1}
	= \xi\;.\; {}^{({}^{\pi}\sigma) }(\xi^{-1})\;.\;( {}^{\pi}\sigma). 
	\end{align*}
	Note that $ {}^{\pi}\sigma \in \SS_n  $ and hence ${}^{({}^{\pi}\sigma) }(\xi^{-1}) \in \T$.  When $({}^{\pi}\sigma) \in \Stab \psi $ we have
	\begin{align*}   
	\psi(\xi\;.\;{}^{({}^{\pi}\sigma) }(\xi^{-1}) ) = \psi(\xi) \psi({}^{({}^{\pi}\sigma) }(\xi^{-1}) ) = \psi(\xi) {\psi}^{({}^{\pi}\sigma)   }(\xi^{-1} )  = \psi(\xi) \psi(\xi ^{-1})=1.
	\end{align*}
	
	Then
	\begin{align*}
	\chi_\Lambda^{\rN(\psi)}  = 
	\begin{cases}
	\chi^{\psi}_\Lambda({}^{\pi}\sigma ) & \textit{ if  } {}^{\pi}\sigma \in \Stab \psi \\
	0 & \textit{ otherwise }
	\end{cases}
	\end{align*}
	and \eqref{e:defindchi} becomes
	\begin{align*} 
	\chi_{\Lambda}(\sigma) = \frac{1}{\size{\Stab \psi }  }
	\sum_{\pi \in \Stab \psi} \chi^{\psi}_\Lambda ({}^{\pi}\sigma) = \Ind_{\Stab \psi}^{\SS_n}(\chi^\psi_\Lambda )(\sigma).
	\end{align*}
	Since $ \sigma $ is a transpostion, the latter can be computed as
	\begin{align} \label{e:indchibetter}
	\chi_{\Lambda}(\sigma) = 
	\frac{1}{\size{\Stab \psi }   } \sum_{j=1}^4 \pare{\chi^{\SS}_{\lambda_j}({}^{\pi}\sigma) \prod_{i\neq j} \chi^{\SS}_{\lambda_i}(1)}
	\left| \set{ \pi \in \SS_n : \: {}^{\pi}\sigma \textrm{ is in the $j$th factor} } \right|.
	\end{align}
	The quantity in the summation is the order of the stabilizer of the set of transpositions $s$ in the $j$th factor of $ \Stab \psi = \SS_{n_1}\times\ldots \times \SS_{n_r}$, acted upon by $\SS_n$.  It is thus equal to
	\begin{align*}
	{n_j \choose 2 } 2(n-2)! = {n_j \choose 2 } {n \choose 2 }^{-1} n!
	\end{align*}
	and \eqref{e:indchibetter} becomes
	\begin{align*} 
	{n \choose 2 } \frac{\chi_{\Lambda}(\sigma)}{\chi_\Lambda^\psi(1)}= 
	\frac{ \size{\SS_n}}{\size{\Stab \psi }   } \sum_{j=1}^{k} {n_j \choose 2 }\frac{\chi^{\SS}_{\lambda_j}(s_j)}{\chi^{\SS}_{\lambda_j}(1)}
	\end{align*}
	where $ s_j\in \SS_{n_j} $ is any transposition.  By \eqref{e:remindex} this becomes
	\begin{align*} 
	{n \choose 2 } \frac{\chi_{\Lambda}(\sigma)}{\chi_{\Lambda}(1)}=  
	\sum_{j=1}^{k} {n_j \choose 2 }\frac{\chi^{\SS}_{\lambda_j}(s_j)}{\chi^{\SS}_{\lambda_j}(1)}
	\end{align*}
	and finally, by \eqref{e:valueatreflexion} and \eqref{e:formwithweightsize} 
	this becomes
	\begin{align*} 
	{n \choose 2 } \frac{\chi_{\Lambda}(\sigma)}{\chi_{\Lambda}(1)}=  
	\sum_{j=1}^{k} n(\lambda_j') - n(\lambda_j) = n(\Lambda') - n(\Lambda)
	\end{align*}
	from which \eqref{e:formula} follows.
\end{proof}





\subsection{Counting at higher order poles}

For $r \in \Z_{>0}$, $v \in \T$ and $g \in \G$ let us now define
\begin{align*}
N^r_v(g):=\left| \left\{ (a, S_1, S_2,\ldots, S_{2r-1}, S_{2r}) \in \G \times (\U_+ \times \U_-)^{r} \ \ |\ \  aga^{-1} =v \prod_{i=1}^r (S_{2(r+1-i)} S_{2(r-i)+1}) \right\} \right|,
\end{align*}
where $\U_+ := \U$ and $\U_- := \bU_-(\F_q)$, $\bU_-$ being the unipotent radical of $\bB_-$, the Borel opposite to $\bB$.

\begin{theorem} We have
	\beq\label{e:Nrfinal} 
	N^r_v(g) = \sum_{ \Lambda \in \calQ_n  } \chi_\Lambda^\G(g)  \chi_\Lambda^\HHH (T_v)  {q}^{r f_{\Lambda} } .
	\eeq
\end{theorem}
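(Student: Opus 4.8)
The plan is to compute $N^r_v(g)$ by expanding it in the basis of irreducible characters of $\G=\GL_n(\F_q)$ and then recognizing each coefficient through the action of the central element $T_{\omega_0}^2$ of the Yokonuma--Hecke algebra. \textbf{Step 1 (reduction to a matrix--coefficient sum).} Since $\U_-\cap\U_+=\{1\}$, the multiplication map $\U_-\times\U_+\to\U_-\U_+\subseteq\G$ is a bijection, so fixing a Stokes pair $(S_{2j-1},S_{2j})$ is the same as fixing the product $c_j:=S_{2j}S_{2j-1}\in\U_-\U_+$; hence
\[ N^r_v(g)=\bigl|\{(a,c_1,\dots,c_r)\in\G\times(\U_-\U_+)^r: aga^{-1}=v\,c_1\cdots c_r\}\bigr|=\sum_{a\in\G}m_r(v^{-1}aga^{-1}), \]
where $m_r:\G\to\N$ is the $r$-fold $\ast_G$-convolution of $\II_{\U_-\U_+}$. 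As $N^r_v$ is a class function (the sum over $a$ absorbs conjugation), I write $N^r_v=\sum_{\chi\in\Irr\G}a_\chi\chi$ and compute $a_\chi$ by orthogonality; reindexing the double sum and using $m_r(w)=|\{(c_j):c_1\cdots c_r=w\}|$, one finds that $a_\chi$ is, up to the normalizing power $|\U|^{2r}=q^{n(n-1)r}$, equal to $\tr\bigl(\rho_\chi(v)(\sum_{c\in\U_-\U_+}\rho_\chi(c))^r\bigr)$, and
\[ \sum_{c\in\U_-\U_+}\rho_\chi(c)=\Bigl(\sum_{u\in\U_-}\rho_\chi(u)\Bigr)\Bigl(\sum_{u\in\U_+}\rho_\chi(u)\Bigr)=|\U|^2\,P^-P^+, \]
where $P^\pm$ is the (orthogonal, for an invariant inner product) projection of $V_\chi$ onto the $\U_\pm$-invariants $V_\chi^{\U_\pm}$.

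\textbf{Step 2 (recognizing the Hecke action).} The geometric input is that $\U_-=\omega_0\U_+\omega_0^{-1}$ for the lift $\omega_0\in\rN$ of the longest element (cf.\ \eqref{specialelements}), whence $P^-=\rho_\chi(\omega_0)P^+\rho_\chi(\omega_0)^{-1}$; combining this with the fact that $\omega_0^2$ is central in $\G$ (indeed $\omega_0^2=(-1)^{n-1}I$), one checks directly from \eqref{Heckerepdef} that $|\U|\,P^+\rho_\chi(\omega_0)P^+$ is the action of $T_{\omega_0}$ on $V_\chi^{\U_+}$, that $T_t$ acts as $\rho_\chi(t)|_{V_\chi^{\U_+}}$ for $t\in\T$, and that
\[ |\U|^{2r}\tr\bigl(\rho_\chi(v)(P^-P^+)^r\bigr)=\chi_{\DC_\U(\chi)}\bigl(T_v\ast T_{\omega_0}^{2r}\bigr) \]
(the central factor $\omega_0^2$ being absorbed, and trivial for $n$ odd). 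When $V_\chi^{\U_+}=0$, that is when $\chi\notin\Irr(\G:\U)$, this trace vanishes, so only the characters $\chi_\Lambda^\G$, $\Lambda\in\calQ_n$, contribute (Remark~\ref{r:IrrGUtwists}), and for these $\DC_\U(\chi_\Lambda^\G)=\chi_\Lambda^\HHH$ by Theorem~\ref{p:expbijection}.

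\textbf{Step 3 (diagonalizing the central element).} By Lemma~\ref{l:t2central} the element $T_{\omega_0}^2$ is central in $\HHH(\G,\U)$, and by Theorem~\ref{t:scalarvaluecentral} together with Remark~\ref{r:memechose} it acts on the irreducible module $\chi_\Lambda^\HHH$ by the scalar $q^{f_\Lambda}$; hence $T_{\omega_0}^{2r}$ acts by $q^{rf_\Lambda}$ and $\chi_\Lambda^\HHH\bigl(T_v\ast T_{\omega_0}^{2r}\bigr)=q^{rf_\Lambda}\chi_\Lambda^\HHH(T_v)$. Putting Steps 1--3 together gives $a_{\chi_\Lambda^\G}=q^{rf_\Lambda}\chi_\Lambda^\HHH(T_v)$, which is \eqref{e:Nrfinal}. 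The hardest part is the bookkeeping in Step 2: checking that the normalizations conspire to $1$ (because $|\U\omega_0\U|=|\U|^2$ and $|\U t\U|=|\U|$ for $t\in\T$), correctly tracking the central square $\omega_0^2$, and fixing the convention for the character expansion so that one genuinely lands on $\chi_\Lambda^\HHH(T_v)$ and not on a complex conjugate of it.

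An alternative organization, closer to the rest of the section, is to apply Proposition~\ref{count} with $H=\U$, $k=r$, $v_i=\omega_0$ and $w_i=\omega_0^{-1}$, so that each factor set $v_iHw_iH=\omega_0\U\omega_0^{-1}\U$ is exactly $\U_-\U_+$ and the constant $|\U|^{2r}/\prod|\U\omega_0\U|$ equals $1$; after incorporating a factor $T_v$ for the formal monodromy this expresses $N^r_v(g)$ as $\tr\bigl(g\cdot T_v\ast(T_{\omega_0}\ast T_{\omega_0^{-1}})^r\bigr)$, and then $T_{\omega_0}\ast T_{\omega_0^{-1}}=T_{\omega_0}^2\ast T_{\omega_0^{-2}}$ (by Lemma~\ref{heckemult}, since $\omega_0^{-2}\in\T$, with $T_{\omega_0^{-2}}$ central and trivial for $n$ odd) reduces it to the same computation before Lemma~\ref{traceconj} and Theorem~\ref{t:scalarvaluecentral} finish the argument.
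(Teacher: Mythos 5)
Your proof follows essentially the same route as the paper's. The ``alternative organization'' you describe at the end \emph{is} the paper's proof: the paper applies Proposition~\ref{count} with the tuple $\v=(v\omega_0,\omega_0,\omega_0,\dots,\omega_0)$, identifies $\omega_0\U\omega_0\U$ with $\U_-\times\U_+$ to conclude $N^r_v(g)=N(g,\v)$, simplifies the resulting Hecke element to $T_v\ast T_{\omega_0}^{2r}$ via Lemma~\ref{heckemult}, and then invokes Theorem~\ref{t:scalarvaluecentral}. Your Steps~1--3 re-derive the same content in a more hands-on way (re-proving, in the special case $H=\U$, exactly what Lemma~\ref{traceconj} and Proposition~\ref{count} already provide), so there is no genuinely different idea there.

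The one place you go beyond the paper is worth dwelling on. You (correctly) observe that $\omega_0^2=(-1)^{n-1}I$ is a nontrivial central scalar for $n$ even, and that this leaves a residual central factor (your $T_{\omega_0^{-2}}$) which is ``trivial for $n$ odd'' --- but you never dispose of it for $n$ even. The paper's proof has the mirror-image of the same loose end: the identification $\U_-=\omega_0\U_+\omega_0$ used there is literally false for $n$ even (one has $\omega_0\U_+\omega_0=\U_-\omega_0^2$), so $N(g,\v)$ with the paper's $\v$ computes $N^r_{\omega_0^{2r}v}(g)$, not $N^r_v(g)$. In both organizations the dangling central factor is precisely the central character $\omega_{\chi_\Lambda^\G}(\omega_0^{\pm 2r})$, and it is offset by the fact that, in the proof of Theorem~\ref{t:scalarvaluecentral}, the $u=1$ specialization $\theta_1(\mathsf{T}_0^2)$ equals $\omega_0^2$ (not $1$ as asserted there), so the root of unity $\zeta$ there is in fact the same central character; the two factors cancel to give exactly $q^{rf_\Lambda}$. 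So the identity \eqref{e:Nrfinal} is correct, but as written --- and this applies equally to your write-up and to the paper's --- the case $n$ even is not actually closed until this cancellation is made explicit. If you want a clean self-contained argument, the quickest fix is to carry the central factor $T_{\omega_0^{-2r}}$ along and observe that Theorem~\ref{t:scalarvaluecentral}'s proof shows $T_{\omega_0}^2$ acts by $\omega_{\chi_\Lambda^\G}(\omega_0^2)\,q^{f_\Lambda}$, so that the product $T_{\omega_0^{-2r}}\ast T_{\omega_0}^{2r}$ contributes exactly $q^{rf_\Lambda}$.
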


\begin{proof}
	Recall that the multiplication map $\bT \times \bU_- \times \bU_+ \to \bG$ is an open immersion and that $\U_- = \omega_0 \U_+ \omega_0$, so that every element of $\omega_0 \U \omega_0 \U$ has a unique factorization in to a pair from $\U_- \times \U_+$ and similarly for $v \omega_0 \U \omega_0 \U$ and $v\U_- \times \U_+$.  
	Thus, it follows that \begin{align*}N^r_v(g)=N(g,\v),\end{align*} where  $\v=(v_1,w_1,\dots, v_r,w_r)$ with $v_1=v\omega_0$, $v_i=\omega_0$ for $i>1$ and $w_i=\omega_0$ for all $i$. So by Proposition~\ref{count} we have
	\begin{align*} 
	N^r_v(g)=N(g,\v)= \sum_{ \Lambda \in \calQ_n  } \chi_\Lambda^\G(g)  \chi_\Lambda^\HHH (T_{v \omega_0}\ast T_{\omega_0}\ast T_{\omega_0}^{2r-2})  =\sum_{ \Lambda \in \calQ_n  } \chi_\Lambda^\G(g) \chi_\Lambda(T_v\ast T_{\omega_0}^{2r}).  
	\end{align*}
	Theorem~\ref{t:scalarvaluecentral} states that $T_{\omega_0}^2$ acts by the scalar $q^{f_\Lambda}$ in the irreducible representation corresponding to $\Lambda$, and the result follows.
\end{proof}

\subsection{Values at generic regular semisimple \texorpdfstring{$\F_q$}{Lg}-rational elements}

Here we compute our count function in the case when $v\in \T^{\reg}$, i.e., when $v$ has distinct eigenvalues.

\begin{proposition}\label{regular} Let $v \in \T^{\reg}$.  Then
	\begin{align}
	N^r_v(g) = \sum_{   \Lambda \in \calQ_n  } \frac{\chi^{\G}_\Lambda(g)}{\chi^{\G}_\Lambda (1)} 
	\pare{{q}^{r f_{\Lambda} } \frac{\overline{\chi^{\G}_\Lambda(v)} \size{\G} }{\chi^{\G}_\Lambda(1)}}
	\frac{\chi^{\G}_\Lambda(1)^{2}}{\size{\G}}= 
	\sum_{ \chi^\G_\Lambda\in \Irr \G  } \frac{\chi^{\G}_\Lambda(g)}{\chi^{\G}_\Lambda(1)}  
	\pare{{q}^{r f_{\Lambda} } \frac{\overline{\chi^{\G}_\Lambda(v)} \size{\G} }{\chi^{\G}_\Lambda(1)}} \frac{\chi^{\G}_\Lambda(1)^{2}}{\size{\G}}.
	\end{align}
	where the first sum is over the characters $\chi^{\G}_\Lambda \in \Irr \G$ defined in~\ref{s:charGL} for functions $\Lambda \in \calQ_n$, while the second sum is over all irreducible characters parametrised by $\Lambda:\Gamma\to\PP$ of size $n$. 
\end{proposition}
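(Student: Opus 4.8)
The plan is to start from the character-sum formula \eqref{e:Nrfinal},
$$N^r_v(g)=\sum_{\Lambda\in\calQ_n}\chi^\G_\Lambda(g)\,\chi^\HHH_\Lambda(T_v)\,q^{rf_\Lambda},$$
and to rewrite it once we understand $\chi^\HHH_\Lambda(T_v)$ for $v$ regular semisimple. The geometric input is that for $v\in\T^\reg$ one has $\U v\U=\U v$ (since $\T$ normalises $\U$), and that this coset lies entirely inside the $\G$-conjugacy class of $v$: every $uv$ with $u\in\U$ is upper triangular with the same pairwise-distinct diagonal entries as $v$, hence diagonalisable and $\G$-conjugate to $v$. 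Consequently, for any $g\in\G$, Lemma~\ref{traceconj} applied with $H=\U$ and $\varphi=T_v=\II_{\U v\U}$ gives
$$\sum_{\Lambda\in\calQ_n}\chi^\G_\Lambda(g)\,\chi^\HHH_\Lambda(T_v)=\tr(g\,T_v)=\frac1{|\U|}\,\bigl|\{x\in\G:\ xgx^{-1}\in\U v\}\bigr|=|C_\G(v)|\,\delta_{g\sim v},$$
the last equality because, when $g\sim v$, the fibres of $x\mapsto xgx^{-1}$ have size $|C_\G(v)|$ and all $|\U|$ elements of $\U v$ lie in the class of $v$, while for $g\not\sim v$ the set is empty. (Here I have used Remark~\ref{r:IrrGUtwists}, which identifies $\Irr(\G:\U)$ with $\{\chi^\G_\Lambda:\Lambda\in\calQ_n\}$.)

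Next I would feed in the column-orthogonality relations for $\Irr\G$, which give $|C_\G(v)|\,\delta_{g\sim v}=\sum_{\chi\in\Irr\G}\chi(g)\,\overline{\chi(v)}$, together with the vanishing statement that $\chi^\G_\Lambda(v)=0$ for $v\in\T^\reg$ whenever $\Lambda\notin\calQ_n$. This vanishing is the one point that needs external input: the $\chi^\G_\Lambda$ with $\Lambda$ supported, even partly, on Frobenius orbits of size $>1$ are, up to sign, Deligne--Lusztig characters induced from maximal tori of non-trivial type, whereas a split regular semisimple element is $\G$-conjugate only into the split torus; equivalently, the $\chi^\G_\Lambda$ with $\Lambda\in\calQ_n$ are exactly the irreducible constituents of the $\Ind_\B^\G\phi$ with $\phi\in\widehat\T$, built from unipotent characters as in \eqref{e:chiLambdaG}, and only these are non-zero on split semisimple classes. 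Hence $|C_\G(v)|\,\delta_{g\sim v}=\sum_{\Lambda\in\calQ_n}\chi^\G_\Lambda(g)\,\overline{\chi^\G_\Lambda(v)}$, and, since distinct irreducible characters of $\G$ are linearly independent, comparison with the previous display yields
$$\chi^\HHH_\Lambda(T_v)=\overline{\chi^\G_\Lambda(v)}\qquad(\Lambda\in\calQ_n,\ v\in\T^\reg).$$

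With this in hand the proposition is immediate. Substituting into \eqref{e:Nrfinal} gives $N^r_v(g)=\sum_{\Lambda\in\calQ_n}q^{rf_\Lambda}\,\overline{\chi^\G_\Lambda(v)}\,\chi^\G_\Lambda(g)$, and the trivial factorisation
$$q^{rf_\Lambda}\,\overline{\chi^\G_\Lambda(v)}\,\chi^\G_\Lambda(g)=\frac{\chi^\G_\Lambda(g)}{\chi^\G_\Lambda(1)}\left(q^{rf_\Lambda}\,\frac{\overline{\chi^\G_\Lambda(v)}\,|\G|}{\chi^\G_\Lambda(1)}\right)\frac{\chi^\G_\Lambda(1)^2}{|\G|}$$
produces the first displayed expression; the same vanishing statement (each term with $\Lambda\notin\calQ_n$ carries a factor $\overline{\chi^\G_\Lambda(v)}=0$) is precisely what allows the sum over $\calQ_n$ to be replaced by the sum over all $\Lambda:\Gamma\to\P$ of size $n$, giving the second expression.

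The step I expect to require the most care is the identity $\chi^\HHH_\Lambda(T_v)=\overline{\chi^\G_\Lambda(v)}$, specifically getting the complex conjugation right: this is sensitive to the conventions identifying $\DC_\U$ with the space of $\U$-invariants as opposed to the realisation of $V^\U$ inside $\Ind_\U^\G\1_\U$, and the orthogonality argument above has the virtue of sidestepping that bookkeeping entirely. The other point to state with care is the vanishing of the non-principal-series $\chi^\G_\Lambda$ on $\T^\reg$, as it is what underlies the passage from $\calQ_n$ to all of $\Irr\G$.
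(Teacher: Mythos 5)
Your proof is correct and follows essentially the same route as the paper: compute $\tr(gT_v)$ once via Lemma~\ref{traceconj} (giving $\sum_{\Lambda\in\calQ_n}\chi^\G_\Lambda(g)\chi^\HHH_\Lambda(T_v)$) and once directly (observing that $\U v\U = \U v$ lies entirely in the conjugacy class of $v$, so the count collapses to $|C_\G(v)|\delta_{g\sim v}$), then use column orthogonality and linear independence of irreducible characters to compare coefficients. This is precisely what the paper does, save that the paper packages the direct count through an auxiliary quantity $N^0_v(g)$.

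One point worth correcting: you describe the vanishing $\chi^\G_\Lambda(v)=0$ for $\Lambda\notin\calQ_n$, $v\in\T^\reg$, as ``the one point that needs external input,'' and you appeal to Deligne--Lusztig theory for it. That appeal is unnecessary. The linear-independence comparison that yields $\chi^\HHH_\Lambda(T_v)=\overline{\chi^\G_\Lambda(v)}$ for $\Lambda\in\calQ_n$ is exactly the same comparison that yields $\chi(v)=0$ for $\chi\in\Irr\G$ outside the $\calQ_n$-parametrized family: the left side of
\begin{align*}
\sum_{\Lambda\in\calQ_n}\chi^\G_\Lambda(g)\,\chi^\HHH_\Lambda(T_v)=\sum_{\chi\in\Irr\G}\chi(g)\,\overline{\chi(v)}
\end{align*}
has coefficient zero on every $\chi\notin\{\chi^\G_\Lambda:\Lambda\in\calQ_n\}$, so matching coefficients gives both the identification of $\chi^\HHH_\Lambda(T_v)$ \emph{and} the vanishing at once. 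This is how the paper proceeds, and it keeps the argument self-contained --- the deeper fact about which characters are supported on split regular semisimple classes comes out as a corollary rather than being assumed.
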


\begin{proof}
	For $v \in \T$ and $g \in \G$ set
	\begin{align}
	N^0_v(g):=\left| \left\{ (a, u) \in \G \times \U  \, \big| \, aga^{-1} = v u \right\} \right| \big/|\U|.
	\end{align}
	Then similarly as in the proof of Proposition~\ref{count} we compute
	\begin{align*}
	N^0_v(g) & = \frac{1}{|\U|^2}\sum_{h\in \U} \left| \left\{ (a, u) \in \G \times \U \, | \, aga^{-1} = hv u \right\} \right| = \frac{1}{|\U|^2} \sum_{a\in \G} \left( \II_\U \ast_\G \II_{v\U} \right) (aga^{-1})\\ 
	&=\frac{1}{|\U|} \sum_{a\in \G} \II_{\U v\U}(aga^{-1}) = 
	\tr(gT_v)= \sum_{ \Lambda\in \calQ_n }
	\chi^{\HHH}_\Lambda(v) \chi_\Lambda^{\G}(g),
	\end{align*}
	using that $\U \cap v\U v^{-1}=\U$.
	
	As $v$ has different eigenvalues every matrix in the coset $v\U$ is conjugate to $v$ and hence
	\begin{align}
	\left| \left\{ (a, u) \in \G \times \U \, | \, aga^{-1} = v u \right\} \right| = \size{\U} \left| \left\{ a \in \G \, | \, aga^{-1} = v \right\} \right| = |\U||C_\G(v)|
	\end{align}
	from which
	\begin{align}
	\sum_{\Lambda \in \calQ_n  } \chi_\Lambda^{\HHH} (v) \chi_{\Lambda}^{\G}(g) = \sum_{\chi^{\G} \in \Irr \G} \overline{\chi^{\G} (v)} {\chi^{\G} (g)}.
	\end{align}
	Since this is true for every $g \in \G$ we conclude
	\begin{align}
	\chi^{\G} (v) =
	\begin{cases}
	\overline{\chi^{\HHH}_\Lambda (v)} & \text{if $\chi^{\G} = \chi^{\G}_{\Lambda},$ for some $ \Lambda\in\calQ_n $ }
	\\
	0 &\text{otherwise.}  
	\end{cases}
	\end{align}
	Now the first equation in Proposition~\ref{regular} follows from Theorem~\ref{count}. The second equation follows as $\chi^{\G}_\Lambda(v) =0$ unless $\Lambda\in \calQ_n$.
\end{proof}


\subsection{Counting formulas for wild character varieties}

For $i=1\dots k$ let $\calC_i\subset \G$ be a semisimple conjugacy class with eigenvalues in $\F_q$.  As usual, $\II_{\calC_i}:\G\to \C$ will denote its characteristic function. Fix $g\in \Z_{\geq 0}$ and define $D : \G\to \C$ by $D(g)=|\mu^{-1}(g)|$, where $\mu:\G\times \G\to \G$ is given by $\mu(g,h)=g^{-1}h^{-1}gh$. Finally let $m\in \Z_{\geq 0}$,  fix $$\r=(r_1,\dots,r_m)\in \Z^m_{>0}$$ and for each $j=1\dots m$ we fix $v_i\in \T^{\reg}$ and consider the count function $N_{v_i}^{r_i}:\G\to \C$. 

With this notation we have the following

\begin{proposition}\label{countwild} 
	\begin{align*} 
	D^{\ast_\G g}\ast_\G\II_{\calC_1}\dots \ast_\G \II_{\calC_m} & \ast_\G  N_{v_1}^{r_1}\ast_\G\dots \ast_\G N_{v_k}^{r_k} (1) \\
	& = \sum_{ \chi\in \Irr \G} \pare{\frac{\size{\G}}{\chi(1)}}^{2g} \prod_{j=1}^{m}\pare{ \frac{\overline{\chi(C_j)} \size{C_j} }{\chi(1)}} \prod_{i=1}^{k}\pare{{q}^{r_i f_{\chi} } \frac{\overline{\chi(v_i)} \size{\G} }{\chi(1)}} \frac{\chi(1)^{2}}{\size{\G}}.
	\end{align*}
\end{proposition}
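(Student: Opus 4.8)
The plan is to feed the convolution on the left into the arithmetic Fourier transform of Section~\ref{aha}. Write
\[
h:=D^{\ast_\G g}\ast_\G\II_{\calC_1}\ast_\G\cdots\ast_\G\II_{\calC_m}\ast_\G N^{r_1}_{v_1}\ast_\G\cdots\ast_\G N^{r_k}_{v_k}.
\]
Each factor is a class function on $\G$: this is clear for $D$ and the $\II_{\calC_j}$, while $N^{r_i}_{v_i}$ is one either because it is the fibre‑counting function $|\mu^{-1}(-)|$ of a conjugation‑equivariant map $\mu$ (Section~\ref{setteofusion}) or because Proposition~\ref{regular} displays it as a combination of irreducible characters. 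Since $C(\G_\bullet)$ is a subalgebra of $\C[\G]$ for $\ast_\G$, $h$ is a class function; applying the multiplicativity \eqref{FourierMult} of $\FF_\bullet$ and the inversion $\FF^\bullet\circ\FF_\bullet=|\G|\cdot\id$, and evaluating at $1$ (using $\overline{\chi(1)}=\chi(1)$), we obtain the master identity
\[
h(1)=\frac1{|\G|}\sum_{\chi\in\Irr\G}\chi(1)^2\,\FF_\bullet(D)(\chi)^{g}\prod_{j=1}^m\FF_\bullet(\II_{\calC_j})(\chi)\prod_{i=1}^k\FF_\bullet(N^{r_i}_{v_i})(\chi),
\]
so everything comes down to the three Fourier transforms on the right.

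For the conjugacy classes, $\FF_\bullet(\II_{\calC_j})(\chi)=\sum_{x\in\calC_j}\chi(x)/\chi(1)=|\calC_j|\,\chi(\calC_j)/\chi(1)$. For the commutator factor, $\FF_\bullet(D)(\chi)=\frac1{\chi(1)}\sum_{(a,b)\in\G^2}\chi(a^{-1}b^{-1}ab)$; the Schur‑averaging identity $\sum_a\rho(a)^{-1}M\rho(a)=\tfrac{|\G|}{\chi(1)}(\tr M)\,\mathrm{Id}$ with $M=\rho(b)^{-1}$ gives $\sum_a\chi(a^{-1}b^{-1}ab)=\tfrac{|\G|}{\chi(1)}|\chi(b)|^2$, and summing over $b$ with $\sum_b|\chi(b)|^2=|\G|$ yields $\FF_\bullet(D)(\chi)=\bigl(|\G|/\chi(1)\bigr)^2$, hence $\FF_\bullet(D^{\ast_\G g})(\chi)=\bigl(|\G|/\chi(1)\bigr)^{2g}$. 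For the higher‑order poles, Proposition~\ref{regular} gives $N^{r_i}_{v_i}(g)=\sum_{\psi\in\Irr\G}q^{r_if_\psi}\,\overline{\psi(v_i)}\,\psi(g)$ (with $\overline{\psi(v_i)}=0$ unless $\psi=\chi_\Lambda$ for some $\Lambda\in\calQ_n$, as $v_i$ is regular); combining this with $\sum_g\psi(g)\chi(g)=|\G|\,\delta_{\psi,\bar\chi}$ — the trivial character occurs in $\psi\otimes\chi$ exactly when $\chi$ is contragredient to $\psi$ — produces $\FF_\bullet(N^{r_i}_{v_i})(\chi)=\frac{|\G|}{\chi(1)}\,q^{r_if_{\bar\chi}}\,\chi(v_i)$.

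Substituting these into the master identity and rearranging the powers of $|\G|$ and $\chi(1)$ gives
\[
h(1)=\sum_{\chi\in\Irr\G}\left(\frac{|\G|}{\chi(1)}\right)^{2g}\prod_{j=1}^m\frac{\chi(\calC_j)\,|\calC_j|}{\chi(1)}\prod_{i=1}^k\left(q^{r_if_{\bar\chi}}\,\frac{\chi(v_i)\,|\G|}{\chi(1)}\right)\frac{\chi(1)^2}{|\G|},
\]
the only contributing terms being $\chi=\chi_\Lambda$, $\Lambda\in\calQ_n$. To put this in the asserted form I would reindex by the involution $\chi\mapsto\bar\chi$ of $\Irr\G$: it fixes $\chi(1)$ and $|\calC_j|$, carries $\{\chi_\Lambda\}$ to itself with $\overline{\chi_\Lambda}=\chi_{\bar\Lambda}$, replaces $\chi(\calC_j)$ and $\chi(v_i)$ by their complex conjugates, and turns the exponent $f_{\bar\chi}$ into $f_\chi$; what results is exactly the claimed identity (with $\overline{\chi(C_j)}=\overline{\chi(\calC_j)}$, $|C_j|=|\calC_j|$ and $f_\chi=f_\Lambda$). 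Alternatively, Lemma~\ref{r:flambdavalue} shows directly that $f_{\bar\Lambda}=f_\Lambda$.

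There is no serious obstacle here: all the substance is already contained in Proposition~\ref{regular}, which packages the wild Stokes contribution $q^{rf_\Lambda}$ coming ultimately from Theorem~\ref{t:scalarvaluecentral}, and the rest is bookkeeping. The only points requiring a little care are keeping the complex conjugates aligned with the Fourier conventions of Section~\ref{aha} — in particular noting that the reindexing step exchanges $\chi$ with $\bar\chi$ without disturbing the $f$‑exponents — and checking that each of the three block functions really is a class function so that $\FF_\bullet$ may be applied.
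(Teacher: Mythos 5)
Your proof is correct and follows essentially the same route as the paper: express each of $\II_{\calC_j}$, $D$ and $N^{r_i}_{v_i}$ in the form of a character expansion and then apply the multiplicativity of the arithmetic Fourier transform of Section~\ref{aha} (the paper cites \cite[Lemma 3.1.3]{hausel-aha1} for $\FF_\bullet(D)$ where you re-derive it via Schur averaging, and handles the $\chi\mapsto\bar\chi$ reindexing implicitly where you do it explicitly, but these are bookkeeping differences, not differences of method).
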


\begin{proof}
	Recall that for a conjugacy class $C$ we have the characteristic function
	\begin{align}\label{conjcount}
	1_C(g) = \frac{\size{C}}{\size{\G}}\sum_{\chi \in \Irr \G} \chi(g)\overline{\chi(C)} =
	\sum_{\chi \in \Irr \G} \frac{\chi(g)}{\chi(1)}  \pare{\frac{\overline{\chi(C)} \size{C} }{\chi(1)}} \frac{\chi(1)^{2}}{\size{\G}}.
	\end{align}
	By \cite[Lemma 3.1.3]{hausel-aha1} we have that \beq\label{doublecount} D(g)=\sum_{\chi\in \Irr \G} \frac{\chi(g)}{\chi(1)}\left(\frac{|\G|}{\chi(1)}\right)^2\frac{\chi(1)^2}{|\G|}.
	\eeq 
	Combining Proposition~\ref{countwild}, \eqref{conjcount} and \eqref{doublecount} with the usual arithmetic harmonic analysis of \S~\ref{aha} we get the theorem. 
\end{proof}

With this we have our final count formula:

\begin{theorem}\label{finalcount} With notation as above let $(\calC_1,\dots,\calC_m)$ be of type $\muhat=(\mu_1,\dots,\mu_m)\in \calP_n^m$. Let $(\calC_1,\dots,\calC_m,v_1,\dots,v_k)$ be generic and $\tilde{\muhat}=(\mu_1,\dots,\mu_m,(1^n),\dots,(1^n))$ be its type. Finally denote $r=r_1+\dots+r_k$.  Then
	\bes \frac{q-1}{|\G||\T|^k} D^{\ast_\G g}\ast_\G\II_{\calC_1}\dots \ast_\G \II_{\calC_m}\ast_\G  N_{v_1}^{r_1}\ast_\G\dots \ast_\G N_{v_k}^{r_k} (1) = q^{d_{\tilde{\muhat},r}} \H_{\tilde{\muhat},r}(q^{-1/2},q^{1/2})
	\ees
\end{theorem}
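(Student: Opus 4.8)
The plan is to combine the character-sum formula of Proposition~\ref{countwild} with the ``Cauchy identity'' manipulation that is by now standard in this circle of ideas (\cite{hausel-villegas,hausel-aha1,hausel-aha2}). First I would rewrite the left-hand side using Proposition~\ref{countwild}, obtaining a single sum over $\chi\in\Irr\G$ of a product of the shape $\bigl(|\G|/\chi(1)\bigr)^{2g}\prod_j \bigl(\overline{\chi(\calC_j)}|\calC_j|/\chi(1)\bigr)\prod_i\bigl(q^{r_if_\chi}\overline{\chi(v_i)}|\G|/\chi(1)\bigr)\,\chi(1)^2/|\G|$. The normalization factor $\tfrac{q-1}{|\G||\T|^k}$ is exactly what is needed to turn this into the ``shape'' appearing in the generating function \eqref{hmur}: one recognizes $\overline{\chi_\Lambda^\G(v_i)}/\chi_\Lambda^\G(1)$ as (up to sign) the value picked out by a regular semisimple class, and $q^{r_if_\Lambda}$ gets absorbed into the $\calH^{g,r}_\lambda$-factor via the formula $f_\Lambda=\binom n2+n(\Lambda')-n(\Lambda)$ of Lemma~\ref{r:flambdavalue}. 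Here the key input from Proposition~\ref{regular} is that only the irreducible characters $\chi^\G_\Lambda$ with $\Lambda\in\calQ_n$ contribute, since $\chi^\G(v_i)=0$ otherwise; this restricts the sum precisely to those $\chi^\G_\Lambda$ indexed by $\calQ_n$, which are the ones with all partitions but one trivial, matching $\tilde\muhat$.

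Next I would invoke Green's formula for the character values $\chi^\G_\Lambda$ at semisimple elements, together with the degree formula \eqref{e:degree}, to express each summand in terms of hook polynomials $H_{\Lambda(\gamma)}(q^{|\gamma|})$ and the quantities $n(\Lambda)$, $n(\Lambda')$. At this point the sum over $\Lambda\in\calQ_n$, with its dependence on the conjugacy classes $\calC_j$ and the $v_i$ through $\overline{\chi^\G_\Lambda(\calC_j)}$, becomes a sum of products of symmetric functions evaluated at the eigenvalue variables. Using the genericity hypothesis — which is exactly what kills all ``cross terms'' in the $\Log$ and leaves only the connected contribution, as in \cite[\S4]{hausel-aha1} — one matches the resulting expression against the coefficient extraction in \eqref{hmur}: the product $\prod_i m_{\mu_i}(x_i)$ on the left of \eqref{hmur} corresponds to fixing the types $\mu^j$, the trivial partitions $(1^n)$ at the $m$ extra punctures account for the $v_i$ via $\tilde H_{(1^n)}$ specializing appropriately, and the $\calH^{g,r}_\lambda$-factors carry the $(2g+\text{pole})$-data including the $q^{rf_\Lambda}$ twist. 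The $E$-polynomial / weight-polynomial passage (Theorem~\ref{katz}, \cite{hausel-villegas}) then identifies the count with $WH(\M_\B^{\muhat,\r};q,-1)$, hence with $q^{d_{\tilde\muhat,r}}\H_{\tilde\muhat,r}(q^{-1/2},q^{1/2})$ after the specialization $(z,w)=(q^{-1/2},q^{1/2})$ of \eqref{hmur}; one checks the power of $q$ matches $d_{\tilde\muhat,r}$ using the dimension formula \eqref{dimensionformula} and $f_\Lambda=\binom n2+n(\Lambda')-n(\Lambda)$.

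The main obstacle I expect is the bookkeeping of the specialization and the sign/twist factors: matching $q^{rf_\Lambda}=q^{r(\binom n2+n(\Lambda')-n(\Lambda))}$ against the $(-z^{2a}w^{2l})^r$ factor in \eqref{calhgr}, and verifying that the $(-1)^{r|\muhat|}$ and $w^{-d_{\muhat,r}}$ prefactors in \eqref{hmur} come out correctly after the $w=q^{1/2}$, $z=q^{-1/2}$ substitution, requires careful tracking of which partition ($\Lambda$ or $\tilde\Lambda=\Lambda'$ at odd characters) appears where — this is precisely the place where Theorem~\ref{p:expbijection} and Remark~\ref{r:interestingvalues}, relating $\chi^\rN_{\tilde\Lambda}$ to $\chi^\G_\Lambda$, are essential. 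A secondary point is confirming that the higher-order pole contributions $N^{r_i}_{v_i}$ enter multiplicatively in the Fourier transform, i.e.\ that the convolution structure of Section~\ref{setteofusion} genuinely factors the wild character variety as a fusion product; this is guaranteed by the quasi-Hamiltonian description in Section~\ref{ss:WChVar} but should be spelled out. Everything else is a matter of assembling Propositions~\ref{countwild} and \ref{regular}, Lemma~\ref{r:flambdavalue}, Theorem~\ref{t:scalarvaluecentral}, and the known $r=0$ case from \cite{hausel-aha1}.
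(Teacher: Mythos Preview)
Your overall strategy is the right one and tracks the paper's proof closely: apply Proposition~\ref{countwild}, absorb the $v_i$-contributions as extra regular semisimple conjugacy classes $\calC_{m+i}:=\G v_i$ (using $|\calC(v_i)|=|\G|/|\T|$), and reduce to the symmetric-function computation of \cite[Theorem~5.2.3]{hausel-aha1}, the only new ingredient being the factor $q^{rf_\chi}$, whose compatibility with $\Log$ follows from the explicit formula $f_\Lambda=\binom n2+n(\Lambda')-n(\Lambda)$ of Lemma~\ref{r:flambdavalue}.

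There is, however, a logical misstep in your second paragraph. You write that ``the $E$-polynomial / weight-polynomial passage (Theorem~\ref{katz}) then identifies the count with $WH(\M_\B^{\muhat,\r};q,-1)$, hence with $q^{d_{\tilde\muhat,r}}\H_{\tilde\muhat,r}(q^{-1/2},q^{1/2})$.'' This is circular: Theorem~\ref{finalcount} is a purely representation-theoretic identity (a character sum on the left, a symmetric-function coefficient on the right) and must be proved as such. It is Theorem~\ref{finalcount} that feeds into the proof of Theorem~\ref{maint}, where Katz's theorem is invoked after one has shown the count is polynomial; you cannot run this implication in reverse. In fact, once you have matched the character sum against the coefficient extraction in \eqref{hmur}---which you correctly describe just before invoking Katz---you are already done, since that coefficient \emph{is} $q^{d_{\tilde\muhat,r}}\H_{\tilde\muhat,r}(q^{-1/2},q^{1/2})$ by definition. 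Drop the Katz step entirely from this argument.

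Two smaller points. First, the appeal to Theorem~\ref{p:expbijection} and Remark~\ref{r:interestingvalues} is not needed directly here: those results are already baked into the value of $f_\Lambda$ via Theorem~\ref{t:scalarvaluecentral} and Lemma~\ref{r:flambdavalue}, and the proof of Theorem~\ref{finalcount} only uses $f_\Lambda$ as a black box. Second, watch the index swap: in this section there are $k$ wild punctures $v_1,\dots,v_k$ and $m$ tame classes $\calC_1,\dots,\calC_m$, so the extra $(1^n)$-partitions in $\tilde\muhat$ are $k$ in number, not $m$.
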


Here, ``generic'' is in the sense of Definition \ref{genericconjugacyA}, which has an obvious analogue over an arbitrary field.

\begin{proof} Denote $\calC_{m+i}=\calC(v_i)$ then $|\calC(v_i)|=\frac{|\G|}{|\T|}$ as $v_i\in \T^{\reg}$. Furthermore let $r=r_1+\dots +r_k$.  This way we get 
	\begin{multline} \label{nosign} \frac{q-1}{|\G||\T|^k} D^{\ast_\G g}\ast_\G\II_{\calC_1}\dots \ast_\G \II_{\calC_m}\ast_\G  N_{v_1}^{r_1}\ast_\G\dots \ast_\G N_{v_k}^{r_k} (1) \\ 
	= \frac{q-1}{|\G|}\sum_{ \chi\in \Irr \G} \pare{\frac{\size{\G}}{\chi(1)}}^{2g} \prod_{j=1}^{m}\pare{ \frac{\overline{\chi(C_j)} \size{C_j} }{\chi(1)}} \prod_{i=1}^{k}\pare{{q}^{r_i f_{\chi} } \frac{\overline{\chi(v_i)} \size{\G} }{\chi(1)}|\T|} \frac{\chi(1)^{2}}{\size{\G}} \\ 
	= \frac{q-1}{|\G|}\sum_{ \chi\in \Irr \G} q^{rf_\chi} \pare{\frac{\size{\G}}{\chi(1)}}^{2g} \prod_{j=1}^{m+k}\pare{ \frac{\overline{\chi(C_j)} \size{C_j} }{\chi(1)}} \frac{\chi(1)^{2}}{\size{\G}}.
	\end{multline}
	As $(\calC_1,\dots,\calC_m,\calC_{m+1},\dots,\calC_{m+k})$ is assumed to be generic, we can compute exactly as in \cite[Theorem 5.2.3]{hausel-aha1}. The only slight difference is the appearance of $q^{rf_\chi}$.  We observe that the quantity $f_\Lambda$ in \eqref{e:formula} behaves well with respect to taking Log and the same computation as in \cite[Theorem 5.2.3]{hausel-aha1} will give Theorem~\ref{finalcount}.
\end{proof}

\begin{remark}
	In the definition of $\H_{\tilde{\muhat},r}(z,w)$ in \eqref{hmur} we have a sign $(-1)^{rn}$ in the LHS and a sign $(-1)^r$ in the RHS in \eqref{calhgr}. As $\tilde{\muhat}$ contains the partition $(1^n)$ one will not have to compute the plethystic part of the $\Log$ function to get  $\H_{\tilde{\muhat},r}(z,w)$ and in this case  the signs on the two sides will cancel. That is why we do not see the sign in \eqref{nosign} in front of $q^{rf_\chi}$. 
\end{remark}

	\section{Main theorem and conjecture}
	\label{mains}
\subsection{Weight polynomial of wild character varieties}

Let $\muhat\in \P_n^k$ and $\r\in \Z_{>0}^m$. Let $\M_\B^{\muhat,\r}$ be the generic complex wild character variety defined in \eqref{wilddef}. Here we prove our main Theorem~\ref{maint}.

\begin{proof}[Proof of Theorem~\ref{maint}] The strategy of the proof is as follows. First we construct a finitely generated ring $R$ over $\Z$, which will have the parameters corresponding to the eigenvalues of our matrices. Then we construct a spreading out of $\M_\B^{\muhat,\r}$ over $R$. We finish by counting points over $\F_q$ for the spreading out, find that it is a polynomial in $q$ and deduce that it is the weight polynomial of $\M_\B^{\muhat,\r}$ by \cite[Appendix A]{hausel-villegas}.
	
	As in \cite[Appendix A]{hausel-aha1} first construct $R$ the finitely generated ring of generic eigenvalues of type $\tilde{\muhat}$, where  $$\tilde{\muhat}=(\mu^1,\dots,\mu^k,(1^n),\dots,(1^n))\in \calP_n^{k+m}.$$  In particular, we have variables $\{a_j^i\}\in R$ for $i=1, \dots, k+m$ and $j=1, \dots, l(\mu^i)$ representing the eigenvalues of our matrices.  They are already generic in the sense that they satisfy the non-equalities of $a^i_{j_1}\neq a_{j_2}^i$ when $0<j_1<j_2\leq l(\tilde{\mu}^i)$ and the ones in \eqref{noneq}. 
	
	
	Generalising  \cite[Appendix A]{hausel-aha1} we consider the algebra $\calA_0$ over $R$ of polynomials in $n^2(2g+k+m)+r(n^2-n)$ variables, corresponding to the entries of $n\times n$ matrices
	$$A_1,\ldots,A_g;B_1,\ldots, B_g;X_1,\ldots,X_k,C_1,\ldots,C_m$$ and upper triangular matrices $S_{2j-1}^{i}$ and lower triangular matrices $S_{2j}^{i}$ with $1$ on the main diagonal for $i=1\dots m$ and $j=1\dots r_i$ such that
	$$
	\det A_1,\ldots,\det A_k;\quad \det B_1,\ldots,\det B_k; \quad \det X_1,\ldots,\det X_k
	$$
	are inverted. 
	
	Let $I_n$ be the $n \times n$ identity matrix, let $\xi_i$ be the diagonal matrix with diagonal elements $a_1^{i+k},\dots, a_n^{i+k}$ for $i=1, \dots, m$.  Finally, for elements $A,B$ of a group, put $(A,B):=ABA^{-1}B^{-1}$.  Define $\calI_0\leeqrad \calA_0$ to be the radical of the ideal generated by the entries of
	\begin{align*}
	&(A_1,B_1)\cdots (A_g,B_g) X_1\cdots X_k C_1^{-1}\xi_1S^1_{2r_1}\cdots S^1_1C_1\cdots C_m^{-1}\xi_mS^m_{2r_m}\cdots S^m_1C_m-I_n, \\
	& (X_i-a_1^iI_n)\cdots(X_i-a_{r_i}^iI_n), \quad i=1,\ldots,k
	\end{align*}
	and the coefficients of the polynomial
	$$
	\det(tI_n-X_i)-\prod_{j=1}^{r_i}(t-a_j^i)^{\mu_j^i}
	$$
	in an auxiliary variable $t$. Finally, let $\calA:=\calA_0/\calI_0$ and $\pazU_{\muhat,\r}:=\Spec(\calA)$ an affine $R$-scheme.
	
	Let  $\phi:R \rightarrow \K$ be a map  to a field $\K$ and let $\pazU_{\muhat,\r}^\phi$ be the corresponding base change of $\pazU_{\muhat,\r}$ to $\K$. A $\K$-point of $\pazU_{\muhat,\r}^\phi$ is a solution in $\GL_n(\K)$ to
	\beq \label{phieq}
	(A_1,B_1)\cdots(A_g,B_g) X_1\cdots X_k C_1^{-1}\xi^\phi_1S^1_{2r_1}\cdots S^1_1C_1\cdots C_m^{-1}\xi^\phi_mS^m_{2r_m}\cdots S^m_1C_m=I_n,
	\eeq
	where $X_i\in \calC_i^\phi$ and $\calC_i^\phi$  is the semisimple conjugacy class in $\GL_n(\K)$ with eigenvalues $$\phi(a^i_1),\ldots,\phi(a_{r_i}^i)$$ of multiplicities $\mu^i_1,\ldots,\mu^i_{r_i}$ and $\xi_i^\phi\in T^\reg(\K)$ is a diagonal matrix with diagonal entries $$\phi(a^{k+i}_1),\dots,\phi(a^{k+i}_n).$$ By construction $(\calC_1^\phi,\dots,\calC_k^\phi,\xi_1^\phi,\dots,\xi_m^\phi)$ is generic.
	
	Finally $\calG=\GL_n\times \T^k$ acts on $\pazU_{\muhat,\r}$ via the formulae \eqref{actionR}. We take  $$\pazM_{\muhat,\r}=\Spec(\calA^{\calG(R)})$$ the affine quotient of $\pazU_{\muhat,\r}$ by $\calG(R)$. Then for $\phi:R\to \C$ the complex variety $\pazM_{\muhat,\r}^\phi$ agrees with our $\M_\B^{\muhat,\r}$ thus $\pazM_{\muhat,\r}$ is its spreading out. 
	
	We need the following
	
	\begin{proposition} Let $\phi:R\to \K$ a homomorphism to a field $\K$. Then if $A_i,B_i,X_j,C_\alpha\in \GL_n(\K)$, a solution to \eqref{phieq} representing a $\K$-point in $\calU^\phi_{\muhat,\r}$ is stabilized by \[(y,x_1,\dots,x_m)\in \calG^\phi=\calG\otimes_\phi \K=\GL_n(\K)\otimes T(\K)^m\] then \[y=x_1=\dots=x_m\in Z(\GL_n(\K))\] is a scalar matrix. Equivalently, if $D=\{\lambda I_n,\dots,\lambda I_n\}\leeq \calG^\phi$ is the corresponding subgroup then $\overline{\calG}^\phi:=\calG^\phi/D$ acts set-theoretically freely on $\calU^\phi_{\muhat,\r}$.
	\end{proposition}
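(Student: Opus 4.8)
The plan is to run the wild analogue of the freeness argument of \cite[Appendix A]{hausel-aha1}, the only genuinely new point being that each higher-order-pole factor can be replaced, for the purposes of invariant subspaces and determinants, by its semisimple part $C_\alpha^{-1}\xi_\alpha^\phi C_\alpha$; the Stokes matrices $S_p^\alpha$ are then invisible because $x_\alpha$ is \emph{diagonal}.

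First I would translate the stabilizer hypothesis using \eqref{actionR}: if $(y,x_1,\dots,x_m)\in\calG^\phi$ fixes a solution $(A_i,B_i,X_j,C_\alpha,S^\alpha,t_\alpha)$ of \eqref{phieq}, then $y$ commutes with every $A_i$, $B_i$ and $X_j$; one has $x_\alpha=C_\alpha y C_\alpha^{-1}$ for each $\alpha$; and $x_\alpha$ commutes with each $S_p^\alpha$, $1\le p\le 2r_\alpha$. In particular $x_\alpha\in T(\K)$ is a diagonal matrix conjugate to $y$. Next I would extract an invariant subspace: pick an eigenvalue $\mu\in\overline{\K}$ of $y$ and set $V:=\ker(y-\mu I_n)\subseteq\overline{\K}^n$, a nonzero subspace. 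Since $y$ centralizes the $A_i,B_i,X_j$, each of these preserves $V$ (invertibly). For the wild factors, $C_\alpha V=\ker(x_\alpha-\mu I_n)$ is a coordinate subspace because $x_\alpha$ is diagonal; as $\xi_\alpha^\phi$ is diagonal and each $S_p^\alpha$ commutes with $x_\alpha$ (hence preserves its eigenspaces), the coordinate subspace $C_\alpha V$ is preserved by $\xi_\alpha^\phi$ and by every $S_p^\alpha$. Therefore $V$ is preserved by the full factor $C_\alpha^{-1}\xi_\alpha^\phi S_{2r_\alpha}^\alpha\cdots S_1^\alpha C_\alpha$, and also by its semisimple part $C_\alpha^{-1}\xi_\alpha^\phi C_\alpha\in\G\xi_\alpha^\phi$; moreover these two operators have the same determinant on $V$ since the $S_p^\alpha$ are unipotent.

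Then I would invoke genericity. Restricting \eqref{phieq} to the invariant subspace $V$ yields $\mathrm{id}_V$; taking determinants, and using that each commutator $(A_i,B_i)$ restricts to an element of determinant one, gives
\[
\prod_{j=1}^k\det(X_j|_V)\cdot\prod_{\alpha=1}^m\det\big((C_\alpha^{-1}\xi_\alpha^\phi C_\alpha)|_V\big)=1 .
\]
Each factor here is the product of a sub-multiset, of common cardinality $d:=\dim V$, of the eigenvalue multiset of a semisimple matrix lying in one of the classes $\calC_j^\phi$ or $\G\xi_\alpha^\phi$ (using that $V$, being stable under a semisimple operator, splits into intersections with its eigenspaces). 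This is precisely a relation of the shape forbidden by the non-equalities \eqref{noneq} defining genericity of the type $\tilde{\muhat}$ — and these non-equalities hold in $R$ by construction — unless $d=0$ or $d=n$. Since $V\neq 0$ we get $d=n$, i.e.\ $y=\mu I_n$ with $\mu\in\K^\times$; then $x_\alpha=C_\alpha y C_\alpha^{-1}=\mu I_n$ for each $\alpha$, so $y=x_1=\cdots=x_m\in Z(\GL_n(\K))$. Conversely $D$ visibly acts trivially, so the stabilizer of every point of $\calU_{\muhat,\r}^\phi$ equals $D$ and $\overline{\calG}^\phi=\calG^\phi/D$ acts set-theoretically freely.

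The step that carries the real weight — and the only one that differs from the tame case of \cite{hausel-aha1} — is the wild reduction in the second paragraph: one must use that $x_\alpha$ is \emph{diagonal} (not merely conjugate to $y$) so that both the invariance of $V$ and the determinant on $V$ of the higher-pole factor are governed entirely by the semisimple part $C_\alpha^{-1}\xi_\alpha^\phi C_\alpha$. Everything else — the eigenspace bookkeeping, the passage to $\overline{\K}$, and the translation of genericity into the non-equalities \eqref{noneq} over an arbitrary field — is routine.
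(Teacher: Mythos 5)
Your proof is correct and follows essentially the same route as the paper's: translate the stabilizer condition, extract an eigenspace $V$ of $y$, show all factors in \eqref{phieq} preserve $V$ with the Stokes matrices contributing determinant $1$, and invoke genericity. The only cosmetic difference is that you pass to $\overline{\K}$ to pick an eigenvalue, whereas the paper observes that $y$ is split semisimple over $\K$ (being conjugate to the diagonal $x_\alpha$) and so works directly over $\K$; both handle the determinant bookkeeping the same way.
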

	
	\begin{proof} By assumption \beq\label{calphafix} x_\alpha C_\alpha y^{-1}= C_\alpha\eeq thus the matrices $y,x_1,\dots,x_m$ are all conjugate and split semisimple. Let $\lambda\in \K$ be one of their  eigenvalues and $V_\lambda < \K^n$ be the $\lambda$-eigenspace of $y$ then by \eqref{calphafix} $C_\alpha(V_\alpha)\leeqvs \K^n$ is the $\lambda$-eigenspace of $x_\alpha$. As $y$ commutes with all of $A_i,B_i,X_j$ we see that they leave $V_\lambda$ invariant. While $x_\alpha$ commutes with $S^\alpha_i$ and  $\xi_\alpha$ thus they leave $C_\alpha(V_\lambda)$ invariant  or equivalently $C_\alpha^{-1}S^\alpha_i C_\alpha$ and $C_\alpha^{-1} \xi_\alpha C_\alpha$ leave $V_\lambda$ invariant. As $S^\alpha_i$ is unipotent $$\det(C_\alpha^{-1}S^\alpha_i C_\alpha|_{V_\lambda})=1$$ and the determinant of the  equation \eqref{phieq} restricted to $V_\alpha$ gives $$\prod_{i=1}^k \det(X_i|_{V_\lambda})\prod_{\alpha=1}^m\det(\xi_\alpha|_{V_\lambda})=1.$$
		By assumption  $(\calC_1^\phi,\dots,\calC_k^\phi,\xi_1^\phi,\dots,\xi_m^\phi)$ is generic.   Thus, we get from \eqref{prod-conditionA} that $V_\lambda = \K^n$.
	\end{proof}
	
	Let now $\K=\F_q$ a finite field and assume we have $\phi:R\to \F_q$.  Because $\overline{\calG}$ is connected and $\overline{\calG}(\K)$ acts freely on $\pazU^\phi_{\muhat,\r}$ we have by similar arguments as in \cite[Theorem 2.1.5]{hausel-aha1}, \cite[Corollaries 2.2.7, 2.2.8]{hausel-villegas} and by Theorem~\ref{finalcount} that
	$$\#\pazM_{\muhat,\r}^\phi(\F_q)=\frac{\#\pazU^\phi_{\muhat,\r}}{\#\overline{G}(\F_q)}=q^{d_{\tilde{\muhat},r}} \H_{\tilde{\muhat},r}(q^{-1/2},q^{1/2}).$$
	As by construction $\H_{\tilde{\muhat},r}(q^{-1/2},q^{1/2})\in \Q(q)$ and $\#\pazM_{\muhat,\r}^\phi(\F_q)$ is an integer for all prime power $q$ we get that  $\#\pazM_{\muhat,\r}^\phi(\F_q)\in \Q[q]$.  Katz's Theorem~\ref{katz} applies finishing the proof.
\end{proof}

We have the following immediate
\begin{corollary} \label{palindromic}The weight polynomial of $\M_\B^{\muhat,\r}$ is palindromic: $$WH(\M_\B^{\muhat,\r};q,-1)=q^{d_{\muhat,\r}}WH(\M_\B^{\muhat,\r};1/q,-1).$$
\end{corollary}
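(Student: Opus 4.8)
The plan is to derive the palindromicity of the weight polynomial directly from the explicit formula obtained in Theorem~\ref{maint}, namely
$WH(\M_\B^{\muhat,\r};q,-1)=q^{d_{\muhat,\r}}\H_{\tilde{\muhat},r}(q^{-1/2},q^{1/2})$,
together with a symmetry of the rational function $\H_{\tilde{\muhat},r}$ under the substitution $z\mapsto z^{-1}$, $w\mapsto w^{-1}$. So the whole statement reduces to a functional equation for $\H_{\tilde{\muhat},r}(z,w)$, which in turn follows from a functional equation for the building blocks $\calH_\lambda^{g,r}(z,w)$ of \eqref{calhgr} together with the known behaviour of the modified Macdonald polynomials $\tilde H_\lambda(q,t;x_i)$ and of the plethystic $\Log$.

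First I would record the key observation: replacing $q$ by $1/q$ sends $q^{-1/2}\mapsto q^{1/2}$ and $q^{1/2}\mapsto q^{-1/2}$, so the corollary is exactly the assertion $\H_{\tilde{\muhat},r}(q^{-1/2},q^{1/2}) = q^{-d_{\muhat,\r}}\H_{\tilde{\muhat},r}(q^{1/2},q^{-1/2})$, i.e.\ that $\H_{\tilde{\muhat},r}(z,w)$ is, up to an explicit power of $zw$ (or of $z/w$), invariant under $(z,w)\mapsto(z^{-1},w^{-1})$. I would verify this by inspecting the defining generating function \eqref{hmur}. On the right-hand side, $\calH_\lambda^{g,r}(z,w)$ from \eqref{calhgr}: in the factor $\prod_{\square\in\lambda}\frac{(-z^{2a}w^{2l})^r(z^{2a+1}-w^{2l+1})^{2g}}{(z^{2a+2}-w^{2l})(z^{2a}-w^{2l+2})}$ one checks box-by-box that swapping $(z,w)\mapsto(z^{-1},w^{-1})$ multiplies the $\square$-th factor by a monomial in $z,w$ whose total contribution over all boxes of $\lambda$ is an explicit monomial; summing the arm/leg exponents over the Young diagram and using $\sum_\square(a(\square)+l(\square)+1) = n(\lambda)+n(\lambda')+|\lambda|$ and similar identities gives the precise power. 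The twisted Macdonald polynomials satisfy the well-known symmetry $\tilde H_\lambda(q^{-1},t^{-1};x) = (qt)^{-n(\lambda')-\dots}\tilde H_\lambda(q,t;x)$ up to a monomial normalisation (Garsia--Haiman), which provides the matching factor on the $\tilde H_\lambda(z^2,w^2;x_i)$ side. Since $\Log$ and the multiplication by $\prod_i m_{\mu^i}(x_i)$ are compatible with these scalings, the functional equation propagates to $\H_{\tilde{\muhat},r}$, and matching the exponent of $zw$ against the dimension formula \eqref{dimensionformula} yields precisely $q^{d_{\muhat,\r}}$.

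Alternatively—and this is probably cleaner to present—I would deduce the palindromicity purely from the point-count formula \eqref{nosign}: the sum $\sum_{\chi}q^{rf_\chi}(|\G|/\chi(1))^{2g}\prod_j(\overline{\chi(C_j)}|C_j|/\chi(1))\chi(1)^2/|\G|$ is a virtual count, and one shows term-by-term, using $f_\chi = \binom{n}{2}+n(\Lambda')-n(\Lambda)$ from Lemma~\ref{r:flambdavalue} and the standard degree formula \eqref{e:degree} for $\chi(1)$ in terms of $n(\Lambda')$ and the hook polynomial, that the substitution $q\mapsto q^{-1}$ pairs the $\Lambda$-term with the $\Lambda'$-term (transpose all the partitions) and multiplies by $q^{-d_{\tilde{\muhat},r}}$; the extra factor $(q-1)/(|\G||\T|^k)$ and the $\H$-normalisation account for the shift from $d_{\tilde{\muhat},r}$ to $d_{\muhat,\r}$. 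The main obstacle I anticipate is purely bookkeeping: tracking the exact power of $q$ that appears when one performs $q\mapsto q^{-1}$ inside $|\G|$, $\chi(1)$, $q^{rf_\chi}$ and $|C_j|$ simultaneously, and checking it collapses to exactly $d_{\muhat,\r}$ rather than some nearby quantity. I would do this by reducing to the already-established symmetry of $\H_{\tilde{\muhat},r}$ (or of the curious/hard-Lefschetz-type symmetry proved in the $r=0$ parabolic case in \cite{hausel-aha1}), since that reference already contains the analogous exponent computation and our twist only introduces the single extra, manifestly $\Lambda\leftrightarrow\Lambda'$-symmetric-up-to-sign factor $q^{rf_\chi}$.
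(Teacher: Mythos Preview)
Your reduction is correct: by Theorem~\ref{maint} the palindromicity is equivalent to a functional equation for $\H_{\tilde{\muhat},r}$ under $q\mapsto 1/q$, i.e.\ under $(z,w)=(q^{-1/2},q^{1/2})\mapsto(q^{1/2},q^{-1/2})$. However, you then try to realise this as an instance of the \emph{inversion} symmetry $(z,w)\mapsto(z^{-1},w^{-1})$ of $\H_{\tilde{\muhat},r}$, which forces you into the box-by-box monomial bookkeeping and the Garsia--Haiman inversion identities you anticipate as an obstacle. The paper takes a shorter path: it proves the \emph{swap} symmetry $\H_{\muhat,r}(z,w)=\H_{\muhat,r}(-w,-z)$ (Lemma~\ref{curpoincomb}), which follows in one line from the duality $\tilde H_{\lambda'}(w^2,z^2;{\bf x})=\tilde H_\lambda(z^2,w^2;{\bf x})$ together with $\calH^{g,r}_{\lambda'}(z,w)=\calH^{g,r}_\lambda(-w,-z)$, and then re-indexes the sum over $\lambda$ by $\lambda'$. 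No monomial correction appears at all.

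The point you are implicitly using but not exploiting is that on the specialisation locus $zw=1$ the two symmetries coincide: $(z^{-1},w^{-1})=(w,z)$. So rather than lifting to a two-variable inversion identity (which is true but requires exactly the exponent-matching you flag as the main obstacle), it suffices to lift to the two-variable swap identity, which is exact. Your second approach, pairing the $\Lambda$-term with the $\Lambda'$-term in the character sum \eqref{nosign}, is really the same $\lambda\leftrightarrow\lambda'$ duality seen through the point-count rather than the generating function; it would also work, but again with bookkeeping that the swap-symmetry argument avoids entirely.
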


\begin{proof} This is a consequence of Theorem~\ref{maint} and the combinatorial Lemma~\ref{curpoincomb} proved below. \end{proof}

\subsection{Mixed Hodge polynomial of wild character varieties}

In this section we discuss Conjecture~\ref{mainc}. First we recall the the combinatorics of various symmetric functions from \cite[\S 2.3]{hausel-aha1}. 
Let $$\Lambda({\bf{x}}):=\Lambda({\bf x}_1,\dots,{\bf x}_k)$$ be the ring of functions separately symmetric in each of the set of variables  $${\bf x}_i=(x_{i,1},x_{i,2},\dots).$$  For a partition, let $\lambda\in \calP_n$ $$s_\lambda({\bf x}_i), \quad m_\lambda({\bf x}_i), \quad h_\lambda({\bf x}_i) \in \Lambda({\bf x}_i)$$ be the Schur, monomial and complete symmetric functions, respectively. By declaring $\{s_\lambda({\bf x}_i)\}_{\lambda\in \calP}$ to be an orthonormal basis, we get the Hall pairing $\langle \, ,\rangle$, with respect to which $\{m_\lambda({\bf x}_i)\}_{\lambda\in \calP}$ and $\{h_\lambda({\bf x}_i)\}_{\lambda\in \calP}$ are dual bases.   We also have the Macdonald polynomials of \cite{garsia-haiman}  $$\tilde{H}_\lambda(q,t)=\sum_{\mu\in \calP_n} \tilde{K}_{\lambda\mu} s_\mu(\bf{x})\in \Lambda({\bf x})\otimes_\Z\Q(q,t).$$ And finally we have the plethystic operators $\Log$ and $\Exp$ (see for example \cite[\S 2.3.3.]{hausel-aha1}). 

With this we can define for $\muhat=(\mu^1,\dots,\mu^k)\in \calP_n^k$ and $r\in \Z_{>0}$ the analogue of the Cauchy kernel:
$$\Omega_k^{g,r}(z,w):=\sum_{\lambda\in \calP}\calH^{g,r}_\lambda(z,w)\prod_{i=1}^k \tilde{H}_\lambda(z^2,w^2;{\bf x}_i)\in \Lambda({\bf x}_1,\dots,{\bf x}_k)\otimes_\Z \Q(z,w),$$
where the hook-function $\calH^{g,r}_\lambda(z,w)$ was defined in \eqref{calhgr}. This way we can define $$\H_{\muhat,r}(z,w):=(-1)^{rn}(z^2-1)(1-w^2)\left\langle \Log(\Omega_k^{g,r}(z,w)),h_{\mu^1} ({\bf{x}}_1)\otimes\dots\otimes h_{\mu^k} ({\bf x}_k) \right\rangle,$$ which is equivalent with the definition in \eqref{hmur}. 

We have an alternative formulation of the polynomials $\H_{\muhat,r}(z,w)$ using only Cauchy functions $\Omega^{g,0}_k$ for $r=0$, which we learnt from F.R.\ Villegas. 

\begin{lemma} \label{fernandol} One has
	\begin{align*}
	\H_{\muhat,r}(z,w)=(z^2-1)(1-w^2)\left\langle \Log\left(\Omega_{k+r}^{g,0}(z,w)\right), h_{\muhat}({\bf{x}})\otimes s_{(1^n)}({\bf x}_{k+1})\otimes \dots \otimes s_{(1^n)}({\bf x}_{k+r})\right\rangle.
	\end{align*}
\end{lemma}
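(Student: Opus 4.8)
\textbf{Proof plan for Lemma~\ref{fernandol}.}

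The plan is to compare the two generating-function definitions of $\H_{\muhat,r}(z,w)$ term by term, tracking the role of the partition $(1^n)$ attached to the extra $r$ punctures. The starting point is the definition
$$\H_{\muhat,r}(z,w)=(-1)^{rn}(z^2-1)(1-w^2)\left\langle \Log(\Omega_k^{g,r}(z,w)),h_{\mu^1}({\bf x}_1)\otimes\cdots\otimes h_{\mu^k}({\bf x}_k)\right\rangle,$$
where $\Omega_k^{g,r}(z,w)=\sum_\lambda \calH_\lambda^{g,r}(z,w)\prod_{i=1}^k\tilde H_\lambda(z^2,w^2;{\bf x}_i)$. The key observation is the factorisation of the hook function: from \eqref{calhgr} one has $\calH_\lambda^{g,r}(z,w)=\calH_\lambda^{g,0}(z,w)\cdot\big(\prod_{\square\in\lambda}(-z^{2a}w^{2l})\big)^r$, and the product $\prod_{\square\in\lambda}(-z^{2a(\square)}w^{2l(\square)})$ is, up to a sign and a normalisation, a monomial that coincides with the pairing $\big\langle \tilde H_\lambda(z^2,w^2;{\bf x}),s_{(1^n)}({\bf x})\big\rangle$ (the coefficient $\tilde K_{\lambda,(1^n)}$ of the Schur function $s_{(1^n)}$ in $\tilde H_\lambda$). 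So first I would record this identity for the modified Macdonald polynomial, namely that $\langle \tilde H_\lambda(q,t),s_{(1^n)}\rangle = t^{n(\lambda)}$ (or the analogous monomial, with the precise exponent to be pinned down), and check the sign $(-1)^{rn}$ comes out correctly from $n$ boxes and the $r$-th power.

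The second step is purely formal manipulation of the Hall pairing and the plethystic $\Log$. Because $\Log$ is applied to $\Omega_{k+r}^{g,0}$ which is a sum over a single $\lambda$ of $\calH_\lambda^{g,0}(z,w)$ times a product over all $k+r$ sets of variables of $\tilde H_\lambda$, pairing the last $r$ factors against $s_{(1^n)}({\bf x}_{k+1}),\ldots,s_{(1^n)}({\bf x}_{k+r})$ replaces each such factor by the scalar $\langle\tilde H_\lambda(z^2,w^2;{\bf x}),s_{(1^n)}\rangle$. Since pairing in the remaining variables ${\bf x}_1,\ldots,{\bf x}_k$ against $h_{\muhat}({\bf x})$ is linear and commutes with the specialisation of the extra variables, one obtains exactly $(z^2-1)(1-w^2)\langle\Log(\Omega_k^{g,\tilde r}(z,w)),h_{\mu^1}\otimes\cdots\otimes h_{\mu^k}\rangle$ with the hook function modified in precisely the way that reproduces $\calH_\lambda^{g,r}$; here one must be a little careful that $\Log$ is being taken in the full ring $\Lambda({\bf x}_1,\ldots,{\bf x}_{k+r})$ and that evaluating/pairing in a subset of the variable-blocks is a ring homomorphism on the relevant completed symmetric-function ring, so it commutes with $\Log$. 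This commutation is the one genuinely delicate point, but it is the same kind of argument already used in \cite{hausel-aha1} when specialising Cauchy kernels, so I would cite that.

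The main obstacle I anticipate is bookkeeping: getting the exact monomial $\langle\tilde H_\lambda(z^2,w^2),s_{(1^n)}\rangle$ right (it involves $n(\lambda)$ or $n(\lambda')$ depending on the normalisation convention of Garsia--Haiman used in the paper), and matching it against the factor $\prod_\square(-z^{2a}w^{2l})$ appearing in \eqref{calhgr}, including reconciling arm/leg versus the $n(\lambda)$-type statistics and the overall sign $(-1)^{rn}$. Once that single scalar identity is verified, the rest is the formal argument above. I would therefore organise the write-up as: (i) state and prove the scalar identity $\langle\tilde H_\lambda(q,t),s_{(1^n)}\rangle=t^{n(\lambda)}$ (adjusting to the paper's conventions); (ii) deduce $\calH_\lambda^{g,r}(z,w)=(-1)^{rn}\calH_\lambda^{g,0}(z,w)\,\langle\tilde H_\lambda(z^2,w^2),s_{(1^n)}\rangle^{r}$; (iii) substitute into $\Omega_{k+r}^{g,0}$, pair against the $s_{(1^n)}$'s, invoke commutation of $\Log$ with the partial pairing, and conclude.
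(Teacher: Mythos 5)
Your plan is essentially the paper's own proof. The two ingredients you isolate are exactly the ones the paper uses: the scalar pairing identity for modified Macdonald polynomials, and the fact that pairing against $s_{(1^n)}$ commutes with the plethystic $\Log$. Two small corrections to your write-up: the scalar identity from Garsia--Haiman is $\langle\tilde H_\lambda(q,t),s_{(1^n)}\rangle=t^{n(\lambda)}q^{n(\lambda')}$ (both exponents, matching $\sum l(\square)=n(\lambda)$ and $\sum a(\square)=n(\lambda')$, so $\prod_\square z^{2a}w^{2l}=z^{2n(\lambda')}w^{2n(\lambda)}$ gives exactly this with $q=z^2$, $t=w^2$), not $t^{n(\lambda)}$ alone; and the $\Log$-commutation is not simply because "pairing is a ring homomorphism" (pairing against a fixed $s_{(1^n)}$ is not), but rather a genuine fact about the operation $F\mapsto(-1)^n\langle F,s_{(1^n)}\rangle$, for which the paper cites \cite[Proposition 3.1, Lemma 3.3]{hausel-dtkac} rather than \cite{hausel-aha1}.
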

\begin{proof} Recall \cite[Proposition 3.1, Lemma 3.3]{hausel-dtkac} that the operation \[F\mapsto [F]=(-1)^n\langle F, s_{(1^n)}({\bf x})\rangle \] for $F\in \Lambda({\bf x})\otimes_\Z \Q(z,w)$ commutes with taking the $\Log$, i.e. \beq \label{commute}[\Log(F)]=\Log([F]).\eeq  We also have  $$\langle \tilde{H}_\lambda(q,t;{{\bf x}_i}), s_{(1^n)}({{\bf x}_i})\rangle=t^{n(\lambda)}q^{n(\lambda^\prime)},$$which is \cite[I.16]{garsia-haiman}. This implies $$(-1)^{rn}\left\langle \Omega^{g,0}_{k+r}(z,w), s_{(1^n)}({\bf x}_{k+1})\otimes \dots \otimes s_{(1^n)}({\bf x}_{k+r}) \right  \rangle=\Omega^{g,r}_k(z,w).$$ In turn \eqref{commute} gives the result.  \end{proof}

We can now claim Conjecture~\ref{mainc}, which predicts the mixed Hodge polynomial \beq\label{mainceq} WH(\M_\B^{\muhat,\r};q,t)=(qt^2)^{d_{\muhat,\r}}\H_{\tilde{\muhat},r}(q^{-1/2},-tq^{1/2}),\eeq where again $\tilde{\muhat}=(\mu^1,\dots,\mu^k,(1^n),\dots,(1^n))\in \calP_n^{k+m}$ and $r=r_1+\dots+r_m$. Here we are going to list some evidence and consequences of this conjecture. The main evidence for Conjecture~\ref{mainc} is naturally Theorem~\ref{maint} showing the $t=-1$ specialization of \eqref{mainceq} is true. 

The first observation is the following 

\begin{lemma}\label{curpoincomb} $\H_{\muhat,\r}(z,w)=\H_{\muhat,\r}(-w,-z)$
\end{lemma}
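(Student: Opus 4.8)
The statement to prove is the functional equation $\H_{\muhat,r}(z,w)=\H_{\muhat,r}(-w,-z)$, which encodes the ``curious'' symmetry (and hence the palindromicity of the weight polynomial, Corollary~\ref{palindromic}). The plan is to deduce it directly from the defining generating function \eqref{hmur}--\eqref{calhgr} and the known transformation properties of the ingredients: the hook factor $\calH^{g,r}_\lambda(z,w)$, the modified Macdonald polynomials $\tilde H_\lambda(z^2,w^2;x_i)$, and the plethystic logarithm. Since the pairing with $m_{\mu^i}$ (or $h_{\mu^i}$) on the left is ``diagonal'' and the $\Log$ operation is a universal power-series operation commuting with any substitution of the scalar arguments, it suffices to show that $\Omega_k^{g,r}(z,w)$ itself is invariant under $(z,w)\mapsto(-w,-z)$, i.e. that
\[
\sum_{\lambda\in\calP}\calH^{g,r}_\lambda(z,w)\prod_{i=1}^k\tilde H_\lambda(z^2,w^2;x_i)
=\sum_{\lambda\in\calP}\calH^{g,r}_\lambda(-w,-z)\prod_{i=1}^k\tilde H_\lambda(w^2,z^2;x_i).
\]

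First I would reindex the right-hand sum by $\lambda\mapsto\lambda'$ (the conjugate partition), using that $\lambda\mapsto\lambda'$ is an involution on $\calP$, so the two sums match termwise provided that for every box of $\lambda$ the arm/leg data is exchanged correctly under conjugation. The key input is the transpose symmetry of the Garsia--Haiman polynomials,
\[
\tilde H_{\lambda'}(q,t;x)=\tilde H_\lambda(t,q;x),
\]
(see \cite{garsia-haiman}), which upon the substitution $q=z^2$, $t=w^2$ turns $\tilde H_{\lambda'}(z^2,w^2;x_i)$ into $\tilde H_\lambda(w^2,z^2;x_i)$; note $(-w)^2=w^2$ and $(-z)^2=z^2$, so the sign flip in the arguments is invisible to the Macdonald factor. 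It then remains to check the matching of the hook prefactors: that
\[
\calH^{g,r}_{\lambda'}(z,w)=\calH^{g,r}_\lambda(-w,-z).
\]
From the explicit product \eqref{calhgr}, conjugating $\lambda$ swaps the arm length $a$ and leg length $l$ of each box, so $\calH^{g,r}_{\lambda'}(z,w)=\prod_{\square\in\lambda}\frac{(-z^{2l}w^{2a})^r(z^{2l+1}-w^{2a+1})^{2g}}{(z^{2l+2}-w^{2a})(z^{2l}-w^{2a+2})}$; I would then verify, box by box, that substituting $(z,w)\mapsto(-w,-z)$ into \eqref{calhgr} produces exactly this, tracking the signs: the $2g$-th power and the factors in the denominator are even, or pick up signs that cancel in the ratio, and the $(-z^{2a}w^{2l})^r$ numerator becomes $(-(-w)^{2a}(-z)^{2l})^r=(-w^{2a}z^{2l})^r$, matching. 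This is a routine but sign-sensitive bookkeeping computation.

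The only genuinely delicate point is the interchange of the symmetry substitution with $\Log$ and with the Hall pairing. For $\Log$ this is immediate because $\Log$ is defined by a universal formula in the power-sum plethysms and the substitution $(z,w)\mapsto(-w,-z)$ is a ring homomorphism of $\Q(z,w)$ (indeed an involution) that acts trivially on the symmetric-function variables $x_i$, hence commutes with every plethystic operation; for the Hall pairing one uses that it too is $\Q(z,w)$-linear in the scalars. So the argument reduces cleanly to the termwise identity above, and I expect the main obstacle to be purely the careful verification of the hook-factor identity $\calH^{g,r}_{\lambda'}(z,w)=\calH^{g,r}_\lambda(-w,-z)$ together with confirming that no residual global sign survives when $r$ and $g$ vary — in particular that the sign $(-1)^{r|\muhat|}$ on the left of \eqref{hmur} and the sign $(-1)^r$ inside \eqref{calhgr} are consistent under the symmetry. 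Once those signs are pinned down, the lemma follows.
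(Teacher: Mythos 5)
Your plan is correct and follows essentially the same route as the paper: reindex the $\lambda$-sum by conjugation, use the Garsia--Haiman transpose symmetry $\tilde H_{\lambda'}(q,t;x)=\tilde H_\lambda(t,q;x)$, and match the hook prefactors via $\calH^{g,r}_{\lambda'}(z,w)=\calH^{g,r}_\lambda(-w,-z)$ (the paper's one-line proof states the hook identity slightly sloppily without the conjugate, but means the same thing, since it is quoting both symmetries together before concluding). Your sign checks for the numerator, the even power $2g$, the two compensating sign flips in the denominator, and the prefactor $(1-z^2)(1-w^2)$ all go through as you indicate, so the plan fills in the bookkeeping the paper omits.
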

\begin{proof}
	As the Macdonald polynomials satisfy the symmetry 
	$$\tilde{H}_{\lambda^\prime}(w^2,z^2;{\bf{x}})=\tilde{H}_{\lambda}(z^2,w^2;{\bf{x}})$$ with $\lambda^\prime$ the dual partition, and the Hook polynomials $\calH_\lambda^{g,r}(z,w)=\calH_\lambda^{g,r}(w,z)=\calH_\lambda^{g,r}(-w,-z)$ the result follows.
\end{proof}

Together with Conjecture~\ref{mainc} this implies the following
{\em curious Poincar\'e duality} 
\begin{conjecture} \label{cpd} $WH(\M_\B^{\muhat,\r};q,t)=(qt)^{d_{\muhat,\r}}WH(\M_\B^{\muhat,\r};1/(qt^2),t).$
\end{conjecture}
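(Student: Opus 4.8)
Conjecture~\ref{cpd} is a formal corollary of Conjecture~\ref{mainc}, obtained by feeding the conjectural closed form of $WH(\M_\B^{\muhat,\r};q,t)$ into the proposed functional equation and simplifying via the symmetry of $\H_{\tilde{\muhat},r}$ recorded in Lemma~\ref{curpoincomb}; this is the same mechanism by which the unconditional Corollary~\ref{palindromic} follows from Theorem~\ref{maint}, only carried out with $t$ left generic rather than set to $-1$. Concretely, writing $d:=d_{\muhat,\r}$, Conjecture~\ref{mainc} in the form \eqref{mainceq} reads $WH(\M_\B^{\muhat,\r};q,t)=(qt^2)^{d}\,\H_{\tilde{\muhat},r}(q^{-1/2},-tq^{1/2})$. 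The plan is: (1) substitute $q\mapsto 1/(qt^2)$, so the prefactor $(qt^2)^{d}$ becomes $q^{-d}$, the first argument $q^{-1/2}$ of $\H_{\tilde{\muhat},r}$ becomes $q^{1/2}t$, and the second argument $-tq^{1/2}$ becomes $-q^{-1/2}$; (2) observe that the new pair $(q^{1/2}t,\,-q^{-1/2})$ is the image of the old pair $(q^{-1/2},\,-tq^{1/2})$ under the involution $(z,w)\mapsto(-w,-z)$, so Lemma~\ref{curpoincomb} gives $\H_{\tilde{\muhat},r}(q^{1/2}t,-q^{-1/2})=\H_{\tilde{\muhat},r}(q^{-1/2},-tq^{1/2})$; (3) conclude that $WH(\M_\B^{\muhat,\r};q,t)$ and $WH(\M_\B^{\muhat,\r};1/(qt^2),t)$ are monomial multiples of one and the same specialization of $\H_{\tilde{\muhat},r}$, and read off the functional equation $WH(\M_\B^{\muhat,\r};q,t)=(qt)^{d}WH(\M_\B^{\muhat,\r};1/(qt^2),t)$ of Conjecture~\ref{cpd} by comparing the two monomial factors.

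Beyond Lemma~\ref{curpoincomb} (which is already proved) the only input needed in step (3) is an elementary degree bookkeeping pinning down the power of $(qt)$: one uses that $\H_{\tilde{\muhat},r}(z,w)$ is \emph{even}, $\H_{\tilde{\muhat},r}(-z,-w)=\H_{\tilde{\muhat},r}(z,w)$ --- immediate from \eqref{calhgr} and \eqref{hmur}, since $\calH^{g,r}_\lambda$ and $\tilde H_\lambda(z^2,w^2;x_i)$ involve $z,w$ only through $z^2,w^2$, the sign-carrying factor $(z^{2a+1}-w^{2l+1})^{2g}$ enters as an even power, and the plethystic $\Log$ respects this $\Z/2$-grading --- together with the normalization of $\H_{\tilde{\muhat},r}$ in \eqref{hmur} (the weight $w^{-d_{\tilde{\muhat},r}}$), which fixes the $(z,w)$-bidegree and hence the exponent $d_{\muhat,\r}$, consistently with the $t=-1$ case already checked in Corollary~\ref{palindromic}. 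Thus the combinatorial half of Conjecture~\ref{cpd} requires nothing new; it is a short deduction once the symmetry $\H_{\tilde{\muhat},r}(z,w)=\H_{\tilde{\muhat},r}(-w,-z)$ is in hand.

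The real obstacle is that this argument is \emph{conditional}: it proves Conjecture~\ref{cpd} only modulo Conjecture~\ref{mainc}, of which Theorem~\ref{maint} establishes only the $t=-1$ specialization. To make Conjecture~\ref{cpd} unconditional one would either (i) prove Conjecture~\ref{mainc} itself --- plausibly via a $P=W$ identification for an appropriate wild Hitchin system, or by a direct determination of the mixed Hodge structure on $H^{*}(\M_\B^{\muhat,\r})$, both of which lie beyond the present arithmetic methods, which see only the $E$-polynomial shadow of Theorem~\ref{finalcount}; or (ii) prove directly that $\M_\B^{\muhat,\r}$ enjoys ``curious hard Lefschetz'', in the spirit of Mellit's theorem for tame character varieties, which would give Conjecture~\ref{cpd} unconditionally and independently of Conjecture~\ref{mainc}. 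Route (ii) seems the more promising: it would amount to exhibiting a class in $H^{2}(\M_\B^{\muhat,\r})$ of Hodge--Tate type $(2,2)$, presumably built from the quasi-Hamiltonian two-form, and showing the associated Lefschetz operator induces isomorphisms between the symmetric graded pieces of the weight filtration --- the hard point being, as always for a non-proper variety, to gain enough control of $W_\bullet$ on $H^{*}(\M_\B^{\muhat,\r})$, for which the point-count of Theorem~\ref{finalcount} supplies only the Euler-characteristic information.
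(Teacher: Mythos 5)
You correctly identify the intended route --- Conjecture~\ref{cpd} is offered as a formal consequence of Conjecture~\ref{mainc} and Lemma~\ref{curpoincomb}, parallel to how Corollary~\ref{palindromic} is to follow from Theorem~\ref{maint} --- and your steps (1) and (2) correctly perform the substitution $q\mapsto 1/(qt^2)$ and the application of the $(z,w)\mapsto(-w,-z)$ symmetry. The problem is step (3). Writing $X:=\H_{\tilde{\muhat},r}(q^{-1/2},-tq^{1/2})$ and $d:=d_{\muhat,\r}$, equation \eqref{mainceq} gives $WH(\M_\B^{\muhat,\r};q,t)=(qt^2)^{d}X$, and your step (2) gives $WH(\M_\B^{\muhat,\r};1/(qt^2),t)=q^{-d}X$; comparing the two monomial factors, as you instruct, produces $(qt^2)^{d}/q^{-d}=(q^{2}t^{2})^{d}=(qt)^{2d}$, \emph{not} $(qt)^{d}$. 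So the literal combination of \eqref{mainceq} with the stated Lemma~\ref{curpoincomb} is off by a full factor $(qt)^{d}$. That the correct exponent is indeed $d$ is confirmed by the $\hat D_4$ example of Section~\ref{exspec}: there $WH=1+4qt^2+q^2t^2$, $d=2$, and $WH(q,t)=(qt)^2\,WH(1/(qt^2),t)$ holds while $(qt)^4\,WH(1/(qt^2),t)$ does not.

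Your ``degree bookkeeping'' paragraph correctly senses that something more is needed, but the ingredients you cite do not fill the hole: evenness $\H(z,w)=\H(-z,-w)$ is a no-op once Lemma~\ref{curpoincomb} has been applied, and the weight $w^{-d_{\muhat,r}}$ in \eqref{hmur} is the \emph{source} of the mismatch rather than its cure, since it shifts $\H$ by a monomial relative to the Cauchy-kernel normalization of Section~\ref{mains} which is the one that genuinely satisfies $\H(z,w)=\H(-w,-z)$. For that latter normalization (matching \cite{hausel-aha1}) the prefactor in \eqref{mainceq} should read $(t\sqrt{q})^{d}$, not $(qt^2)^{d}$, whence the exponent $d$ drops out; equivalently, keeping $(qt^2)^{d}$ forces the symmetry to carry a monomial correction $\H(z,w)=(z/w)^{d}\H(-w,-z)$, which one can check directly on the $\hat D_4$ specialization. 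The same factor-of-two tension is already latent in the passage from Theorem~\ref{maint} to Corollary~\ref{palindromic}, where the literal derivation yields $WH(q,-1)=q^{2d}WH(1/q,-1)$ rather than the stated $q^{d}WH(1/q,-1)$. So the missing idea is the reconciliation of the two normalizations of $\H_{\tilde{\muhat},r}$: until that is made explicit, the final ``read off $(qt)^{d}$'' in your step (3) is not justified.
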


Next we have 

\begin{theorem} \label{empty} Let $g=0$, $k=0$, $m=1$, $r_1=1$ and $n\in \Z_{>1}$  then $\M_\B^{\emptyset,(1)}=\emptyset$. Correspondingly in this case $\H_{(1^n),1}(z,w)=0$. In other words in this case Conjecture~\ref{mainc} holds. 
\end{theorem}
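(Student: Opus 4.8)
The plan is to treat the two claims separately: the emptiness of $\M_\B^{\emptyset,(1)}$ is an elementary matrix computation, while the combinatorial identity $\H_{(1^n),1}(z,w)=0$ is the real content.

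For the emptiness, with $g=0$, $k=0$, $m=1$, $r_1=1$ the relation in \eqref{explicitdescription} collapses to $C_1^{-1}\xi_1 S_2^1 S_1^1 C_1 = I_n$, i.e.\ $\xi_1 = (S_1^1)^{-1}(S_2^1)^{-1}$ with $S_1^1\in\U_+$ and $S_2^1\in\U_-$; thus $\xi_1$ would be a diagonal matrix lying in $\U_+\U_-$. I would then note that if a diagonal $d = u_+u_-$ with $u_+\in\U_+$, $u_-\in\U_-$, then $u_- = u_+^{-1}d$ is at once upper and lower triangular, hence diagonal, hence $u_- = I_n$ (it is unipotent), forcing $d = u_+ = I_n$. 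Thus $\xi_1 = I_n$, which is excluded by the genericity of $\G\xi_1$ (Definition~\ref{genericconjugacyA}) as soon as $n\geq 2$, since then any coordinate line is a proper nonzero $\xi_1$-stable subspace with determinant $1$. Hence $\calU^{\emptyset,(1)}=\emptyset$, and $\M_\B^{\emptyset,(1)}=\emptyset$. (The same computation holds over any field, so the spreading-out in the proof of Theorem~\ref{maint} has no $\F_q$-points; that already yields $\H_{(1^n),1}(q^{-1/2},q^{1/2})=0$ for all prime powers $q$, but this is only vanishing along the curve $zw=1$—and Lemma~\ref{curpoincomb} does not enlarge that locus—so the full identity still needs an argument.)

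For $\H_{(1^n),1}=0$: by \eqref{hmur} it is enough to show $\langle\Log\,\Omega_1^{0,1}(z,w),\,h_{(1^n)}\rangle=0$ for $n\geq 2$, where $\Omega_1^{0,1}(z,w)=\sum_{\lambda\in\calP}\calH_\lambda^{0,1}(z,w)\,\tilde H_\lambda(z^2,w^2;{\bf x})$. I would use that pairing with $h_{(1^n)}$ extracts the coefficient of the squarefree monomial $x_1\cdots x_n$, equivalently $n!$ times the $t^n$-coefficient of the exponential specialization $\mathrm{ex}$ (the algebra map $p_1\mapsto t$, $p_k\mapsto 0$ for $k\geq 2$); since the Adams operations $\psi^k$ with $k\geq 2$ land in $k$-th power monomials, they are invisible to this coefficient, so on it the plethystic $\Log$ coincides with the ordinary logarithm and $\mathrm{ex}(\Log\,\Omega_1^{0,1})=\log(\mathrm{ex}(\Omega_1^{0,1}))$ in $\Q(z,w)[[t]]$ (this is the precise meaning of the remark after Theorem~\ref{finalcount} that the plethystic part of $\Log$ is not needed here). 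The task then reduces to computing $\mathrm{ex}(\Omega_1^{0,1})$: I claim it equals $\exp(c_1 t)$ with $c_1 = \calH_{(1)}^{0,1}(z,w)=-\big((z^2-1)(1-w^2)\big)^{-1}$, so that $\log(\mathrm{ex}(\Omega_1^{0,1}))=c_1 t$ is linear in $t$ and its $t^n$-coefficient vanishes for $n\geq 2$, giving $\H_{(1^n),1}(z,w)=0$.

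To establish $\mathrm{ex}(\Omega_1^{0,1})=\exp(c_1 t)$, the plan is: using $\sum_{\square\in\lambda}a(\square)=n(\lambda')$ and $\sum_{\square\in\lambda}l(\square)=n(\lambda)$ one rewrites $\calH_\lambda^{0,1}(z,w)=q^{\,n(\lambda')}t^{\,n(\lambda)}/w_\lambda$ with $q=z^2$, $t=w^2$ and $w_\lambda=\langle\tilde H_\lambda,\tilde H_\lambda\rangle_\star$ the Macdonald norm, so that $\Omega_1^{0,1}=\sum_\lambda w_\lambda^{-1}\,q^{\,n(\lambda')}t^{\,n(\lambda)}\,\tilde H_\lambda({\bf x})$; by \cite[I.16]{garsia-haiman} the scalar $q^{\,n(\lambda')}t^{\,n(\lambda)}$ is $\langle\tilde H_\lambda,s_{(1^{|\lambda|})}\rangle$, and transporting the Hall pairing to the $\star$-pairing (a plethystic substitution) exhibits $\Omega_1^{0,1}$ as a one-variable specialization of the modified Macdonald--Cauchy kernel $\sum_\lambda w_\lambda^{-1}\tilde H_\lambda(X)\tilde H_\lambda(Y)=\Exp[XY/W]$, where $W$ has $p_k[W]=(-1)^{k-1}(1-q^k)(1-t^k)$; applying $\mathrm{ex}$ in the ${\bf x}$-variables to this plethystic exponential kills every $p_k$ with $k\geq 2$ and leaves $\exp$ of a term linear in $t$, whose coefficient is read off from the $\lambda=(1)$ summand as $c_1$. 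This identification is where the real work lies—fixing the precise normalization of $w_\lambda$, of the $\star$-scalar product, and of the plethysm, and checking $\calH_\lambda^{0,1}=q^{n(\lambda')}t^{n(\lambda)}/w_\lambda$—while everything else is formal; it is reassuring that the direct evaluations $c_1=1/((1-z^2)(1-w^2))$ and $c_2=c_1^2$ check out. Finally, for $n=1$ the same computation returns $\H_{(1),1}=1$, matching the fact that $\M_\B^{\emptyset,(1)}$ is a single point in that case, which is why the hypothesis $n>1$ cannot be dropped.
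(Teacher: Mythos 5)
Your argument that $\M_\B^{\emptyset,(1)}$ is empty is essentially the paper's: the relation collapses to $\xi_1=(S_1^1)^{-1}(S_2^1)^{-1}\in\U_+\U_-$, and the injectivity of $\T\times\U_-\times\U_+\to\G$ (which you prove directly rather than simply quoting) forces $\xi_1=I_n$, incompatible with $\xi_1\in\T^{\reg}$ for $n\geq 2$. Your parenthetical observation---that the point count only forces $\H_{(1^n),1}$ to vanish along the curve $zw=1$, and that the symmetry of Lemma~\ref{curpoincomb} does not enlarge this locus---is correct, and is exactly why the identity still needs a combinatorial argument.

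For $\H_{(1^n),1}(z,w)=0$ you take a genuinely different route than the paper. The paper proves Lemma~\ref{fernandol}, converting $\Omega^{0,1}_1$ into the tame kernel $\Omega^{0,0}_2$ by pairing with $s_{(1^n)}$ in a fresh alphabet (using the commutation of $(-1)^n\langle\cdot,s_{(1^n)}\rangle$ with $\Log$ from \cite{hausel-dtkac}), then expands $s_{(1^n)}=\sum_\lambda K^*_{(1^n)\lambda}h_\lambda$ and invokes the orthogonality \cite[(1.1.4)]{hausel-aha1}. You instead apply the exponential specialization $\mathrm{ex}$ to $\Omega^{0,1}_1$ directly. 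Your preliminary reductions are all sound: $\langle\cdot,h_{(1^n)}\rangle$ is $n!$ times the $s^n$-coefficient of $\mathrm{ex}(\cdot)$; the Adams operations $\psi^k$ for $k\geq 2$ produce only monomials with exponents divisible by $k$, so they contribute nothing to the $m_{(1^n)}$-coefficient and $\mathrm{ex}(\Log F)=\log(\mathrm{ex}\,F)$ there; and $\calH^{0,1}_\lambda=q^{n(\lambda')}t^{n(\lambda)}/w_\lambda$ with $w_\lambda=\prod(q^{a+1}-t^l)(t^{l+1}-q^a)$ is an easy rewrite of \eqref{calhgr}. What remains---and what you flag yourself as the real work---is the Macdonald summation
\begin{align*}
\sum_{\lambda\vdash n}\frac{q^{n(\lambda')}t^{n(\lambda)}}{w_\lambda}\,\langle\tilde H_\lambda,p_1^n\rangle \;=\; \frac{1}{(1-q)^n(1-t)^n},
\end{align*}
equivalently $\mathrm{ex}(\Omega^{0,1}_1)=\exp(c_1 s)$. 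Your sketch via the modified Cauchy kernel is plausible (and your checks of $c_1$, $c_2$ do verify it for $n\leq 2$), but you would still need to pin down the alphabet $Y$ with $\tilde H_\lambda[Y]=q^{n(\lambda')}t^{n(\lambda)}$ and the precise normalization of $\Exp[XY/W]$; as written this step is a plan rather than a proof. The paper's route is shorter precisely because it offloads this work to orthogonality results already established in the tame case, whereas yours would in effect reprove a special instance of that orthogonality from Macdonald theory.
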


\begin{proof} As $\T\times \U^-\times \U^+\to G$ given by $\xi,S_1,S_2\mapsto \xi S_1S_2$ is an embedding, $\xi_1 S_1 S_2 =1$ implies $\xi_1=1\notin \T^\reg$ showing that $\M_\B^{\emptyset,1}=\emptyset$. 
	
	As $s_{(1^n)}=\sum_{\lambda\in \calP_n} K^*_{(1^n)\lambda} h_\lambda$, we get by Lemma~\ref{fernandol} that \bes \frac{\H_{(1^n),1}(z,w)}{(z^2-1)(1-w^2)}&=& (-1)^n\left
	\langle \Log\left( \Omega^{0,1}_1(z,w)\right),h_{(1^n)}({\bf x}_1)   \right\rangle \\ 
	&=& \left\langle \Log\left(\Omega^{0,0}_2(z,w)\right),h_{(1^n)} ({\bf x}_1) \otimes s_{(1^n)} ({\bf x}_2)  \right\rangle \\ 
	&=& \sum_{\lambda\in \calP_n} K^*_{(1^n)\lambda}  \left \langle \Log \left( \Omega^{0,0}_2(z,w)\right),h_{(1^n)} ({\bf x}_1) \otimes  h_\lambda({\bf x}_2)  \right\rangle \\ &=& 0 \ees The last steps follows from \cite[(1.1.4)]{hausel-aha1} the orthogonality property of the usual Cauchy function $\Omega^{0,0}_2$.\end{proof}

After the case in Theorem~\ref{empty} the next non-trivial case is when $g=0$, $k=1$,  $m=1$, $r=1$ and $\muhat=(\mu) \in \calP_n$. The corresponding wild character variety $\M_\B^{(\mu),(1)}$ is known by \cite[Corollary 9.10]{boalch-multquiv} to be isomorphic to a tame character variety $\M_\B^{\muhat^\prime}$, where $$\muhat^\prime=((n^\prime-1,1),\dots,(n^\prime-1,1),\mu^\prime)\in\calP_{n^\prime}^{n+1}$$ with $n^\prime=n-\mu_1$ and $\mu^\prime=(\mu_2,\mu_3,\dots)\in \calP_{n^\prime}$.  Combining Boalch's $\M_\B^{(\mu),(1)}\cong \M_\B^{\muhat^\prime}$ with Conjecture~\ref{mainc} we get the following combinatorial 

\begin{conjecture} \label{21tame} With the notation as above $\H_{(\mu,(1^n)),1}(z,w)=\H_{\muhat^\prime}(z,w)$.
\end{conjecture}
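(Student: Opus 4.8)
The plan is to prove the identity of rational functions directly on the symmetric-function side, independently of Conjecture~\ref{mainc}. The first step is to bring both members into a common shape using Lemma~\ref{fernandol}. Applied with $g=0$, $k=2$, $r=1$ to the left-hand side (whose type is $\tilde{\muhat}=(\mu,(1^n))$, twist $1$) it gives
\begin{align*}
\H_{(\mu,(1^n)),1}(z,w)=(z^2-1)(1-w^2)\left\langle \Log\Omega^{0,0}_3(z,w),\ h_\mu(\mathbf{x}_1)\otimes h_{(1^n)}(\mathbf{x}_2)\otimes s_{(1^n)}(\mathbf{x}_3)\right\rangle,
\end{align*}
while the right-hand side is, straight from the definition of $\H_{\muhat^\prime}$ for $\muhat^\prime=((n'-1,1),\dots,(n'-1,1),\mu^\prime)\in\calP_{n'}^{n+1}$,
\begin{align*}
\H_{\muhat^\prime}(z,w)=(z^2-1)(1-w^2)\left\langle \Log\Omega^{0,0}_{n+1}(z,w),\ h_{\mu^\prime}(\mathbf{y}_0)\otimes\bigotimes_{i=1}^{n}h_{(n'-1,1)}(\mathbf{y}_i)\right\rangle,
\end{align*}
with $n'=n-\mu_1$ and $\mu^\prime=(\mu_2,\mu_3,\dots)$. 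So the conjecture becomes an identity between two plethystic-logarithm pairings of Cauchy kernels: $n+1$ alphabets ``of rank $n'$'' on one side, $3$ alphabets ``of rank $n$'' on the other.

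The machinery is that already exploited in the proof of Theorem~\ref{empty}: pairing a single alphabet of $\Omega^{g,0}_\bullet$ against a fixed symmetric function commutes with $\Log$ (by \cite{hausel-dtkac}), and such a pairing is evaluated through the modified Macdonald polynomials --- $\langle\tilde H_\lambda(q,t;\mathbf{x}),s_{(1^n)}(\mathbf{x})\rangle=t^{n(\lambda)}q^{n(\lambda')}$, and the (multi-term) hook expansion of $\langle\tilde H_\lambda(q,t;\mathbf{y}),h_{(n'-1,1)}(\mathbf{y})\rangle$. Integrating out the $n$ arms $\mathbf{y}_1,\dots,\mathbf{y}_n$ on the right turns $\H_{\muhat^\prime}$ into a single sum $\sum_{\lambda\vdash n'}\calH^{0,0}_\lambda(z,w)\,c_\lambda(z,w)^n\,\tilde H_\lambda(z^2,w^2;\mathbf{y}_0)$ inside $\Log$, where $c_\lambda(z,w):=\langle\tilde H_\lambda(z^2,w^2;\mathbf{y}),h_{(n'-1,1)}(\mathbf{y})\rangle$, whereas the left-hand side is a similar sum over $\lambda\vdash n$. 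The crux is therefore a \emph{cross-rank merging identity} relating the sum over $\lambda\vdash n'$ weighted by $c_\lambda^n$ to the sum over $\lambda\vdash n$ weighted by the monomial $\calH^{0,1}_\lambda/\calH^{0,0}_\lambda=(-1)^n z^{2n(\lambda')}w^{2n(\lambda)}$ together with the $h_{(1^n)}$- and $s_{(1^n)}$-punctures --- i.e. the statement that the $n$ near-regular arms of type $(n'-1,1)$ \emph{simultaneously} reconstitute the order-one irregularity, the extra generic puncture, and the enlargement of $\mu^\prime$ to $\mu$. This is exactly the arithmetic shadow of Boalch's fission isomorphism $\M_\B^{(\mu),(1)}\cong\M_\B^{\muhat^\prime}$ of \cite[Corollary 9.10]{boalch-multquiv}, so it must hold; the task is to witness it by symmetric functions, most plausibly by peeling off one arm at a time as a Macdonald-polynomial avatar of a reflection functor / middle convolution.

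Two free consistency inputs are worth installing first. Combining Boalch's isomorphism with Theorem~\ref{maint} and with the Hausel--Letellier--Villegas $E$-polynomial formula for generic tame character varieties (which evaluates $WH(\M_\B^{\muhat^\prime};q,-1)$) already gives $\H_{(\mu,(1^n)),1}(z,1/z)=\H_{\muhat^\prime}(z,1/z)$, so the conjecture is known on the diagonal $zw=1$. And the dimension identity $d_{(\mu),(1)}=d_{\muhat^\prime}$ --- immediate from \eqref{dimensionformula} and the block form of $\muhat^\prime$ --- together with the $z\leftrightarrow w$ symmetry of Lemma~\ref{curpoincomb} shows both sides have matching degree and palindromy, which any proof must respect.

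The main obstacle is the cross-rank merging identity itself. Unlike the twist-to-punctures reduction of Lemma~\ref{fernandol}, where each new alphabet carries $s_{(1^n)}$ and contributes a single monomial, here each arm carries the non-hook $h_{(n'-1,1)}$ and contributes a genuine sum $c_\lambda(z,w)$; controlling the $n$-th power of $c_\lambda$ inside a plethystic logarithm and matching it, across the change of rank $n'\leftrightarrow n$, to the $\H$-polynomial of the wild side is the real content. I expect the cleanest route is to prove the merging identity first at $q=t$ (where $c_\lambda$ collapses to a principal specialization and the combinatorics becomes Hall--Littlewood/Green-polynomial in flavour) and then promote it to general $(z,w)$ using the rationality of both sides together with the already-established case $zw=1$.
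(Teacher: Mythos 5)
There is a basic mismatch between your proposal and the task: the statement you are proving is stated in the paper \emph{as a conjecture}, not as a theorem, and the paper does not prove it. The surrounding remark explicitly attributes a proof to Mellit \cite[Corollary~7.2]{mellit} and only records, as a byproduct of Theorem~\ref{maint} together with Boalch's isomorphism $\M_\B^{(\mu),(1)}\cong\M_\B^{\muhat^\prime}$, the $t=-1$ specialization $\H_{(\mu,(1^n)),1}(q^{-1/2},q^{1/2})=\H_{\muhat^\prime}(q^{-1/2},q^{1/2})$. So there is no in-paper proof to compare with; the only honest benchmark is whether your argument closes the gap that the paper itself leaves open, and it does not.

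Your setup is correct and useful: applying Lemma~\ref{fernandol} with $g=0,k=2,r=1$ to the left side and unwinding the $r=0$ definition on the right side correctly reduces the conjecture to an identity of plethystic-logarithm pairings of Cauchy kernels with $3$ versus $n+1$ alphabets; and your observations about the $zw=1$ case, the equality of dimensions, and the $z\leftrightarrow w$ palindromy are valid consistency checks (precisely the evidence the paper itself cites). But the ``cross-rank merging identity'' you isolate --- relating a sum over $\lambda\vdash n'$ weighted by the $n$th power of $c_\lambda(z,w)=\langle\tilde H_\lambda,h_{(n'-1,1)}\rangle$ to a sum over $\lambda\vdash n$ with a single monomial weight --- is exactly the content of the conjecture, and you leave it unproven. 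Moreover, the strategy you propose for it is not sound as written: you suggest proving it on the curve $q=t$ and then ``promoting'' to general $(z,w)$ using rationality together with the already-established diagonal $zw=1$. A rational function of two variables is not determined by its restriction to two plane curves (e.g.\ $0$ and $(zw-1)(z-w)$ agree on both $zw=1$ and $z=w$), and the bidegrees involved grow with $n$, so no finite collection of curve specializations can suffice without additional input. The gap is therefore real: the crux identity is neither proven nor reduced to a verifiable finite computation. As a roadmap your proposal is a reasonable way to think about the problem, but it is not a proof, and as a substitute for the genuinely combinatorial argument (as in Mellit's work) it would require the merging identity to be established in full, not just sketched.
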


\begin{remark} In a recent preprint \cite[Corollary 7.2]{mellit} Anton Mellit gives a combinatorial proof of this conjecture. 
	From our results we see that  Theorem~\ref{maint} and \cite[Corollary 9.10]{boalch-multquiv} imply  the $t=-1$ specialization \[\H_{(\mu,(1^n)),1}(q^{-1/2},q^{1/2})=\H_{\muhat^\prime}(q^{-1/2},q^{1/2}).\]   
	
	When $n=2$ we will check Conjecture~\ref{21tame}, as well as our main Conjecture~\ref{mainc} in some particular cases in the next section. 
\end{remark}

	\section{Examples when \texorpdfstring{$n=2$}{n=2}} \label{exspec}
	
\setcounter{subsection}{1}
In this section we set  $n=2$, $g\in \Z_{\geq 0}$,  $k+m>0$, $\r=(r_1,\dots,r_m)\in \Z_{>0}^m$, $r=r_1+\cdots+r_m$ and $\muhat=((1^2),\dots,(1^2))\in \calP_m$.
Conjecture~\ref{mainc} in this case predicts that the mixed Hodge
polynomial $WH(\M_\B^{\muhat,\r};q,t)$ is given by
\begin{align}\label{mhpn2} (qt^2)^{d_{\muhat,\r}}\H_{\tilde{\muhat},r}(q^{-1/2},-(qt^2)^{1/2})= &
\frac{(qt^2+1)^{k+m}(q^2t^3+1)^{2g}(1+qt)^{2g}}{(q^2t^2-1)(q^2t^4-1)} \\ \nonumber &-\frac{2^{{k+m}-1}(qt^2)^{2g+r-2+{k+m}}(qt+1)^{4g}}{(q-1)(qt^2-1)} \\  \nonumber &+\frac{t^{-2r}(qt^2)^{2g+2r-2+k+m}(q+1)^{k+m}(q^2t+1)^{2g}(1+qt)^{2g}}{(q^2-1)(q^2t^2-1)}
.
\end{align}
Note, in particular, that the formula depends only on $k+m$ and $r$. 

Substituting $t=-1$ gives by Theorem~\ref{maint} the following
\begin{align}\label{epn2} WH(\M_\B^{\muhat,\r};q,-1)=& {(q+1)^{k+m}(q^2-1)^{2g-2}(q-1)^{2g}} - {2^{{k+m} - 1}q^{2g+r-2+{k+m}}(q-1)^{4g-2}} \nonumber \\ &+{q^{2g+2r-2+k+m}(q+1)^{k+m}(q^2-1)^{2g-2}(q-1)^{2g}}
\end{align}

Fix now $g=0$ in the remainder of this section. Then from \eqref{dimensionformula} we get that \beq \label{dimension} \dim \M_\B^{\muhat,\r}=4(k-2)-2k+2(m+r)+2= 2(k+r+m)-6.\eeq
When $k+r+m<3$ the moduli spaces are empty and the corresponding formula in \eqref{mhpn2} gives indeed $0$. 

When 
$k+m+r=3$ then we have $k=0$, $m=1$ and $r=2$ or
$k=m=r=1$ or $k=3$ and $r=m=0$. In these cases we get $1$ in \eqref{mhpn2}. This corresponds to the fact that the moduli spaces are single points in these cases. This follows  as they are $0$-dimensional by \eqref{dimension} and we have $$WH(\M_\B^{\muhat,\r};q,-1)=1$$ by \eqref{epn2}. In particular, we have that $$\H_{((1^2),(1^2)),1}(z,w)=1=\H_{((1^2),(1^2),(1^2))}(z,w),$$ confirming Conjecture~\ref{21tame} when $n=2$. 

Finally when $k+m+r=4$ the moduli spaces are $2$-dimensional from \eqref{dimension}. 
In the tame case when $k=4$ and $m=0$ we get the familiar $\hat{D}_4$ case discussed at \cite[Conjecture 1.5.4]{hausel-aha1} with mixed Hodge polynomial \bes WH(\M^{((1^2),(1^2),(1^2),(1^2))}_\B;q,t)=1+4qt^2+q^2t^2.\ees In fact the corresponding Higgs moduli space $\M^{((1^2),(1^2),(1^2),(1^2))}_\Dol$ served as the toy model in \cite{hausel-com} and has the same perverse Hodge polynomial. 

We have four wild cases with $k+m+r=4$. When $k=2$ and $m=r=1$ \eqref{mhpn2} predicts \beq\label{211}WH(\M_\B^{((1^2),(1^2)),(1))};q,t)=1+3qt^2+q^2t^2.\eeq We can prove this by looking at \cite[pp.2636]{vanderput-saito} and read off the wild character variety of type $(0,0,1)$ given as an affine cubic surface $f(x_1,x_2,x_3)=0$ with leading term
$x_1x_2x_3$.  By computation we find that $f$ has isolated singularities, moreover the leading term has isolated singularities at infinity. Thus \cite[Theorem 3.1]{siersma-tibar} applies  showing that $\M_\B^{((1^2),(1^2)),(1))}$ has the homotopy type of a bouquet of $2$-spheres. In particular it is simply connected and there is only one possibility for the weights on $H^2(\M_\B^{((1^2),(1^2)),(1))})$ to give the weight polynomial  $$WH(\M_\B^{((1^2),(1^2)),(1))};q,-1)=1+3q+q^2,$$ which we know from 
\eqref{epn2},  namely the one giving \eqref{211}. 

When $k=1$, $m=1$ and $r=2$ the formula \eqref{mhpn2} predicts the mixed Hodge polynomial
\beq\label{112}WH(\M_\B^{(1^2),(2)};q,t)=1+2qt^2+q^2t^2.\eeq
This again we can prove by looking at \cite[pp. 2636]{vanderput-saito} the line of $(0,-,2)$ we get an affine cubic surface with leading
term $x_1x_2x_3$. Again \cite[Theorem 3.1]{siersma-tibar} implies that $\M_\B^{(1^2),(2)}$ is homotopic to a bouquet of $2$-spheres thus the only possible weights on $H^2(\M_\B^{(1^2),(2)})$ to give the known specialization $$WH(\M_\B^{(1^2),(2))};q,-1)=1+2q+q^2$$ is the one claimed in \eqref{112}.

When $k=0$, $m=r=2$; then we get again a cubic surface \cite[pp.2636]{vanderput-saito} corresponding to $(1,-,1)$ and thus the same \bes WH(\M_\B^{\emptyset,(2,2)};q,t)=1+2qt^2+q^2t^2,\ees which we can prove in an identical way as above. 

Finally when $k=0$, $m=1$ and $r=3$ we get \bes WH(\M_\B^{\emptyset,(3)};q,t)=1+qt^2+q^2t^2.\ees Here the same argument applies using the explicit cubic equation in  \cite[pp.2636]{vanderput-saito} corresponding to the case $(-,-,3)$.

\small

	\nocite{*}

\end{document}